\newtheorem{theorem}{Theorem}
\newtheorem{proposition}[theorem]{Proposition}
\newtheorem{lemma}[theorem]{Lemma}
\newtheorem{definition}[theorem]{Definition}
\newtheorem{construction}[theorem]{Construction}
\newtheorem{remark}[theorem]{Remark}
\newtheorem*{theorem*}{Theorem}
\theoremstyle{definition}
\def\Xint#1{\mathchoice
{\XXint\displaystyle\textstyle{#1}}%
{\XXint\textstyle\scriptstyle{#1}}%
{\XXint\scriptstyle\scriptscriptstyle{#1}}%
{\XXint\scriptscriptstyle\scriptscriptstyle{#1}}%
\!\int}
\def\XXint#1#2#3{{\setbox0=\hbox{$#1{#2#3}{\int}$ }
\vcenter{\hbox{$#2#3$ }}\kern-.6\wd0}}
\def\dashint{\Xint-}
\definecolor{Yellow}{rgb}{0.95,0.9,0.0} 
\definecolor{Red}{rgb}{0.8,0.1,0.1}
\definecolor{Green}{rgb}{0.1,0.65,0.2}
\definecolor{Blue}{rgb}{0.1,0.1,0.8}
\definecolor{Purple}{rgb}{0.7,0.1,0.7}
\definecolor{Grey}{rgb}{0.6,0.6,0.6}
\definecolor{YELLOW}{rgb}{0.95,0.9,0.0} 
\definecolor{RED}{rgb}{0.8,0.1,0.1}
\definecolor{GREEN}{rgb}{0.25,0.65,0.1}
\definecolor{BLUE}{rgb}{0.1,0.1,0.8}
\definecolor{PURPLE}{rgb}{0.7,0.1,0.7}
\newcommand{\BV}{\operatorname{BV}} 
\newcommand{\supp}{\operatorname{supp}}
\newcommand{\dist}{\operatorname{dist}} 
\DeclareMathOperator*{\esssup}{ess\,sup}
\newcommand{\Id}{\operatorname{Id}}
\newcommand{\interface}{}
\DeclareMathOperator*{\argmin}{arg\,min}
\newcommand{\Rd}[1][d]{{\mathbb{R}^{#1}}}
\newcommand{\dH}{\,\mathrm{d}\mathcal{H}^{d-1}}
\newcommand{\dx}{\,\mathrm{d}x}
\newcommand{\dy}{\,\mathrm{d}y}
\newcommand{\dt}{\,\mathrm{d}t}
\newcommand{\dtildet}{\,\mathrm{d}\tilde t}
\newcommand{\ddt}{\frac{\mathrm{d}}{\mathrm{d}t}}
\newcommand{\dnablachii}{\,\mathrm{d}|\nabla \chi_i|}
\newcommand{\tj}{p}
\newcommand{\xiTwoPh}{\xi}
\newcommand{\xiTrJ}{\xi}
\newcommand{\BTwoPh}{B}
\newcommand{\BTrJ}{B}
\newcommand{\TBV}{T_{\BV}}
\newcommand{\Tstrong}{T_{\mathrm{strong}}}
\renewcommand{\vec}[1]{{\operatorname{#1}}}
\newcommand{\cupdot}{\mathbin{\mathaccent\cdot\cup}}
\begin{document}

\title[Weak-strong uniqueness for multiphase mean curvature flow]{The local structure of the energy landscape in multiphase mean curvature flow: Weak-strong uniqueness and stability of evolutions}

\author{Julian Fischer}
\address{Institute of Science and Technology Austria (IST Austria), Am~Campus~1, 
3400 Klosterneuburg, Austria}
\email{julian.fischer@ist.ac.at}
\author{Sebastian Hensel}
\address{Institute of Science and Technology Austria (IST Austria), Am~Campus~1, 
3400 Klosterneuburg, Austria}
\email{sebastian.hensel@ist.ac.at}
\curraddr{Hausdorff Center for Mathematics, Universit{\"a}t Bonn, Endenicher Allee 62, 53115 Bonn, Germany
(\texttt{sebastian.hensel@hcm.uni-bonn.de})}
\author{Tim Laux}
\address{Hausdorff Center for Mathematics, Universit{\"a}t Bonn, Endenicher Alllee 62, 53115 Bonn, Germany}
\email{tim.laux@hcm.uni-bonn.de}
\author{Theresa M.\ Simon}
\address{Institut f{\"u}r angewandte Mathematik, Universit{\"a}t Bonn, Endenicher Allee 60, 53115 Bonn, Germany}
\email{simon@iam.uni-bonn.de}

\thanks{
Parts of the paper were written during the visit of the authors to the Hausdorff Research Institute for Mathematics (HIM), University of Bonn, in the framework of the trimester program ``Evolution of Interfaces''. The support and the hospitality of HIM are gratefully acknowledged.
This project has received funding from the European Union's Horizon 2020 research and 
innovation programme under the Marie Sk\l{}odowska-Curie Grant Agreement No.\ 665385 
\begin{tabular}{@{}c@{}}\includegraphics[width=3ex]{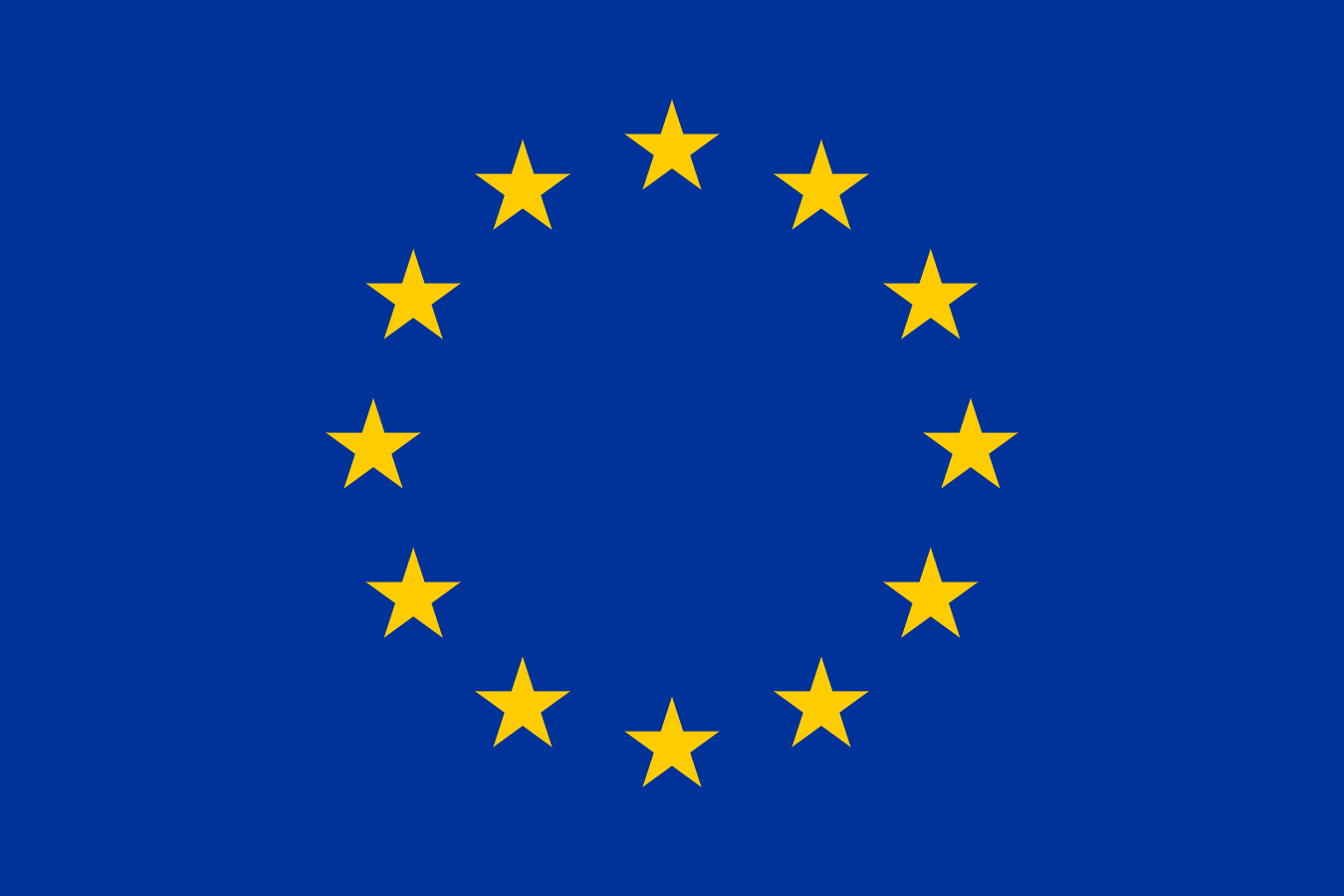}\end{tabular}, from the
European Research Council (ERC) under the European Union's Horizon 2020
research and innovation programme (grant agreement No 948819)
\smash{
\begin{tabular}{@{}c@{}}\includegraphics[width=6ex]{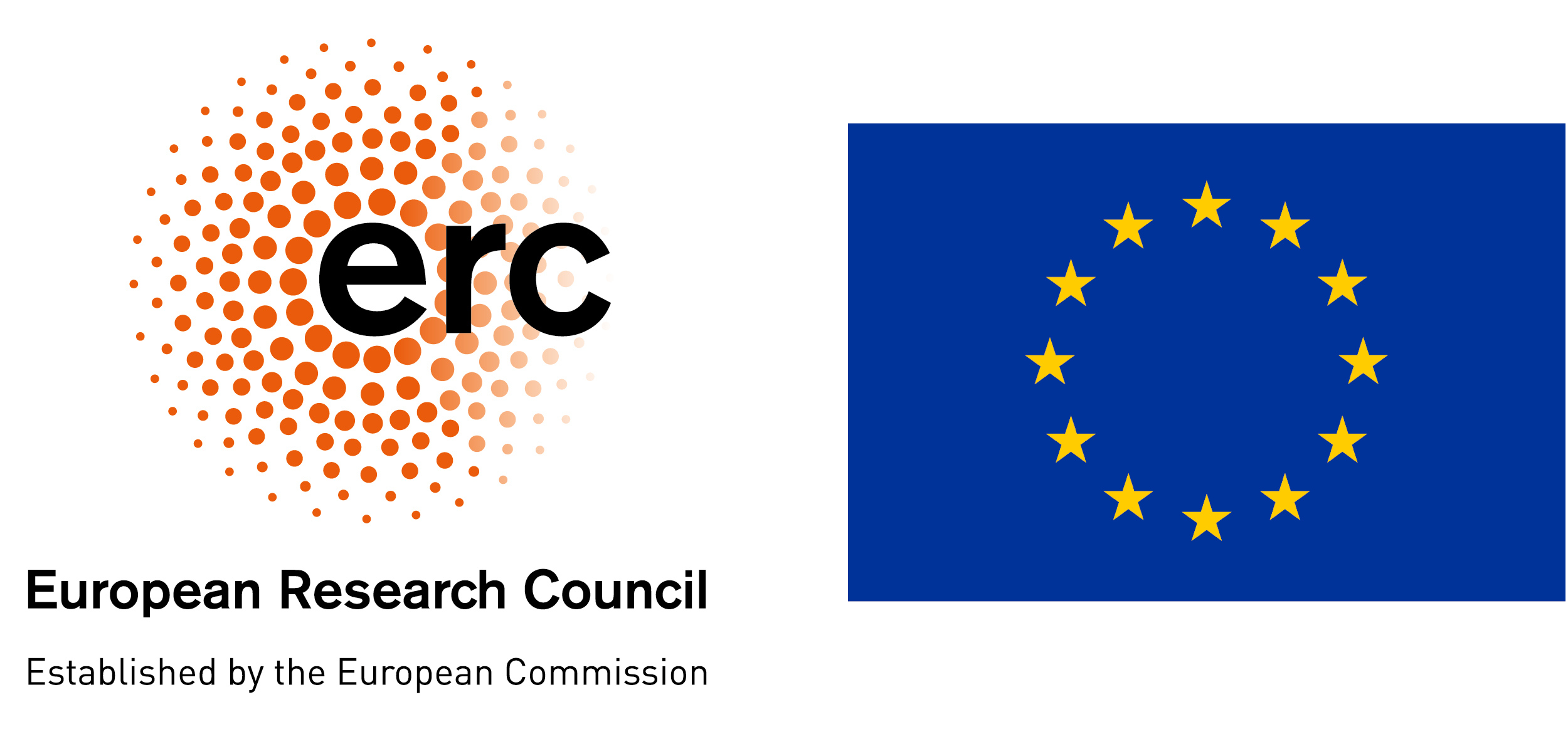}\end{tabular}
},
and from the Deutsche Forschungsgemeinschaft (DFG, German Research Foundation) 
under Germany's Excellence Strategy -- EXC-2047/1 -- 390685813.
}%

\begin{abstract}
We prove that in the absence of topological changes, the notion of $\BV$ solutions to planar multiphase mean curvature flow does not allow for a mechanism for (unphysical) non-uniqueness.
Our approach is based on the local structure of the energy landscape near a classical evolution by mean curvature. Mean curvature flow being the gradient flow of the surface energy functional, we develop a gradient-flow analogue of the notion of calibrations. Just like the existence of a calibration guarantees that one has reached a global minimum in the energy landscape, the existence of a ``gradient flow calibration'' ensures that the route of steepest descent in the energy landscape is unique and stable.
\end{abstract}

%\keywords{}

\maketitle

\setcounter{tocdepth}{2}
\tableofcontents

\section{Introduction}
In evolution problems for interfaces, the occurrence of topology changes and the 
associated geometric singularities generally limits the applicability of classical 
solution concepts to a finite time horizon, depending on the initial data.
The evolution beyond topology changes can only be described in the framework of 
suitably weakened solution concepts. However, weak concepts may in general suffer 
from an (unphysical) loss of uniqueness of solutions: For example, in the framework 
of Brakke solutions \cite{Brakke} to mean curvature flow (MCF), the interface may 
suddenly disappear at any time (see Figure~\ref{FigureBrakke} for an illustration). 
In particular, Brakke solutions fail to be unique, even prior to the onset of geometric 
singularities in the classical solution.
With the exception of evolution equations subject to a comparison principle such 
as two-phase mean curvature flow \cite{ChenGigaGoto,EvansSpruck}, only few positive 
results on uniqueness of weak solutions for interface evolution problems are known.

In the present work, we establish a weak-strong uniqueness principle for 
distributional solutions (in the framework of finite perimeter sets, a 
solution concept also known as ``$\BV$ solutions'') to planar multiphase 
mean curvature flow: As long as a strong solution to planar multiphase mean 
curvature flow -- in the sense of an evolution of smooth curves meeting at 
triple junctions at an angle of $120^\circ$ -- exists, any
distributional
solution starting from the same initial conditions must coincide with it. 
Note that for regular initial data, strong solutions are known to exist until 
a topology change in the network of evolving curves occurs, see for instance 
\cite{MantegazzaNovagaPludaSchulze}. In particular, our result establishes uniqueness of
distributional solutions to planar multiphase mean curvature flow in the absence of topology changes.

Our weak-strong uniqueness principles also apply to a notion of varifold solutions introduced by 
Kim, Stuvard, and Tonegawa \cite{KimTonegawa,StuvardTonegawaMCF}.
We emphasize that beyond certain topology changes even a mathematically ideal solution concept should not be expected to prevent failure of uniqueness, as inherent instabilities may lead to different evolutions of the system (see Figure~\ref{FigureNonuniqueness}).
Thus, together with the works by Tonegawa et al.\ \cite{KimTonegawa,StuvardTonegawaMCF} on global existence of solutions, our present work shows that this concept of varifold solutions gives rise to a mathematically sound theory of solutions for multiphase mean curvature flow.

The key insight in our present work is the observation that in analogy to 
the notion of calibrations for minimizers of the surface energy functional, 
one may develop a notion of calibrations for its gradient flow.
Just like classical calibrations carry information on the global structure 
of the energy landscape -- namely, a global lower bound for the energy -- , ``gradient 
flow calibrations'' contain information on the local structure of the energy landscape 
near a partition evolving by mean curvature: The existence of a gradient flow calibration 
implies that the path of steepest descent in the energy landscape of the surface energy 
functional is unique and stable with respect to perturbations of the initial condition.\footnote{While 
in the present work ``path of steepest descent'' is to be understood as ``$\BV$ solution to multiphase mean 
curvature flow'', we will give a rigorous statement of this notion at the level of the energy functional in a future work.}

We implement this strategy in general ambient dimension $d\geq 2$ by proving  that the existence of a gradient flow calibration implies an inclusion principle for BV solutions to multiphase mean curvature flow: The existence of a calibration for an evolving partition ensures that the interface of any BV solution must be contained in the corresponding interface of the calibrated partition. This reduces the proof of the desired weak-strong uniqueness principle to the construction of a gradient flow calibration, given a strong solution to multiphase mean curvature flow. We provide this explicit construction in the planar case $d=2$.
However, we would like to emphasize that conceptually the approach carries over to multiple dimensions. In particular, with the techniques used in the present paper it is for example possible to calibrate the smooth evolution of a double bubble; the adaptation of our arguments is elaborated on in the follow-up work \cite{HenselLauxMultiD}. However, as soon as quadruple junctions (typically occurring in three spatial dimensions) are present in the initial data, an additional construction is needed; nevertheless, we expect the principles of our present construction to guide the construction also in this situation.

\begin{figure}
\begin{tikzpicture}[scale=2.0]
%picture 1
\draw[fill=GREEN!70!white,draw=black] (1,1) .. controls (1+0.3,1-0.2) and (2-0.3,0.7+0.3*0.05) .. (2.0,0.7) .. controls (2.0+0.07,0.7+0.1) and (2.2-0.2*0.05,1.1-0.2) .. (2.2,1.1) .. controls (2.2-0.3*0.7,1.1+0.3*0.5) and (1.5+0.3*0.5,1.8-0.3*0.87) .. (1.5,1.8) .. controls (1.5-0.2*1,1.8) and (0.9+0.3*0.81,1.7+0.3*0.57) .. (0.9,1.7) .. controls (0.9+0.35*0.06,1.7-0.35) and (1-0.35*0.06,1+0.35) .. cycle;
\draw[fill=BLUE!70!white,draw=black] (0.2,2.3)  .. controls (0.2+0.3,2.3-0.3) and (0.9-0.3*0.91,1.7+0.3*0.42) .. (0.9,1.7) .. controls (0.9+0.3*0.81,1.7+0.3*0.57) and (1.5-0.2*1,1.8) .. (1.5,1.8) .. controls (1.5+0.3*0.5,1.8+0.3*0.87) and (1.8-0.2,2.3-0.3) .. (1.8,2.3) -- cycle;
\draw[fill=PURPLE!70!white,draw=black] (2.7,1.3) .. controls (2.7-0.3,1.3-0.05) and (2.2+0.2*0.83,1.1+0.2*0.54) .. (2.2,1.1) .. controls (2.2-0.3*0.7,1.1+0.3*0.5) and (1.5+0.3*0.5,1.8-0.3*0.87) .. (1.5,1.8) .. controls (1.5+0.3*0.5,1.8+0.3*0.87) and (1.8-0.2,2.3-0.3) .. (1.8,2.3) -- (2.7,2.3) -- cycle;
\draw[fill=RED!70!white,draw=black] (2.0,0.7) .. controls (2.0+0.07,0.7+0.1) and (2.2-0.2*0.05,1.1-0.2) .. (2.2,1.1) .. controls (2.2+0.2*0.83,1.1+0.2*0.54) and (2.7-0.3,1.3-0.05) .. (2.7,1.3) -- (2.7,0.4) -- (2.1,0.4) .. controls (2.1-0.05,0.4+0.1)  and (2.0+0.15*0.17,0.7-0.15*1.0) .. cycle;
\draw[fill=PURPLE!70!white,draw=black] (2.1,0.4) .. controls (2.1-0.05,0.4+0.1)  and (2.0+0.15*0.17,0.7-0.15*1.0) .. (2.0,0.7) .. controls (2-0.3,0.7+0.3*0.05) and (1+0.3,1-0.2) .. (1,1) .. controls (1-0.25,1-0.25*0.47) and (0.5+0.15,0.4+0.2) .. (0.5,0.4) -- cycle;
\draw[fill=RED!70!white,draw=black] (0.2,2.3) .. controls (0.2+0.3,2.3-0.3) and (0.9-0.3*0.91,1.7+0.3*0.42) .. (0.9,1.7) .. controls (0.9+0.35*0.06,1.7-0.35) and (1-0.35*0.06,1+0.35) .. (1,1) .. controls (1-0.25,1-0.25*0.47) and (0.5+0.15,0.4+0.2) .. (0.5,0.4) -- (0.2,0.4)  -- cycle;
\end{tikzpicture}
\caption{A partition of a planar domain by a network of smooth curves meeting at triple junctions at angles of 120$^\circ$, corresponding to the typical situation in multiphase mean curvature flow with equal surface energies.\label{FigureGrains}}
\end{figure}
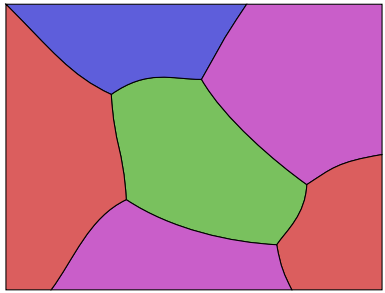

\subsection{Multiphase mean curvature flow}

Mathematically, mean curvature flow is one of the most studied geometric evolution equations. 
Being the gradient flow of the area functional with respect to the $L^2(S_t)$ distance, it 
constitutes the perhaps most natural area-reducing flow for submanifolds.
Its multiphase variant may be seen as the simplest case of mean curvature flow for a 
non-smooth surface, allowing for ``branching'' of the surface (see e.\,g.\ Figure~\ref{FigureGrains}).

Multiphase mean curvature flow also is an important phenomenological model for the motion 
of grain boundaries in polycrystals (``grains'' being the domains in a polycrystal with a 
single crystallographic orientation): Their evolution may be approximated as the gradient 
flow of the surface energy between the different grains, see for instance the seminal work of Mullins \cite{Mullins}.
While in principle the motion of grain boundaries is governed by anisotropic mean curvature flow 
or even more complex evolution equations \cite{SrolovitzUnified,SrolovitzEtAl}, isotropic 
multiphase mean curvature flow may be viewed as an important model case for these equations.
For recent developments in anisotropic and crystalline curvature flows, we refer to 
Caselles and Chambolle \cite{CasellesChambolle} and Chambolle, Morini, and Ponsiglione \cite{ChambolleMoriniPonsiglione}.

The existence theory for solutions to multiphase mean curvature flow is
quite well-developed: Classical solutions to planar multiphase mean curvature flow 
are known to exist (and to be unique) for short times, see Bronsard and Reitich \cite{BronsardReitich}.
For initial configurations close to an equilibrium state, classical solutions exist even 
globally in time, see Kinderlehrer and Liu \cite{KinderlehrerLiu}. In the higher-dimensional 
case, Depner, Garcke, and Kohsaka \cite{DepnerGarckeKohsaka} have shown the local-in-time existence 
of classical solutions for the evolution of a double bubble.
In principle, Brakke's concept of varifold solutions \cite{Brakke} is applicable to 
multiphase mean curvature flow. However, it suffers from the well-known shortcoming of 
exhibiting a drastic and unphysical failure of uniqueness of solutions \cite{Brakke} 
as mentioned above; see Figure~\ref{FigureBrakke} for an illustration.

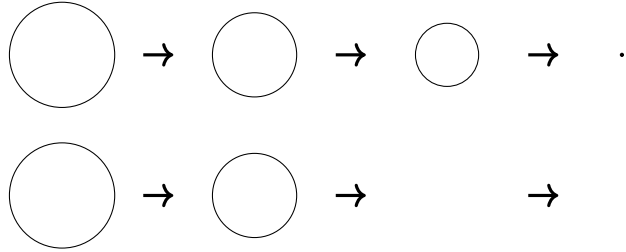
\begin{figure}
\begin{tikzpicture}[xscale=0.7,yscale=0.7]
\draw (-2,0) ellipse (1.0 and 1.0);
\end{tikzpicture}
~
\begin{tikzpicture}[xscale=0.5,yscale=0.7]
\draw[white] (0,-1) -- (0,1);
\draw[very thick,->] (-0.4,0) -- (0.4,0);
\end{tikzpicture}
~
\begin{tikzpicture}[xscale=0.7,yscale=0.7]
\draw[white] (-3,-1) -- (-1,1);
\draw (-2,0) ellipse (0.8 and 0.8);
\end{tikzpicture}
~
\begin{tikzpicture}[xscale=0.5,yscale=0.7]
\draw[white] (0,-1) -- (0,1);
\draw[very thick,->] (-0.4,0) -- (0.4,0);
\end{tikzpicture}
~
\begin{tikzpicture}[xscale=0.7,yscale=0.7]
\draw[white] (-3,-1) -- (-1,1);
\draw (-2,0) ellipse (0.6 and 0.6);
\end{tikzpicture}
~
\begin{tikzpicture}[xscale=0.5,yscale=0.7]
\draw[white] (0,-1) -- (0,1);
\draw[very thick,->] (-0.4,0) -- (0.4,0);
\end{tikzpicture}
\begin{tikzpicture}[xscale=0.7,yscale=0.7]
\draw[white] (-3,-1) -- (-1,1);
\draw[fill=black] (-2,0) circle (0.03);
\end{tikzpicture}
\\~\\
\begin{tikzpicture}[xscale=0.7,yscale=0.7]
\draw (-2,0) ellipse (1.0 and 1.0);
\end{tikzpicture}
~
\begin{tikzpicture}[xscale=0.5,yscale=0.7]
\draw[white] (0,-1) -- (0,1);
\draw[very thick,->] (-0.4,0) -- (0.4,0);
\end{tikzpicture}
~
\begin{tikzpicture}[xscale=0.7,yscale=0.7]
\draw[white] (-3,-1) -- (-1,1);
\draw (-2,0) ellipse (0.8 and 0.8);
\end{tikzpicture}
~
\begin{tikzpicture}[xscale=0.5,yscale=0.7]
\draw[white] (0,-1) -- (0,1);
\draw[very thick,->] (-0.4,0) -- (0.4,0);
\end{tikzpicture}
~
\begin{tikzpicture}[xscale=0.7,yscale=0.7]
\draw[white] (-3,-1) -- (-1,1);
\end{tikzpicture}
~
\begin{tikzpicture}[xscale=0.5,yscale=0.7]
\draw[white] (0,-1) -- (0,1);
\draw[very thick,->] (-0.4,0) -- (0.4,0);
\end{tikzpicture}
\begin{tikzpicture}[xscale=0.7,yscale=0.7]
\draw[white] (-3,-1) -- (-1,1);
\end{tikzpicture}
\caption{Top: An initially circular interface evolving by mean curvature flow. In finite time the interface shrinks to a point and disappears, giving rise to a geometric singularity and a topology change. Bottom: In Brakke solutions to mean curvature flow, the interface may suddenly disappear at any time, leading to a drastic failure of uniqueness of solutions.\label{FigureBrakke}}
\end{figure}

The existence of classical solutions to planar multiphase mean curvature flow 
up to finitely many singular times -- a solution concept that we will refer to 
as ``classical solutions with restarting'' -- has been established by 
Manteganzza, Novaga, Pluda, and Schulze \cite{MantegazzaNovagaPludaSchulze} 
under the assumption that certain types of singularities do not accumulate, 
extending earlier results by Ilmanen, Neves, and Schulze \cite{IlmanenNevesSchulze} 
and Mantegazza, Novaga, and Tortorelli \cite{MantegazzaNovagaTortorelli}. However, 
it is not evident how to generalize this notion of solutions to the higher-dimensional 
case, as it relies on the classification of potential singularities.

In \cite{LauxOtto,LauxOttoBrakke}, a conditional convergence result for an efficient 
numerical scheme for multiphase mean curvature flow -- the thresholding scheme 
of Merriman, Bence, and Osher \cite{MerrimanBenceOsher} -- towards $\BV$ solutions of 
multiphase mean curvature flow has been shown by Otto and the third author, thereby 
also establishing a conditional existence result for $\BV$~solutions. In~\cite{LauxSimon}, a 
conditional convergence result for the Allen-Cahn approximation for multiphase mean curvature 
flow towards $\BV$~solutions has been derived by the third and the fourth author. Both results 
employ an assumption of convergence of the interface area, analogous to the one 
in Luckhaus-Sturzenhecker~\cite{LuckhausSturzenhecker} for the implicit time discretization 
developed by Luckhaus-Sturzenhecker and Almgren-Taylor-Wang~\cite{AlmgrenTaylorWang}.

Kim and Tonegawa~\cite{KimTonegawa}, and Stuvard and Tonegawa~\cite{StuvardTonegawaMCF} have recently introduced a notion of varifold solutions that combines the concept of Brakke solutions with an evolution equation for the different phases. They prove global existence of solutions in arbitrary ambient dimension, only requiring the initial partition to have finite perimeter. By imposing an evolution equation for the phases, their solution concept prevents the sudden unphysical vanishing of the interface that is possible in the framework of Brakke solutions. As their notion of solutions may be viewed as the natural generalization of the concept of $\BV$ solutions to varifolds, one might expect similar uniqueness properties as in the case of $\BV$ solutions. Indeed, we shall also establish a weak-strong uniqueness principle for these varifold-$\BV$ solutions. Finally, we mention that our weak-strong uniqueness principle can be extended to another notion of varifold solutions satisfying a global energy-dissipation inequality in the sense of De Giorgi, see~\cite{hensellaux}.

\begin{table}
\begin{tabular}{llll}
\emph{Solution concept}
&\emph{Topology changes}
&\emph{Uniqueness prior to}
&\emph{Existence}
\\&&\emph{topology changes}&\emph{theory}
\vspace{2.5mm}
\\
classical solutions
&not possible
&yes \cite{BronsardReitich}
&yes (local) \cite{BronsardReitich}
\vspace{1.5mm}
\\
Brakke solutions
&possible
&fails \cite{Brakke}
&yes \cite{Brakke}
\vspace{1.5mm}
\\
classical solutions with
&possible
&yes \cite{MantegazzaNovagaPludaSchulze}
&cond.\footnotemark \cite{MantegazzaNovagaPludaSchulze}
\\
restarting (2D only)
\vspace{1.5mm}
\\
Kim-Stuvard-
&possible
&{\bf yes (Theorem\,\ref{TheoremStabilityVarifoldSolution})}\footnotemark
&yes \cite{KimTonegawa,StuvardTonegawaMCF}
\\
Tonegawa solutions & & &
\vspace{1.5mm}
\\
$\BV$ solutions
&possible
&{\bf yes (Theorem\,\ref{MainResult})}
&cond.\footnotemark \cite{LauxOtto,LauxSimon}
\vspace{2.5mm}
\end{tabular}
\caption{An overview of solutions concepts for multiphase mean curvature flow.
\label{TableSolutionConcepts}}
\end{table}
\footnotetext[2]{Global existence under the assumption that a certain type of singularities does not accumulate.}
\footnotetext[3]{Provided that one starts with a multiplicity one interface.}
\footnotetext[4]{Global existence under an assumption as in Almgren-Taylor-Wang / Luckhaus-Sturzenhecker.}

\subsection{The uniqueness properties of multiphase mean curvature flow}

The uniqueness properties of weak solution concepts for multiphase mean 
curvature flow have remained essentially unexplored. For two-phase mean curvature 
flow, a combination of the level-set formulation by Osher and Sethian \cite{OsherSethian} 
and Ohta, Jasnow, and Kawasaki \cite{OhtaJasnowKawasaki}, and the concept of viscosity 
solutions by Crandall and Lions \cite{CrandallLions} facilitates an existence and 
uniqueness theory for a weak notion of solutions, as 
shown by Chen, Giga, and Goto \cite{ChenGigaGoto} and Evans and Spruck \cite{EvansSpruck}. While 
these viscosity solutions to two-phase mean curvature flow are unique, a given level set 
may ``fatten'' \cite{BarlesSonerSouganidis}, thereby failing to describe an interface and 
indicating the emergence of a non-unique evolution of the surface. Nevertheless, fattening 
is known to not occur prior to the first topology change, provided that one starts with a 
smooth initial surface. Unfortunately, the absence of a comparison principle for multiphase 
mean curvature flow a~priori prevents the applicability of these techniques in the multiphase case.

The example in Figure~\ref{FigureNonuniqueness} shows that after topology changes, the uniqueness of BV solutions to planar multiphase mean curvature flow may fail.
Note that in contrast to the sudden vanishing of the interface in Brakke solutions, this is a case of \emph{physical} non-uniqueness: The failure of uniqueness is caused by a physically unstable situation -- the symmetric configuration of four perfect squares -- , starting from which infinitesimal perturbations may select either of the two evolutions.
This example also shows that a principle of maximal dissipation of energy may fail to single out a unique evolution.

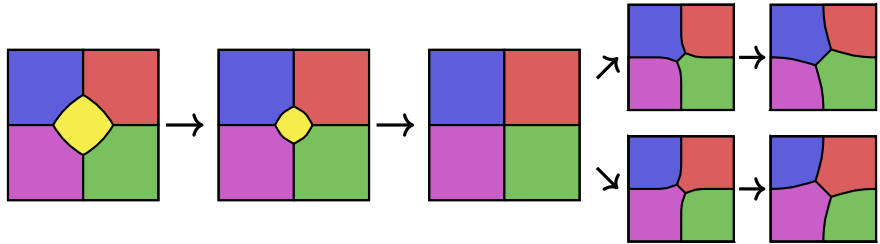
\begin{figure}
\begin{tikzpicture}[scale=1.0]
%picture -1
\begin{scope}[xshift=-4.4cm,yshift=-2.05cm]
\draw[fill=PURPLE!70!white,draw=black,thick] (0,0+0.3+2.3) -- (1,0+0.3+2.3) -- (1,1+0.3+2.3) -- (0,1+0.3+2.3) -- cycle;
\draw[fill=BLUE!70!white,draw=black,thick] (0,2+0.3+2.3) -- (1,2+0.3+2.3) -- (1,1+0.3+2.3) -- (0,1+0.3+2.3) -- cycle;
\draw[fill=GREEN!70!white,draw=black,thick] (2,0+0.3+2.3) -- (1,0+0.3+2.3) -- (1,1+0.3+2.3) -- (2,1+0.3+2.3) -- cycle;
\draw[fill=RED!70!white,draw=black,thick] (2,2+0.3+2.3) -- (2,1+0.3+2.3) -- (1,1+0.3+2.3) -- (1,2+0.3+2.3) -- cycle;
\draw[fill=YELLOW!70!white,draw=black,thick] (1.4,1+0.3+2.3) .. controls (1.4-0.2*0.5,1+0.2*0.86+0.3+2.3) and (1+0.2*0.86,1.4-0.2*0.5+0.3+2.3) .. (1,1.4+0.3+2.3) .. controls (1-0.2*0.86,1.4-0.2*0.5+0.3+2.3) and (0.6+0.2*0.5,1+0.2*0.86+0.3+2.3) .. (0.6,1+0.3+2.3) .. controls (0.6+0.2*0.5,1-0.2*0.86+0.3+2.3) and (1-0.2*0.86,0.6+0.2*0.5+0.3+2.3) .. (1,0.6+0.3+2.3) .. controls (1+0.2*0.86,0.6+0.2*0.5+0.3+2.3) and (1.4-0.2*0.5,1-0.2*0.86+0.3+2.3) .. cycle;
\draw[thick] (0,0+0.3+2.3) -- (2,0+0.3+2.3) -- (2,2+0.3+2.3) -- (0,2+0.3+2.3) -- cycle;
\draw[very thick,->] (2+0.1,1+0.3+2.3) -- (2+0.6,1+0.3+2.3);
\end{scope}
%picture 0
\begin{scope}[xshift=-1.6cm,yshift=-2.05cm]
\draw[fill=PURPLE!70!white,draw=black,thick] (0,0+0.3+2.3) -- (1,0+0.3+2.3) -- (1,1+0.3+2.3) -- (0,1+0.3+2.3) -- cycle;
\draw[fill=BLUE!70!white,draw=black,thick] (0,2+0.3+2.3) -- (1,2+0.3+2.3) -- (1,1+0.3+2.3) -- (0,1+0.3+2.3) -- cycle;
\draw[fill=GREEN!70!white,draw=black,thick] (2,0+0.3+2.3) -- (1,0+0.3+2.3) -- (1,1+0.3+2.3) -- (2,1+0.3+2.3) -- cycle;
\draw[fill=RED!70!white,draw=black,thick] (2,2+0.3+2.3) -- (2,1+0.3+2.3) -- (1,1+0.3+2.3) -- (1,2+0.3+2.3) -- cycle;
\draw[fill=YELLOW!70!white,draw=black,thick] (1.25,1+0.3+2.3) .. controls (1.25-0.2*0.5,1+0.2*0.86+0.3+2.3) and (1+0.2*0.86,1.25-0.2*0.5+0.3+2.3) .. (1,1.25+0.3+2.3) .. controls (1-0.2*0.86,1.25-0.2*0.5+0.3+2.3) and (0.75+0.2*0.5,1+0.2*0.86+0.3+2.3) .. (0.75,1+0.3+2.3) .. controls (0.75+0.2*0.5,1-0.2*0.86+0.3+2.3) and (1-0.2*0.86,0.75+0.2*0.5+0.3+2.3) .. (1,0.75+0.3+2.3) .. controls (1+0.2*0.86,0.75+0.2*0.5+0.3+2.3) and (1.25-0.2*0.5,1-0.2*0.86+0.3+2.3) .. cycle;
\draw[thick] (0,0+0.3+2.3) -- (2,0+0.3+2.3) -- (2,2+0.3+2.3) -- (0,2+0.3+2.3) -- cycle;
\draw[very thick,->] (2+0.1,1+0.3+2.3) -- (2+0.6,1+0.3+2.3);
\end{scope}
%picture 3
\begin{scope}[xshift=1.2cm,yshift=0.25cm]
\draw[fill=PURPLE!70!white,draw=black,thick] (0,0+0.3) -- (1,0+0.3) -- (1,1+0.3) -- (0,1+0.3) -- cycle;
\draw[fill=BLUE!70!white,draw=black,thick] (0,2+0.3) -- (1,2+0.3) -- (1,1+0.3) -- (0,1+0.3) -- cycle;
\draw[fill=GREEN!70!white,draw=black,thick] (2,0+0.3) -- (1,0+0.3) -- (1,1+0.3) -- (2,1+0.3) -- cycle;
\draw[fill=RED!70!white,draw=black,thick] (2,2+0.3) -- (2,1+0.3) -- (1,1+0.3) -- (1,2+0.3) -- cycle;
\draw[thick] (0,0+0.3) -- (2,0+0.3) -- (2,2+0.3) -- (0,2+0.3) -- cycle;
\end{scope}
%picture 1
\begin{scope}[xscale=0.7,yscale=0.7,xshift=13.2cm]
\draw[fill=RED!70!white,draw=black,thick] (0-5.0,0) -- (2-5.0,0) -- (2-5.0,2) -- (0-5.0,2) -- cycle;
\draw[fill=PURPLE!70!white,draw=black,thick] (0-5.0,0) -- (2-5.0,0) -- (0-5.0,2) -- cycle;
\draw[fill=BLUE!70!white,draw=black,thick] (0-5.0,2) -- (1-5.0,2) .. controls (1-5.0,1.55) and (0.85+0.2*0.258-5.0,1.15+0.2*0.97) .. (0.85-5.0,1.15) .. controls (0.85-0.2*0.97-5.0,1.15-0.2*0.258) and (0.45-5.0,1) .. (0-5.0,1) -- cycle;
\draw[fill=GREEN!70!white,draw=black,thick] (2-5.0,0) -- (2-5.0,1) .. controls (1.55-5.0,1) and (1.15+0.2*0.97-5.0,0.85+0.2*0.258) .. (1.15-5.0,0.85) .. controls (1.15-0.2*0.258-5.0,0.85-0.2*0.97) and (1-5.0,0.45) .. (1-5.0,0) -- cycle;
\draw[thick] (0-5.0,0) -- (2-5.0,0) -- (2-5.0,2) -- (0-5.0,2) -- cycle;
\draw[very thick,->] (2.1-7.7,1) -- (2.6-7.7,1);
\end{scope}
%picture 2
\begin{scope}[xscale=0.7,yscale=0.7,xshift=8cm]
\draw[fill=RED!70!white,draw=black,thick] (0-2.5,0) -- (2-2.5,0) -- (2-2.5,2) -- (0-2.5,2) -- cycle;
\draw[fill=PURPLE!70!white,draw=black,thick] (0-2.5,0) -- (2-2.5,0) -- (0-2.5,2) -- cycle;
\draw[fill=BLUE!70!white,draw=black,thick] (0-2.5,2) -- (1-2.5,2) -- (1-2.5,1.5) .. controls (1-2.5,1.2) and (0.92-2.5+0.13*0.258,1.08+0.13*0.97) .. (0.92-2.5,1.08) .. controls (0.92-2.5-0.13*0.97,1.08-0.13*0.258) and (0.8-2.5,1.0) .. (0.5-2.5,1) -- (0-2.5,1) -- cycle;
\draw[fill=GREEN!70!white,draw=black,thick] (2-2.5,0) -- (1-2.5,0) -- (1-2.5,0.5) .. controls (1-2.5,0.8) and (1.08-2.5-0.13*0.258,0.92-0.13*0.97) .. (1.08-2.5,0.92) .. controls (1.08-2.5+0.13*0.97,0.92+0.13*0.258) and (1.2-2.5,1.0) .. (1.5-2.5,1) -- (2-2.5,1) -- cycle;
\draw[thick] (0-2.5,0) -- (2-2.5,0) -- (2-2.5,2) -- (0-2.5,2) -- cycle;
\draw[very thick,->] (2.1-5.2,1+0.4) -- (2.5-5.2,1);
\end{scope}
%picture 4
\begin{scope}[xscale=0.7,yscale=0.7,xshift=3cm,yshift=2.5cm]
\draw[very thick,->] (2.0-0.1,1-0.4) -- (2.4-0.1,1);
\draw[fill=BLUE!70!white,draw=black,thick] (0+2.5,0) -- (2+2.5,0) -- (2+2.5,2) -- (0+2.5,2) -- cycle;
\draw[fill=GREEN!70!white,draw=black,thick] (0+2.5,0) -- (2+2.5,0) -- (2+2.5,2) -- cycle;
\draw[fill=PURPLE!70!white,draw=black,thick] (0+2.5,0) -- (1+2.5,0) -- (1+2.5,0.5) .. controls (1+2.5,0.8) and (0.92+2.5+0.13*0.258,0.92-0.13*0.97) .. (0.92+2.5,0.92) .. controls (0.92+2.5-0.13*0.97,0.92+0.13*0.258) and (0.8+2.5,1.0) .. (0.5+2.5,1) -- (0+2.5,1) -- cycle;
\draw[fill=RED!70!white,draw=black,thick] (2+2.5,2) -- (1+2.5,2) -- (1+2.5,1.5) .. controls (1+2.5,1.2) and (1.08+2.5-0.13*0.258,1.08+0.13*0.97) .. (1.08+2.5,1.08) .. controls (1.08+2.5+0.13*0.97,1.08-0.13*0.258) and (1.2+2.5,1.0) .. (1.5+2.5,1) -- (2+2.5,1) -- cycle;
\draw[thick] (0+2.5,0) -- (2+2.5,0) -- (2+2.5,2) -- (0+2.5,2) -- cycle;
\draw[very thick,->] (2.1+2.5,1) -- (2.6+2.5,1);
\end{scope}
%picture 5
\begin{scope}[xscale=0.7,yscale=0.7,xshift=3.2cm,yshift=2.5cm]
\draw[fill=BLUE!70!white,draw=black,thick] (0+5.0,0) -- (2+5.0,0) -- (2+5.0,2) -- (0+5.0,2) -- cycle;
\draw[fill=GREEN!70!white,draw=black,thick] (0+5.0,0) -- (2+5.0,0) -- (2+5.0,2) -- cycle;
\draw[fill=PURPLE!70!white,draw=black,thick] (0+5.0,0) -- (0+5.0,1) .. controls (0.45+5.0,1) and (0.85-0.2*0.97+5.0,0.85+0.2*0.258) .. (0.85+5.0,0.85) .. controls (0.85+0.2*0.258+5.0,0.85-0.2*0.97) and (1+5.0,0.45) .. (1+5.0,0) -- cycle;
\draw[fill=RED!70!white,draw=black,thick] (2+5.0,2) -- (2+5.0,1) .. controls (1.55+5.0,1) and (1.15+0.2*0.97+5.0,1.15-0.2*0.258) .. (1.15+5.0,1.15) .. controls (1.15-0.2*0.258+5.0,1.15+0.2*0.97) and (1+5.0,1.55) .. (1+5.0,2) -- cycle;
\draw[thick] (0+5.0,0) -- (2+5.0,0) -- (2+5.0,2) -- (0+5.0,2) -- cycle;
\end{scope}
\end{tikzpicture}
\caption{An example of a nonunique evolution of multiphase mean curvature flow, starting from an initial interface consisting only of smooth curves meeting at an angle of 120$^\circ$.\label{FigureNonuniqueness}}
\end{figure}

Our main result -- a uniqueness theorem for $\BV$ solutions to planar multiphase mean curvature flow prior to the first topology change, along with a corresponding result for Kim-Stuvard-Tonegawa varifold-$\BV$ solutions -- is therefore not only the first positive result concerning uniqueness for a weak solution concept to multiphase mean curvature flow, but also optimal for general initial data.
Nevertheless, let us mention that it has been suggested by Ilmanen (see e.\,g.\ \cite{MantegazzaNovagaPludaSchulze}) that the uniqueness properties may be better if one restricts one's attention to \emph{generic} initial data:
For initial data given by a small random perturbation of a fixed multiphase interface, the evolution by mean curvature in the plane is expected to be unique and stable with respect to perturbations for \emph{almost every} perturbation.
The argument in favor of this proposed phenomenon is based on a numerical study classifying the ``stable'' and therefore ``generically occurring'' singularities in planar mean curvature flow \cite{Haettenschweiler}.
In two-phase mean curvature flow, evidence in favor of ``generic'' well-posedness is abundant: For instance, an infinitesimal amount of stochastic noise has been shown to yield selection principles for the evolution, see Dirr, Luckhaus, and Novaga \cite{DirrLuckhausNovaga} and Souganidis and Yip \cite{SouganidisYip}. Furthermore, in the framework of viscosity solutions it is immediate that fattening of level sets must be absent in almost all levels. Finally, a classification of generic singularities has been achieved by Colding and Minicozzi \cite{ColdingMinicozzi2,ColdingMinicozzi}.

\subsection{Classical calibrations and gradient flow calibrations}

The key idea for our weak-strong uniqueness result is a gradient-flow 
analogue of the notion of \emph{calibrations}. The classical concept of 
calibrations is an important tool to deduce lower bounds on the interface 
energy functional for fixed boundary conditions.
Recall that a classical calibration for a candidate minimizer $(\bar \chi_1,\ldots,\bar \chi_P)$ 
of the interface energy functional (for given boundary conditions and with equal surface tensions) 
is a collection of vector fields $\xi_i$, $1\leq i\leq N$, subject to the following three properties:
\begin{itemize}
\item It holds that $|\xi_i-\xi_j|\leq 1$ for all $i$ and $j$.
\item The vector fields are solenoidal, i.\,e.,\ $\nabla \cdot \xi_i=0$ for all $i$.
\item On the interface $\partial  \{\bar\chi_i=1\} \cap \partial\{ \bar \chi_j=1\}$ 
			between the phases $i$ and $j$, $i\neq j$, the vector field $\xi_{i,j}:=\xi_i-\xi_j$ 
			coincides with the outer unit normal vector field of $\partial \{\bar\chi_i=1\}$.
\end{itemize}
The existence of a calibration allows to infer that the partition $(\bar \chi_1,\ldots,\bar \chi_P)$ 
indeed minimizes the interface energy functional among all possible Caccioppoli 
partitions, see \cite[Definition 4.16]{AmbrosioFuscoPallara}, of the underlying 
set $D \subset \Rd[d]$, $d\geq 2$, with the same boundary conditions: For any competitor 
partition $(\chi_1,\ldots,\chi_{P})$, one may compute using the first two defining conditions 
of a calibration (with the abbreviation for the interfaces $I_{i,j}:=\partial^*\{\chi_i=1\}\cap \partial^* \{\chi_j=1\}$)
\begin{align*}
E[\chi]&=\frac{1}{2}\sum_{i,j=1,i\neq j}^P \int_{I_{i,j}} 1 \,\mathrm{d}\mathcal{H}^{d-1}
\geq 
\frac{1}{2} \sum_{i,j=1,i\neq j}^P \int_{I_{i,j}} (\xi_j-\xi_i)\cdot \frac{\nabla \chi_i}{|\nabla \chi_i|} \,\mathrm{d}|\nabla \chi_i|
\\&
= - \sum_{i,j=1,i\neq j}^P \int_{I_{i,j}} \xi_i \cdot \frac{\nabla \chi_i}{|\nabla \chi_i|} \,\mathrm{d}|\nabla \chi_i|
= - \sum_{i=1}^P \int_{D} \xi_i \cdot \frac{\nabla \chi_i}{|\nabla \chi_i|} \,\mathrm{d}|\nabla \chi_i|
\\&
= - \sum_{i=1}^P \int_{\partial D} \chi_i \vec{n}_{\partial D}\cdot \xi_i \,\mathrm{d}\mathcal{H}^{d-1}.
\end{align*}
The third defining condition for a calibration shows that in the previous computation, 
equality is in fact achieved for $(\bar \chi_1,\ldots,\bar \chi_{P})$. 
This proves $E[\chi]\geq E[\bar \chi]$ for all partitions~$\chi$ with the same boundary 
conditions $\chi=\bar \chi$ on $\partial D$.

We recall that a notion of calibrations is also available for free 
discontinuity problems \cite{AlbertiBouchitteDalMaso}, having important 
implications for the numerical computation of minimizers \cite{ChambolleCremersPock}. 
For further applications of the concept of calibrations to anisotropic or nonlocal perimeters or the Steiner problem, we refer to \cite{AlterCasellesChambolle,CasellesChambolleMollNovaga} 
as well as \cite{CabreCalibrations,PagliariHalfspaces} and \cite{CarioniPluda}.

In the present work, we introduce a gradient-flow analogue of the notion of calibrations.
As shown above, the existence of a (classical) calibration ensures that a certain configuration 
is a global minimizer of the energy functional. In a similar spirit, the existence of a 
gradient-flow calibration ensures that the path of steepest descent in the energy landscape 
of the surface energy functional is unique, and moreover that this path is stable with respect 
to perturbations in the initial condition.
In the case of equal surface tensions, a \emph{gradient flow calibration} for a given 
classical solution $\bar \chi=(\bar \chi_1,\ldots,\bar \chi_P)$ to multiphase mean curvature 
flow on~$\Rd$ consists of the following objects:
\begin{subequations}
\begin{itemize}
\item A vector field $\xi_{i,j}$ for each pair of phases $1\leq i,j\leq P$, $i\neq j$. 
Denoting by~${\bar{I}}_{i,j}$ the interface between the phases~$i$ and~$j$ in the strong 
solution~$\bar \chi$ and by~$\bar{ \vec{n}}_{i,j}$ its unit normal vector field pointing 
from phase~$i$ to phase~$j$, we require~$\xi_{i,j}$ to be an extension of~$\bar{\vec{n}}_{i,j}$ 
subject to the coercivity condition
\begin{align}
\label{LengthBound}
|\xi_{i,j}|\leq 1-c \min\{\dist^2(\cdot,{\bar I}_{i,j}),1\}
\end{align}
for some $c\in (0,1)$.
\item The extended normal vector fields $\xi_{i,j}$ must have the structure 
$\xi_{i,j}=\xi_i-\xi_j$ for some vector fields $\xi_i$, $1\leq i\leq P$. 
(This structure is reminiscent of the corresponding condition for classical 
calibrations and in fact serves a similar purpose, see the explanation 
preceding \eqref{RelativeEntropyRewritten} below.)
\item A \emph{single} velocity field $B$, which approximately transports \emph{all} extended normal vector fields $\xi_{i,j}$ in the sense
\begin{align}
\label{FirstEq}
\partial_t \xi_{i,j} + (B\cdot \nabla) \xi_{i,j} + (\nabla B)^{\mathsf{T}} \xi_{i,j} = O(\dist(\cdot,{\bar{I}}_{i,j})).
\end{align}
Furthermore, the length of the extended normal vector fields is transported to higher accuracy in the sense
\begin{align}
\partial_t |\xi_{i,j}|^2 + (B\cdot \nabla) |\xi_{i,j}|^2 = O(\dist^2(\cdot,{\bar{I}}_{i,j})).
\end{align}
\item Near the interfaces ${\bar{I}}_{i,j}$ of the strong solution, the normal velocity $\xi_{i,j}\cdot B$ is given by the mean curvature of ${\bar{I}}_{i,j}$ in the sense
\begin{align}
\label{ThirdEq}
\xi_{i,j} \cdot B = -\nabla \cdot \xi_{i,j} + O(\dist(\cdot,{\bar{I}}_{i,j})).
\end{align}
Note that on the interface ${\bar{I}}_{i,j}$, the expression $-\nabla \cdot \xi_{i,j}$ is exactly equal to its mean curvature.
\end{itemize}
\end{subequations}

If a gradient flow calibration exists, we may introduce a measure for the 
difference between any $\BV$ solution $\chi=(\chi_1,\ldots,\chi_P)$ to multiphase mean 
curvature flow and the strong solution $\bar \chi$ by defining
\begin{align}
\label{ErrorFunctional}
E[\chi|\xi]:=
\frac{1}{2}\sum_{i,j=1,i\neq j}^{P} \int_{I_{i,j}} 1 - \xi_{i,j} \cdot \vec{n}_{i,j} \,\mathrm{d}\mathcal{H}^{d-1},
\end{align}
with $I_{i,j}:=\partial^* \{\chi_i=1\} \cap \partial^* \{\chi_j=1\}$ denoting the 
interface between phases $i$ and $j$ and with $\vec{n}_{i,j}$ being its normal pointing from phase $i$ to phase $j$.
Note that the condition \eqref{LengthBound} then precisely ensures that $E[\chi|\xi]$ is a 
suitable notion of error between the $\BV$ solution $\chi$ and the strong solution $\bar \chi$: 
In addition to providing a tilt-excess-like control of the error, it also provides an estimate 
on the distance of the interfaces.

\begin{table}
\begin{tabular}{ll}
\emph{Calibrations}
&\emph{Gradient flow calibrations}
\vspace{2mm}
\\
Existence implies global minimality
&Existence implies uniqueness of
\\of surface energy among all partitions
&$\BV$ solutions to gradient flow
\vspace{2mm}
\\
Shortness condition
&Coercivity condition
\\
$|\xi_{i,j}|\leq 1$
&$|\xi_{i,j}|\leq 1-c\min\{\dist^2(\cdot,{\bar{I}}_{i,j}),1\}$
\vspace{2mm}
\\
Stationary situation
&Advection equation
\\
($\partial_t \xi_{i,j}\equiv 0$, $B\equiv 0$)
&$\partial_t \xi_{i,j}+(B\cdot \nabla)\xi_{i,j}+(\nabla B)^{\mathsf{T}} \xi_{i,j}$
\\&$~~~~~~~~~~~~~~~~~~~~~~~~= O(\dist(\cdot,{\bar{I}}_{i,j}))$
\vspace{1.5mm}
\\
Vector fields solenoidal
&Motion by mean curvature
\\
$\nabla \cdot \xi_i=0$
&
$\xi_{i,j}\cdot B=-\nabla \cdot \xi_{i,j}+O(\dist(\cdot,{\bar{I}}_{i,j}))$~~~~~
\vspace{2.5mm}
\end{tabular}
\caption{A comparison of the concept of calibrations for minimal partitions with the new concept of gradient flow calibrations.
\label{TableCalibrationsGradientFlow}
}
\end{table}

On the other hand, the calibration structure $\xi_{i,j}=\xi_i-\xi_j$ ensures that 
the error functional \eqref{ErrorFunctional} may be rewritten as an expression involving 
only two contributions: First, the total interface energy of the $\BV$ solution $E[\chi]$ 
and, second, a linear functional of the characteristic functions $\chi_i$ of the phases.
Indeed, we may compute
\begin{align}
\label{RelativeEntropyRewritten}
E[\chi|\xi]
&=
\frac{1}{2}\sum_{i,j=1,i\neq j}^{P} \int_{I_{i,j}} 1 - \xi_{i,j} \cdot \vec{n}_{i,j} \,\mathrm{d}\mathcal{H}^{d-1}
\\&
\nonumber
=E[\chi] - \frac{1}{2} \sum_{i,j=1,i\neq j}^P \int_{I_{i,j}} (\xi_i-\xi_j) \cdot \vec{n}_{i,j} \,\mathrm{d}\mathcal{H}^{d-1}
\\&
\nonumber
=E[\chi] - \sum_{i,j=1,i\neq j}^P \int_{I_{i,j}} \xi_i \cdot \vec{n}_{i,j} \,\mathrm{d}\mathcal{H}^{d-1}
\\&
\nonumber
=E[\chi] + \sum_{i=1}^P \int_{\Rd} \xi_i \cdot \,\mathrm{d}\nabla \chi_i
\\&
\nonumber
=E[\chi] - \sum_{i=1}^P \int_{\Rd} \chi_i \nabla \cdot \xi_i \,\mathrm{d}x.
\end{align}
This enables us to estimate the time evolution of the error functional $E[\chi| \xi]$ 
using only two ingredients, namely, first, the sharp energy dissipation 
estimate~\eqref{EnergyDissipationInequalityBVSolution} for the interface 
energy~$E[\chi]$ for $\BV$~solutions, and, second, the evolution 
equation~\eqref{EvolutionPhasesBVSolution} for the phase indicator 
functions $\chi_i$ from the $\BV$ formulation of mean curvature flow. 
The equations \eqref{FirstEq}--\eqref{ThirdEq} are crucial for deriving 
a Gronwall-type estimate for $E[\chi|\xi]$ in subsequent rearrangements. 
We remark that this approach may be regarded as an instance of the relative 
entropy method introduced independently by Dafermos~\cite{Dafermos} and Di~Perna~\cite{DiPerna}.

Note that locally at a two-phase interface or a triple junction of the strong 
solution, for any fixed time $t$ the blowups of our vector fields $\xi_{i}(\cdot,t)$ 
turn out to precisely be calibrations of the planar interface or the triple junction, 
respectively. However, on a global (not blown-up) scale, the vector fields $\xi_i$ may 
be thought of as deformed variants of classical calibrations which follow the (smooth 
but typically curved) interface of the strong solution. We refer to Figure~\ref{FigureCurveCalibrations} 
and Figure~\ref{fig:triple_rotation} for the illustration of a vector field $\xi_{i,j}=\xi_i-\xi_j$ 
at a two-phase interface and at a triple junction, respectively. 

\begin{figure}
\includegraphics[scale=0.55]{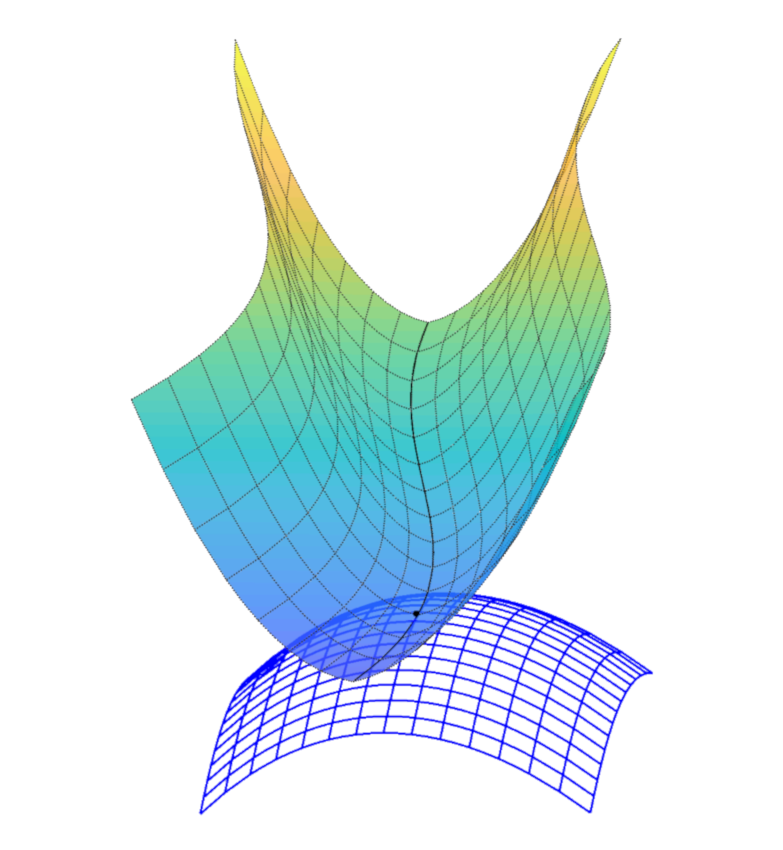}
\caption{An illustration of the energy landscape interpretation of our construction: The gradient flow 
calibration provides a smooth lower bound for the rough energy landscape of the interface energy 
functional $E[\chi]$, correctly capturing the energy and its subgradient at the current 
configuration. \label{EnergyLandscapeInterpretation}}
\bigskip
\end{figure}

Let us finally comment on the energy landscape interpretation of our approach, as 
illustrated in Figure~\ref{EnergyLandscapeInterpretation}. Let $\bar \chi(t)$ 
be a classical solution to the gradient flow of the interface energy functional, i.e., 
a classical solution to multiphase mean curvature flow. For each point in time, the 
``calibration for the gradient flow'' gives a smooth lower bound
\begin{align*}
\mathcal{F}_t:=\frac{1}{2}\sum_{i,j=1,i\neq j}^P \int_{I_{i,j}} \xi_{i,j}\cdot \vec{n}_{i,j} \,\mathrm{d} \mathcal{H}^{d-1} \overset{\eqref{RelativeEntropyRewritten}}{=} \sum_{i=1}^P \int_{\Rd} \chi_i \nabla \cdot \xi_i \,\mathrm{d} x
\end{align*}
(illustrated as the blue wireframe plot in Figure~\ref{EnergyLandscapeInterpretation}) for 
the rough landscape of the interface energy functional $E[\chi]$ (illustrated as the colored 
surface plot in Figure~\ref{EnergyLandscapeInterpretation}). This lower bound is sharp for 
the network described by $\bar \chi(t)$ in the sense $\mathcal{F}_t[\bar \chi(t)]=E[\bar \chi(t)]$, and 
it describes the local direction and speed of steepest descent of the energy functional $E[\chi]$ 
at $\bar \chi(t)$ correctly in the sense $D\mathcal{F}_t[\bar \chi(t)]\in DE[\bar \chi(t)]$ 
(where heuristically $DE$ denotes the subdifferential of $E$). Moreover, for each $\chi$ 
the difference $E[\chi]-\mathcal{F}_t[\chi] =E[\chi| \xi] $ provides an estimate for the error between 
the smooth solution $\bar \chi(t)$ and the configuration $\chi$, as measured in a tilt-excess-like quantity.

\section{Main results}

\subsection{Weak-strong uniqueness principle}
In the following, we present our weak-strong uniqueness principle for 
$\BV$~solutions of multiphase mean curvature flow in the plane. In addition, 
we provide a quantitative stability estimate, i.\,e., as long as a strong solution 
exists, any solution to the $\BV$~formulation of multiphase mean curvature
flow with slightly perturbed initial data remains close to it.  
Our results are valid under minimal assumptions on the surface tensions, 
see Definition~\ref{DefinitionAdmissibleSurfaceTensions}; in particular, the choice of 
equal surface tensions $\sigma_{i,j}=1$ for any pair $i\neq j$ is admissible.

\begin{theorem}[Weak-strong uniqueness and quantitative stability]
\label{MainResult}
Let $d=2$ and $P\in \mathbb{N}$, $P\geq 2$. Let $\sigma_{i,j}>0$, $1\leq i,j\leq P$, 
be an admissible set of surface tensions in the sense of Definition~\ref{DefinitionAdmissibleSurfaceTensions}.
Let $\chi=(\chi_1,\ldots,\chi_P)$ be a $\BV$~solution of multiphase mean curvature 
flow in the sense of Definition~\ref{DefinitionBVSolution} on some time interval $[0,\TBV)$. 
Let $\bar\chi=(\bar\chi_1,\ldots,\bar\chi_P)$ be a strong solution 
of multiphase mean curvature flow on $\Rd[d]$ in the sense of 
Definition~\ref{DefinitionStrongSolution} on some time interval $[0,\Tstrong)$ with $\Tstrong\leq\TBV$.

Then, the $BV$~solution $\chi$ must coincide with the strong solution 
$\bar \chi$ for almost all $0\leq t<\Tstrong$, provided that it starts from the same initial data.

Furthermore, the evolution by mean curvature is stable with respect to perturbations in the initial data in the following sense: Denote by $\xi_{i,j}$ the gradient flow calibration for the classical solution~$\bar\chi$ as constructed in Theorem~\ref{TheoremExistenceCalibration} and define
\begin{align*}
E[\chi|\xi](t):=
\sum_{i,j=1,i\neq j}^P \sigma_{i,j} \int_{I_{i,j}(t)} 
1-\xi_{i,j}(\cdot,t) \cdot \vec{n}_{i,j}(\cdot,t) \,\mathrm{d}\mathcal{H}^{d-1}.
\end{align*}
Then for every $T\in (0,\Tstrong)$ the stability estimates
\begin{align*}
E[\chi|\xi](t) & \leq e^{Ct}E[\chi|\xi](0),\\
E_{\mathrm{volume}}[\chi|{\bar \chi}](t)
& \leq e^{Ct} \big(E_{\mathrm{volume}}[\chi| \bar \chi ](0)+E[\chi|\xi](0)\big)
\end{align*}
hold true for almost every $t\in[0,T]$, where the bulk error 
functional $E_{\mathrm{volume}}[\chi| \bar \chi]$ is defined in \eqref{volumeErrorFunctional} and where the constant $C>0$ 
only depends on~$\bar \chi$ and~$T$ through certain higher derivatives of functions associated to~$\bar \chi$.
\end{theorem}

\begin{proof}
Given the assumptions of our theorem, Theorem~\ref{TheoremExistenceCalibration} ensures the existence of a gradient flow calibration, while Lemma~\ref{LemmaWeightedVolumeControl} yields the existence of a family of transport weights. Thus, the conditional weak-strong uniqueness principles of Theorem~\ref{TheoremUniqueness} and Proposition~\ref{PropositionUniquenessConditional} are applicable and provide our assertion.
\end{proof}

\subsection{Calibrations and inclusion principle}
The key ingredient for our uniqueness result prior to topology changes is the following gradient 
flow analogue of the notion of calibrations for minimal partitions. Our main result, 
Theorem~\ref{MainResult}, is then an immediate consequence of two implications: First, 
the existence of a gradient flow calibration guarantees uniqueness of the $\BV$ solution 
(see Theorem~\ref{TheoremUniqueness} and Proposition~\ref{PropositionUniquenessConditional})
in arbitrary ambient dimension $d\geq 2$; second, classical solutions to planar multiphase mean 
curvature flow are calibrated in the sense that a gradient flow calibration exists 
(see Theorem~\ref{TheoremExistenceCalibration} and 
Lemma~\ref{LemmaWeightedVolumeControl}).

\begin{definition}[Calibrations for the gradient flow and calibrated flows]
\label{DefinitionCalibrationGradientFlow}
Let $d\geq 2$, $P\geq 2$ be integers and let $\sigma\in\Rd[P\times P]$ be an admissible
matrix of surface tensions in the sense of Definition~\ref{DefinitionAdmissibleSurfaceTensions}.
Let $T>0$, and for all~$i\in\{1,\ldots,P\}$ let
$\smash{\bar\Omega_i:=\bigcup_{t\in [0,T]}\bar\Omega_i(t){\times}\{t\}}$
such that for all $t\in [0,T]$ the family $(\bar \Omega_1(t),\ldots,\bar \Omega_P(t))$ 
is a partition of finite surface energy of $\Rd$ in the sense of Definition~\ref{DefinitionPartition}.
For each $i,j\in \{1,\ldots,P\}$ with $i\neq j$ and all~$t\in [0,T]$, let 
${\bar{I}}_{i,j}(t):=\partial^* {\bar{\Omega}}_i(t)\cap \partial^* {\bar{\Omega}}_j(t)$
be the interface between the phases~$i$ and~$j$ at time~$t$.

A pair~$(\xi=(\xi_i)_{i\in\{1,\ldots,P\}},B)$ consisting of vector fields
\begin{align*}
\xi_i &\in C^1([0,T];C^0_{\mathrm{cpt}}(\Rd[d];\Rd[d]))
\cap C^0([0,T];C^1_{\mathrm{cpt}}(\Rd[d];\Rd[d])), \quad i\in\{1,\ldots,P\},
\\
B &\in C^0([0,T];C^1_{\mathrm{cpt}}(\Rd[d];\Rd[d]))
\end{align*}
is called a \emph{calibration for the gradient flow}
for the \emph{calibrated flow $\smash{(\bar\Omega_1,\ldots,\bar\Omega_P)}$ on~$[0,T]$}
if the following conditions are satisfied:
\begin{subequations}
\begin{itemize}
\item For each pair of phases $i,j\in\{1,\ldots, P\}$ and all~$t\in [0,T]$, the vector field
\begin{align}
\label{Calibrations}
\xi_{i,j}(\cdot,t):=\frac{1}{\sigma_{i,j}}(\xi_i-\xi_j)(\cdot,t)
\end{align}
coincides on ${\bar{I}}_{i,j}(t)$ with the associated unit normal vector field~$\bar{\vec{n}}_{i,j}(\cdot,t)$ 
(with the convention that $\bar{\vec{n}}_{i,j}(\cdot,t)$ points from phase~$i$ into phase~$j$), and it
satisfies an estimate of the form
\begin{align}
\label{LengthControlXi}
|\xi_{i,j}(x,t)|\leq 1-c \min\{\dist^2(x,{\bar{I}}_{i,j}(t)),\,1\}
\end{align}
for some $c\in (0,1)$ and all $(x,t)\in \Rd\times [0,T]$.
\item The evolution of the vector fields $\xi_{i,j}$ is approximately transported by the velocity field $B$ in the sense
\begin{align}
\label{TransportEquationXi}
\big|\partial_t \xi_{i,j} + (B\cdot \nabla) \xi_{i,j} + (\nabla B)^{\mathsf{T}} \xi_{i,j} \big|(x,t)
\leq C \big(\dist(x,{\bar{I}}_{i,j}(t))\wedge 1\big)
\end{align}
and
\begin{align}
\label{LengthConservation}
\big|\partial_t |\xi_{i,j}|^2 + (B\cdot \nabla) |\xi_{i,j}|^2 \big|(x,t)
\leq C \big(\dist^2(x,{\bar{I}}_{i,j}(t))\wedge 1\big)
\end{align}
for some $C>0$ and all $(x,t)\in \Rd\times [0,T]$.
\item For each~$t\in [0,T]$, the normal component of the velocity field~$B(\cdot,t)$ 
		 near the interface ${\bar{I}}_{i,j}(t)$ is approximately given by the mean curvature 
			of ${\bar{I}}_{i,j}(t)$ in the sense that
\begin{align}
\label{Dissip}
\big|\xi_{i,j}\cdot B+\nabla \cdot \xi_{i,j}\big| (x,t)
\leq
C \big(\dist(x,{\bar{I}}_{i,j}(t))\wedge 1\big)
\end{align}
for some $C>0$ and all $(x,t)\in \Rd\times [0,T]$.
\end{itemize}
\end{subequations}
\end{definition}
Note that, at least heuristically, such a calibrated flow is a solution to mean 
curvature flow as on $\bar I_{i,j}$ the normal velocity $ \vec{\bar{n}}_{i,j} \cdot B$ 
coincides with the mean curvature due to \eqref{Dissip}.

The next proposition states that for general $d\geq 2$ the existence of a gradient flow calibration for a 
given time-evolving partition of $\Rd$ into $P$ domains $(\bar \Omega_1,\ldots,\bar \Omega_P)$ 
constrains the possible locations of the interfaces in weak ($\BV$) solutions to mean curvature 
flow to the corresponding interfaces of the partition $(\bar \Omega_1,\ldots,\bar \Omega_P)$. 
This assertion may be seen as a multiphase analogue of the varifold comparison principle by 
Ilmanen \cite[Theorem 10.7]{Ilmanen}, which for two-phase mean curvature flow provides a 
corresponding inclusion given any Brakke solution and a level set solution. Note that such 
an inclusion does not yet yield uniqueness of $\BV$ solutions, as it does not exclude the sudden 
vanishing of all phases except one.

\begin{theorem}[Quantitative inclusion principle]
\label{TheoremUniqueness}
Let $d\geq 2$ and $P\geq 2$ be integers and let $\sigma \in \Rd[P\times P]$ be an 
admissible matrix of surface tensions, see Definition~\ref{DefinitionAdmissibleSurfaceTensions}.
Let $T>0$, and let $\smash{(\bar\Omega_1,\ldots,\bar\Omega_P)}$ 
be a calibrated flow on~$[0,T]$ in the sense of Definition~\ref{DefinitionCalibrationGradientFlow}.

Then the interfaces $I_{i,j}(t) := \partial^* \{\chi_i(t)=1\} \cap \partial^* \{\chi_j(t)=1\}$ 
of any $\BV$ solution $(\chi_1,\ldots,\chi_P)$ to mean curvature flow on~$[0,T]$ in the sense of 
Definition~\ref{DefinitionBVSolution} with the same initial data as the calibrated flow
must be contained in the corresponding interfaces 
${\bar{I}}_{i,j}(t):=\partial^* \bar \Omega_i(t)\cap \partial^* \bar \Omega_j(t)$ for a.\,e.\ $0<t<T$, i.e., 
it holds $I_{i,j}(t)\subset {\bar{I}}_{i,j}(t)$ for all $i,j$ with $i\neq j$ up to $\mathcal{H}^{d-1}$ null sets.

Furthermore, the existence of a gradient flow calibration also implies a stability 
estimate: Introducing the interface error functional
\begin{align}
\label{DefinitionRelativeEntropy}
E[\chi|\xi](t):=
\sum_{i,j=1,i\neq j}^P \sigma_{i,j} \int_{I_{i,j}(t)} 
1-\xi_{i,j}(\cdot,t) \cdot \vec{n}_{i,j}(\cdot,t) \,\mathrm{d}\mathcal{H}^{d-1},
\end{align}
there exist two constants $c,C>0$ depending on the calibrated flow such that we have the stability estimate
\begin{align}\label{StabilityEstimate}
&E[\chi|\xi](t)
+c\sum_{i,j=1,i\neq j}^{P}
\int_0^{t}\int_{I_{i,j}(\tilde t)}
|V_{i,j}{+}\nabla\cdot\xi_{i,j}|^2
+|V_{i,j}\vec{n}_{i,j}{-}(B\cdot\xi_{i,j})\xi_{i,j}|^2
\,\mathrm{d}\mathcal{H}^{d-1}\dtildet
\\&~~~~~~
\notag
\leq e^{Ct}E[\chi|\xi](0)
\end{align}
for general BV solutions $\chi= (\chi_1,\ldots \chi_P)$ and almost every $t\in[0,T]$.
\end{theorem}

As already discussed, the interface error control provided by the 
functional~\eqref{DefinitionRelativeEntropy}
suffers from a lack of coercivity concerning the vanishing of interface length in a $\BV$ solution.
For this reason, we also have to consider a lower-order term $E_{\mathrm{volume}}[\chi|\bar\chi]$, 
see~\eqref{volumeErrorFunctional} below, which controls bulk deviations from the grains 
of the strong solution~$\bar \Omega$. The main input for the bulk error functional
is captured in the following notion of transported weights.

\begin{definition}[Transported weights]
\label{def:transportedWeights} 
Let $d\geq 2$, $P\geq 2$ be integers and denote by $T\in (0,\infty)$ a finite time horizon. 
For all~$i\in\{1,\ldots,P\}$ let
$\smash{\bar\Omega_i:=\bigcup_{t\in [0,T]}\bar\Omega_i(t){\times}\{t\}}$
such that for all $t\in [0,T]$ the family $(\bar \Omega_1(t),\ldots,\bar \Omega_P(t))$ 
is a partition of finite surface energy of $\Rd$ in the sense of Definition~\ref{DefinitionPartition}.
Denote by $\bar\chi=(\bar\chi_1,\ldots,\bar\chi_P)$ 
the associated family of indicator functions for~$\bar\Omega=(\bar\Omega_1,\ldots,\bar\Omega_P)$. 
Assume that for all~$i\in\{1,\ldots,P\}$ the measure $\partial_t\bar\chi_i$ is absolutely continuous
with respect to the measure $|\nabla\bar\chi_i|$, and that the boundary $\partial\bar\Omega_i(\cdot,t)$
is Lipschitz at all times $t\in[0,T]$.
Let finally $B\in C^0([0,T];C^1_{\mathrm{cpt}}(\Rd;\Rd))$.

In this setting, a family of measurable maps
\begin{align*}
\vartheta_i\colon\Rd\times [0,T] \to [-1,1],\quad i\in\{1,\ldots,P\},
\end{align*} 
is called a \emph{family of transported weights with respect to $(\bar\Omega,B)$
on~[0,T]} if the following conditions are satisfied:
\begin{itemize}[leftmargin=0.7cm]
\item (Regularity) For all phases $i\in\{1,\ldots,P\}$ it holds $$\vartheta_i\in W^{1,1}(\Rd\times [0,T])
			\cap W^{1,\infty}(\Rd\times [0,T]).$$
\item (Coercivity) For all phases $i\in\{1,\ldots,P\}$ and all~$t\in [0,T]$, 
			we have $\vartheta_i(\cdot,t) < 0$ in the essential interior
			of $\bar\Omega_i(t)$, $\vartheta_i(\cdot,t) > 0$ in the essential exterior of $\bar\Omega_i(t)$, and
			$\vartheta_i(\cdot,t) = 0$ on $\partial\bar\Omega_i(t)$.
\item (Advection equation) The weights are transported by the vector field~$B$ in the sense that
			\begin{align}
			\label{AdvectionEquationVolumeControl}
			|\partial_t\vartheta_i + (B\cdot\nabla)\vartheta_i| \leq C|\vartheta_i|
			\end{align}
			holds true in $\Rd\times [0,T]$ for all phases $i\in\{1,\ldots,P\}$. 
\end{itemize}
\end{definition}

The merit of the previous definition is that it allows to sharpen
the quantitative inclusion principle of Theorem~\ref{TheoremUniqueness}
to a conditional weak-strong uniqueness principle (with an associated
conditional stability estimate) for BV solutions of multiphase mean curvature flow;
see Proposition~\ref{PropositionUniquenessConditional} below for the precise statement.
The result is conditional in the sense that in addition to the existence of a gradient flow calibration
(see Definition~\ref{DefinitionCalibrationGradientFlow}),
the existence of a family of transported weights
(see Definition~\ref{def:transportedWeights}) is assumed. 
However, the crucial point is that it already holds in arbitrary ambient dimension $d\geq 2$. 

\begin{proposition}[Conditional weak-strong uniqueness and quantitative stability]
\label{PropositionUniquenessConditional}
Let $d\geq 2$, $P\geq 2$ be integers and $\sigma\in\Rd[P\times P]$ be an admissible matrix of surface tensions
in the sense of Definition~\ref{DefinitionAdmissibleSurfaceTensions}. 
Let $\chi=(\chi_1,\ldots,\chi_P)$ be a BV solution of multiphase mean curvature flow in
the sense of Definition~\ref{DefinitionBVSolution} on $[0,T]$.
Let moreover $\bar\Omega=(\bar\Omega_1,\ldots,\bar\Omega_P)$ be as in Definition~\ref{def:transportedWeights}
on~$[0,T]$. The associated family of indicator functions is denoted by $\bar\chi=(\bar\chi_1,\ldots,\bar\chi_P)$.

Assume also that there exists a gradient flow calibration $((\xi_i)_{i\in\{1,\ldots,P\}},B)$
with respect to $\bar\Omega$ on $[0,T]$ in the sense of Definition~\ref{DefinitionCalibrationGradientFlow},
and that there exists a family of transported weights $(\vartheta_i)_{i\in\{1,\ldots,P\}}$
with respect to $(\bar\Omega,B)$ on~$[0,T]$ in the sense of Definition~\ref{def:transportedWeights}.
Recall the definition~\eqref{DefinitionRelativeEntropy} of the interface error functional,
and define a bulk error functional by means of
\begin{align}
\label{volumeErrorFunctional}
E_{\mathrm{volume}}[\chi|\bar\chi](t) :=
\sum_{i=1}^P \int_{\Rd}|\chi_i(\cdot,t){-}\bar\chi_i(\cdot,t)|
|\vartheta_i(\cdot,t)| \dx, \quad t\in [0,T].
\end{align}

Then it holds
\begin{align*}
\chi(\cdot,0) = \bar\chi(\cdot,0) \text{ a.e.\ in } \Rd
\Rightarrow \chi(\cdot,t) = \bar\chi(\cdot,t) \text{ a.e.\ in }  \Rd
\text{ for a.e.\ } t\in [0,T].
\end{align*}
Moreover, the interface error functional $E[\chi|\xi]$ from~\eqref{DefinitionRelativeEntropy}
and the bulk error functional $E_{\mathrm{volume}}[\chi|\bar\chi]$ from~\eqref{volumeErrorFunctional}
satisfy the quantitative stability estimate
\begin{align}
\label{eq:stabilityBulkError}
E_{\mathrm{volume}}[\chi|{\bar \chi}](t)
& \leq e^{Ct} \big(E_{\mathrm{volume}}[\chi| \bar \chi ](0)+E[\chi|\xi](0)\big)
\end{align}
for almost every $t\in [0,T]$ in addition to the stability estimate \eqref{StabilityEstimate}.
\end{proposition}

\subsection{Gradient flow calibrations for regular networks}
In view of Proposition~\ref{PropositionUniquenessConditional} above, the question of weak-strong uniqueness for BV solutions
of multiphase mean curvature flow is reduced to the task of constructing a gradient flow calibration
and a family of transported weights. As it turns out, in the planar case the existence of a classical solution to mean curvature 
flow --- in the sense of a smooth evolution of curves meeting at triple junctions with the correct contact angle, 
see Definition~\ref{DefinitionStrongSolution} --- entails the existence of a calibration for the gradient flow:
\begin{theorem}
\label{TheoremExistenceCalibration}
Let $d=2$ and $P \in \mathbb{N}$, $P\geq 2$. Let $(\bar{\Omega}_1,\ldots,\bar{\Omega}_P)$ 
be a strong solution to multiphase mean curvature flow on~$[0,T]$
in the sense of Definition~\ref{DefinitionStrongSolution}. 
Then there exists an associated gradient flow calibration on~$[0,T]$ 
in the sense of Definition~\ref{DefinitionCalibrationGradientFlow}.
\end{theorem}

In fact, our construction of gradient flow calibrations provides several additional properties.
\begin{remark}
\label{RemarkFurtherCalibrationProperties}
The gradient flow calibrations constructed in the proof of Theorem~\ref{TheoremExistenceCalibration} satisfy the following additional properties, which may be useful in the context of diffuse interface approximations:
\begin{itemize}
\item[i)] In the case of equal surface energies $\sigma_{i,j}=\sigma_{k,l}$ for all $i\neq j$ and all $k\neq l$, we have the estimates $|\xi_{i,j}\cdot \xi_k|(x,t)\leq C \dist(x,{\bar{I}}_{i,j}(t))$ for all $i\neq j$, all $k\notin\{i,j\}$
and all $(x,t) \in \Rd{\times}[0,T]$, as well as $|\xi_i|\leq \frac{1}{\sqrt{3}}$ for all $i$.
\item[ii)] It holds that $|\nabla B : \xi_{i,j} \otimes \xi_{i,j}|(x,t)\leq C 
\dist(x,{\bar{I}}_{i,j}(t))$ for all $i\neq j$ and all $(x,t) \in \Rd{\times}[0,T]$.
\item[iii)] Finally, we can achieve the estimate $\big|\nabla B : \big(\xi_{i,j} \otimes J\xi_{i,j}+J\xi_{i,j} \otimes \xi_{i,j}\big)\big|(x,t)\leq C \dist(x,{\bar{I}}_{i,j}(t))$ for all $i\neq j$
and all $(x,t) \in \Rd{\times}[0,T]$, where the matrix $J$ denotes the counter-clockwise rotation by $90^\circ$.
\end{itemize}
\end{remark}

In the same setting as above, one can in addition establish
the existence of a family of transported weights.

\begin{lemma}
\label{LemmaWeightedVolumeControl}
Let $d=2$ and $P \in \mathbb{N}$, $P\geq 2$. Let $(\bar{\Omega}_1,\ldots,\bar{\Omega}_P)$ 
be a strong solution to multiphase mean curvature flow on~$[0,T]$
in the sense of Definition~\ref{DefinitionStrongSolution}. 
Let~$B$ denote the velocity field from Theorem~\ref{TheoremExistenceCalibration}. 
Then there exists a family of transported weights on~$[0,T]$ with respect to~$(\bar\Omega,B)$ 
in the sense of Definition~\ref{def:transportedWeights}.
\end{lemma}

\subsection{Basic definitions}

In the following, we recall the precise definitions of the solution concepts for multiphase mean curvature flow which our main results are concerned with.
We begin with the notion of admissible surface tensions. 

\begin{definition}[Admissible matrix of surface tensions]\label{DefinitionAdmissibleSurfaceTensions}
Let $P\geq 2$ be an integer and $\sigma=(\sigma_{i,j})_{i,j=1,\ldots,P}\in \Rd[P\times P]$. The matrix $\sigma$
is called an \emph{admissible matrix of surface tensions} if the following conditions are satisfied:
\begin{itemize}[leftmargin=0.7cm]
\item[i)] (Symmetry) It holds that $\sigma_{i,j}=\sigma_{j,i}$ and $\sigma_{i,i}= 0$ for every $i,j\in\{1,\ldots,P\}$.
\item[ii)] (Positivity) We have $\sigma_{\mathrm{min}}:=\min\{\sigma_{i,j}\colon i,j\in\{1,\ldots,P\},i\neq j\}>0$.
\item[iii)] (Coercivity) 
The matrix of surface tensions $\sigma$ is non-degenerately $\ell^2$-embeddable into $\Rd[P-1]$, i.e., there exists a non-degenerate $(P-1)$-simplex $(q_1,\ldots, q_P)$ in $\Rd[P-1]$ such that $\sigma_{i,j} = | q_i-q_j|$ for all $i,j \in \{1,\ldots,P\}$,
	see Figure~\ref{fig:l2embedding}.
\end{itemize}
\end{definition}

We briefly comment on the previous definition.

\begin{remark}
	The above conditions on the matrix of surface tensions are natural, which is clear for the first two items, 
	while condition~{iii)} already appeared in \cite{Lawlor-Morgan} as being necessary for the existence 
	of calibrations in the static case. It implies another coercivity condition in the form of the strict triangle inequality
		\begin{align}\label{TriangleInequalitySurfaceTensions}
		\sigma_{i,j} < \sigma_{i,k} + \sigma_{k,j}
		\end{align}
		for all choices of pairwise distinct $i,j,k\in\{1,\ldots, P\}$.
		
	We call condition~{iii)} of Definition~\ref{DefinitionAdmissibleSurfaceTensions} 
	and condition~\eqref{TriangleInequalitySurfaceTensions} coercivity properties
	for the following reasons:	
		First, the strict triangle inequality \eqref{TriangleInequalitySurfaceTensions} will ensure that 
		our relative entropy functional provides control on wetting, i.e., the nucleation of 
		a thin layer of a third phase along the smooth part of an interface between two phases.
		Second, the embeddability condition~{iii)} will prevent 
		the nucleation of a fourth phase (or clusters of phases) at a triple junction.
	
	It is well known, see \cite[Section 3]{schoenberg1938metric}, that 
	condition~{iii)} of Definition~\ref{DefinitionAdmissibleSurfaceTensions} may be equivalently phrased as follows: 
	The symmetric $(P\times P)$-matrix $ Q=( \sigma_{i,j}^2)_{i,j=1,\dots,P}$ 
	is strictly conditionally negative definite in the sense that
	\begin{align}\label{eq:Q negative}
	 z\cdot Q z <0 \quad \text{for all }  z\in \Rd[P]\setminus\{0\} \text{ with } \sum_{i=1}^P  z_i =0.
	\end{align}
\end{remark}

Incidentally, it seems that the crucial coercivity property~\textit{iii)}
of Definition~\ref{DefinitionAdmissibleSurfaceTensions} has not yet been verified for 
commonly used classes of surface tensions. 
One can easily generate instances of surface tensions which satisfy the 
triangle inequality~\eqref{TriangleInequalitySurfaceTensions} but violate 
this property and indeed lead to nucleation at triple junctions~\cite{cahn1991stability}. 
In contrast, the following lemma shows that the coercivity condition~\textit{iii)}
of Definition~\ref{DefinitionAdmissibleSurfaceTensions} holds for a certain class of surface tensions arising in models for grain boundary motion in polycrystalline materials. In this context, different phases correspond to regions with different crystal lattice orientations. The surface tension of an interface between two phases $i$ and $j$ is then often approximated as a function of the misorientation angle $\theta_i-\theta_j$ between the grains (i.\,e., the angular mismatch between the crystal lattice orientations). The Read-Shockley low-angle grain boundary formulas with high-angle saturation \cite{read1950dislocation,holm2001misorientation} for the interface energies take the form
\begin{equation}\label{eq:read-shockley sigma}
	\sigma_{i,j} := f\left(\min_{k\in \mathbb{Z}} \left|\theta_i - \theta_j -k\frac{\pi}2\right|\right),
\end{equation}
where the profile $f$ is given by
\begin{align}\label{eq:read-shockley f}
	f(\theta) = 
	\begin{cases}
		\frac{\theta}{\theta_\ast} \big(1-\log \big(\frac{\theta}{\theta_\ast}\big)\big),& 0\leq \theta \leq \theta_\ast\\
		1,& \theta_\ast<\theta \leq \pi/4.
	\end{cases}
\end{align}
Here $\theta_\ast \in (0,\pi/4)$ (typically, $\theta_\ast $ lies between $10^\circ$ and $30^\circ$) and we assumed for simplicity that the crystal lattice has cubic symmetry, as can be seen in Figure~\ref{fig:read-shockley circle}.

\begin{figure}
	\subcaptionbox{\label{fig:read-shockley f}}{
		\begin{tikzpicture}[scale=4.1]
			\draw[->] (-0.1,0)--(1,0) node [below] {$\theta$};
			\draw (pi/4,-0.01)--(pi/4,0.01) node [below] {$\pi/4$};
			\draw (pi/10,-0.01)--(pi/10,0.01) node [below] {$\theta_\ast$};
			\draw[->] (0,-0.1)--(0,1.2) node [left] {$f(\theta)$};
			\draw (-0.01,1)--(0.01,1) node [left] {$1$};
			
			\draw [thick, color=red, domain=0.00001:pi/10, samples=200, smooth]
			plot (\x, {1*\x/(pi/10)*(1-ln(\x/(pi/10)))});
			
			\draw [thick, color=red, domain={pi/10-0.001}:pi/4, samples=200, smooth]
			plot (\x, {1});
			
		\end{tikzpicture}	
	}
	\subcaptionbox{\label{fig:read-shockley circle}}{
		\begin{tikzpicture}[scale=.6]
		
			\foreach \s in {0, 1, 2, 3} {
				\draw [lightgray] (0,0) circle (\s + 0.5);
				\draw (0,0) circle (\s);
				%	\node [fill=white] at (\s, 0) [below] {\scriptsize $\s$};
			}
		
			\foreach \theta in {0,...,31} {
				\draw [lightgray] (0,0) -- (\theta * 180 / 16:4);
			}
			
			\foreach \theta/\label/\direction in {
				0/0/right,
				1/{\pi/4}/{above right},
				2/{\pi/2}/above,
				3/{3\pi/4}/{above left},
				4/{\pi}/left,
				5/{5\pi/4}/{below left},
				7/{7\pi/4}/{below right},
				6/{3\pi/2}/below} {
				\draw (0,0) -- (\theta * 180 / 4:4.1);
				\node [fill=white] at (\theta * 180 / 4:4.2) [\direction] { $\label$};
			}
			
			\draw [style=thick] (0,0) circle (4);
			
			\foreach \k in {0,...,3}
			{
				\draw [thick, color=red, domain=0.01:pi/10, samples=200, smooth]
				plot (xy polar cs:angle={\x r+pi/2*\k r}, radius={4*(\x )/(pi/10)*(1-ln((\x)/(pi/10)))});
				
				\draw [thick, color=red, domain=0.00001:pi/10, samples=200, smooth]
				plot (xy polar cs:angle={-\x r+pi/2*\k r}, radius={4*(\x )/(pi/10)*(1-ln((\x)/(pi/10)))});
				
				\draw [thick, color=red, domain={pi/10-0.001}:{pi/2-pi/10+0.001}, samples=200, smooth]
				plot (xy polar cs:angle={\x r + pi/2*\k r}, radius={4});
			}
		\end{tikzpicture} 
	}
	\caption{Surface tension $\sigma=f(\theta)$ depending on misorientation angle $\theta$ 
	according to the Read-Shockley formula for low-angle grain boundaries with high-angle 
	saturation, and cubic symmetry. a) Graph of $f$ for small, positive misorientation 
	angle $\theta$. b) Graph of $f$ for all misorientation angles $\theta$.}
\end{figure}
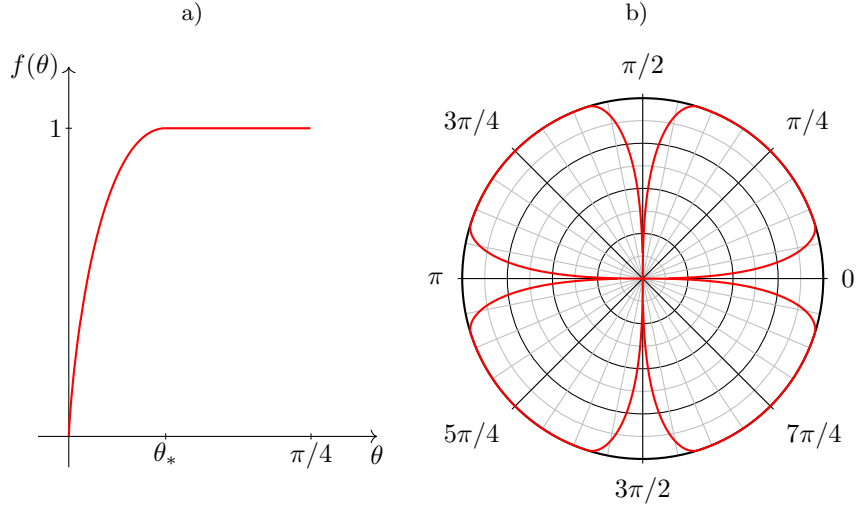

\begin{lemma}\label{lemma:read-shockley}
	Let $\theta_1,\ldots, \theta_P \in \Rd[]$ be given angles such that $\theta_i \neq \theta_j\mod \frac\pi2$ for $i\neq j$, and define the matrix of surface 
	tensions $\sigma=(\sigma_{i,j})$ by
	\eqref{eq:read-shockley sigma} with a function $f\colon[0,\frac\pi4] \to [0,1]$ such that the complex Fourier coefficients of the (evenly extended function) $f^2 \colon [-\frac\pi4, \frac\pi4] \to \Rd[]$ satisfy the negativity condition
	\begin{align}\label{eq:read shockley negative fourier}
		\widehat{(f^2)}_k \text{ is a negative real number for all } k \in \mathbb{Z}\setminus\{0\}.
	\end{align} 
	Then $\sigma$ is admissible in the sense of Definition \ref{DefinitionAdmissibleSurfaceTensions}.
	
	In particular, any matrix of surface tensions $\sigma$ given by the Read-Shockley formulas \emph{\eqref{eq:read-shockley sigma}--\eqref{eq:read-shockley f}} for any saturation angle $\theta_\ast \in (0,\pi/4)$ is admissible in the sense of Definition \ref{DefinitionAdmissibleSurfaceTensions}.
\end{lemma}

The simple proof of this lemma is inspired by the one of~\cite[Theorem 5.5]{EsedogluOtto15} 
where the triangle inequality \eqref{TriangleInequalitySurfaceTensions} and 
the $\ell^2$-embaddability of the matrix $\big(\sqrt{\sigma_{i,j}}\big)$ are derived.
Here, we prove the $\ell^2$-embeddability of $(\sigma_{i,j})$, which in particular 
implies their first conclusion, but appears to be unrelated to the latter.

\begin{definition}[Partitions with finite interface energy, cf.\ \cite{AmbrosioFuscoPallara}]
\label{DefinitionPartition}
Let $d\geq 2$, let $P\geq 2$ be an integer and let $\sigma\in\Rd[P\times P]$ be an admissible
matrix of surface tensions in the sense of Definition~\ref{DefinitionAdmissibleSurfaceTensions}. 
Let $(\Omega_1,\ldots,\Omega_P)$ be a partition of $\Rd$ in the sense that for 
$i,j =1, \ldots , P$ with $i\neq j$ we have $\Omega_i \subset \Rd$ and the sets 
$\Omega_i \cap \Omega_j$ and $\Rd \setminus \smash{\bigcup_{i=1}^P} \Omega_i$ 
have $\mathcal{L}^d$-measure zero. Let $\chi_i:=\chi_{\Omega_i}$ denote
the characteristic function of the $\mathcal{L}^d$-measurable set $\Omega_i$ for $i = 1,\ldots, P$.

We call $\chi=(\chi_1,\ldots,\chi_P)$, or equivalently $(\Omega_1,\dots,\Omega_P)$, 
a \emph{partition of $\Rd$ with finite interface energy} if the energy
\begin{align}\label{EnergyFunctionalBVPartition}
E[\chi] := \sum_{i,j=1,i\neq j}^P \sigma_{i,j}\int_{\Rd}\frac{1}{2}
\big(\mathrm{d}|\nabla\chi_i|+\mathrm{d}|\nabla\chi_j|-\mathrm{d}|\nabla(\chi_i{+}\chi_j)|\big)
\end{align}
is finite.
\end{definition}
Note that for a partition of $\Rd$ with finite interface energy, 
each $\Omega_i$ is a set of finite perimeter. By introducing the 
interfaces $I_{i,j}:=\partial^*\Omega_i\cap\partial^*\Omega_j$ as the intersection 
of the respective reduced boundaries, the energy of a partition $\chi$ can be 
rewritten in the equivalent form
\begin{align}
\label{eq:energy}
E[\chi] =  \sum_{i,j=1,i\neq j}^P \sigma_{i,j}\int_{I_{i,j}}1\,\mathrm{d}\mathcal{H}^{d-1}.
\end{align}
We next recall the notion of $\BV$ solutions to multiphase mean curvature flow as in \cite{LauxOtto, LauxOttoBrakke}.

\begin{definition}[BV solutions for multiphase mean curvature flow]\label{DefinitionBVSolution}
Let $d\geq 2$ and $P\geq 2$ be integers. Let $\sigma\in\Rd[P\times P]$ be an admissible matrix of surface tensions
in the sense of Definition~\ref{DefinitionAdmissibleSurfaceTensions}, and let $\TBV>0$ be a finite time horizon.
Let $\chi_0=(\chi_{0,1},\ldots,\chi_{0,P})$ be an initial partition of $\Rd$ with finite interface energy in the sense of Definition~\ref{DefinitionPartition}.

A measurable map
\begin{align*}
\chi=(\chi_1,\ldots,\chi_P)\colon \Rd \times [0,\TBV)\to\{0,1\}^P
\end{align*}
(respectively the corresponding tuple of sets $\smash{\Omega_i:=\bigcup_{t\in [0,\TBV)}\Omega_i(t){\times}\{t\}}$,
$\Omega_i(t):=\{\chi_i(t) {=}1\}$ for $i\in\{1,\ldots, P\}$ and~$t\in [0,\TBV)$)
is called a \emph{BV solution for multiphase mean curvature flow with initial data $\chi_0$}
if the following conditions are satisfied:
\begin{subequations}
\begin{itemize}[leftmargin=0.7cm]
\item[i)] (Partition with finite interface energy) For almost every $t\in [0,\TBV)$, $\chi(\cdot,t)$
is a partition of $\Rd$ with finite interface energy in the sense of Definition~\ref{DefinitionPartition} and
\begin{align}\label{GlobalEnergyBoundBVSolution}
\esssup_{t\in [0,\TBV)} E[\chi(\cdot,t)] 
= \esssup_{t\in [0,\TBV)} \sum_{i,j=1,i\neq j}^P \sigma_{i,j}\int_{I_{i,j}(t)}1\,\mathrm{d}\mathcal{H}^{d-1} < \infty,
\end{align}
where for all~$t\in [0,T]$ we denote by 
$I_{i,j}(t)=\partial^* \Omega_i(t) \cap \partial^* \Omega_j(t)$ for $i\neq j$ 
the interface between the phases~$\Omega_i(t)$ and~$\Omega_j(t)$.
\item[ii)] (Evolution equation) For all $i\in \{1,\ldots,P\}$, there exist normal velocities $V_i\in L^2(\Rd\times [0,\TBV),|\nabla\chi_i|\otimes\mathcal{L}^1)$
 in the sense that each $\chi_i$ satisfies the evolution equation
\begin{align}\label{EvolutionPhasesBVSolution}
\nonumber
&\int_{\Rd} \chi_i(\cdot,T) \varphi(\cdot,T) \dx
-\int_{\Rd} \chi_{0,i} \varphi(\cdot,0) \dx
\\&
=\int_0^T \int_{\Rd} V_i \varphi \dnablachii \dt
+ \int_0^T \int_{\Rd} \chi_i \partial_t \varphi \dx \dt
\end{align}
for almost every $T\in [0,\TBV)$ and all $\varphi\in C^\infty_{\mathrm{cpt}}(\Rd \times [0,\TBV))$.
Moreover, the (reflection) symmetry condition 
$V_i \smash{\frac{\nabla \chi_i}{|\nabla \chi_i|}}=V_j  \smash{\frac{\nabla \chi_j}{|\nabla \chi_j|}}$
holds $\mathcal{H}^{d-1}\otimes\mathcal{L}^1$-almost everywhere on 
$\bigcup_{t\in [0,\TBV)}I_{i,j}(t){\times}\{t\}$, $i\neq j$.
\item[iii)] (BV formulation of mean curvature) The normal velocities are given by the weak formulation of mean curvature in the sense that
\begin{align}\label{BVFormulationMeanCurvature}
\nonumber
&\sum_{i,j=1,i\neq j}^P \sigma_{i,j}\int_0^{T}\int_{I_{i,j}(t)}
V_i \frac{\nabla \chi_i}{|\nabla \chi_i|} \cdot \vec{B} 
\,\mathrm{d}\mathcal{H}^{d-1} \dt
\\&
= \sum_{i,j=1,i\neq j}^P \sigma_{i,j}\int_0^{T}\int_{I_{i,j}(t)} 
\bigg(\Id-\frac{\nabla \chi_i}{|\nabla \chi_i|}\otimes \frac{\nabla \chi_i}{|\nabla \chi_i|}\bigg) : \nabla \vec{B} 
\,\mathrm{d}\mathcal{H}^{d-1} \dt
\end{align}
holds for almost every $T\in [0,\TBV)$
and all $B\in C^\infty_{\mathrm{cpt}}(\Rd \times [0,\TBV);\Rd)$.
\item[iv)] (Energy dissipation inequality) The sharp energy dissipation inequality
\begin{align}\label{EnergyDissipationInequalityBVSolution}
E[\chi(\cdot, T)] + \sum_{i,j=1,i\neq j}^P \sigma_{i,j}\int_0^T\int_{I_{i,j}(t)} 
|V_i|^2 \,\mathrm{d}\mathcal{H}^{d-1}\dt
\leq E[\chi_0]
\end{align}
holds true for almost every $T\in [0,\TBV)$.
\end{itemize}
\end{subequations}

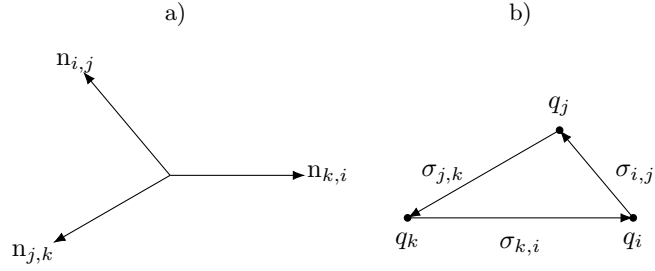
\begin{figure}
	\subcaptionbox{\label{fig:herring}}{
		\begin{tikzpicture}[scale=3]
		  	\tikzmath{\l = .6;};
		  	\draw[-Latex] (0,0) -- (0:\l);
		  	\node at ($(0:\l+.1)$) {$\vec{n}_{k,i}$};
		  	\draw[-Latex] (0,0) -- (130:\l);
		  	\node at ($(130:\l+.05)$) {$\vec{n}_{i,j}$};
		  	\draw[-Latex] (0,0) -- (210:\l);
		  	\node at ($(210:\l+.1)$) {$\vec{n}_{j,k}$};
		\end{tikzpicture}
	}
	\subcaptionbox{\label{fig:l2embedding}}{
		\begin{tikzpicture}[scale=5]
			\tikzmath{\l = .6;};
			\coordinate (A) at (0,0);
			\coordinate (B) at (30:\l);
			\draw [name path=A--B,color=white] (A) -- (B);
			\coordinate (C) at (0:\l);
			\coordinate (D) at ($(0:\l)+(130:\l)$);
			\draw [name path=C--D,color=white] (C) -- (D);
			\coordinate (E) at (intersection of A--B and C--D);
			
		  	\draw[-Latex] (0,0) -- node[label= below:{$\sigma_{k,i}$}] {} (0:\l);
		  	\draw[-Latex] (0:\l) --  node[label= right:{$\sigma_{i,j}$}] {}  (E);
		  	\draw[-Latex] (E) --node[label= left:{$\sigma_{j,k}$}] {}  (0,0);
		  	
		  	\filldraw (0,0) circle[radius=.25pt];
			\node[below, outer sep=2pt] (0,0) {$q_k$};
			\filldraw (\l,0) circle[radius=.25pt];
			\node[below, outer sep=2pt] at (\l,0) {$q_i$};
			\filldraw (E) circle[radius=.25pt];
			\node[above, outer sep=2pt] at (E) {$q_j$};
			
		\end{tikzpicture}
	}
	\caption{a) Normals $\vec{n}_{i,j}$, $\vec{n}_{j,k}$ and $\vec{n}_{k,i}$ satisfying 
	the balance-of-forces condition $ \sigma_{i,j}\vec{n}_{i,j} + \sigma_{j,k}\vec{n}_{j,k} + \sigma_{k,i}\vec{n}_{k,i}=0$. 
	b) Sketch of the points $q_i$, $q_j$ and $q_k$ of the $l^2$-embedding of $\sigma$.}
\end{figure}

The same definition can be used to define a BV solution for multiphase mean curvature flow on the closed time interval $[0,\TBV]$ 
for maps $\chi=(\chi_1,\ldots,\chi_P)\colon \Rd \times [0,\TBV]\to\{0,1\}^P$.
\end{definition}

Next, we give the definition of strong solutions to multiphase mean curvature flow. To this end, we first define a notion
of regular partitions and regular networks of interfaces (cf.\ \cite[Definitions 2.1, 2.7 and 4.7]{MantegazzaNovagaPludaSchulze}).

\begin{definition}[Regular partitions and networks of interfaces]
\label{DefinitionRegularPartition}
Let $d=2$, let $P\geq 2$ be an integer, 
and let $(\bar{\Omega}_1,\ldots,\bar{\Omega}_P)$ be a partition with finite interface energy of open subsets of $\Rd[2]$ such that $\partial^* \bar \Omega_i = \partial \bar \Omega_i$.
Moreover, let $\bar\chi_i:=\chi_{\bar{\Omega}_i}$ denote
the characteristic function of the $\mathcal{L}^d$-measurable set ${\bar{\Omega}}_i$, and
let ${\bar{I}}_{i,j}:=\partial \bar{\Omega}_i\cap\partial \bar{\Omega}_j$ denote the respective interfaces for $i\neq j$.

We call $\bar\chi=(\bar\chi_1,\ldots,\bar\chi_P)$, or equivalently $(\bar \Omega_1,\ldots, \bar \Omega_P)$,
a \emph{regular partition of~$\Rd[2]$} and $\mathcal{I}:=\smash{\bigcup_{i\neq j}{\bar{I}}_{i,j}}$ a 
\emph{regular network of interfaces in~$\Rd[2]$} if the
following properties are satisfied:
\begin{subequations}
\begin{itemize}[leftmargin=0.7cm]
\item[i)] (Regularity) Each interface~${\bar{I}}_{i,j}$ is a one-dimensional manifold 
with boundary of class $C^5$. The interior of each interface is embedded. Moreover,
each interface~${\bar{I}}_{i,j}$ is compact and consists of finitely many connected components.
\item[ii)] (Multi-points are triple junctions) Only different interfaces may intersect,
and if this is the case then only at their boundary. Moreover,
at each intersection point exactly three interfaces meet. In other words, all multi-points 
of the network of interfaces are triple junctions.
\item[iii)] (Balance-of-forces condition) Let $p\in \Rd[2]$ be
a triple junction present in the network. Assume for notational
concreteness that at the triple junction $p$, the three phases $\bar{\Omega}_i$, $\bar\Omega_j$ 
and $\bar{\Omega}_k$ meet. Then, the balance-of-forces condition.
\begin{align}\label{HerringAngleCondition}
\sigma_{i,j}\bar{\vec{n}}_{i,j}(p) + \sigma_{j,k}\bar{\vec{n}}_{j,k}(p) + \sigma_{k,i}\bar{\vec{n}}_{k,i}(p) = 0
\end{align}
has to be satisfied, see Figure~\ref{fig:herring}. Here, $\bar{\vec{n}}_{i,j}(x)$ 
denotes the unit normal vector of the interface
${\bar{I}}_{i,j}$ at $x\in \bar I_{i,j}$ pointing from phase $\bar{\Omega}_i$ towards phase $\bar{\Omega}_j$.
\item[iv)]  (Second- and third-order compatibility) We additionally have the second-order compatiblity condition
\begin{align}\label{SecondOrderCompDef}
	\sigma_{i,j} H_{i,j}(p) + \sigma_{j,k} H_{j,k}(p)+ \sigma_{k,i} H_{k,i}(p)=0 
\end{align}
for the scalar mean curvatures $H_{i,j} := -\nabla^{\mathrm{tan}} \cdot\bar {\vec{n}}_{i,j}$, 
which is equivalent to the existence of a ``velocity'' vector $B(p) \in \Rd[2]$ 
with $H_{l,m}(p) = \bar{\vec{n}}_{l,m}(p) \cdot B(p)$ for all distinct $l,m\in\{i,j,k\}$.
For the choice of tangent vectors $\bar{\tau}_{i,j}:= J^{-1}\bar{\vec{n}}_{i,j}$ 
with $\smash{J:= \big(\begin{smallmatrix} 0 & -1 \\ 1 & 0	\end{smallmatrix}\big)}$, 
we furthermore have the third-order condition
\begin{align}\label{ThirdOrderCompDef}
 \begin{split}
	\bar{\tau}_{i,j}(p)\cdot\left(H_{i,j} B +  \nabla H_{i,j}  \right)(p) 
	& = \bar{\tau}_{j,k}(p)\cdot\left(H_{j,k} B +  \nabla H_{j,k}  \right)(p)\\
	& = \bar{\tau}_{k,i}(p)\cdot\left(H_{k,i} B +  \nabla H_{k,i}  \right)(p).
 \end{split}
\end{align}
Here, we slightly abuse notation by denoting the tangential derivative of $H_{i,j}$ 
in direction $\bar{\tau}_{i,j}$ by $\bar{\tau}_{i,j} \cdot \nabla H_{i,j}$.

\end{itemize}
Let $\sigma\in\Rd[P\times P]$ be an admissible
matrix of surface tensions in the sense of Definition~\ref{DefinitionAdmissibleSurfaceTensions}.
We call $\bar\chi=(\bar\chi_1,\ldots,\bar\chi_P)$, or equivalently $(\bar \Omega_1,\ldots, \bar \Omega_P)$, 
a \emph{regular partition of $\Rd[2]$ with finite interface energy}
if $\bar\chi$ satisfies
\begin{align}\label{EnergyFunctionalRegularPartition}
E[\bar\chi] := \sum_{i,j=1,i\neq j}^P \sigma_{i,j}\int_{{\bar{I}}_{i,j}}1\,\mathrm{d}S < \infty
\end{align}
in addition to the previous requirements.
\end{subequations}
\end{definition}

Interpreting the triple junction as a free boundary of the interfaces, 
the identities~\eqref{SecondOrderCompDef} and~\eqref{ThirdOrderCompDef} can be 
viewed as parabolic compatibility conditions:
They arise from differentiating in time the zero-th order condition (that is, $p$ 
being the common endpoint of $\bar{I}_{i,j}$, $\bar{I}_{j,k}$, and $\bar{I}_{k,i}$) and the first-order condition~\eqref{HerringAngleCondition} (that is, the contact angle condition), respectively.
Keeping in mind parabolic scaling, the condition~\eqref{SecondOrderCompDef} is indeed second order, 
while~\eqref{ThirdOrderCompDef} is third order.

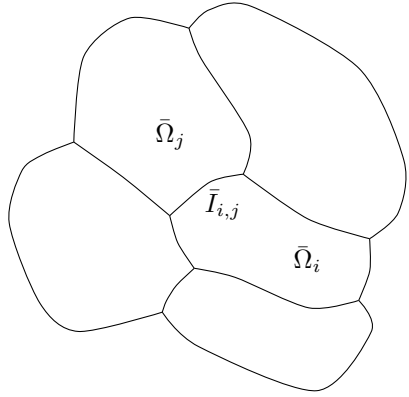
\begin{figure}
		\begin{tikzpicture}[scale=.8]

		\draw plot [smooth] coordinates { (0:3)(9.5:2)(50:1.4)};
		\draw plot [smooth] coordinates { (50:1.4)(60:2)(86.15:3)(90:3.5)};
		\draw plot [smooth] coordinates { (50:1.4)(70:1) (130:0.5)};
		\draw plot [smooth] coordinates { (280:.5) (200:.2) (130:0.5)};
		\draw plot [smooth] coordinates {(130:0.5)(137:1.5)(140:2.5)};

		\draw plot [smooth] coordinates {(140:2.5)(120:3.5)(105:3.8)(90:3.5)};
		\draw plot [smooth] coordinates {(0:3)(7:3.5)(20:3.8)(45:4)(75:4)(85:3.82)(90:3.5)};
		\draw plot [smooth] coordinates {(140:2.5)(150:2.8)(158:3)(175:3)(203.5:2.7)(220:2.4)(250:1.3)};
	  \draw plot [smooth] coordinates {(250:1.3)(250:1.1)(255:0.8)(280:.5)};
		\draw plot [smooth] coordinates {(250:1.3)(275:1.8)(310:3.3)(333:3.4)(340:3)};
		\draw plot [smooth] coordinates {(280:.5)(320:1)(330:2.3)(340:3)};
	  \draw plot [smooth] coordinates {(340:3)(350:3.05)(0:3)};
				
		\node at (350:2){${\bar{\Omega}}_i$};
		
		\node at (100:1.75){${\bar{\Omega}}_j$};
		
		\node at (45:.8){${\bar{I}}_{i,j}$};
		
	\end{tikzpicture}
	\caption{Sketch of a regular partition of the plane and the corresponding regular network. \label{fig:partition} }
\end{figure}

We say that a regular partition along with its associated regular network of interfaces evolves smoothly if no topological changes occur in the sense of the following definition:

\begin{definition}[Smoothly evolving partitions and smoothly evolving networks of interfaces]
\label{DefinitionSmoothlyEvolvingPartition}
Let $d=2$, let $P\geq 2$ be an integer and let
$\bar\chi_0=(\bar\chi_1^0,\ldots,\bar\chi_P^0)$ be a regular partition of~$\Rd[2]$ with
a regular network of interfaces $\mathcal{I}_0=\smash{\bigcup_{i\neq j}{\bar{I}}_{i,j}^{\,0}}$ in the sense of 
Definition~\ref{DefinitionRegularPartition}. Let $T > 0$,
and consider
$\smash{\bar\Omega_i:=\bigcup_{t\in [0,T]}\bar\Omega_i(t){\times}\{t\}}$, $i\in\{1,\ldots,P\}$,
so that for all $t\in [0,T]$ the family $(\bar \Omega_1(t),\ldots,\bar \Omega_P(t))$ 
is a regular partition of $\Rd[2]$ in the sense of Definition~\ref{DefinitionRegularPartition}.
For each~$i\in\{1,\ldots,P\}$ let $\bar\chi_i\colon \Rd[2]{\times}[0,T]\to\Rd[2]$
be the characteristic function of~$\bar\Omega_i$, and for each pair $i\neq j$ with $i,j\in\{1,\ldots,P\}$
and all~$t\in [0,T]$ define the interfaces ${\bar{I}}_{i,j}(t):=\partial \bar{\Omega}_i(t)\cap\partial \bar{\Omega}_j(t)$.

We say that $\bar\chi=(\bar\chi_1,\ldots,\bar\chi_P)$, 
or equivalently $(\bar{\Omega}_1,\ldots,\bar{\Omega}_P)$, 
is a \emph{smoothly evolving regular partition of $\Rd[2]{\times}[0,T]$}
and $\mathcal{I}:=\bigcup_{i,j\in\{1,\ldots,P\},i\neq j}\bar{I}_{i,j}$ 
is a \emph{smoothly evolving regular network 
of interfaces in $\Rd[2]{\times}[0,T]$}, where $\bar{I}_{i,j}:=\bigcup_{t\in[0,T]}\bar{I}_{i,j}(t){\times}\{t\}$
for all $i,j\in\{1,\ldots,P\}$ with~$i\neq j$, if there exists a time-dependent family of diffeomorphisms
\begin{align*}
\psi^t\colon \Rd[2] \to \Rd[2],\quad t\in [0,T],
\end{align*}
with the following properties:
\begin{itemize}[leftmargin=0.7cm]
	\item[i)] $\psi^0=\mathrm{Id}$, $\bar\chi_i(t) = \bar\chi_i^0\circ\left(\psi^t\right)^{-1}$
						and ${\bar{I}}_{i,j}(t)=\psi^t({\bar{I}}_{i,j}^{\,0})$ for all~$i,j\in\{1,\ldots,P\}$
						with~$i\neq j$ and all $t\in [0,T]$,
	\item[ii)] for all~$i,j\in\{1,\ldots,P\}$ with $i\neq j$, the map
						 $$\psi_{i,j}\colon \bar I_{i,j}^{\,0}{\times} [0,T] \to \bar I_{i,j},\quad
						 (x,t) \to (\psi^t(x),t)$$ is a diffeomorphism of 
						 class $(C^0_tC^5_x \cap C^1_tC^3_x)(\bar I_{i,j}^{\,0}{\times} [0,T])$.
\end{itemize}
\end{definition}

We have everything in place to proceed with the definition of strong solutions for multiphase
mean curvature flow.

\begin{definition}[Strong solution for multiphase mean curvature flow]
\label{DefinitionStrongSolution}
Let $d=2$, $P\geq 2$ be an integer, $\sigma\in\Rd[P\times P]$ be an admissible matrix of surface tensions
in the sense of Definition~\ref{DefinitionAdmissibleSurfaceTensions}, and let $\Tstrong>0$ be a finite time horizon.
Let $\bar\chi_0=(\bar\chi_1^0,\ldots,\bar\chi_P^0)$ be an initial regular partition of $\Rd[2]$ with finite interface energy
in the sense of Definition~\ref{DefinitionRegularPartition}.

A measurable map
\begin{align*}
\bar\chi=(\bar\chi_1,\ldots,\bar\chi_P)\colon \Rd\times [0,\Tstrong)\to\{0,1\}^P,
\end{align*}
(respectively, the corresponding tuple of sets $\smash{\bar\Omega_i:=
\bigcup_{t\in [0,\Tstrong)}\bar\Omega_i(t){\times}\{t\}}$,
$\bar\Omega_i(t):=\{\bar\chi_i(t) {=}1\}$ for $i\in\{1,\ldots, P\}$ and $t \in [0,\Tstrong)$)
is called a \emph{strong solution for multiphase mean curvature flow with initial data $\bar\chi_0$}
if for all $T\in [0,\Tstrong)$ it is a \emph{strong solution for multiphase mean curvature flow 
on $[0,T]$} in the following sense:
\begin{subequations}
\begin{itemize}[leftmargin=0.7cm]
\item[i)] (Smoothly evolving regular partition with finite interface energy) 
The map~$\bar\chi$ is a smoothly evolving regular partition of $\Rd[2]{\times}[0,T]$
and $\mathcal{I}:=\bigcup_{i,j\in\{1,\ldots,P\},i\neq j}\bar{I}_{i,j}$ 
is a smoothly evolving regular network of interfaces in 
$\Rd[2]{\times}[0,T]$ in the sense of Definition~\ref{DefinitionSmoothlyEvolvingPartition}. In particular, 
for every ${t}\in [0,T]$, $\bar\chi(\cdot,t)$ is a regular partition of $\Rd[2]$
and $\smash{\bigcup_{i\neq j}\bar{I}_{i,j}(t)}$ is a regular network of interfaces in $\Rd[2]$
in the sense of Definition~\ref{DefinitionRegularPartition} such that
\begin{align}\label{GlobalEnergyBoundStrongSolution}
\sup_{t\in [0,T]} E[\bar\chi(\cdot,{t})] 
= \sup_{t\in [0,T]} \sum_{i,j=1,i\neq j}^P 
\sigma_{i,j}\int_{{\bar{I}}_{i,j}(t)}1\,\mathrm{d}S < \infty.
\end{align}
\item[ii)] (Evolution by mean curvature) For $i,j =1,\ldots, P$ with 
$i\neq j$ and $(x,t)\in {\bar{I}}_{i,j}$ let $\bar V_{i,j}(x,t)$ denote 
the normal speed of the interface at the point $x\in {\bar{I}}_{i,j}(t)$, i.e., 
$\bar V_{i,j}(x,t):=(\bar{\vec{n}}_{i,j}(x,t),0)\cdot \partial_t\psi_{i,j}(y,t)$
at $y=(\psi^t)^{-1}(x)\in {\bar{I}}_{i,j}(0)$, where $\psi_{i,j}$ and $\psi^t$ 
are the maps from Definition~\ref{DefinitionSmoothlyEvolvingPartition}.
Denoting by $\vec{H}_{i,j}(x,t)$ the mean curvature vector of ${\bar{I}}_{i,j}(t)$
at~$x\in\bar I_{i,j}(t)$, we then assume that the interfaces ${\bar{I}}_{i,j}$ evolve by mean curvature in the sense
\begin{align}\label{MotionByMeanCurvature}
\bar V_{i,j}(x,t)\bar{\vec{n}}_{i,j}(x,t) = \vec{H}_{i,j}(x,t),
\quad\text{for all } t\in [0,T],\, x\in {\bar{I}}_{i,j}(t).
\end{align}
	\item[iii)] (Initial conditions) We have $\bar \chi_i(x,0) = \bar \chi_{0,i}(x)$ for all points $x\in \Rd$ and 
	each phase $i\in\{1,\ldots, P\}$.
\end{itemize}
\end{subequations}
\end{definition}

\subsection{Relative entropy inequality}
The key ingredient for the proof of Theorem~\ref{TheoremUniqueness} is the derivation of a Gronwall-type inequality for the tilt-excess-like error functional \eqref{DefinitionRelativeEntropy}: We aim to derive an estimate of the form
\begin{align}\label{eq:Gronwall}
E[\chi|\xi](T) \leq E[\chi|\xi](0) +C(\xi)\int_0^TE[\chi|\xi](t)\dt
\end{align}
for almost all admissible times $T \geq 0$
from which one may infer the desired stability estimate \eqref{StabilityEstimate} 
by an application of Gronwall's lemma. The weak-strong uniqueness principle then 
follows by means of the coercivity properties of the relative entropy error functional 
\eqref{DefinitionRelativeEntropy} and a subsequent estimate for $E_{\mathrm{volume}}[\chi|\bar \chi]$, 
see Proposition~\ref{PropositionUniquenessConditional}. The following result contains the first key step in the 
derivation of the Gronwall-type inequality \eqref{eq:Gronwall}; it is valid for general vector fields 
$\xi_i$ and $B$ with sufficient smoothness (not just for gradient flow calibrations).

\begin{proposition}[Relative entropy inequality]
\label{PropositionRelativeEntropyInequality}
Let $d\geq 2$, $P\geq 2$ be integers, and
let $\sigma\in\Rd[P\times P]$ be an admissible matrix of surface tensions
in the sense of Definition~\ref{DefinitionAdmissibleSurfaceTensions}. 
Let $\chi=(\chi_1,\ldots,\chi_P)$ be a BV solution of multiphase mean curvature flow in
the sense of Definition~\ref{DefinitionBVSolution} on some time interval $[0,T']$ with $T'>0$.
For $i,j = 1,\ldots,P$ with $i\neq j$ we denote by
\begin{align}\label{UnitNormalsBVSolution}
\vec{n}_{i,j}&:=\frac{\nabla \chi_{j}}{|\nabla \chi_{j}|}
=-\frac{\nabla \chi_i}{|\nabla \chi_i|},
\quad
\mathcal{H}^{d-1}\text{-a.e.\ on }I_{i,j},
\end{align}
the (measure-theoretic) unit normal vector of the interface $I_{i,j}$ pointing 
from the $i$-th to the $j$-th phase of the $\BV$ solution. Moreover, let
\begin{align}\label{NormalVelocitiesBVSolution}
V_{i,j} := V_{i} = -V_{j}, \quad
\mathcal{H}^{d-1}\text{-a.e.\ on }I_{i,j}.
\end{align}

Let $(\xi_{i,j})_{i\neq j\in\{1,\ldots,P\}}$ and $(\xi_i)_{i=1,\ldots,P}$ 
be families of compactly supported vector fields such that
\begin{align*}
\xi_{i,j},\xi_i \in C^1([0,T'];C^0_{\mathrm{cpt}}(\Rd[d];\Rd[d]))
\cap C^0([0,T'];C^1_{\mathrm{cpt}}(\Rd[d];\Rd[d]))
\end{align*}
as well as $\sigma_{i,j}\xi_{i,j}=\xi_i-\xi_j$ for all $i\neq j$.
Let
\begin{align*}
B\in C^0([0,T'];C^1_{\mathrm{cpt}}(\Rd[d];\Rd[d]))
\end{align*}
be an arbitrary compactly supported vector field.
Consistently with \eqref{DefinitionRelativeEntropy}, define the interface error control
\begin{align}
\label{DefinitionRelativeEntropyFunctional}
E[\chi|\xi](t)
:=
\sum_{i,j=1,i\neq j}^P \sigma_{i,j} \int_{I_{i,j}(t)} 
1-\xi_{i,j}(\cdot,t) \cdot \vec{n}_{i,j}(\cdot,t) \,\mathrm{d}\mathcal{H}^{d-1}.
\end{align}

Then the interface error control
is subject to the estimate
\begin{align}
\nonumber
&E[\chi|\xi](T)
\\&\nonumber
+\sum_{i,j=1,i\neq j}^{P} \frac{\sigma_{i,j}}{2}
\int_0^{T}\int_{I_{i,j}(t)}
|V_{i,j}{+}\nabla\cdot\xi_{i,j}|^2
+|V_{i,j}\vec{n}_{i,j}{-}(B\cdot\xi_{i,j})\xi_{i,j}|^2
\,\mathrm{d}\mathcal{H}^{d-1}\dt
\\&\label{RelativeEntropyInequality}
\leq E[\chi|\xi](0) + R_{\mathrm{dt}} + R_{\mathrm{dissip}}
\end{align}
for almost every $T \in [0,T']$. Here, we made use of the abbreviations
\begin{align*}
R_{\mathrm{dt}}
&:= -\sum_{i,j=1,i\neq j}^{P}\sigma_{i,j}
\int_0^T\int_{I_{i,j}(t)}
\frac{1}{2}\big(\partial_t|\xi_{i,j}|^2{+}(B\cdot\nabla)|\xi_{i,j}|^2\big)
\,\mathrm{d}\mathcal{H}^{d-1}\dt \notag
\\&~~~~
-\sum_{i,j=1,i\neq j}^{P}\sigma_{i,j}
\int_0^T\int_{I_{i,j}(t)}
\big(\partial_t\xi_{i,j}{+}(B\cdot\nabla)\xi_{i,j}{+}(\nabla B)^\mathsf{T}\xi_{i,j}\big)
\cdot(\vec{n}_{i,j}{-}\xi_{i,j})\,\mathrm{d}\mathcal{H}^{d-1}\dt,\\
R_{\mathrm{dissip}} &:=
\sum_{i,j=1,i\neq j}^{P}\sigma_{i,j}
\int_0^T\int_{I_{i,j}(t)}
\frac{1}{2}|(\nabla\cdot\xi_{i,j})+B\cdot\xi_{i,j}|^2
\,\mathrm{d}\mathcal{H}^{d-1}\dt
\\&~~~~
-\sum_{i,j=1,i\neq j}^{P}\sigma_{i,j}
\int_0^T\int_{I_{i,j}(t)}
\frac{1}{2}|B\cdot\xi_{i,j}|^2(1-|\xi_{i,j}|^2)
\,\mathrm{d}\mathcal{H}^{d-1}\dt
\\&~~~~
-\sum_{i,j=1,i\neq j}^{P}\sigma_{i,j}
\int_0^T\int_{I_{i,j}(t)}
(1-\vec{n}_{i,j}\cdot\xi_{i,j})(\nabla\cdot\xi_{i,j})(B\cdot\xi_{i,j})
\,\mathrm{d}\mathcal{H}^{d-1}\dt
\\&~~~~
+\sum_{i,j=1,i\neq j}^{P}\sigma_{i,j}
\int_0^T\int_{I_{i,j}(t)}
\Big((\mathrm{Id}{-}\xi_{i,j}\otimes\xi_{i,j})B\Big)\cdot(V_{i,j}{+}\nabla\cdot\xi_{i,j})\vec{n}_{i,j}
\,\mathrm{d}\mathcal{H}^{d-1}\dt
\\&~~~~
+\sum_{i,j=1,i\neq j}^{P}\sigma_{i,j}
\int_0^T\int_{I_{i,j}(t)}
(1-\vec{n}_{i,j}\cdot\xi_{i,j})(\nabla\cdot B)
\,\mathrm{d}\mathcal{H}^{d-1}\dt
\\&~~~~
-\sum_{i,j=1,i\neq j}^{P}\sigma_{i,j}
\int_0^T\int_{I_{i,j}(t)}
(\vec{n}_{i,j}{-}\xi_{i,j})\otimes(\vec{n}_{i,j}{-}\xi_{i,j}):\nabla B
\,\mathrm{d}\mathcal{H}^{d-1}\dt.
\end{align*}
\end{proposition}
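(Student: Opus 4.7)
The plan is to derive the estimate via the relative entropy method. Three ingredients coming from the $\BV$ solution are combined: the sharp energy dissipation inequality \eqref{EnergyDissipationInequalityBVSolution}, the distributional evolution equation \eqref{EvolutionPhasesBVSolution} for the phase indicators $\chi_i$, and the $\BV$ formulation \eqref{BVFormulationMeanCurvature} of the mean curvature equation. They are glued together by the structural identity $\sigma_{i,j}\xi_{i,j}=\xi_i-\xi_j$, which splits the interface error functional into an energy and a linear functional of the phases.

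As a preliminary, the identities $\sigma_{i,j}\xi_{i,j}=\xi_i-\xi_j$ and $\vec{n}_{j,i}=-\vec{n}_{i,j}$ together with the Gauss--Green theorem applied to each compactly supported $\xi_i$ yield
\[
E[\chi|\xi](t) = E[\chi](t) - 2\sum_{i=1}^P\int_{\Rd}\chi_i(\cdot,t)\,(\nabla\cdot\xi_i)(\cdot,t)\,dx.
\]
For $E[\chi](T)-E[\chi](0)$, the sharp dissipation inequality \eqref{EnergyDissipationInequalityBVSolution} directly produces the bound $-\sum\sigma_{i,j}\int_0^T\!\int_{I_{i,j}}|V_{i,j}|^2$ on the right-hand side. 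For the linear functional, we test the evolution equation \eqref{EvolutionPhasesBVSolution} with the time-dependent test function $\varphi=\nabla\cdot\xi_i$, sum over $i$, and resymmetrize using $V_i=-V_j$, $\vec{n}_{j,i}=-\vec{n}_{i,j}$, and $\sigma_{i,j}\xi_{i,j}=\xi_i-\xi_j$. This produces the boundary-in-time contribution accounting for $E[\chi|\xi](T)-E[\chi|\xi](0)$, an interface integral of $\sigma_{i,j}V_{i,j}\,\nabla\cdot\xi_{i,j}$, and a bulk integral of $\chi_i\,\partial_t(\nabla\cdot\xi_i)$. An integration by parts in space rewrites the bulk integral as $-\int_0^T\!\int\partial_t\xi_i\cdot d\nabla\chi_i$, and the algebraic decomposition
\[
\partial_t\xi_i = \bigl(\partial_t\xi_i+(B\cdot\nabla)\xi_i+(\nabla B)^{\mathsf{T}}\xi_i\bigr) - (B\cdot\nabla)\xi_i - (\nabla B)^{\mathsf{T}}\xi_i
\]
isolates the approximate-advection combination as a piece of $R_{\mathrm{dt}}$, leaving interface integrals of $(B\cdot\nabla)\xi_{i,j}$ and $(\nabla B)^{\mathsf{T}}\xi_{i,j}$ paired against $\vec{n}_{i,j}-\xi_{i,j}$.

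To introduce $B$ at the level of $V_{i,j}$ itself, we test the $\BV$ formulation of mean curvature \eqref{BVFormulationMeanCurvature} against $B$; using $\nabla\chi_i/|\nabla\chi_i|=-\vec{n}_{i,j}$ on $I_{i,j}$ and $V_i=V_{i,j}$ this becomes
\[
\sum_{i,j=1,i\neq j}^P\sigma_{i,j}\int_0^T\!\!\int_{I_{i,j}}V_{i,j}\,\vec{n}_{i,j}\cdot B\,d\mathcal{H}^{d-1}dt = \sum_{i,j=1,i\neq j}^P\sigma_{i,j}\int_0^T\!\!\int_{I_{i,j}}(\mathrm{Id}-\vec{n}_{i,j}\otimes\vec{n}_{i,j}):\nabla B\,d\mathcal{H}^{d-1}dt,
\]
which we add to the accumulated estimate with the appropriate prefactor. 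At this stage all linear-in-$V_{i,j}$ contributions have been made available and the remaining step is purely algebraic: the quadratic $|V_{i,j}|^2$ from dissipation and the cross terms $V_{i,j}\,\nabla\cdot\xi_{i,j}$ and $V_{i,j}\,\vec{n}_{i,j}\cdot B$ just produced are combined, via completion of squares and by adding and subtracting the pieces $\tfrac{1}{2}|\nabla\cdot\xi_{i,j}+B\cdot\xi_{i,j}|^2$ and $\tfrac{1}{2}(1-|\xi_{i,j}|^2)|B\cdot\xi_{i,j}|^2$ (which then form the first two lines of $R_{\mathrm{dissip}}$), into the perfect squares $\tfrac{1}{2}|V_{i,j}+\nabla\cdot\xi_{i,j}|^2$ and $\tfrac{1}{2}|V_{i,j}\vec{n}_{i,j}-(B\cdot\xi_{i,j})\xi_{i,j}|^2$ on the left-hand side of \eqref{RelativeEntropyInequality}. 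The remaining terms of $R_{\mathrm{dissip}}$ arise by using the pointwise identity $\vec{n}_{i,j}\cdot\xi_{i,j}=1-(1-\vec{n}_{i,j}\cdot\xi_{i,j})$ and the decomposition $\mathrm{Id}-\vec{n}_{i,j}\otimes\vec{n}_{i,j}=(\mathrm{Id}-\xi_{i,j}\otimes\xi_{i,j})-(\vec{n}_{i,j}\otimes\vec{n}_{i,j}-\xi_{i,j}\otimes\xi_{i,j})$ to swap $\vec{n}_{i,j}$ for $\xi_{i,j}$ wherever the integrand can be simplified.

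The main obstacle is organizational rather than conceptual: keeping track of the many interface integrals produced by the three ingredients, of the signs introduced by the symmetries $V_i=-V_j$ and $\vec{n}_{j,i}=-\vec{n}_{i,j}$ on $I_{i,j}$, and of the passage from ordered sums of single-phase quantities to the symmetric pairwise quantities $V_{i,j}$ and $\xi_{i,j}$ via the calibration structure $\sigma_{i,j}\xi_{i,j}=\xi_i-\xi_j$. Once this bookkeeping is under control, no additional treatment of triple or higher-order junctions is required, as the $\BV$ formulations \eqref{EvolutionPhasesBVSolution} and \eqref{BVFormulationMeanCurvature} are insensitive to the singular set of the network.
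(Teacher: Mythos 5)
Your outline follows the same overall route as the paper -- rewrite $E[\chi|\xi]$ via the calibration structure, apply the sharp dissipation inequality, test \eqref{EvolutionPhasesBVSolution} with $\nabla\cdot\xi_i$, test \eqref{BVFormulationMeanCurvature} with $B$, then add-and-subtract the advection operator and complete squares. However, there is a concrete gap in your description of the last stage. After inserting $\pm(B\cdot\nabla)\xi_{i,j}\pm(\nabla B)^\mathsf{T}\xi_{i,j}$ you are left with an interface integral of $\vec{n}_{i,j}\cdot(B\cdot\nabla)\xi_{i,j}$, paired against the full normal $\vec{n}_{i,j}$, not against $\vec{n}_{i,j}-\xi_{i,j}$. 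This term cannot be disposed of by the "pointwise" manipulations you invoke ($\vec{n}\cdot\xi = 1-(1-\vec{n}\cdot\xi)$, $\mathrm{Id}-\vec{n}\otimes\vec{n}=\ldots$); it has no smallness in $\vec{n}_{i,j}-\xi_{i,j}$, and no such term appears in $R_{\mathrm{dissip}}$.

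The missing ingredient is a global integration-by-parts identity: because $\nabla\cdot\bigl[\nabla\cdot(B\otimes\xi_i-\xi_i\otimes B)\bigr]=0$ for every $i$ (antisymmetry of the tensor), one converts $\sum\sigma_{i,j}\int_{I_{i,j}}\vec{n}_{i,j}\cdot\bigl[\nabla\cdot(B\otimes\xi_{i,j}-\xi_{i,j}\otimes B)\bigr]\,\mathrm{d}\mathcal{H}^{d-1}$ to volume integrals via the calibration structure $\sigma_{i,j}\xi_{i,j}=\xi_i-\xi_j$, sees that it vanishes, and converts back. Expanding the divergence gives the identity
\begin{align*}
0 = \sum_{i\neq j}\sigma_{i,j}\int_{I_{i,j}}\Bigl[(\nabla\cdot\xi_{i,j})\,B\cdot\vec{n}_{i,j}
+\vec{n}_{i,j}\cdot(\xi_{i,j}\cdot\nabla)B
-\vec{n}_{i,j}\cdot(B\cdot\nabla)\xi_{i,j}
-(\nabla\cdot B)\,\xi_{i,j}\cdot\vec{n}_{i,j}\Bigr]\,\mathrm{d}\mathcal{H}^{d-1},
\end{align*}
which is exactly what trades the problematic $\vec{n}_{i,j}\cdot(B\cdot\nabla)\xi_{i,j}$ for the terms $(\nabla\cdot\xi_{i,j})\,B\cdot\vec{n}_{i,j}$ (needed to complete the square $\tfrac12|(\nabla\cdot\xi_{i,j})+B\cdot\xi_{i,j}|^2$), $\vec{n}_{i,j}\cdot(\xi_{i,j}\cdot\nabla)B$ (which, together with the $\vec{n}_{i,j}\otimes\vec{n}_{i,j}:\nabla B$ contribution from \eqref{BVFormulationMeanCurvature} and the $(\nabla B)^\mathsf{T}\xi_{i,j}$ piece, assembles $-(\vec{n}_{i,j}-\xi_{i,j})\otimes(\vec{n}_{i,j}-\xi_{i,j}):\nabla B$ and the length-conservation piece $\tfrac12(\partial_t|\xi_{i,j}|^2+(B\cdot\nabla)|\xi_{i,j}|^2)$), and $-(\nabla\cdot B)\,\xi_{i,j}\cdot\vec{n}_{i,j}$ (which combines with the raw $\nabla\cdot B$ term to give $(1-\vec{n}_{i,j}\cdot\xi_{i,j})(\nabla\cdot B)$). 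Without this nonlocal symmetry identity your completion of squares cannot terminate in the stated $R_{\mathrm{dissip}}$.
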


\subsection{Weak-strong uniqueness and stability of varifold-$\BV$ solutions}

A very recent solution concept by Stuvard and Tonegawa \cite{StuvardTonegawaMCF} combines the concept of Brakke's notion of varifold solutions with ideas from the notion of $\BV$ solutions. We shall refer to this new solution concept by the name ``varifold-$\BV$ solutions''; as we shall see in Theorem~\ref{TheoremStabilityVarifoldSolution} below, our arguments from the case of $\BV$ solutions can readily be extended to also prove weak-strong uniqueness and stability for varifold-$\BV$ solutions.
\begin{definition}
\label{DefinitionVarifoldBVsolution}
Let $\mathcal{V}=(\mathcal{V}_t)_{t\in [0,\infty)}$ be a measurable family of integral and rectifiable $(d{-}1)$-varifolds; denote by $(\mu_t)_{t \in [0,\infty)}$ the associated family of weight measures. Let $(\chi_1,\ldots,\chi_P):\Rd\times [0,\infty)\rightarrow \{0,1\}^P$ denote a family of indicator functions of sets with bounded perimeter subject to the properties in item~\textit{i)} of Definition~\ref{DefinitionBVSolution}.

We then call the tuple $(\mathcal{V},\chi)$ a \emph{varifold-BV solution} to multiphase mean curvature flow if the following conditions are satisfied:
\begin{subequations}
\begin{itemize}
\item[i)] For a.e.\ $t\in[0,\infty)$, there exists a generalized mean curvature vector $h(\cdot,t) \in L^2(\Rd,\mu_t)$ of $\mathcal{V}_t$ in the sense that
\begin{align}
\label{eq:meanCurvatureTonegawa}
- \int_{\Rd} h \cdot B \,\mathrm{d}\mu_t = \int_{\Rd{\times}\mathbf{G}(d,d{-}1)} 
\mathrm{Id}_{\mathbf{G}(d,d{-}1)} \colon \nabla B \,\mathrm{d}\mathcal{V}_t 
\end{align} 
for all $B\in C^\infty_{\mathrm{cpt}}(\Rd;\Rd)$. Here, as usual $\mathbf{G}(d,d{-}1)$ denotes the space of all $(d{-}1)$-dimensional linear subspaces of~$\Rd$.
\item[ii)] The family of varifolds $\mathcal{V}$ is a Brakke solution to multiphase mean curvature flow.
In particular, the global energy dissipation estimate
\begin{align}
\label{eq:dissipationTonegawa}
\mu_T(\Rd) + \int_{0}^{T} \int_{\Rd} |h|^2 \,\mathrm{d}\mu_t
\leq \mu_0(\Rd)
\end{align}
holds true for a.e.\ $T \in (0,\infty)$.

\item[iii)] For a.e.\ $t \in (0,\infty)$, the varifold $\mathcal{V}_t$ describes the interfaces $\partial^* \{\chi_i(\cdot,t)=1\}$ in the sense that
\begin{align}
\label{CompatibilityVarifoldChi}
\frac{1}{2}\sum_{i=1}^P |\nabla \chi_i(\cdot,t)| \leq \mu_t.
\end{align}

\item[iv)] The indicator functions $\chi_i$ evolve according to the mean curvature of $\mathcal{V}$ in the sense that
\begin{align}
\label{eq:evolutionPhaseTonegawa}
\partial_t \chi_i + h \cdot \nabla\chi_i = 0
\end{align}
holds distributionally for all $i\in\{1,\ldots,P\}$.
\end{itemize}
\end{subequations}
\end{definition}

Consider a calibrated flow with time horizon $T\in(0,\infty)$ with associated gradient-flow calibration $(\xi=(\xi_{i})_{i=1,\ldots,P},B)$. Let~$(\mathcal{V},\chi)$ be a varifold-$\BV$ solution to multiphase mean curvature flow in the sense of Stuvard and Tonegawa~\cite{StuvardTonegawaMCF}. The natural varifold solution analogue of our relative entropy functional \eqref{ErrorFunctional} is then given by
\begin{align}
\label{eq:relEntropyTonegawa}
E[\mathcal{V},\chi|\xi](t) := 2\mu_t(\Rd) - 
\sum_{i,j=1,\,i\neq j}^{P} \int_{I_{i,j}(t)} n_{i,j}\cdot\xi_{i,j}\dH.
\end{align}
Note that the varifold relative entropy controls the relative entropy for $\BV$ solutions: Denoting the Radon-Nikodym derivatives $\tfrac{d|\nabla \chi_i(\cdot,t)|}{d\mu_t}$ by $\rho_i(\cdot,t)$, we may write
\begin{align*}
E[\mathcal{V},\chi|\xi](t) = 2\int_{\mathbb{R}^d} 1-\frac{1}{2}\sum_{i=1}^P \rho_i(\cdot,t) \,\mathrm{d}\mu_t + E[\chi|\xi](t).
\end{align*}
Note that the first term on the right-hand side is nonnegative by \eqref{CompatibilityVarifoldChi} and provides control of the multiplicity of the varifold whenever it exceeds the multiplicity of the $\BV$ interfaces $\tfrac{1}{2}\sum_{i=1}^P |\nabla \chi_i(\cdot,t)|$.

By arguments mostly analogous to the case of $\BV$ solutions, we derive the following weak-strong uniqueness and stability result for varifold-$\BV$ solutions.
\begin{theorem}
\label{TheoremStabilityVarifoldSolution}
Let $\bar \chi$ be a strong solution to planar multiphase mean curvature flow and let $(\xi,B)$ be an associated gradient flow calibration.

Let $(\mathcal{V},\chi)$ be a varifold-$\BV$ solution to multiphase mean curvature flow in the sense of Definition~\ref{DefinitionVarifoldBVsolution}. The varifold relative entropy \eqref{eq:relEntropyTonegawa} then satisfies the stability estimate
\begin{align}
\label{StabilityVarifold}
E[\mathcal{V},\chi|\xi](T) \leq e^{Ct} E[\mathcal{V},\chi|\xi](0).
\end{align}
Furthermore, the stability estimate \eqref{eq:stabilityBulkError} for the bulk error holds
(with the $\BV$ relative entropy replaced by the varifold relative entropy).

In particular, if the initial data of the varifold-$\BV$ solution coincides with the strong solution in the sense that $\chi(\cdot,0)=\bar \chi(\cdot,0)$ and in the sense that $\mu_0=\tfrac{1}{2}\sum_{i=1}^P |\nabla \bar \chi(\cdot,0)|$, we have $\chi=\bar \chi$ and $\mu_t = \tfrac{1}{2}\sum_{i=1}^P |\nabla \bar \chi(\cdot,t)|$ for all $t$ prior to the first topology change.
\end{theorem}
Just like in the case of $\BV$ solutions, the stability estimate \eqref{StabilityVarifold} is valid in general ambient dimension, assuming that a gradient flow calibration exists.

\subsection{Structure of the paper}
The remaining part of the paper is organized as follows. 
Section~\ref{SectionOutlineStrategy} illustrates our strategy at the 
two most important examples, a smooth interface and a triple junction.

In Section~\ref{SectionWeakStrongUniqueness}, we prove the stability of evolving partitions for which a gradient flow calibration exists. To this aim, we exploit the properties of our gradient flow calibrations and the \emph{weak solution}: 
In Subsection~\ref{SectionDerivationRelativeEntropyInequality} we derive the relative entropy 
inequality in its full generality of Proposition~\ref{PropositionRelativeEntropyInequality}; 
and in Subsection~\ref{SectionProofMainResult}, we prove the quantitative inclusion principle, 
Theorem~\ref{TheoremUniqueness}.
The latter is upgraded to the conditional weak-strong uniqueness principle of Proposition~\ref{PropositionUniquenessConditional}
in Subsection~\ref{SectionProofConditionalWeakStrong}.

The subsequent three sections of the manuscript are devoted to the construction of our gradient flow calibrations given a \emph{strong solution}. First, we provide explicit constructions at a smooth interface (Section~\ref{SectionLocalConstructionsTwoPhase}) 
and at a triple junction (Section~\ref{SectionLocalConstructionsTriod}). These cases do not 
only serve as model examples but they also form the building blocks for our general construction 
in Section~\ref{sec:networkConstruction}. Therein, we glue together these local constructions 
to obtain a gradient flow calibration for regular networks, which establishes 
Theorem~\ref{TheoremExistenceCalibration}.

Section~\ref{SectionVolumeError} provides the construction of a family of transported weights given a strong solution. 
Finally, we prove in the last section Lemma~\ref{lemma:read-shockley}, which states that 
the Read-Shockley type surface tensions given by~\eqref{eq:read-shockley sigma} 
and~\eqref{eq:read-shockley f} are admissible.

\section{Outline of the strategy}
\label{SectionOutlineStrategy}

	\subsection{Idea of proof for a smooth interface}
	Let us give a brief idea of the proof, ignoring technical difficulties in the 
	simplest case of two phases sharing one single interface with $\sigma=1$.
	In that case, it is sufficient to describe the weak solution and the calibrated flow by a single phase $\Omega(t) \subset \Rd$, resp.\ $\bar \Omega(t) \subset \Rd$ for $t\in [0,T]$, the first being a set of finite perimeter and the second being a simply connected, smooth set.
	The relative entropy is then simply given by
	\begin{align*}
		E[\chi | \xi ](t)
		= \int_{\partial^* \Omega(t)} (1-\vec{n}\cdot \xi) \dH,
	\end{align*}
	which has the interpretation of an oriented excess of the weak solution with respect to the strong one. 
	Here $\chi=\chi(x,t)$ denotes the characteristic function of
	$\Omega=\Omega(t)$ and $\vec{n} = -\frac{\mathrm{d} \nabla \chi}{\mathrm{d} |\nabla \chi |}$ 
	denotes the (measure theoretic) exterior unit normal of $\partial^\ast \Omega(t)$.
	Furthermore, the vector field $\xi(\cdot,t)$ is an extension of the exterior unit normal 
	$ \vec{\bar n}(\cdot,t)$ of the calibrated, smooth interface $\bar{I}(t) := \partial \bar \Omega$
	(note that it is necessary to extend the vector field due to the fact that we evaluate it on the weak solution).
	
	In order to relate the extension $\xi$ to the evolution, we require it to be 
	transported along an extension $B$ of the velocity field of $\bar{I}$ in the sense that
	\begin{equation}\label{eq:xitransport2phase}
		\partial_t \xi =-\left( B\cdot \nabla \right)\xi -(\nabla B)^\mathsf{T} \xi + O\big(\dist(\cdot,\bar{I}\,)\big),
	\end{equation}
	which will help make the second term of $R_{\mathrm{dt}}$ small (see Proposition~\ref{PropositionRelativeEntropyInequality} for the definition).
	The extension for $B$ will be done such that it is constant in the ``normal'' $\xi$-direction, 
	meaning we have $(\xi \cdot \nabla ) B=0$, and such that the motion law 
	$\vec{\bar n} \cdot B = \bar V= H = - \nabla^{\tan} \cdot \vec{\bar n}$ is still approximately 
	true away from the interface in the sense that
	\begin{equation}\label{eq:Banddivxi2phase}
		\xi \cdot B = -\nabla \cdot \xi + O\big(\dist(\cdot,\bar{I}\,)\big),
	\end{equation}
	helping with the first term of $R_{\mathrm{dissip}}$.
	
	As we also want the functional $E[\chi | \xi]$ to ensure that 
	$\chi$ cannot be too far away from $\bar \chi$, we allow for $\xi$ to 
	be short, i.e., we have $|\xi|\leq 1$, and we ask this effect to be 
	transported by $B$ up to quadratic error
	\begin{equation}\label{eq:modxiquadratic}
		\partial_t |\xi|^2 + (B\cdot \nabla )	|\xi|^2 = O\big(\dist^2(\cdot,\bar{I}\,)\big),
	\end{equation}
	keeping the first term of $R_{\mathrm{dt}}$ small.

	In the present case of a single interface, the construction of these vector fields 
	is straightforward using the signed distance function $s = s(x,t)$ 
	to the smooth interface $\bar{I}$:
	We set
	\begin{align*}
		\xi := \zeta(s) \nabla s \quad \text{and} \quad  B := - (\Delta s )\xi,
	\end{align*}
	where $\zeta$ is a suitable cut-off function such that $\zeta(\tilde s) = 1-\tilde s^2$ close to $\tilde s=0$.
	Note that since $|\nabla s | =1$, this implies
	\begin{align}\label{EntropyControlsDistance}
		s^2 = 1-\zeta(s) \leq 1-\zeta(s)\,  \vec{n} \cdot \nabla s = 1- \vec{n} \cdot \xi
	\end{align} 
	in the region where $s$ is small, so that the relative entropy controls the (truncated) $L^2$ distance of the 
	weak solution and the calibrated flow.
	
	In the following heuristic derivation of the relative entropy inequality (from Proposition~\ref{PropositionRelativeEntropyInequality}) in the case of a single interface, we will use the abbreviation $\int_{\partial^\ast \Omega} \cdot := \int_{\partial^\ast \Omega(t)} \cdot \dH$ for the integral along 
	a time slice $\partial^* \Omega(t)$, $t\in [0,T]$, of the weak solution.
	Recall that $V$ denotes the 
	normal velocity of the weak solution characterized by the distributional equation 
	$\partial_t \chi = V\left|\nabla \chi\right|$, see \eqref{EvolutionPhasesBVSolution}, so that the sign convention is $V>0$ 
	for expanding $\Omega$.
	
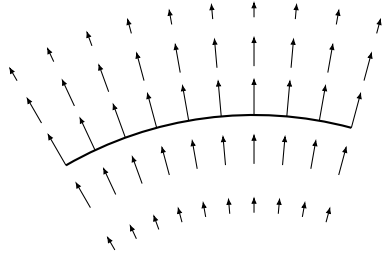
\begin{figure}
\begin{tikzpicture}[scale=5]
\draw[color=white] (0,0.45)--(0,-0.22);
		\begin{scope}
			%\clip (0.4,0) circle [radius=.5];
			\draw[thick,opacity=1,color=black] (0,0) arc (120:75:1);
		\end{scope}
			\foreach \i in {0,...,9}
			{
				\foreach \j in {-2,...,2}
				{
					\begin{scope}[shift={($(300:1)+(120 - 5*\i :1)+(120-5*\i : .13*\j)$) }]
						\draw[-{Latex[length=1mm]},opacity=1,color=black]  (0,0) -- ($(120-5*\i:.1-\j*\j*0.02+\j*\j*\j*\j*0.0015)$);
					\end{scope}
				}
			}
\end{tikzpicture}
\caption{Illustration of the vector field $\xi$ at a smooth interface~${\bar{I}}(t)$. 
The vector field~$\xi$ extends the unit normal vector field of~${\bar{I}}(t)$ 
by projection onto~${\bar{I}}(t)$ and multiplication with a cutoff function.\label{FigureCurveCalibrations}}
\end{figure}

	The optimal energy dissipation rate \eqref{EnergyDissipationInequalityBVSolution} and the definition \eqref{EvolutionPhasesBVSolution} of $V$ imply
	\begin{align*}
		\ddt E[\chi | \xi]
		= \ddt |\partial^*\Omega| -  \ddt \int_\Omega (\nabla \cdot \xi ) \,\mathrm{d}x
		\leq -\int_{\partial^\ast \Omega} V^2  - \int_{\partial^\ast \Omega} V\left(\nabla \cdot \xi \right) - \int_{\partial^\ast \Omega} \partial_t \xi \cdot \vec{n}.
	\end{align*}
	Testing the distributional mean curvature flow equation \eqref{BVFormulationMeanCurvature} with the extended velocity field $B$ gives
	\begin{align*}
		0=\int_{\partial^\ast \Omega} V\left(\vec{n}\cdot B \right) + \int_{\partial^\ast \Omega} \left(\Id - \vec{n}\otimes \vec{n}\right) \colon \nabla B.
	\end{align*}
	Adding these terms to the right-hand side of the previous inequality yields
	\begin{align*}
		\ddt E[\chi | \xi]
		& \leq -\int_{\partial^\ast \Omega}
		\left( 
			V^2 +V \left(\nabla \cdot \xi\right) -V\left(\vec{n}\cdot B\right)
		\right)
		+ \int_{\partial^\ast \Omega} (\nabla \cdot B)
		-\int_{\partial^\ast \Omega} \vec{n}\otimes \vec{n}  \colon \nabla  B\\
		& \quad - \int_{\partial^\ast \Omega} \partial_t \xi \cdot \vec{n}.
	\end{align*}
	We now write $B=\left(\xi\cdot B\right)\xi + \left(\Id - \xi \otimes \xi \right) B$, which we interpret as a decomposition of $B$ into ``normal'' and ``tangential'' parts.
	 Then we complete the squares, and add and subtract $\left( B\cdot \nabla \right)\xi + (\nabla B)^\mathsf{T} \xi$ to make the transport equation for $\xi$ appear. We obtain
	\begin{align}
	\label{eq:firstcompletionsquare}
	\notag
		\ddt E[\chi | \xi]
		\leq& -\frac12 \int_{\partial^\ast \Omega}
		\left( 
			\left( 
				V+\nabla \cdot \xi
			\right)^2
			+
			\left|
				V\vec{n} - \left(\xi\cdot B\right)\xi 
			\right|^2
		\right)
		\\\notag
		&+\frac12 \int_{\partial^\ast \Omega}
		\left(
			\left( 
				\nabla \cdot \xi
			\right)^2
			+
			|\xi|^2\left(
				\xi \cdot B
			\right)^2
		\right)
		+\int_{\partial^\ast \Omega} V\vec{n} \cdot  \left(\Id - \xi \otimes \xi \right) B 
		\\\notag
		&+\int_{\partial^\ast \Omega} \left(\nabla \cdot B\right)
		-\int_{\partial^\ast \Omega}   \vec{n}\otimes \vec{n} \colon  \nabla  B
		\\&\notag +\int_{\partial^\ast \Omega} \vec{n} \cdot \left( B \cdot \nabla \right) \xi
		+\int_{\partial^\ast \Omega}  \xi \cdot \left(\vec{n}\cdot \nabla\right) B
		\\
		&- \int_{\partial^\ast \Omega} \left(\partial_t \xi +\left( B\cdot \nabla \right)\xi + (\nabla B)^\mathsf{T} \xi\right)\cdot \vec{n},
	\end{align}
	where the second line collects precisely the terms left after completing the squares.
	
	By symmetry considerations, we have
	\begin{align*}
		0 
		&=  \int_\Omega \nabla \cdot 
		\left[
			\nabla \cdot
			\left(
				B\otimes \xi -\xi \otimes B
			\right)
		\right]
		\dx 
		= \int_{\partial^\ast \Omega} \left[ \nabla \cdot 
		\left(
			B\otimes \xi -\xi \otimes B
		\right) \right] \cdot \vec{n} \\
		& = \int_{\partial^\ast \Omega}
		\left[
			\left( \nabla \cdot \xi	\right) \vec{n} \cdot B
			- \left(\nabla \cdot B\right) \vec{n} \cdot \xi
			-\vec{n} \cdot \left( B\cdot \nabla \right)\xi
		\right],
	\end{align*}
	where for the second line we used $(\xi \cdot \nabla)B=0$.
	Now we use $|\xi | \leq 1$ to drop the prefactor $|\xi|^2$ of $(\xi\cdot B)^2$ 
	in the second right-hand side integral in inequality \eqref{eq:firstcompletionsquare}, 
	complete the square, add the above identity, and collect all terms involving $\nabla B$ to deduce
	\begin{align*}
		\ddt E[\chi | \xi ]
		& \leq -\frac12 \int_{\partial^\ast \Omega} 
		\left( 
		\left( 
		V+\nabla \cdot \xi
		\right)^2
		+
		\left|
		V\vec{n} - \left(\xi\cdot B\right)\xi 
		\right|^2
		\right)\\
		&\quad +\frac12 \int_{\partial^\ast \Omega}
		\left(
		\nabla \cdot \xi
		+
		\xi \cdot B
		\right)^2
		+ \int_{\partial^\ast \Omega} \left(\nabla \cdot \xi \right) \left(\vec{n}-\xi\right) \cdot B\\
		& \quad +\int_{\partial^\ast \Omega} V\vec{n} \cdot  \left(\Id - \xi \otimes \xi \right) B 
		 +\int_{\partial^\ast \Omega} (1-\vec{n}\cdot \xi ) \left(\nabla\cdot  B\right) \\
		& \quad -\int_{\partial^\ast \Omega}  \left(\vec{n}-\xi\right) \otimes  \left(\vec{n}-\xi\right)\colon  \nabla  B + \int_{\partial^\ast \Omega} \xi \otimes \xi \colon \nabla B
		\\&\quad- \int_{\partial^\ast \Omega} \left(\partial_t \xi +\left( B\cdot \nabla \right)\xi + (\nabla B)^\mathsf{T} \xi\right)\cdot \vec{n}.
	\end{align*}
	Once more, we decompose $B$ into ``tangential'' and ``normal'' components with respect to $\xi$ and manipulate the last integral to  finally arrive at the entropy dissipation inequality
	\begin{align*}
		\ddt E[\chi | \xi ]
		\leq& -\frac12 \int_{\partial^\ast \Omega}
		\left( 
		\left( 
		V+\nabla \cdot \xi
		\right)^2
		+
		\left|
		V\vec{n} - \left(\xi\cdot B\right)\xi 
		\right|^2
		\right)\\
		&+\frac12 \int_{\partial^\ast \Omega}
		\left(
		\nabla \cdot \xi
		+
		\xi \cdot B
		\right)^2
		+ \int_{\partial^\ast \Omega} \left(\nabla \cdot \xi \right) \left(\vec{n}\cdot \xi - 1\right) \left(\xi \cdot B\right)\\
		&+\int_{\partial^\ast \Omega} \left(\nabla \cdot \xi + V \right) \vec{n} \cdot  \left(\Id - \xi \otimes \xi \right) B \\
		&+\int_{\partial^\ast \Omega} (1-\vec{n}\cdot \xi ) \left(\nabla\cdot  B\right) 
		-\int_{\partial^\ast \Omega}   \left(\vec{n}-\xi\right) \otimes  \left(\vec{n}-\xi\right) \colon  \nabla  B\\
		&- \int_{\partial^\ast \Omega} \left(\partial_t \xi +\left( B\cdot \nabla \right)\xi + (\nabla B)^\mathsf{T} \xi\right)\cdot (\vec{n}-\xi)\\
		& - \int_{\partial^\ast \Omega} \left(\partial_t \xi +\left( B\cdot \nabla \right)\xi \right)\cdot \xi.
	\end{align*}
	
	 Now let us briefly argue term-by-term that the right-hand side can be controlled by the 
	relative entropy $E[\chi|\xi]$. Combining the resulting estimate
	\begin{align*}
	\ddt E[\chi | \xi ] \leq C E[\chi|\xi]
	\end{align*}
	with a Gronwall argument and a subsequent bound~\eqref{eq:stabilityBulkError} for the bulk error, this would yield 
	Theorem~\ref{MainResult} for $P=2$.
	
	Indeed, thanks to \eqref{eq:Banddivxi2phase}, 
	the first integrand in the second line is quadratic in $\dist(\cdot,{\bar{I}})$; thus, this integral is controlled by the relative entropy due to \eqref{EntropyControlsDistance}. The second integral of the second line is controlled by the relative entropy since $\nabla \xi$ and $B$ are bounded. To handle the third line, we use Cauchy-Schwarz and Young, and absorb $\int(\nabla \cdot \xi + V)^2$ in the first integral. The remaining 
	integral of $ | (\Id - \xi \otimes \xi) \vec{n}|^2 = |\vec{n} - (\xi \cdot \vec{n}) \xi|^2 
	\lesssim |\vec{n}-\xi|^2 + (1-\vec{n}\cdot \xi)^2$ is controlled by the relative entropy.
	Clearly, both terms in the fourth line are controlled by the relative entropy.
	 Finally, the integrals 
	in the fifth and sixth lines are of the order $\int_{\partial^\ast \Omega} \big(|\vec{n}-\xi|^2 + \dist^2(\cdot,\bar{I})\wedge 1\big)$ due to \eqref{eq:xitransport2phase} and the 
	factor $\vec{n}-\xi$, and \eqref{eq:modxiquadratic}, respectively.

\subsection{Idea of proof for a triple junction}

The second model case is given by a triple junction, say, with equal surface tensions. To illustrate the additional difficulties, we also present the idea of our proof in this case. However, we restrict ourselves to the case $d=2$.

We denote the phases of the \emph{weak} solution by 
$\Omega_1$, $\Omega_2$, and $\Omega_3$ with characteristic functions $\chi_1,$ $\chi_2$, and $\chi_3$.
To simplify notation, we identify indices if they are equivalent mod $3$, i.\,e., we define 
$\chi_{4}:=\chi_1$, $\chi_5:=\chi_2$, $\chi_0:=\chi_3$, and so on. 
Following the notation of Proposition \ref{PropositionRelativeEntropyInequality}, we denote the normal 
vector of the interface $I_{i,i+1}=\partial^* \Omega_i\cap \partial^* \Omega_{i+1}$ between phases $i$ and $i+1$ for $i=1,2,3$ in the weak solution by
\begin{align*}
\vec{n}_{i,i+1}&:=\frac{\mathrm{d}\nabla \chi_{i+1}}{\mathrm{d}|\nabla \chi_{i+1}|}
=-\frac{\mathrm{d}\nabla \chi_i}{\mathrm{d}|\nabla \chi_i|}\quad\quad\mathcal{H}^{1}\text{-a.\,e.\ on }\partial^* \Omega_i \cap \partial^* \Omega_{i+1}.
\end{align*}
The normal velocity of $I_{i,i+1}$, denoted by $V_i$, is characterized by the distributional 
identity $\partial_t \chi_i = V_i |\nabla \chi_i|$.
Furthermore, we will consider its restriction $V_{i,i+1}:= V_i|_{I_{i,i+1}}$ to the interface $I_{i,i+1}$ together with the symmetry condition $V_{i+1,i} := - V_{i,i+1}$.
As before, the corresponding quantities in the \emph{calibrated}
solution will be indicated by an additional bar on top of the quantity, i.e., for example 
$\bar \chi_i$ for the indicator function of the corresponding phases, $\bar{\vec{n}}_{i,i+1}$ 
for the corresponding normal, and so on.

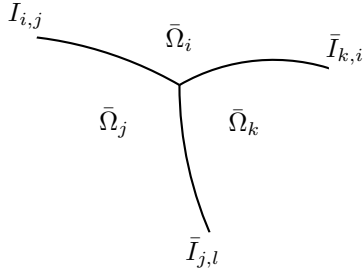
\begin{figure}
	\begin{tikzpicture}[scale=2.5]
		\begin{scope}
			\clip (0,0) circle [radius=.8];
			
			\draw[thick] (0,0) arc (60:90:2);
			\draw[thick] (0,0) arc (180:205:2);
			\draw[thick] (0,0) arc (120:70:1);
			
		\end{scope}
		
			\node at (90:.25){${\bar{\Omega}}_i$};
			\node at (210:.4){${\bar{\Omega}}_j$};
			\node at (330:.4){${\bar{\Omega}}_k$};
			
			\node at (12:.9){${\bar{I}}_{k,i}$};
			\node at (155:.9){${\bar{I}}_{i,j}$};
			\node at (278:.9){${\bar{I}}_{j,l}$};
	\end{tikzpicture}
	\caption{Sketch of a triple junction. \label{fig:triple}}
\end{figure}

The first key step is to construct extensions $\xi_{i,i+1}$, $i=1,2,3$, 
of the unit normal vector field $\bar{\vec{n}}_{i,i+1}$ of the \emph{calibrated} interfaces ${\bar{I}}_{i,i+1}$. As in the case of a single interface, the extensions 
$\xi_{i,i+1}$ and the velocity field $B$ are constructed to have the following properties:
\begin{subequations}
\label{CalibrationIdentitities}
\begin{itemize}[leftmargin=0.7cm]
\item The time evolution of the vector fields $\xi_{i,i+1}$ is approximately described 
by transport along the flow of the velocity field $B$. More precisely, for the vector field $B$ we have for $i=1,2,3$ that
\begin{align}
\quad\quad\quad~
\partial_t \xi_{i,i+1} = -(B\cdot \nabla)\xi_{i,i+1} - (\nabla B)^\mathsf{T}\xi_{i,i+1} 
+ O(\dist(\cdot,{\bar{I}}_{i,i+1})).
\end{align}
\item On each interface ${\bar{I}}_{i,i+1}$, $i=1,2,3$, of the calibrated solution, the normal part of the velocity field $B$ must satisfy $\bar{\vec{n}}_{i,i+1}\cdot B = \bar H_{i,i+1} := -\nabla^{\tan} \cdot \bar{\vec{n}}_{i,i+1}$, where $\bar H_{i,i+1}$ is the scalar mean curvature of ${\bar{I}}_{i,i+1}$. We strengthen this identity to approximately
hold even away from the interface, in form of
\begin{align}
\xi_{i,i+1}\cdot B = -\nabla \cdot \xi_{i,i+1} + O(\dist(\cdot,{\bar{I}}_{i,i+1}))
\quad\quad\text{for }i=1,2,3.
\end{align}
\item The vector fields $\xi_{i,i+1}$ have at most unit length $|\xi_{i,i+1}|\leq 1$.
\item The length of the vector fields $\xi_{i,i+1}$ is advected with the flow of $B$ to higher order
\begin{align}
\quad\quad\quad~
\partial_t |\xi_{i,i+1}|^2 = -(B\cdot \nabla)|\xi_{i,i+1}|^2 
+ O\big(\dist^2(\cdot,{\bar{I}}_{i,i+1})\big)\quad\quad\text{for }i=1,2,3.
\end{align}
\end{itemize}
\end{subequations}

The new aspect of a triple junction as opposed to a single interface is that one also has to extend the normal of an interface to locations where a different interface may be closer.
To this end, we turn to Herring's angle condition \eqref{HerringAngleCondition}, which in our case of equal surface tensions says 
that the three interfaces must meet at the triple junction to form equal angles of $120^\circ$ each, and 
require it to hold throughout the domain in the sense that
\begin{align}\label{eq:herringthrougout}
\sum_{i=1}^3 \xi_{i,i+1}(x,t)=0\quad\quad\text{for all }x,t.
\end{align}

Furthermore, note carefully that we only define a single extension $B$ of the velocity field, and that 
$B$ is not necessarily a normal vector field on each interface ${\bar{I}}_{i,i+1}$:
Indeed, we expect the triple junction $\tj(t)$ to move according to $\ddt p = B(p(t),t)$, so that not 
allowing for tangential components would pin the triple junction in space.
It turns out that in addition to Herring's angle condition, which we take to be of first order, we require higher-order compatibility conditions of the interfaces at the triple junction.
For instance, in part iv) of Definition~\ref{DefinitionRegularPartition} we have already seen that the second-order condition $H_{1,2}(p(t),t)+H_{2,3}(p(t),t)+H_{3,1}(p(t),t)=0$ is equivalent to the existence of the vector $B(p(t),t)$.

To construct the extensions $\xi_{i,i+1}$ of the normal vector fields 
$\vec{\bar n}_{i,i+1}$, $i=1,2,3$, we first partition space into six wedge-shaped sets around the triple junction:
Three contain one strong interface each, while the remaining three wedges lie entirely within a single phase, see Figure~\ref{fig:triple_wedges}.
On the mixed phase wedges, we first extend the corresponding normal by an expansion ansatz, see Figure~\ref{fig:triple_extension}, and then define the remaining vector 
fields to satisfy the identity \eqref{eq:herringthrougout} by $120^\circ$ rotations of the ansatz, see Figure~\ref{fig:triple_rotation}.
On the single phase wedges, we will interpolate between the competing definitions of the two adjacent mixed phase wedges.

\begin{figure}
	\centering
     \subcaptionbox{\label{fig:triple_wedges}}{
	  \centering
	  \begin{tikzpicture}[scale=3]
	  	\begin{scope}
			\clip (0,0) circle [radius=.8];
			\fill[pattern=north west lines,pattern color=blue,opacity=.2] (120:.8) -- (1,1.3) -- (1,-1)--  (300:.95) -- (120:.8);
			\fill[pattern=north east lines,pattern color=red,opacity=.2] (60:.8) -- (-1,1.3) -- (-1,-1)--  (240:.95) -- (60:.8);
			
			\foreach \i in {0,...,35}{
				\draw[opacity=.05] (-1,-\i*0.025) -- (1,-\i*0.025);
			};

		\draw[very thick,opacity=.6,color=red] (0,0) arc (60:90:2);
		\draw[very thick,opacity=.3] (0,0) arc (180:205:2);
		\draw[very thick,color=blue] (0,0) arc (120:70:1);

		\draw[color=red,opacity=.4] (0,0) -- (60:.8);
		\draw[color=blue,opacity=0.5] (0,0) -- (120:.8);
		\draw[opacity=.1]  (0,0) -- (180:1);
		\draw[color=red,opacity=.4] (0,0) -- (240:.95);
		\draw[color=blue,opacity=0.5] (0,0) -- (300:.95);
		\draw[opacity=.1] (0,0) -- (0:.85);
		
		\end{scope};
		
		\node at (10:.9){${\bar{I}}_{k,i}$};
		\node at (168:.9){${\bar{I}}_{i,j}$};
		\node at (290:.9){${\bar{I}}_{j,k}$};

		\node at (260:.9){$W_{j,k}$};
		\node at (330:.9){$W_{k}$};
		\node at (30:.9){$W_{k,i}$};
		\node at (90:.9){$W_{i}$};
		\node at (145:.9){$W_{i,j}$};
		\node at (210:.9){$W_{j}$};
	\end{tikzpicture}
     }
     \subcaptionbox{\label{fig:triple_extension}}{
	  \centering
% 	  \vspace{-\baselineskip}
	 	\begin{tikzpicture}[scale=3]
		\begin{scope}
			\clip (0,0) circle [radius=.8];
			\draw[thick,opacity=.6,color=red] (0,0) arc (60:90:2);
			\draw[thick,opacity=.3] (0,0) arc (180:205:2);
			\draw[thick,color=blue] (0,0) arc (120:70:1);
		
			\draw[color=red,opacity=.6] (0,0) -- (60:.8);
			\draw[color=blue] (0,0) -- (120:.8);
			\draw[opacity=.3]  (0,0) -- (180:1);
			\draw[color=red,opacity=.6] (0,0) -- (240:.95);
			\draw[color=blue] (0,0) -- (300:.95);
			\draw[opacity=.3] (0,0) -- (0:.85);
			
		\end{scope}
		
%			\node at (10:.9){$\ths{\bar{I}}_{k,i}$};
%			\node at (173:1){$\ths{\bar{I}}_{i,j}$};
%			\node at (293:.9){$\ths{\bar{I}}_{j,k}$};
		
%			\node at (260:.9){$W_{j,k}$};
%			\node at (320:.9){$W_{k}$};
			\node at (40:.9){$W_{k,i}$};
			\node at (90:.9){$W_{i}$};
			\node at (135:.9){$W_{i,j}$};
%			\node at (210:.9){$W_{j}$};

			\foreach \i in {0,...,9}
			{
				\foreach \j in {-2,...,2}
				{
					\begin{scope}[shift={($(300:1)+(120 - 5*\i :1)+(120-5*\i : .13*\j)$) }]
						\draw[-{Latex[length=1mm]},color=blue]  (0,0) -- ($(120-5*\i - 10*\j - 1.7\j*\i:.1)$);
					\end{scope}
				}
			}

			\foreach \i in {0,...,8}
			{
				\foreach \j in {-2,...,2}
				{
					\begin{scope}[shift={($(240:2)+(60 + 2.7*\i :2) + (60+2.7*\i: .13*\j)$) }]
						\draw[-{Latex[length=1mm]},opacity=.6,color=red]  (0,0) -- ($(240+2.7*\i - 10*\j + 1.7*\j*\i  :.1)$);
					\end{scope}
				}
			}
	\end{tikzpicture}
     }
     \subcaptionbox{\label{fig:triple_rotation}}{
	  \centering
% 	  \vspace{-\baselineskip}
	 	\begin{tikzpicture}[scale=3]
	 	\begin{scope}
	 		\clip (0,0)  circle [radius=.8];
		
			\draw[thick,opacity=.6,color=red] (0,0) arc (60:90:2);
			\draw[thick,opacity=.3] (0,0) arc (180:205:2);
			\draw[thick,color=blue] (0,0) arc (120:70:1);
		
			\draw[color=red,opacity=.6] (0,0) -- (60:.8);
			\draw[color=blue] (0,0) -- (120:.8);
			\draw[opacity=.3]  (0,0) -- (180:1);
			\draw[color=red,opacity=.6] (0,0) -- (240:.95);
			\draw[color=blue] (0,0) -- (300:.95);
			\draw[opacity=.3] (0,0) -- (0:.85);
		\end{scope}
		
%			\node at (10:.9){$\ths{\bar{I}}_{k,i}$};
%			\node at (173:1){$\ths{\bar{I}}_{i,j}$};
%			\node at (293:.9){$\ths{\bar{I}}_{j,k}$};
		
%			\node at (260:.9){$W_{j,k}$};
%			\node at (320:.9){$W_{k}$};
			\node at (40:.9){$W_{k,i}$};
			\node at (90:.9){$W_{i}$};
			\node at (135:.9){$W_{i,j}$};
%			\node at (210:.9){$W_{j}$};

			\foreach \i in {0,...,9}
			{
				\foreach \j in {-2,...,2}
				{
					\begin{scope}[shift={($(300:1)+(120 - 5*\i :1)+(120-5*\i : .13*\j)$) }]
						\draw[-{Latex[length=1mm]},color=blue]  (0,0) -- ($(120-5*\i - 10*\j - 1.7\j*\i:.1)$);
					\end{scope}
				}
			}

			\foreach \i in {0,...,8}
			{
				\foreach \j in {-2,...,2}
				{
					\begin{scope}[shift={($(240:2)+(60 + 2.7*\i :2) + (60+2.7*\i: .13*\j)$) }]
						\draw[-{Latex[length=1mm]},opacity=.6,color=red]  (0,0) -- ($(120+2.7*\i - 10*\j + 1.7*\j*\i  :.1)$);
					\end{scope}
				}
			}
	\end{tikzpicture}
     }
     \caption{a) The gray, horizontally hatched domain is $\mathbb{H}_{j,k}$, the region 
		hatched in red from the bottom left to the top right is $\mathbb{H}_{i,j}$, and 
		$\mathbb{H}_{k,i}$ is shown hatched in blue from the top left to the bottom right. 
		The simply hatched regions indicate the wedges $W_{i,j}$, $W_{j,k}$ and $W_{k,i}$ 
		containing the interfaces ${\bar{I}}_{i,j}$, ${\bar{I}}_{j,k}$ and ${\bar{I}}_{k,i}$. 
		The interpolation wedges $W_i$, $W_j$ and $W_k$ are shown as doubly hatched regions. 
		b) Sketch of the initial extensions of $\bar{\vec{n}}_{k,i}$ in blue on the right and 
		$\bar{\vec{n}}_{i,j}$ in red on the left, defined on $W_{k,i}$ and $W_{i,j}$, as well 
		as the two respective neighboring interpolation wedges. c) The image shows the vector 
		field $\bar{\vec{n}}_{k,i}$ (in blue on the right) and the rotated vector field 
		$R \bar{\vec{n}}_{i,j}$ (in red on the left), where $R$ is the clockwise rotation by $120^\circ$.}
        \label{fig:triple_xi}
\end{figure}
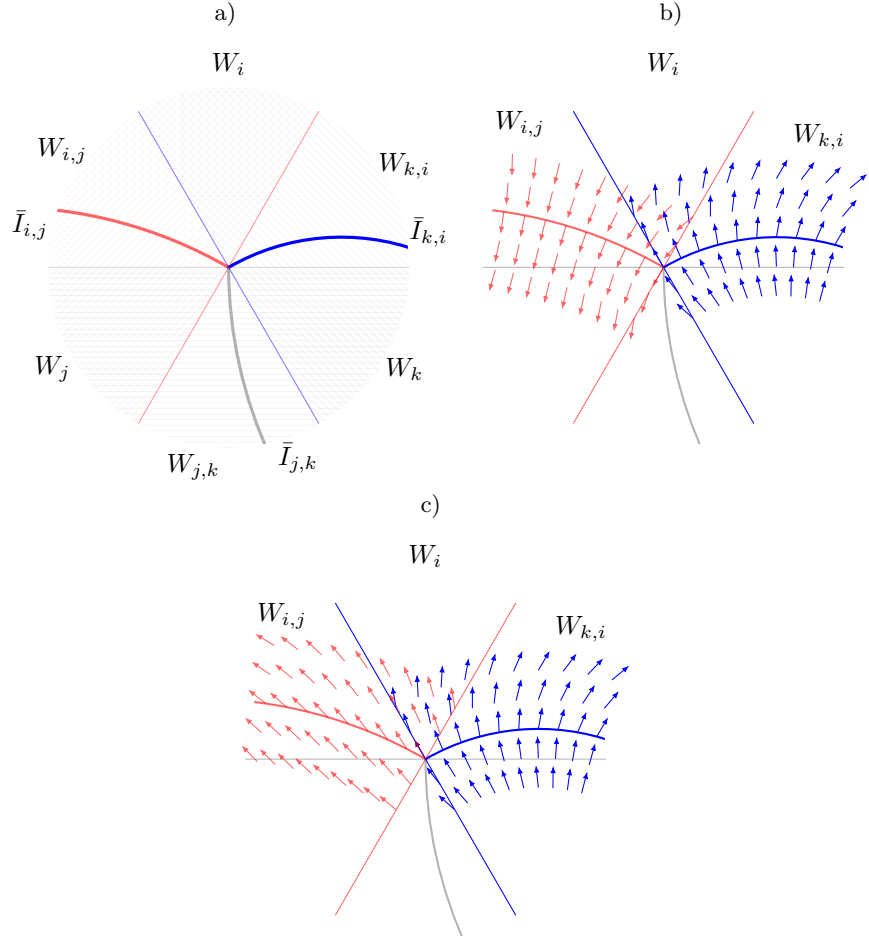

All rigorous discussions of compatibility will be deferred to Section \ref{SectionLocalConstructionsTriod}, and we will only describe the initial extension procedure here.
Let us fix $i=1,2,3$. In fact, it is more instructive to first extend the velocity field $B$ 
in the wedge-shaped neighborhood of the interface ${\bar{I}}_{i,i+1}$.
To this end, we recall $\bar \tau_{i,i+1} = J^{-1} \bar{\vec{n}}_{i,i+1}$ on $\bar{I}_{i,i+1}$ with $\smash{J = \big(\begin{smallmatrix} 0 & -1 \\ 1 & 0	\end{smallmatrix}\big)}$ from Definition~\ref{DefinitionRegularPartition} and use the extension ansatz
\begin{align*}
	B := \bar{H}_{i,i+1} \vec{\bar n}_{i,i+1} + \alpha_{i,i+1} \bar \tau_{i,i+1} +\beta_{i,i+1} s_{i,i+1} \bar \tau_{i,i+1},
\end{align*}
where $\vec{\bar{n}}_{i,i+1}$ and $\bar \tau_{i,i+1}$ are extended to be constant in the $\vec{\bar{n}}_{i,i+1}$-direction, $s_{i,i+1}$ is the signed distance function to ${\bar{I}}_{i,i+1}$ with the sign convention $\nabla s_{i,i+1} = \bar{\vec{n}}_{i,i+1}$, and $ \alpha_{i,i+1}$ and $\beta_{i,i+1}$ are still to be determined.
As $\ddt p (t)= B(p(t),t)$, it is reasonable that $\alpha_{i,i+1}(p(t),t) := \bar{\tau}_{i,i+1}(p(t),t)\cdot \ddt p(t)$ should be the tangential 
velocity of $p$ at the triple junction.
It turns out to be convenient to extend $\alpha_{i,i+1}$ along the interface ${\bar{I}}_{i,i+1}$
by means of the \emph{ordinary} differential equation $(\bar{\tau}_{i,i+1}\cdot\nabla)\alpha_{i,i+1}=H^2_{i,i+1}$.
In view of the third-order compatibility condition \ref{ThirdOrderCompDef}, the choice
$\beta_{i,i+1}(x,t): = (\bar \tau_{i,i+1} \cdot \nabla ) H_{i,i+1} + \alpha_{i,i+1} H_{i,i+1}$ 
for $x\in {\bar{I}}_{i,i+1}(t)$ is a good candidate to make $B$ independent of $i$.
To define $\alpha_{i,i+1}$ and $\beta_{i,i+1}$ away from the interface, we once again require them to be constant in $\bar{\vec{n}}_{i,i+1}$-direction.

To achieve the desired identitities \eqref{CalibrationIdentitities}, it turns out that one should construct the extension $\xi=\xi_{i,i+1}(x,t)$ of $\vec{\bar n}_{i,i+1}$ by an expansion ansatz of the form
\begin{equation}\label{xi3j_overview}
	\xi= \vec{\bar{n}} + \alpha s \bar \tau - \tfrac12 \alpha^2 s^2 \vec{\bar{n}} 
\end{equation}
where the functions $\alpha=\alpha_{i,i+1}(x,t)$ 
are as above and we dropped the indices $i,i+1$ for ease of notation.
Note that in particular $\xi_{i,i+1}=\bar{\vec{n}}_{i,i+1}$ on the interface ${\bar{I}}_{i,i+1}$ 
and that we allow for linear corrections of the tangential component as we move away from the interface, 
but only for quadratic corrections of the normal component of $\xi$.
In particular, this expansion ansatz will allow for zeroth and first order compatibility of the constructions of $\xi_{i,i+1}$ in the various wedges around the triple junction, facilitating a glueing procedure near the triple junction that preserves the identities \eqref{CalibrationIdentitities}.

We then measure the error between the weak solution $\chi$ and the calibrated solution $\bar \chi$ 
by means of the relative entropy functional
\begin{align*}
E[\chi|\xi](t)
:=\sum_{i=1}^3 \int_{I_{i,i+1}(t)} (1 - \vec{n}_{i,i+1} \cdot \xi_{i,i+1}) \,\mathrm{d}\mathcal{H}^{1}.
\end{align*}
Let us use the abbreviation 
$\sum_i = \sum_{i=1}^3$ for the summation over the three relevant indices.

As in the two-phase case, we only use two ingredients to evaluate the time evolution of the relative entropy: the energy dissipation inequality for the weak solution in the sharp form
\begin{align*}
\ddt \sum_i \int_{I_{i,i+1}} 1 \,\mathrm{d}\mathcal{H}^{1} \leq -\sum_{i=1}^3 \int_{I_{i,i+1}} V_{i,i+1}^2 \,\mathrm{d}\mathcal{H}^{1},
\end{align*}
and the weak formulation of the evolution equation of the indicator functions $\chi_i$
\begin{align*}
\ddt \int_{\Rd} \chi_i \varphi \dx = \int_{\partial^* \Omega_i} V_i \varphi \,\,\mathrm{d}\mathcal{H}^{1}  +\int_{\Rd} \chi_i \partial_t \varphi \dx
\end{align*}
for compactly supported, smooth $\varphi$.
In order to make use of the latter equation, we have to rewrite the contributions $\int_{I_{i,i+1}} \vec{n}_{i,i+1} \cdot \xi_{i,i+1}(x,t) $ as a volume integral. It turns out that the annihilation condition $\sum_i \xi_{i,i+1}(x,t)=0$ enables us to rewrite $\xi_{i,i+1}$ as
\begin{align}\label{overview frame xi}
\xi_{i,i+1} = \xi_i-\xi_{i+1}
\end{align}
by defining the vector field $\xi_{i}$ as $\xi_{i} :=\frac{1}{3} (\xi_{i,i+1}-\xi_{i-1,i})$.
Combining \eqref{overview frame xi} with the symmetry $\vec{n}_{i,i+1}=-\frac{\mathrm{d}\nabla \chi_i}{\mathrm{d}|\nabla \chi_i|}=\frac{\mathrm{d}\nabla \chi_{i+1}}{\mathrm{d}|\nabla \chi_{i+1}|}$ and the decomposition $\partial^* \Omega_{i} = I_{i-1,i} \cup I_{i,i+1}$, we rewrite the second term in the relative entropy as
\begin{align*}
	-\sum_i \int_{I_{i,i+1}}   \vec{n}_{i,i+1} \cdot \xi_{i,i+1} \,\mathrm{d}\mathcal{H}^{1}
	&=
	\sum_i \bigg(\int_{I_{i,i+1}} \xi_i\cdot \mathrm{d}\nabla \chi_i+\int_{I_{i,i+1}} \xi_{i+1}\cdot \mathrm{d} \nabla \chi_{i+1}\bigg)
	\\&
	=
	\sum_i \int_{\partial^* \Omega_i} \xi_i\cdot \mathrm{d}\nabla \chi_i
	\\&
	= -\sum_i \int_{\Rd} \chi_i (\nabla \cdot \xi_i) \dx.
\end{align*}
This indeed enables us to evaluate the time evolution of the relative entropy as
\begin{align*}
\ddt E[\chi|\xi]
\leq&
-\sum_i \int_{I_{i,i+1}} V_{i,i+1}^2\,\mathrm{d}\mathcal{H}^{1}
\\&
-\sum_i  \int_{\partial^* \Omega_i} V_i (\nabla \cdot \xi_i)\,\mathrm{d}\mathcal{H}^{1}
+\sum_i \int_{\partial^* \Omega_i} \partial_t \xi_i \cdot \mathrm{d} \nabla \chi_i \,\mathrm{d}\mathcal{H}^{1}.
\end{align*}
Arguing analogously to the previous computation in reverse order---that is, 
splitting the integrals into contributions $\partial^* \Omega_i\cap \partial^* \Omega_{i+1} = I_{i,i+1}$, 
using \eqref{overview frame xi} and the definitions of $\vec{n}_{i,i+1}$ and $V_{i,i+1}$---we obtain
\begin{align*}
\ddt E[\chi|\xi]
\leq
-\sum_i \int_{I_{i,i+1}} V_{i,i+1}^2\,\mathrm{d}\mathcal{H}^{1}
&-\sum_i  \int_{I_{i,i+1}} V_{i,i+1} (\nabla \cdot \xi_{i,i+1})\,\mathrm{d}\mathcal{H}^{1}
\\&
-\sum_i \int_{I_{i,i+1}} \partial_t \xi_{i,i+1} \cdot \vec{n}_{i,i+1}\,\mathrm{d}\mathcal{H}^{1}.
\end{align*}

Now we proceed as in the two-phase case in the previous section: The $\BV$ formulation of 
mean curvature flow in this three-phase setting reads
\begin{align*}
&\sum_i \int_{I_{i,i+1}} V_{i,i+1} \vec{n}_{i,i+1} \cdot B \,\mathrm{d}\mathcal{H}^{1}
=- \sum_i \int_{I_{i,i+1}} (\Id-\vec{n}_{i,i+1}\otimes \vec{n}_{i,i+1}) : \nabla B\,\mathrm{d}\mathcal{H}^{1}.
\end{align*}
Following precisely the same algebraic manipulations as in the two-phase case we obtain
\begin{align*}
		\ddt &E[\chi | \xi]
	\\
	\leq& -\frac12 \sum_i \int_{I_{i,i+1}} 
	\left( 
	\left( 
	V_{i,i+1}+\nabla \cdot \xi_{i,i+1}
	\right)^2
	+
	\left|
	V_{i,i+1}\vec{n}_{i,i+1} - \left(\xi_{i,i+1}\cdot B\right)\xi_{i,i+1}
	\right|^2
	\right)
	\,\mathrm{d}\mathcal{H}^{1}
	\\
	&+\frac12 \sum_i \int_{I_{i,i+1}}
	\left(
	\nabla \cdot \xi_{i,i+1}
	+
	\xi_{i,i+1} \cdot B
	\right)^2 \,\mathrm{d}\mathcal{H}^{1}
	\\&+ \sum_i \int_{I_{i,i+1}} \left(\nabla \cdot \xi_{i,i+1} \right) 
	\left(\vec{n}_{i,i+1}\cdot \xi_{i,i+1} - 1 \right) \left(\xi_{i,i+1} \cdot B\right) \,\mathrm{d}\mathcal{H}^{1}
	\\
	&+\sum_i\int_{I_{i,i+1}} \left(\nabla \cdot \xi_{i,i+1} + V_{i,i+1} \right) 
	\vec{n}_{i,i+1} \cdot  \left(\Id - \xi_{i,i+1} \otimes \xi_{i,i+1} \right) B\,\mathrm{d}\mathcal{H}^{1}
	 \\
	&+\sum_i\int_{I_{i,i+1}} (1-\vec{n}_{i,i+1}\cdot \xi_{i,i+1} ) \left(\nabla\cdot  B\right) \,\mathrm{d}\mathcal{H}^{1}
	\\&-\sum_i \int_{I_{i,i+1}}   \left(\vec{n}_{i,i+1}-\xi_{i,i+1}\right) \otimes  
	\left(\vec{n}_{i,i+1}-\xi_{i,i+1}\right) \colon  \nabla  B \,\mathrm{d}\mathcal{H}^{1}
	\\
	&-\sum_i \int_{I_{i,i+1}} \left(\partial_t \xi_{i,i+1} 
	+\left( B\cdot \nabla \right)\xi_{i,i+1} 
	+ (\nabla B)^\mathsf{T} \xi_{i,i+1}\right)\cdot (\vec{n}_{i,i+1}-\xi_{i,i+1})\,\mathrm{d}\mathcal{H}^{1}
	\\
	& - \sum_i\int_{I_{i,i+1}} \left(\partial_t \xi_{i,i+1} +\left( B\cdot \nabla \right)\xi_{i,i+1} \right)\cdot \xi_{i,i+1}\,\mathrm{d}\mathcal{H}^{1}.
\end{align*}
With this inequality at our disposal we can conclude as in the two-phase case.

\section{Stability of calibrated flows}
\label{SectionWeakStrongUniqueness}
This section is devoted to the proof of the stability properties of calibrated flows. 
In the next three subsections, we derive the relative entropy inequality 
Proposition~\ref{PropositionRelativeEntropyInequality} and the quantitative inclusion 
principle Theorem~\ref{TheoremUniqueness}.

\subsection{Relative entropy inequality: Proof of  Proposition~\ref{PropositionRelativeEntropyInequality}}
\label{SectionDerivationRelativeEntropyInequality}
We start with the proof of the relative entropy inequality for a BV solution 
$\chi=(\chi_1,\ldots,\chi_P)$ of multiphase mean curvature flow in the sense of Definition~\ref{DefinitionBVSolution}.
Recall the definition of the relative entropy functional $E[\chi | \xi]$ in \eqref{DefinitionRelativeEntropyFunctional}.

\begin{proof}[Proof of Proposition~\ref{PropositionRelativeEntropyInequality}]
In order to make use of the evolution equations \eqref{EvolutionPhasesBVSolution}
for the indicator functions $\chi_i$ of the BV solution,
we start by rewriting the interface error control of our relative entropy. 
Using the relation $\sigma_{i,j}\xi_{i,j}=\xi_i-\xi_j$ from Definition~\ref{DefinitionCalibrationGradientFlow}
of a gradient flow calibration, the symmetry relation $\vec{n}_{i,j}=-\vec{n}_{j,i}$,
the definition \eqref{UnitNormalsBVSolution} of the measure theoretic normal,
as well as the representation of the energy \eqref{eq:energy},
we obtain by an application of the generalized divergence theorem
\begin{align}\label{eq:SurfaceToVolume}
\nonumber
E[\chi | \xi](T)&=
\sum_{i,j=1,i\neq j}^P \sigma_{i,j} \int_{I_{i,j}(T)} 
1-\xi_{i,j}(\cdot,T) \cdot \vec{n}_{i,j}(\cdot,T) 
\,\mathrm{d}\mathcal{H}^{d-1}
\\&\nonumber
= E[\chi(\cdot,T)] - \sum_{i,j=1,i\neq j}^P \int_{I_{i,j}(T)} 
(\xi_i(\cdot,T){-}\xi_j(\cdot,T))\cdot \vec{n}_{i,j}(\cdot,T) 
\,\mathrm{d}\mathcal{H}^{d-1}
\\&\nonumber
= E[\chi(\cdot,T)] 
+ \sum_{i=1}^P\sum_{j=1,j\neq i}^P\int_{I_{i,j}(T)} 
\xi_i(\cdot,T)\cdot \frac{\nabla\chi_i(\cdot,T)}{|\nabla\chi_i(\cdot,T)|} 
\,\mathrm{d}\mathcal{H}^{d-1}
\\&~~~\nonumber
+ \sum_{j=1}^P\sum_{i=1,i\neq j}^P\int_{I_{i,j}(T)}
\xi_j(\cdot,T)\cdot \frac{\nabla\chi_j(\cdot,T)}{|\nabla\chi_j(\cdot,T)|} 
\,\mathrm{d}\mathcal{H}^{d-1}
\\&\nonumber
= E[\chi(\cdot,T)] + 2\sum_{i=1}^P \int_{\Rd} 
\xi_i(\cdot,T)\cdot \frac{\nabla\chi_i(\cdot,T)}{|\nabla\chi_i(\cdot,T)|} 
\,\mathrm{d}|\nabla\chi_i(\cdot,T)|
\\&
= E[\chi(\cdot,T)] - 2\sum_{i=1}^P \int_{\Rd} \chi_i(\cdot,T)
(\nabla\cdot\xi_i(\cdot,T))\dx.
\end{align}
This enables us to compute by the sharp energy dissipation inequality \eqref{EnergyDissipationInequalityBVSolution},
the evolution equations \eqref{EvolutionPhasesBVSolution} for the indicator functions $\chi_i$
of the BV solution, and the definition \eqref{NormalVelocitiesBVSolution} of the velocities $V_{i,j}$
for almost every $T\in [0,T']$
\begin{align*}
&E_{\interface}[\chi|\xi](T)
\\&
\leq E[\chi(\cdot,0)] - 2\sum_{i=1}^P \int_{\Rd} \chi_{0,i}
(\nabla\cdot\xi_i(\cdot,0))\dx
-\sum_{i,j=1,i\neq j}^P\sigma_{i,j}\int_0^T\int_{I_{i,j}(t)} 
|V_{i,j}|^2\,\mathrm{d}\mathcal{H}^{d-1}\dt
\\&~~~~
-2\sum_{i=1}^P \int_0^T\int_{\Rd}
\chi_i\partial_t(\nabla\cdot\xi_i)\dx\dt
-2\sum_{i=1}^P \int_0^T\int_{\Rd}
V_i(\nabla\cdot\xi_i)\,\mathrm{d}|\nabla\chi_i|\dt.
\end{align*}
The first two terms combine to $E_{\mathrm{interface}}[\chi|\bar\chi](0)$ using \eqref{eq:SurfaceToVolume}
in reverse order. We aim to rewrite the latter two terms back to surface integrals over the interfaces as well.
To this end, we argue analogously to the computation in \eqref{eq:SurfaceToVolume} but now in reverse order.
Using first the generalized divergence theorem, then splitting the integrals over the reduced boundaries of the phases 
into contributions over the interfaces $I_{i,j}=\partial^* \Omega_i\cap \partial^* \Omega_{j}$ 
by means of $\sigma_{i,j}\xi_{i,j}=\xi_{i}-\xi_{j}$ from Definition~\ref{DefinitionCalibrationGradientFlow}
of a gradient flow calibration, we obtain
\begin{align*}
-2\sum_{i=1}^P \int_0^T\int_{\Rd}\chi_i\partial_t(\nabla\cdot\xi_i)\dx\dt
&=2\sum_{i=1}^P \int_0^T\int_{\Rd}\frac{\nabla\chi_i}{|\nabla\chi_i|}\cdot\partial_t\xi_i
\,\mathrm{d}|\nabla\chi_i|\dt
\\&
=\sum_{i=1}^P\sum_{j=1,j\neq i}^P\int_0^T\int_{I_{i,j}(t)}
\frac{\nabla\chi_i}{|\nabla\chi_i|}\cdot\partial_t\xi_i
\,\mathrm{d}\mathcal{H}^{d-1}\dt
\\&~~~
+\sum_{j=1}^P\sum_{i=1,i\neq j}^P\int_0^T\int_{I_{i,j}(t)}
\frac{\nabla\chi_j}{|\nabla\chi_j|}\cdot\partial_t\xi_j
\,\mathrm{d}\mathcal{H}^{d-1}\dt
\\&
\stackrel{\eqref{UnitNormalsBVSolution}}{=}
-\sum_{i,j=1,i\neq j}^P\int_0^T\int_{I_{i,j}(t)}
\vec{n}_{i,j}\cdot\partial_t(\xi_i-\xi_j)\,\mathrm{d}\mathcal{H}^{d-1}\dt
\\&
=-\sum_{i,j=1,i\neq j}^P\sigma_{i,j}\int_0^T\int_{I_{i,j}(t)} 
\vec{n}_{i,j}\cdot\partial_t\xi_{i,j}\,\mathrm{d}\mathcal{H}^{d-1}\dt.
\end{align*}
The term incorporating the normal velocities is treated similarly.
In addition to the above ingredients, i.e., $\sigma_{i,j}\xi_{i,j}=\xi_{i}-\xi_{j}$ 
from Definition~\ref{DefinitionCalibrationGradientFlow} of a gradient flow calibration and
splitting the integrals over the reduced boundaries of the phases 
into contributions over the interfaces $I_{i,j}=\partial^* \Omega_i\cap \partial^* \Omega_{j}$, 
we also use that $V_{i,j}=-V_{j,i}$ on ${\bar{I}}_{i,j}$ together with definition 
\eqref{NormalVelocitiesBVSolution} to compute
\begin{align*}
-2\sum_{i=1}^P \int_0^T\int_{\Rd}
V_i(\nabla\cdot\xi_i)\,\mathrm{d}|\nabla\chi_i|\dt
&=-\sum_{i=1}^P\sum_{j=1,j\neq i}^P\int_0^T\int_{I_{i,j}(t)}
V_{i,j}(\nabla\cdot\xi_i)\,\mathrm{d}\mathcal{H}^{d-1}\dt
\\&~~~
+\sum_{j=1}^P\sum_{i=1,i\neq j}^P\int_0^T\int_{I_{i,j}(t)}
V_{i,j}(\nabla\cdot\xi_j)\,\mathrm{d}\mathcal{H}^{d-1}\dt
\\&
=-\sum_{i,j=1,i\neq j}^P\sigma_{i,j}\int_0^T\int_{I_{i,j}(t)} 
V_{i,j}(\nabla\cdot\xi_{i,j})\,\mathrm{d}\mathcal{H}^{d-1}\dt.
\end{align*}
Combining the last two identities, we obtain for almost every $T\in [0,T']$
\begin{align*}
&E_{\interface}[\chi|\xi](T)
\\&
\leq E_{\interface}[\chi|\xi](0)
-\sum_{i,j=1,i\neq j}^P\sigma_{i,j}\int_0^T\int_{I_{i,j}(t)} 
|V_{i,j}|^2\,\mathrm{d}\mathcal{H}^{d-1}\dt
\\&~~~~
-\sum_{i,j=1,i\neq j}^P\sigma_{i,j}\int_0^T\int_{I_{i,j}(t)} 
\vec{n}_{i,j}\cdot\partial_t\xi_{i,j}\,\mathrm{d}\mathcal{H}^{d-1}\dt
\\&~~~~
-\sum_{i,j=1,i\neq j}^P\sigma_{i,j}\int_0^T\int_{I_{i,j}(t)} 
V_{i,j}(\nabla\cdot\xi_{i,j})\,\mathrm{d}\mathcal{H}^{d-1}\dt.
\end{align*}
For the next step, we use the vector field $B$
as a test function in the BV formulation of mean curvature flow \eqref{BVFormulationMeanCurvature}. Adding the
resulting equation to the previous inequality, observing in the process that 
$V_i\frac{\nabla\chi_i}{|\nabla\chi_i|}=-V_{i,j}\vec{n}_{i,j}$ on $I_{i,j}$ 
due to \eqref{UnitNormalsBVSolution} and \eqref{NormalVelocitiesBVSolution},
as well as decomposing $B=(\mathrm{Id}{-}\xi_{i,j}\otimes\xi_{i,j})B
+(B\cdot\xi_{i,j})\xi_{i,j}$, we obtain
\begin{align}\label{DerivationRelEntropyInequ1}
\nonumber
&E_{\interface}[\chi|\xi](T)
\\&
\leq E_{\interface}[\chi|\xi](0)
-\sum_{i,j=1,i\neq j}^P\sigma_{i,j}\int_0^T\int_{I_{i,j}(t)} 
|V_{i,j}|^2\,\mathrm{d}\mathcal{H}^{d-1}\dt
\\&~~~~\nonumber
+\sum_{i,j=1,i\neq j}^P\sigma_{i,j}\int_0^T\int_{I_{i,j}(t)} 
(B\cdot\xi_{i,j})\xi_{i,j}\cdot V_{i,j}\vec{n}_{i,j}
\,\mathrm{d}\mathcal{H}^{d-1}\dt
\\&~~~~\nonumber
-\sum_{i,j=1,i\neq j}^P\sigma_{i,j}\int_0^T\int_{I_{i,j}(t)} 
V_{i,j}(\nabla\cdot\xi_{i,j})\,\mathrm{d}\mathcal{H}^{d-1}\dt
\\&~~~~\nonumber
+\sum_{i,j=1,i\neq j}^P\sigma_{i,j}\int_0^T\int_{I_{i,j}(t)} 
(\mathrm{Id}{-}\xi_{i,j}\otimes\xi_{i,j})B\cdot V_{i,j}\vec{n}_{i,j}
\,\mathrm{d}\mathcal{H}^{d-1}\dt
\\&~~~~\nonumber
+\sum_{i,j=1,i\neq j}^P\sigma_{i,j}\int_0^T\int_{I_{i,j}(t)} 
(\nabla\cdot B)\,\mathrm{d}\mathcal{H}^{d-1}\dt
\\&~~~~\nonumber
-\sum_{i,j=1,i\neq j}^P\sigma_{i,j}\int_0^T\int_{I_{i,j}(t)} 
\vec{n}_{i,j}\otimes\vec{n}_{i,j}:\nabla B\,\mathrm{d}\mathcal{H}^{d-1}\dt
\\&~~~~\nonumber
-\sum_{i,j=1,i\neq j}^P\sigma_{i,j}\int_0^T\int_{I_{i,j}(t)} 
\vec{n}_{i,j}\cdot\partial_t\xi_{i,j}\,\mathrm{d}\mathcal{H}^{d-1}\dt,
\end{align}
which holds for almost every $T\in [0,T']$.
In order to obtain the dissipation term on the left hand side of the
relative entropy inequality \eqref{RelativeEntropyInequality}, we complete
the squares yielding for almost every $T\in [0,T']$
\begin{align}\label{DerivationRelEntropyInequ2}
\nonumber
&-\sum_{i,j=1,i\neq j}^P\sigma_{i,j}\int_0^T\int_{I_{i,j}(t)} 
|V_{i,j}|^2\,\mathrm{d}\mathcal{H}^{d-1}\dt
\\&\nonumber
+\sum_{i,j=1,i\neq j}^P\sigma_{i,j}\int_0^T\int_{I_{i,j}(t)} 
(B\cdot\xi_{i,j})\xi_{i,j}\cdot V_{i,j}\vec{n}_{i,j}
\,\mathrm{d}\mathcal{H}^{d-1}\dt
\\&
-\sum_{i,j=1,i\neq j}^P\sigma_{i,j}\int_0^T\int_{I_{i,j}(t)} 
V_{i,j}(\nabla\cdot\xi_{i,j})\,\mathrm{d}\mathcal{H}^{d-1}\dt
\\&\nonumber
= -\sum_{i,j=1,i\neq j}^{P}\sigma_{i,j}
\int_0^T\int_{I_{i,j}(t)}\Big(
\frac{1}{2}|V_{i,j}{+}\nabla\cdot\xi_{i,j}|^2
+\frac{1}{2}|V_{i,j}\vec{n}_{i,j}{-}(B\cdot\xi_{i,j})\xi_{i,j}|^2\Big)
\,\mathrm{d}\mathcal{H}^{d-1}\dt
\\&~~~~\nonumber
+ \sum_{i,j=1,i\neq j}^{P}\sigma_{i,j}\int_0^T\int_{I_{i,j}(t)}
\Big(\frac{1}{2}|\nabla\cdot\xi_{i,j}|^2+\frac{1}{2}|(B\cdot\xi_{i,j})\xi_{i,j}|^2\Big)
\,\mathrm{d}\mathcal{H}^{d-1}\dt.
\end{align}
Furthermore, on the one hand, adding and subtracting $(B\cdot\nabla)\xi_{i,j}+(\nabla B)^\mathsf{T}\xi_{i,j}$ yields
\begin{align}\label{DerivationRelEntropyInequ3}
\nonumber
&\sum_{i,j=1,i\neq j}^P\sigma_{i,j}\int_0^T\int_{I_{i,j}(t)} 
(\nabla\cdot B)\,\mathrm{d}\mathcal{H}^{d-1}\dt
\\&\nonumber
-\sum_{i,j=1,i\neq j}^P\sigma_{i,j}\int_0^T\int_{I_{i,j}(t)} 
\vec{n}_{i,j}\otimes\vec{n}_{i,j}:\nabla B\,\mathrm{d}\mathcal{H}^{d-1}\dt
\\&\nonumber
-\sum_{i,j=1,i\neq j}^P\sigma_{i,j}\int_0^T\int_{I_{i,j}(t)} 
\vec{n}_{i,j}\cdot\partial_t\xi_{i,j}\,\mathrm{d}\mathcal{H}^{d-1}\dt
\\&
= \sum_{i,j=1,i\neq j}^P\sigma_{i,j}\int_0^T\int_{I_{i,j}(t)} 
(\nabla\cdot B)\,\mathrm{d}\mathcal{H}^{d-1}\dt
\\&~~~~\nonumber
-\sum_{i,j=1,i\neq j}^P\sigma_{i,j}\int_0^T\int_{I_{i,j}(t)} 
(\vec{n}_{i,j}-\xi_{i,j})\cdot(\vec{n}_{i,j}\cdot\nabla)B\,\mathrm{d}\mathcal{H}^{d-1}\dt
\\&~~~~\nonumber
+\sum_{i,j=1,i\neq j}^P\sigma_{i,j}\int_0^T\int_{I_{i,j}(t)}
\big((B\cdot\nabla)\xi_{i,j}\big)\cdot\vec{n}_{i,j}\,\mathrm{d}\mathcal{H}^{d-1}\dt
\\&~~~~\nonumber
-\sum_{i,j=1,i\neq j}^{P}\sigma_{i,j}\int_0^T\int_{I_{i,j}(t)}
\big(\partial_t\xi_{i,j}{+}(B\cdot\nabla)\xi_{i,j}{+}(\nabla B)^\mathsf{T}\xi_{i,j}\big)\cdot\vec{n}_{i,j}
\,\mathrm{d}\mathcal{H}^{d-1}\dt
\end{align}
for almost every $T\in [0,T']$. On the other hand, we may exploit 
symmetry to obtain (relying again on the by now routine fact that one can
switch back and forth between certain volume integrals and surface integrals over the individual interfaces
by means of $\sigma_{i,j}\xi_{i,j}=\xi_i-\xi_j$ from Definition~\ref{DefinitionCalibrationGradientFlow}
of a gradient flow calibration, the symmetry relation $\vec{n}_{i,j}=-\vec{n}_{j,i}$ and
the definition \eqref{UnitNormalsBVSolution})
\begin{align*}
&\sum_{i,j=1,i\neq j}^{P}\sigma_{i,j}\int_0^T\int_{I_{i,j}(t)}
\vec{n}_{i,j}\cdot\big(\nabla\cdot(B\otimes\xi_{i,j})\big)
\,\mathrm{d}\mathcal{H}^{d-1}\dt
\\&
=\sum_{i,j=1,i\neq j}^{P}\int_0^T\int_{I_{i,j}(t)}
\vec{n}_{i,j}\cdot\big(\nabla\cdot(B\otimes(\xi_{i}{-}\xi_j))\big)
\,\mathrm{d}\mathcal{H}^{d-1}\dt
\\&
=-2\sum_{i=1}^P\int_0^T\int_{\Rd}\frac{\nabla\chi_i}{|\nabla\chi_i|}
\cdot\big(\nabla\cdot(B\otimes\xi_{i})\big)\,\mathrm{d}\mathcal{H}^{d-1}\dt
\\&
=2\sum_{i=1}^P\int_0^T\int_{\Rd}\chi_i\nabla\cdot
\big(\nabla\cdot(B\otimes\xi_{i})\big)\dx\dt
\\&
=2\sum_{i=1}^P\int_0^T\int_{\Rd}\chi_i\nabla\cdot
\big(\nabla\cdot(\xi_{i}\otimes B)\big)\dx\dt
\\&
=\sum_{i,j=1,i\neq j}^{P}\sigma_{i,j}\int_0^T\int_{I_{i,j}(t)}
\vec{n}_{i,j}\cdot\big(\nabla\cdot(\xi_{i,j}\otimes B)\big)
\,\mathrm{d}\mathcal{H}^{d-1}\dt.
\end{align*}
Because of this identity, we can now compute
\begin{align}
\label{DerivationRelEntropyInequ4}
\nonumber
0 &= \sum_{i,j=1,i\neq j}^{P}\sigma_{i,j}\int_0^T\int_{I_{i,j}(t)}
\vec{n}_{i,j}\cdot\big(\nabla\cdot(B\otimes\xi_{i,j}-\xi_{i,j}\otimes B)\big)
\,\mathrm{d}\mathcal{H}^{d-1}\dt
\\&
= \sum_{i,j=1,i\neq j}^{P}\sigma_{i,j}\int_0^T\int_{I_{i,j}(t)}
(\nabla\cdot\xi_{i,j})B\cdot\vec{n}_{i,j}
\,\mathrm{d}\mathcal{H}^{d-1}\dt
\\&~~~~\nonumber
+ \sum_{i,j=1,i\neq j}^{P}\sigma_{i,j}\int_0^T\int_{I_{i,j}(t)}
\vec{n}_{i,j}\cdot(\xi_{i,j}\cdot\nabla) B
\,\mathrm{d}\mathcal{H}^{d-1}\dt
\\&~~~~\nonumber
- \sum_{i,j=1,i\neq j}^{P}\sigma_{i,j}\int_0^T\int_{I_{i,j}(t)}
\vec{n}_{i,j}\cdot(B\cdot\nabla)\xi_{i,j}
\,\mathrm{d}\mathcal{H}^{d-1}\dt
\\&~~~~\nonumber
- \sum_{i,j=1,i\neq j}^{P}\sigma_{i,j}\int_0^T\int_{I_{i,j}(t)}
(\nabla\cdot B)\xi_{i,j}\cdot\vec{n}_{i,j}	
\,\mathrm{d}\mathcal{H}^{d-1}\dt.
\end{align}
Making use of the identities \eqref{DerivationRelEntropyInequ2} and \eqref{DerivationRelEntropyInequ3}
in the inequality \eqref{DerivationRelEntropyInequ1} as well as adding \eqref{DerivationRelEntropyInequ4}
to the right hand side of \eqref{DerivationRelEntropyInequ1}, we arrive at the following bound for
the time evolution of the interface error control of our relative entropy functional
\begin{align}
\nonumber
&E_{\interface}[\chi|\xi](T)
\\&\nonumber
+\sum_{i,j=1,i\neq j}^{P}\sigma_{i,j}
\int_0^T\int_{I_{i,j}(t)}\Big(
\frac{1}{2}|V_{i,j}{+}\nabla\cdot\xi_{i,j}|^2
+\frac{1}{2}|V_{i,j}\vec{n}_{i,j}{-}(B\cdot\xi_{i,j})\xi_{i,j}|^2\Big)
\,\mathrm{d}\mathcal{H}^{d-1}\dt
\\&\label{eq:prefinalRelEntropyInequality}
\leq E_{\interface}[\chi|\xi](0)
\\&~~~\nonumber
+ \sum_{i,j=1,i\neq j}^{P}\sigma_{i,j}\int_0^T\int_{I_{i,j}(t)}
\Big(\frac{1}{2}|\nabla\cdot\xi_{i,j}|^2+\frac{1}{2}|(B\cdot\xi_{i,j})\xi_{i,j}|^2\Big)
\,\mathrm{d}\mathcal{H}^{d-1}\dt
\\&~~~\nonumber
+\sum_{i,j=1,i\neq j}^{P}\sigma_{i,j}\int_0^T\int_{I_{i,j}(t)}
(\nabla\cdot\xi_{i,j})B\cdot\vec{n}_{i,j}
\,\mathrm{d}\mathcal{H}^{d-1}\dt
\\&~~~\nonumber
+\sum_{i,j=1,i\neq j}^P\sigma_{i,j}\int_0^T\int_{I_{i,j}(t)} 
(\mathrm{Id}{-}\xi_{i,j}\otimes\xi_{i,j})B\cdot V_{i,j}\vec{n}_{i,j}
\,\mathrm{d}\mathcal{H}^{d-1}\dt
\\&~~~\nonumber
+\sum_{i,j=1,i\neq j}^{P}\sigma_{i,j}\int_0^T\int_{I_{i,j}(t)}
(\nabla\cdot B)(1-\xi_{i,j}\cdot\vec{n}_{i,j})	
\,\mathrm{d}\mathcal{H}^{d-1}\dt
\\&~~~\nonumber
-\sum_{i,j=1,i\neq j}^P\sigma_{i,j}\int_0^T\int_{I_{i,j}(t)} 
(\vec{n}_{i,j}-\xi_{i,j})\otimes \vec{n}_{i,j}:\nabla B\,\mathrm{d}\mathcal{H}^{d-1}\dt
\\&~~~\nonumber
+\sum_{i,j=1,i\neq j}^P\sigma_{i,j}\int_0^T\int_{I_{i,j}(t)} 
\vec{n}_{i,j}\otimes \xi_{i,j}:\nabla B\,\mathrm{d}\mathcal{H}^{d-1}\dt
\\&~~~\nonumber
-\sum_{i,j=1,i\neq j}^{P}\sigma_{i,j}\int_0^T\int_{I_{i,j}(t)}
\big(\partial_t\xi_{i,j}{+}(B\cdot\nabla)\xi_{i,j}{+}(\nabla B)^\mathsf{T}\xi_{i,j}\big)\cdot\vec{n}_{i,j}
\,\mathrm{d}\mathcal{H}^{d-1}\dt,
\end{align}
which is valid for almost every $T\in [0,T']$. Completing squares and adding zero
yields for the second, third and fourth term on the right hand side of \eqref{eq:prefinalRelEntropyInequality}
\begin{align}
\nonumber
&\sum_{i,j=1,i\neq j}^{P}\sigma_{i,j}\int_0^T\int_{I_{i,j}(t)}
\Big(\frac{1}{2}|\nabla\cdot\xi_{i,j}|^2+\frac{1}{2}|(B\cdot\xi_{i,j})\xi_{i,j}|^2\Big)
\,\mathrm{d}\mathcal{H}^{d-1}\dt
\\&\nonumber
+\sum_{i,j=1,i\neq j}^{P}\sigma_{i,j}\int_0^T\int_{I_{i,j}(t)}
(\nabla\cdot\xi_{i,j})B\cdot\vec{n}_{i,j}
\,\mathrm{d}\mathcal{H}^{d-1}\dt
\\&\nonumber
+\sum_{i,j=1,i\neq j}^P\sigma_{i,j}\int_0^T\int_{I_{i,j}(t)} 
(\mathrm{Id}{-}\xi_{i,j}\otimes\xi_{i,j})B\cdot V_{i,j}\vec{n}_{i,j}
\,\mathrm{d}\mathcal{H}^{d-1}\dt
\\&\label{eq:prefinalRelEntropyInequality2}
=\sum_{i,j=1,i\neq j}^{P}\sigma_{i,j}\int_0^T\int_{I_{i,j}(t)}
\frac{1}{2}|(\nabla\cdot\xi_{i,j})+B\cdot\xi_{i,j}|^2
\,\mathrm{d}\mathcal{H}^{d-1}\dt 
\\&~~~\nonumber
-\sum_{i,j=1,i\neq j}^{P}\sigma_{i,j}\int_0^T\int_{I_{i,j}(t)}
\frac{1}{2}|B\cdot\xi_{i,j}|^2(1-|\xi_{i,j}|^2)
\,\mathrm{d}\mathcal{H}^{d-1}\dt
\\&~~~\nonumber
+\sum_{i,j=1,i\neq j}^P\sigma_{i,j}\int_0^T\int_{I_{i,j}(t)} 
(\mathrm{Id}{-}\xi_{i,j}\otimes\xi_{i,j})B\cdot 
(V_{i,j}+\nabla\cdot\xi_{i,j})\vec{n}_{i,j}
\,\mathrm{d}\mathcal{H}^{d-1}\dt
\\&~~~\nonumber
-\sum_{i,j=1,i\neq j}^{P}\sigma_{i,j}\int_0^T\int_{I_{i,j}(t)}
(1-\vec{n}_{i,j}\cdot\xi_{i,j})(\nabla\cdot\xi_{i,j})(B\cdot\xi_{i,j})
\,\mathrm{d}\mathcal{H}^{d-1}\dt.
\end{align}
Adding zero in the last term on the right hand side of \eqref{eq:prefinalRelEntropyInequality}
in order to generate the transport equation for the length of the vector fields $\xi_{i,j}$, we
observe that the last three terms on the right hand side of \eqref{eq:prefinalRelEntropyInequality}
combine to
\begin{align}
\nonumber
&-\sum_{i,j=1,i\neq j}^P\sigma_{i,j}\int_0^T\int_{I_{i,j}(t)} 
(\vec{n}_{i,j}-\xi_{i,j})\otimes \vec{n}_{i,j}:\nabla B\,\mathrm{d}\mathcal{H}^{d-1}\dt
\\&\nonumber
+\sum_{i,j=1,i\neq j}^P\sigma_{i,j}\int_0^T\int_{I_{i,j}(t)} 
\vec{n}_{i,j}\otimes \xi_{i,j}:\nabla B\,\mathrm{d}\mathcal{H}^{d-1}\dt
\\&\nonumber
-\sum_{i,j=1,i\neq j}^{P}\sigma_{i,j}\int_0^T\int_{I_{i,j}(t)}
\big(\partial_t\xi_{i,j}{+}(B\cdot\nabla)\xi_{i,j}{+}(\nabla B)^\mathsf{T}\xi_{i,j}\big)\cdot\vec{n}_{i,j}
\,\mathrm{d}\mathcal{H}^{d-1}\dt
\\&\label{eq:prefinalRelEntropyInequality3}
=-\sum_{i,j=1,i\neq j}^P\sigma_{i,j}\int_0^T\int_{I_{i,j}(t)} 
(\vec{n}_{i,j}-\xi_{i,j})\otimes (\vec{n}_{i,j}-\xi_{i,j}):\nabla B\,\mathrm{d}\mathcal{H}^{d-1}\dt
\\&~~~\nonumber
-\sum_{i,j=1,i\neq j}^{P}\sigma_{i,j}\int_0^T\int_{I_{i,j}(t)}
\big(\partial_t\xi_{i,j}{+}(B\cdot\nabla)\xi_{i,j}{+}(\nabla B)^\mathsf{T}\xi_{i,j}\big)
\cdot(\vec{n}_{i,j}-\xi_{i,j})\,\mathrm{d}\mathcal{H}^{d-1}\dt
\\&~~~\nonumber
-\sum_{i,j=1,i\neq j}^{P}\sigma_{i,j}\int_0^T\int_{I_{i,j}(t)}
\frac{1}{2}\big(\partial_t|\xi_{i,j}|^2{+}(B\cdot\nabla)|\xi_{i,j}|^2\big)
\,\mathrm{d}\mathcal{H}^{d-1}\dt. 
\end{align}
Employing the notation of Proposition~\ref{PropositionRelativeEntropyInequality}
as well as using \eqref{eq:prefinalRelEntropyInequality2} and \eqref{eq:prefinalRelEntropyInequality3} 
in \eqref{eq:prefinalRelEntropyInequality}, we deduce that the right hand side
of \eqref{eq:prefinalRelEntropyInequality} indeed reduces to
\begin{align*}
&E_{\interface}[\chi|\xi](T)
\\&\nonumber
+\sum_{i,j=1,i\neq j}^{P}\sigma_{i,j}
\int_0^T\int_{I_{i,j}(t)}\Big(
\frac{1}{2}|V_{i,j}{+}\nabla\cdot\xi_{i,j}|^2
+\frac{1}{2}|V_{i,j}\vec{n}_{i,j}{-}(B\cdot\xi_{i,j})\xi_{i,j}|^2\Big)
\,\mathrm{d}\mathcal{H}^{d-1}\dt
\\&
\leq E_{\interface}[\chi|\xi](0)
+R_{\mathrm{dt}}+R_{\mathrm{dissip}},
\end{align*}
which is valid for almost every $T\in [0,T']$.
This concludes the proof of \eqref{RelativeEntropyInequality}.
\end{proof}

\subsection{Quantitative inclusion principle: Proof of Theorem~\ref{TheoremUniqueness}}\label{SectionProofMainResult}

We now prove the inclusion principle stating that interfaces of any $\BV$ solution 
must be contained in the corresponding interfaces of a calibrated flow, provided both 
start with the same initial data.

\begin{proof}[Proof of Theorem~\ref{TheoremUniqueness}]

\emph{Step 1: The stability estimate \eqref{StabilityEstimate}.}
The starting point is the estimate on the evolution of the interface 
error functional \eqref{DefinitionRelativeEntropy} from Proposition~\ref{PropositionRelativeEntropyInequality}.
In the following, we estimate the terms appearing on
the right hand side one-by-one. Let $T \in [0,T']$.

Due to \eqref{TransportEquationXi}, \eqref{LengthConservation}, as well as \eqref{LengthControlXi} and the trivial relation
\begin{align}\label{eq:L2excess}
|\vec{n}_{i,j}{-}\xi_{i,j}|^2\leq 2(1-\vec{n}_{i,j}\cdot\xi_{i,j})
\end{align}
(which follows by $|\xi_{i,j}|\leq 1$),
we immediately deduce using Young's inequality
\begin{align}\label{BoundRdt}
|R_{\mathrm{dt}}|&
\leq 
\sum_{i,j=1,i\neq j}^{P}
C\int_0^T\int_{I_{i,j}(t)}
|\vec{n}_{i,j}-\xi_{i,j}|^2+\dist^2(\cdot,{\bar{I}}_{i,j}(t))\wedge 1
\,\mathrm{d}\mathcal{H}^{d-1}\dt
\\&
\leq C\int_0^T E_{\interface}[\chi|\xi](t)\dt.
\notag
\end{align}
Making use of the simple estimate $1{-}|\xi_{i,j}|^2\leq 2(1-|\xi_{i,j}|)\leq 2(1-\vec{n}_{i,j}\cdot\xi_{i,j})$
and again the bound \eqref{eq:L2excess}, we obtain by similar arguments
\begin{align*}
|R_{\mathrm{dissip}}|
&\leq 
\sum_{i,j=1,i\neq j}^{P}\sigma_{i,j}
\int_0^T\int_{I_{i,j}(t)}
\frac{1}{2}|(\nabla\cdot\xi_{i,j})+B\cdot\xi_{i,j}|^2
\,\mathrm{d}\mathcal{H}^{d-1}\dt
\\&~~~~
+\sum_{i,j=1,i\neq j}^{P}\sigma_{i,j}
\int_0^T\int_{I_{i,j}(t)}
(\mathrm{Id}{-}\xi_{i,j}\otimes\xi_{i,j})B\cdot(V_{i,j}{+}\nabla\cdot\xi_{i,j})\vec{n}_{i,j}
\,\mathrm{d}\mathcal{H}^{d-1}\dt
\\&~~~~
+C\int_0^T E_{\interface}[\chi|\xi](t)\dt
\\&
=: I + II + C\int_0^T E_{\interface}[\chi|\xi](t)\dt.
\end{align*}
By means of \eqref{Dissip}, we may directly estimate
\begin{align*}
|I| \leq C\int_0^T E_{\interface}[\chi|\xi](t)\dt.
\end{align*}
Furthermore, by an application of H\"older's and Young's inequality we deduce
\begin{align*}
|II| &= \bigg|\sum_{i,j=1,i\neq j}^{P}\sigma_{i,j}
\int_0^T\int_{I_{i,j}(t)}
(\mathrm{Id}{-}\xi_{i,j}\otimes\xi_{i,j})B\cdot(V_{i,j}{+}\nabla\cdot\xi_{i,j})
(\vec{n}_{i,j}{-}\xi_{i,j})\,\mathrm{d}\mathcal{H}^{d-1}\dt\bigg|
\\&
\leq \delta\sum_{i,j=1,i\neq j}^{P}\sigma_{i,j}\int_0^T\int_{I_{i,j}(t)}
\frac{1}{2}(V_{i,j}{+}\nabla\cdot\xi_{i,j})^2\,\mathrm{d}\mathcal{H}^{d-1}\dt
\\&~~~~
+C\delta^{-1}\int_0^T E_{\interface}[\chi|\xi](t)\dt,
\end{align*}
uniformly over all $\delta\in (0,1)$. Hence, we get the bound
\begin{align}\label{BoundRdissip}
|R_{\mathrm{dissip}}| 
&\leq 
\delta\sum_{i,j=1,i\neq j}^{P}\sigma_{i,j}\int_0^T\int_{I_{i,j}(t)}
\frac{1}{2}(V_{i,j}{+}\nabla\cdot\xi_{i,j})^2\,\mathrm{d}\mathcal{H}^{d-1}\dt
\\&~~~~\nonumber
+C\delta^{-1}\int_0^T E_{\interface}[\chi|\xi](t)\dt.
\end{align}

Plugging in the bounds from \eqref{BoundRdt} and \eqref{BoundRdissip} 
into the relative entropy inequality from Proposition~\ref{PropositionRelativeEntropyInequality},
and then choosing $\delta\in (0,1)$ sufficiently small in order to absorb the first right-hand side term, we therefore get constants $C_1, C_2>0$
such that the estimate
\begin{align}
\label{eq:postProcessedInterfaceError}
&E[\chi|\xi](T)
\\&\nonumber
+ C_1\sum_{i,j=1,i\neq j}^{P}
\int_0^T\int_{I_{i,j}(t)}\Big(\frac{1}{2}(V_{i,j}{+}\nabla\cdot\xi_{i,j})^2
+\frac{1}{2}|V_{i,j}\vec{n}_{i,j}{-}(B\cdot\xi_{i,j})\xi_{i,j}|^2\Big)
\,\mathrm{d}\mathcal{H}^{d-1}\dt
\\&\nonumber
\leq C_2 \int_0^T E[\chi|\xi](t)\dt
\end{align}
holds true for almost every $T\in [0,T']$. By an application of the Gronwall lemma, 
the asserted stability estimate \eqref{StabilityEstimate} from Theorem~\ref{TheoremUniqueness} follows. 

\emph{Step 3: Weak-strong comparison.}
In the case of coinciding initial conditions, i.e.\ $E[\chi|\xi](0)=0$, the stability 
estimate \eqref{StabilityEstimate} entails $E[\chi|\xi]=0$ for almost every $t\in [0,T']$. From this and \eqref{LengthControlXi}, it immediately follows by the definition of the relative entropy \eqref{DefinitionRelativeEntropy} that $I_{i,j}(t)\subset {\bar{I}}_{i,j}(t)$ 
holds up to an $\mathcal{H}^{d-1}$-null set for almost every $t\in [0,T']$.
This proves the quantitative inclusion principle for $\BV$~solutions of multiphase mean curvature flow.
\end{proof}

\subsection{Conditional weak-strong uniqueness: Proof of Proposition~\ref{PropositionUniquenessConditional}}
\label{SectionProofConditionalWeakStrong}
We start with an analogue of the relative entropy inequality
of Proposition~\ref{PropositionRelativeEntropyInequality}
in terms of the bulk error functional $E_{\mathrm{volume}}[\chi|\bar\chi]$ from~\eqref{volumeErrorFunctional}.

\begin{lemma}
\label{relativeEntropyVolume}
Let $d\geq 2$, $P\geq 2$ be integers and $\sigma\in\Rd[P\times P]$ be an admissible matrix of surface tensions
in the sense of Definition~\ref{DefinitionAdmissibleSurfaceTensions}. 
Let $\chi=(\chi_1,\ldots,\chi_P)$ be a BV solution of multiphase mean curvature flow in
the sense of Definition~\ref{DefinitionBVSolution} on some time interval $[0,T']$.
Recall from \eqref{UnitNormalsBVSolution} resp.\ \eqref{NormalVelocitiesBVSolution} the definitions of the 
(measure-theoretic) unit normal vectors $\vec{n}_{i,j}$ resp.\ of the normal velocities $V_{i,j}$ of a BV solution.
Let moreover $\bar\Omega=(\bar\Omega_1,\ldots,\bar\Omega_P)$ be a time-dependent partition of $\Rd$
with finite interface energy on $[0,T']$ as in Definition~\ref{def:transportedWeights},
and assume that there exists an associated family of transported weights $(\vartheta_i)_{i\in\{1,\ldots,P\}}$
with velocity field $B$. Finally, let $(\xi_{i,j})_{i\neq j\in\{1,\ldots,P\}}$ be a family 
of compactly supported vector fields such that
\begin{align*}
\xi_{i,j} \in C^0([0,T'];C^1_{\mathrm{cpt}}(\Rd[d];\Rd[d])).
\end{align*}

Then, the bulk error functional
$E_{\mathrm{volume}}[\chi|\bar\chi]$ from~\eqref{volumeErrorFunctional}
is subject to the identity
\begin{align}
\label{RelativeEntropyInequalityVolumeControl}
E_{\mathrm{volume}}[\chi|\bar\chi](T) 
= E_{\mathrm{volume}}[\chi|\bar\chi](0) + R_{\mathrm{volume}}
\end{align}
for almost every $T\in [0,T']$. Here, we made use of the abbreviation
\begin{align*}
R_{\mathrm{volume}} &:= 
-\sum_{i,j=1,i\neq j}^{P}\int_0^T\int_{I_{i,j}(t)}
\vartheta_i(B\cdot\xi_{i,j}-V_{i,j})\,\mathrm{d}\mathcal{H}^{d-1}\dt
\\&~~~
-\sum_{i,j=1,i\neq j}^{P}\int_0^T\int_{I_{i,j}(t)}
\vartheta_iB\cdot(\vec{n}_{i,j}-\xi_{i,j})\,\mathrm{d}\mathcal{H}^{d-1}\dt
\\&~~~
+\sum_{i=1}^{P}\int_0^T\int_{\Rd}(\chi_i-\bar\chi_i)
\vartheta_i(\nabla\cdot B)\dx\dt
\\&~~~
+\sum_{i=1}^{P}\int_0^T\int_{\Rd}(\chi_i-\bar\chi_i)
(\partial_t\vartheta_i+(B\cdot\nabla)\vartheta_i)\dx\dt.
\end{align*}
Denote for $i,j\in\{1,\ldots,P\}$ with $i\neq j$ and~$t\in [0,T']$
by $\bar I_{i,j}(t):=\partial\bar\Omega_i(t)\cap\partial\bar\Omega_j(t)$
the interfaces associated with~$\bar\Omega$.
Then, the identity~\eqref{RelativeEntropyInequalityVolumeControl}
may be upgraded to the estimate
\begin{align}
\nonumber
&E_{\mathrm{volume}}[\chi|\bar\chi](T) 
\\& \label{eq:postProcessedRelEntropyWeightedVolume}
\leq E_{\mathrm{volume}}[\chi|\bar\chi](0)
+ \delta \sum_{i,j=1,i\neq j}^{P}\int_0^T\int_{I_{i,j}(t)}
|B\cdot\xi_{i,j}-V_{i,j}|^2 \,\mathrm{d}\mathcal{H}^{d-1}\dt
\\&~~~\nonumber
+ \frac{C}{\delta}\sum_{i,j=1,i\neq j}^{P}\int_0^T\int_{I_{i,j}(t)}
\dist^2(\cdot,\bar I_{i,j}) \wedge 1 \,\mathrm{d}\mathcal{H}^{d-1}\dt
\\&~~~\nonumber
+ \frac{C}{\delta}\sum_{i,j=1,i\neq j}^{P}\int_0^T\int_{I_{i,j}(t)}
1 - \vec{n}_{i,j}\cdot\xi_{i,j} \,\mathrm{d}\mathcal{H}^{d-1}\dt
\\&~~~\nonumber
+ C\sum_{i=1}^{P}\int_0^T E_{\mathrm{volume}}[\chi|\bar\chi](t) \dt
\end{align}
valid for almost every $T\in [0,T']$, all $\delta\in (0,1]$ and a constant $C>0$
that is independent of $\delta$.
\end{lemma}

\begin{proof}
We split the proof into two steps.

\textit{Proof of~\eqref{RelativeEntropyInequalityVolumeControl}.}
To compute the time evolution, 
note that the sign conditions on~$\vartheta_i$
from Definition~\ref{def:transportedWeights} 
of a family of transported weights
is precisely what is needed to have
\begin{align*}
E_{\mathrm{volume}}[\chi|\bar\chi](T) = 
\sum_{i=1}^{P}\int_{\Rd}(\chi_i(\cdot,T){-}\bar\chi_i(\cdot,T))\vartheta_i(\cdot,T)\dx. 
\end{align*}
Hence, we may make use of the evolution equations \eqref{EvolutionPhasesBVSolution}
for the indicator functions~$\chi_i$ of the BV solution 
which together with $\partial_t\bar\chi_i \ll |\nabla\bar\chi_i|$ and $\vartheta_i = 0$
on $\supp |\nabla\bar\chi_i|$ (see Definition~\ref{def:transportedWeights})  
yields for almost every $T\in [0,T']$
\begin{align*}
&E_{\mathrm{volume}}[\chi|\bar\chi](T) 
\\&
= E_{\mathrm{volume}}[\chi|\bar\chi](0)
+\sum_{i=1}^{P}\int_0^T\int_{\Rd}(\chi_i{-}\bar\chi_i)\partial_t\vartheta_i\dx\dt
+\sum_{i=1}^{P}\int_0^T\int_{\Rd} V_i\vartheta_i\,\mathrm{d}|\nabla\chi_i|\dt.
\end{align*}
We next use the convention~\eqref{NormalVelocitiesBVSolution} and rewrite
\begin{align*}
\sum_{i=1}^{P}\int_0^T\int_{\Rd} V_i\vartheta_i\,\mathrm{d}|\nabla\chi_i|\dt
=\sum_{i,j=1,i\neq j}^{P}\int_0^T\int_{I_{i,j}(t)} \vartheta_iV_{i,j}\,\mathrm{d}\mathcal{H}^1\dt.
\end{align*}
Furthermore, by adding and subtracting $(B\cdot\nabla)\vartheta_i$, 
an integration by parts, the fact that $\vartheta_i = 0$ on $\supp |\nabla\bar\chi_i|$
(see Definition~\ref{def:transportedWeights}), and the definition~\eqref{UnitNormalsBVSolution} 
of the measure theoretic unit normal, we obtain
\begin{align*}
&\sum_{i=1}^{P}\int_0^T\int_{\Rd}(\chi_i{-}\bar\chi_i)\partial_t\vartheta_i\dx\dt
\\&
= -\sum_{i=1}^{P}\int_0^T\int_{\Rd}(\chi_i{-}\bar\chi_i)(B\cdot\nabla)\vartheta_i\dx\dt
+\sum_{i=1}^{P}\int_0^T\int_{\Rd}(\chi_i{-}\bar\chi_i)(\partial_t\vartheta_i{+}(B\cdot\nabla)\vartheta_i)\dx\dt
\\&
=-\sum_{i=1}^{P}\int_0^T\int_{\Rd}(\chi_i{-}\bar\chi_i)\nabla\cdot(\vartheta_iB)\dx\dt
+\sum_{i=1}^{P}\int_0^T\int_{\Rd}(\chi_i{-}\bar\chi_i)\vartheta_i(\nabla\cdot B)\dx\dt
\\&~~~
+\sum_{i=1}^{P}\int_0^T\int_{\Rd}(\chi_i{-}\bar\chi_i)(\partial_t\vartheta_i{+}(B\cdot\nabla)\vartheta_i)\dx\dt
\\&
=\sum_{i=1}^{P}\int_0^T\int_{\Rd}
\frac{\nabla\chi_i}{|\nabla\chi_i|}\cdot\vartheta_iB
\,\mathrm{d}|\nabla\chi_i|\dt
+\sum_{i=1}^{P}\int_0^T\int_{\Rd}(\chi_i{-}\bar\chi_i)\vartheta_i(\nabla\cdot B)\dx\dt
\\&~~~
+\sum_{i=1}^{P}\int_0^T\int_{\Rd}(\chi_i{-}\bar\chi_i)(\partial_t\vartheta_i{+}(B\cdot\nabla)\vartheta_i)\dx\dt
\\&
=-\sum_{i,j=1,i\neq j}^{P}\int_0^T\int_{I_{i,j}(t)}
\vartheta_iB\cdot\xi_{i,j}\,\mathrm{d}\mathcal{H}^1\dt
\\&~~~
-\sum_{i,j=1,i\neq j}^{P}\int_0^T\int_{I_{i,j}(t)}
\vartheta_iB\cdot(\vec{n}_{i,j}{-}\xi_{i,j})\,\mathrm{d}\mathcal{H}^1\dt
\\&~~~
+\sum_{i=1}^{P}\int_0^T\int_{\Rd}(\chi_i{-}\bar\chi_i)\vartheta_i(\nabla\cdot B)\dx\dt
\\&~~~
+\sum_{i=1}^{P}\int_0^T\int_{\Rd}(\chi_i{-}\bar\chi_i)(\partial_t\vartheta_i{+}(B\cdot\nabla)\vartheta_i)\dx\dt
\end{align*}
for almost every $T\in [0,T']$. The combination of the previous
three displays thus proves~\eqref{RelativeEntropyInequalityVolumeControl} 
as asserted.

\textit{Step 2: Proof of~\eqref{eq:postProcessedRelEntropyWeightedVolume}.}
Starting point is of course \eqref{RelativeEntropyInequalityVolumeControl}
meaning that we need to estimate the term $R_{\mathrm{volume}}$.
First, we may infer based on the bound~\eqref{AdvectionEquationVolumeControl}
on the advective derivative of the transported weights $\vartheta_i$, 
the bound $|B| \leq C$ (see Definition~\ref{def:transportedWeights}),
H\"older's and Young's inequality as well as the bound~\eqref{eq:L2excess} that the estimate
\begin{align*}
|R_{\mathrm{volume}}| 
&\leq
\delta \sum_{i,j=1,i\neq j}^{P}\int_0^T\int_{I_{i,j}(t)}
|B\cdot\xi_{i,j}-V_{i,j}|^2 \,\mathrm{d}\mathcal{H}^{d-1}\dt
\\&~~~\nonumber
+ \frac{C}{\delta}\sum_{i,j=1,i\neq j}^{P}\int_0^T\int_{I_{i,j}(t)}
\vartheta_i^2 
\,\mathrm{d}\mathcal{H}^{d-1}\dt
\\&~~~\nonumber
+ \frac{C}{\delta}\sum_{i,j=1,i\neq j}^{P}\int_0^T\int_{I_{i,j}(t)}
1 - \vec{n}_{i,j}\cdot\xi_{i,j} \,\mathrm{d}\mathcal{H}^{d-1}\dt
\\&~~~\nonumber
+ C\sum_{i=1}^{P}\int_0^T E_{\mathrm{volume}}[\chi|\bar\chi](t) \dt
\end{align*}
holds true, uniformly over all $\delta\in (0,1)$. 
As $\vartheta_i = 0$ on $\supp|\nabla\bar\chi_i|$, 
$\vartheta_i\in W^{1,\infty}_{x,t}(\Rd\times [0,T'];[-1,1])$ and 
$\partial\bar\Omega_i$ is Lipschitz (see Definition~\ref{def:transportedWeights}),
we may further estimate $$\vartheta_i^2 \leq C(\dist^2(\cdot,\partial\bar\Omega_i)\wedge 1)
\leq C(\dist^2(\cdot,\bar I_{i,j})\wedge 1)$$ for all phases $i,j\in\{1,\ldots,P\}$ with $i\neq j$.
This, however, concludes the proof.
\end{proof}

We have everything in place to lift the quantitative inclusion
principle from Theorem~\ref{TheoremUniqueness} to
the conditional weak-strong uniqueness principle of
Proposition~\ref{PropositionUniquenessConditional}
(with an associated conditional stability estimate).

\begin{proof}[Proof of Proposition~\ref{PropositionUniquenessConditional}]
As our assumptions entail the applicability of Theorem~\ref{TheoremUniqueness} (which only requires the existence of a gradient flow calibration $((\xi_i)_{i\in\{1,\ldots,P\}},B)$ with respect to $\bar\Omega$), the stability estimate~\eqref{StabilityEstimate} concerning the interface error applies.
Recall from~\eqref{LengthControlXi} that $\dist^2(\cdot,\bar I_{i,j})\wedge 1
\leq C(1-|\xi_{i,j}|)$ for all $i,j\in\{1,\ldots,P\}$ with $i\neq j$.
Inserting these bounds into
the corresponding right hand side terms of~\eqref{eq:postProcessedRelEntropyWeightedVolume}, we obtain
\begin{align*}
E_{\mathrm{volume}}[\chi|{\bar \chi}](T')
\leq E_{\mathrm{volume}}[\chi|{\bar \chi}](0) + C e^{CT'} E[\chi|\xi](0) + C\int_0^{T'} E_{\mathrm{volume}}[\chi|{\bar \chi}](t) \dt
\end{align*}
for almost every $T'\in [0,T]$. The stability estimate~\eqref{eq:stabilityBulkError} 
for the bulk error is now a direct consequence of Gronwall's lemma.

It remains to prove the conditional weak-strong uniqueness statement.
To this end, note first that $\chi(\cdot,0) = \bar\chi(\cdot,0)$
almost everywhere in $\Rd$ entails $E[\chi|\xi](0)=0$ 
and $E_{\mathrm{volume}}[\chi|{\bar \chi}](0)=0$ 
as a consequence of the respective definitions~\eqref{DefinitionRelativeEntropy}
and~\eqref{volumeErrorFunctional}. In view of the stability estimate~\eqref{eq:stabilityBulkError}, 
this directly implies $E_{\mathrm{volume}}[\chi|{\bar \chi}](T')=0$ for almost every $T'\in [0,T]$.
It then follows from the coercivity properties of a family of transported weights 
(see Definition~\ref{def:transportedWeights}) that $\chi(\cdot,T') = \bar\chi(\cdot,T')$ almost everywhere in $\Rd$
for almost every $T'\in [0,T]$. This, however, is the desired weak-strong
uniqueness principle.
\end{proof}

\subsection{Weak-strong uniqueness and stability for varifold-$\BV$ solutions}

Before proceeding with the proof of Theorem~\ref{TheoremStabilityVarifoldSolution}, let us collect some additional compatibility properties of the varifold $\mathcal{V}_t$ and the indicator functions $\chi_i$ that may be inferred from Definition~\ref{DefinitionVarifoldBVsolution}. First, observe that given a varifold-$\BV$ solution $(\mathcal{V},\chi)$, for each $i\in\{1,\ldots,P\}$ and a.e.\ $t\in(0,\infty)$ the Radon--Nikodym derivative
\begin{align}
\label{eq:inverseMultiplicityTonegawa}
\rho_i(\cdot,t):=\frac{\mathrm{d}|\nabla\chi_i(\cdot,t)|}{\mathrm{d}\mu_t} \in [0,1] 
\end{align}
exists. Since $\mathcal{V}$ is a family of integral varifolds and since $\sum_{i=1}^P |\nabla \chi_i| \leq 2 \mathcal{H}^{d-1}$, for a.e.\ $t\in(0,\infty)$ it holds that for $\mu_t$-a.e.\ $x\in\mathbb{R}^d$
\begin{align}
\label{eq:propertiesInverseMultiplicityTonegawa}
\text{either}\quad \frac{1}{2} \sum_{i=1}^P \rho_i(x,t) = 1
\quad\text{or}\quad \frac{1}{2} \sum_{i=1}^P \rho_i(x,t) \leq \frac{1}{2}
\quad\text{holds true}.
\end{align}
Finally, since $\mu_t\llcorner\{\frac{1}{2}\sum_{i=1}^P\rho_i(\cdot,t)=1\}
= \mathcal{H}^{d-1}\llcorner\big(\{\frac{1}{2}\sum_{i=1}^P\rho_i(\cdot,t)=1\}\cap\bigcup_{i\neq j} I_{i,j}(t)\big)$
and $\mathcal{V}_t$ is rectifiable for a.e.\ $t\in(0,\infty)$, it follows that 
\begin{equation}
\begin{aligned}
\label{eq:approxTangentSpaceTonegawa}
&\mathcal{V}_t \llcorner \Big\{\frac{1}{2}\sum_{i=1}^P\rho_i(\cdot,t)=1\Big\}
\\&
= \frac{1}{2} \sum_{i=1}^P \bigg(\supp|\nabla\chi_i(\cdot,t)| \llcorner \Big\{\frac{1}{2}\sum_{i=1}^P\rho_i(\cdot,t){=}1\Big\}
\otimes \big(\delta_{\mathrm{Tan}^{d-1}_x(\supp|\nabla\chi_i(\cdot,t)|)}\big)_{x\in\supp|\nabla\chi_i(\cdot,t)|}\bigg)
\end{aligned}
\end{equation}
for a.e.\ $t\in(0,\infty)$. In particular, from Brakke's perpendicularity theorem it follows for a.e.\ $t\in(0,\infty)$ that
\begin{align}
\label{eq:brakkeOrthogonalityTonegawa}
h(\cdot,t) = \Big(h(\cdot,t)\cdot\frac{\nabla\chi_i(\cdot,t)}{|\nabla\chi_i(\cdot,t)|}\Big)
\frac{\nabla\chi_i(\cdot,t)}{|\nabla\chi_i(\cdot,t)|}
\end{align}
$\mathcal{H}^{d-1}$-a.e.\ on $\{\frac{1}{2}\sum_{i=1}^P\rho_i(\cdot,t)=1\}\cap\supp|\nabla\chi_i(\cdot,t)|$
for all $i\in\{1,\ldots,P\}$.

We now have all the ingredients for the proof of Theorem~\ref{TheoremStabilityVarifoldSolution}.
\begin{proof}[Proof of Theorem~\ref{TheoremStabilityVarifoldSolution}]
We first prove the estimate
\begin{align*}
E[\mathcal{V},\chi|\xi](T) \leq e^{Ct} E[\mathcal{V},\chi|\xi](0).
\end{align*}
As usual, the Gronwall inequality reduces this task to establishing the bound
\begin{align}
\label{eq:stabilityRelEntropyTonegawa}
E[\mathcal{V},\chi|\xi](T') \leq E[\mathcal{V},\chi|\xi](0)
+ C \int_{0}^{T'} E[\mathcal{V},\chi|\xi](t) \dt  
\end{align}
for a.e.\ $T'\in (0,T)$ and for some constant $C=C(\xi,B)>0$. The proof of this estimate can be reduced to the computation in the case of $\BV$ solutions as follows.
\textit{Step 1: Error control by relative entropy.}
By~\eqref{eq:inverseMultiplicityTonegawa} and~\eqref{eq:propertiesInverseMultiplicityTonegawa},
we obtain
\begin{align*}
E[\mathcal{V},\chi|\xi](t)
&= 2\int_{\Rd} 1 - \frac{1}{2}\sum_{i=1}^P \rho_i(\cdot,t) \,\mathrm{d}\mu_t
+ \sum_{i,j=1,\,i\neq j}^{P} \int_{I_{i,j}(t)} 1 - n_{i,j}\cdot\xi_{i,j}\dH
\\&
\geq \int_{\Rd \cap \{\frac{1}{2}\sum_{i=1}^P\rho_i(\cdot,t) \leq \frac{1}{2}\}} 1 \,\mathrm{d}\mu_t
+ \sum_{i,j=1,\,i\neq j}^{P} \int_{I_{i,j}(t)} 1 - n_{i,j}\cdot\xi_{i,j}\dH
\end{align*}
for a.e.\ $t\in (0,T)$. Hence, $E[\mathcal{V},\chi|\xi]$ inherits all of
the coercivity properties from the BV~case and in addition controls
the measure of higher multiplicity areas.

\textit{Step 2: Dissipation control.} Define $V_i(\cdot,t):=-h(\cdot,t)\cdot
\frac{\nabla\chi_i(\cdot,t)}{|\nabla\chi_i(\cdot,t)|}$ for all $i\in\{1,\ldots,P\}$
and a.e.\ $t\in(0,\infty)$. Again by~\eqref{eq:inverseMultiplicityTonegawa} 
and~\eqref{eq:propertiesInverseMultiplicityTonegawa}, and this time
also relying on~\eqref{eq:brakkeOrthogonalityTonegawa}, we then estimate
\begin{align*}
&- 2\int_{\Rd} |h(\cdot,t)|^2 \,\mathrm{d}\mu_t
\\&
= - 2\int_{\Rd} |h(\cdot,t)|^2\Big(1 - \frac{1}{2}\sum_{i=1}^P \rho_i(\cdot,t)\Big) \,\mathrm{d}\mu_t
- \sum_{i=1}^P \int_{\Rd} |V_i(\cdot,t)|^2 \,\mathrm{d}|\nabla\chi_i(\cdot,t)|
\\&
\leq - \int_{\Rd \cap \{\frac{1}{2}\sum_{i=1}^P\rho_i(\cdot,t) \leq \frac{1}{2}\}} |h(\cdot,t)|^2 \,\mathrm{d}\mu_t
- \sum_{i=1}^P \int_{\Rd} |V_i(\cdot,t)|^2 \,\mathrm{d}|\nabla\chi_i(\cdot,t)|
\end{align*}
for a.e.\ $t\in(0,T)$. 

\textit{Step 3: From BV to varifold mean curvature.}
We first bound by means of~\eqref{eq:propertiesInverseMultiplicityTonegawa}, \eqref{eq:approxTangentSpaceTonegawa}
and \textit{Step~1}
\begin{align*}
&- \sum_{i=1}^P \int_{\Rd} \Big(\mathrm{Id} - \frac{\nabla\chi_i(\cdot,t)}{|\nabla\chi_i(\cdot,t)|}
\otimes \frac{\nabla\chi_i(\cdot,t)}{|\nabla\chi_i(\cdot,t)|}\Big) : \nabla B \,\mathrm{d}|\nabla\chi_i(\cdot,t)|
\\&
\leq -2\int_{\Rd{\times}\mathbf{G}(d,d{-}1)} 
\mathrm{Id}_{\mathbf{G}(d,d{-}1)} \colon \nabla B \,\mathrm{d}\mathcal{V}_t 
+ C E[\mathcal{V},\chi|\xi](t)
\end{align*}
for a.e.\ $t\in(0,T)$. We then proceed using~\eqref{eq:meanCurvatureTonegawa},
H\"older's and Young's inequality, \textit{Step~1}, the identity~\eqref{eq:brakkeOrthogonalityTonegawa},
and finally the definition of~$V_i$ from \textit{Step~2} to obtain
\begin{align*}
&- \sum_{i=1}^P \int_{\Rd} \Big(\mathrm{Id} - \frac{\nabla\chi_i(\cdot,t)}{|\nabla\chi_i(\cdot,t)|}
\otimes \frac{\nabla\chi_i(\cdot,t)}{|\nabla\chi_i(\cdot,t)|}\Big) : \nabla B \,\mathrm{d}|\nabla\chi_i(\cdot,t)|
\\&
\leq - \sum_{i=1}^P \int_{\Rd} V_i(\cdot,t)\frac{\nabla\chi_i(\cdot,t)}{|\nabla\chi_i(\cdot,t)|}
\cdot B \,\mathrm{d}|\nabla\chi_i(\cdot,t)|
+ \frac{1}{2}\int_{\Rd \cap \{\frac{1}{2}\sum_{i=1}^P\rho_i(\cdot,t) \leq \frac{1}{2}\}} |h(\cdot,t)|^2 \,\mathrm{d}\mu_t
\\&~~~
+ C E[\mathcal{V},\chi|\xi](t)
\end{align*}
for a.e.\ $t\in(0,T)$.

\textit{Step 4: Conclusion.} In summary, it follows from the dissipation estimate~\eqref{eq:dissipationTonegawa},
the transport equation~\eqref{eq:evolutionPhaseTonegawa} in form of $\partial_t\chi_i=V_i|\nabla\chi_i|$,
and the previous three steps that
\begin{align*}
E[\mathcal{V},\chi|\xi](T') &\leq E[\mathcal{V},\chi|\xi](0)
- \frac{1}{2}\int_{0}^{T'} \int_{\Rd \cap \{\frac{1}{2}\sum_{i=1}^P\rho_i \leq \frac{1}{2}\}} 
|h|^2 \,\mathrm{d}\mu_t\dt
\\&~~~
+ C \int_{0}^{T'} E[\mathcal{V},\chi|\xi](t) \dt 
\\&~~~
- \sum_{i=1}^P \int_{0}^{T'} \int_{\Rd} |V_i|^2 \,\mathrm{d}|\nabla\chi_i|\dt
\\&~~~
- 2\sum_{i=1}^P \int_{0}^{T'} \int_{\Rd} V_i (\nabla\cdot\xi_i) \,\mathrm{d}|\nabla\chi_i| \dt
\\&~~~
- \sum_{i=1}^P \int_{0}^{T'} \int_{\Rd} V_i\frac{\nabla\chi_i}{|\nabla\chi_i|}
\cdot B \,\mathrm{d}|\nabla\chi_i| \dt
\\&~~~
+ \sum_{i=1}^P \int_{0}^{T'} \int_{\Rd} \Big(\mathrm{Id} {-} 
\frac{\nabla\chi_i}{|\nabla\chi_i|} \otimes \frac{\nabla\chi_i}{|\nabla\chi_i|}\Big) : \nabla B \,\mathrm{d}|\nabla\chi_i| \dt
\\&~~~
+ 2\sum_{i=1}^P \int_{0}^{T'} \int_{\Rd} \frac{\nabla\chi_i}{|\nabla\chi_i|} \cdot \partial_t\xi_i \,\mathrm{d}|\nabla\chi_i| \dt \dt 
\end{align*}
for a.e.\ $T'\in (0,T)$. From here on, one may estimate the last five terms by following the corresponding computations in the case of $\BV$ solutions line by line.

By similar arguments, one may lift the BV~computation for the bulk error 
functional to the case of a varifold solution in the
sense of Stuvard and Tonegawa \cite{StuvardTonegawaMCF} to establish the bound \eqref{eq:stabilityBulkError}. Details 
in this direction are left to the interested reader. We only
remark that the additional dissipation control on higher multiplicity areas
as provided by the second right hand side term of the last display is crucial.
\end{proof}

\section{Gradient flow calibrations at a smooth manifold}
\label{SectionLocalConstructionsTwoPhase}
The aim of this section is to construct a gradient flow calibration 
in the simple situation of one single connected manifold (with or without boundary) evolving by mean curvature,
see Lemma~\ref{LemmaBoundsLocalConstructionsTwoPhase} for the main result of this section. For the sake of simplicity, we stick to the case $d=2$, but the construction in this section immediately carries over to arbitrary dimensions.

In terms of a gradient flow calibration for a whole network of interfaces in the sense of
Definition~\ref{DefinitionCalibrationGradientFlow}, the vector fields constructed in
Lemma~\ref{LemmaBoundsLocalConstructionsTwoPhase} provide the local building block at a smooth two-phase interface 
of the network. These vector fields therefore only live in a small tubular neighborhood of the evolving interface, so that in the case of general networks a suitable localization in terms of a family of cutoff functions will be necessary.
We defer these considerations to Section~\ref{SectionPartitionOfUnity}.

First, we provide the precise setting of this section by giving a suitable
notion of neighborhood for a single space-time connected component of
the evolving network of interfaces.

\begin{definition}
\label{DefinitionStrongSolutionTwoPhase}
Let $d=2$ and $P \in \mathbb{N}$, $P\geq 2$. Let $(\bar{\Omega}_1,\ldots,\bar{\Omega}_P)$ 
be a strong solution to multiphase mean curvature flow in the sense of Definition~\ref{DefinitionStrongSolution}. 
Fix phases $i,j\in\{1,\ldots,P\}$ with $i\neq j$ such that~$\bar I_{i,j}=\bigcup_{t\in [0,T]}\bar{I}_{i,j}(t){\times}\{t\}$
is a non-trivial interface (possibly with boundary). A scale~$r_{i,j}\in (0,1]$ is called an 
\emph{admissible localization radius for the interface~$\bar I_{i,j}$} if for all~$t\in [0,T]$
the following two ball conditions are satisfied:
\begin{itemize}[leftmargin=0.7cm]
\item[i)] For each interior point~$x\in \bar I_{i,j}(t)$ it 
					holds~$\overline{B_{2r_{i,j}}(x{\pm}2r_{i,j}\bar{\vec{n}}_{i,j}(x,t))}\cap\bar I_{i,j}(t)=\{x\}$.
\item[ii)] In addition, for a boundary point~$x\in\partial\bar I_{i,j}(t)$ 
					 (i.e., a triple junction) denote by~$\bar{\vec{t}}_{i,j}(x,t)$
					 the tangent at~$x$ pointing away from the curve~$\bar I_{i,j}(t)$, and by~$\mathbb{H}_{\bar{\vec{t}}_{i,j}}(x,t)$
					 the half-space~$\{y\in\Rd[2]\colon (y - x)\cdot \bar{\vec{t}}_{i,j}(x,t) > 0\}$.
					 We then require that $\overline{B_{2r_{i,j}}(y)}\cap\bar I_{i,j}(t)=\{x\}$
					 for all $y\in\partial B_{2r_{i,j}}(x) \cap \overline{\mathbb{H}_{\bar{\vec{t}}_{i,j}}(x,t)}$.
\end{itemize}
\end{definition}

It follows from our regularity requirements in Definition~\ref{DefinitionStrongSolution}
that an admissible localization radius always exists. Moreover,
\begin{align}
\label{DiffeoTubularNeighborhood}
\Psi_{i,j}\colon \bar I_{i,j}\times (-r_{i,j},r_{i,j})
\to \Rd[2]\times [0,T],\quad
(x,t,s) \mapsto (x + s\bar{\vec{n}}_{i,j}(x,t),t)
\end{align}
defines a bijective map onto its image 
\begin{align}
\nonumber
\mathrm{im}(\Psi_{i,j}) &:=
\Psi_{i,j}(\bar I_{i,j}{\times}(-r_{i,j},r_{i,j}))
\\&~\label{eq:imageDiffeo}
= \bigcup_{t\in [0,T]}\bigg(\big\{\dist(\cdot,\bar I_{i,j}(t)) < r_{i,j}\big\} 
\setminus\bigcup_{x\in\partial\bar I_{i,j}(t)}\big(\mathbb{H}_{\bar{\vec{t}}_{i,j}}(x,t)\cap B_{r_{i,j}}(x)\big)\bigg)
\times \{t\},
\end{align}
and the inverse map is a diffeomorphism of class $(C^0_tC^4_x\cap C^1_tC^2_x)(\overline{\mathrm{im}(\Psi_{i,j})})$.
We may further split the inverse of the diffeomorphism~\eqref{DiffeoTubularNeighborhood}
as follows:
\begin{align*}
\Psi_{i,j}^{-1}\colon \mathrm{im}(\Psi_{i,j}) 
\to {\bar I_{i,j}}\times (-r_{i,j},r_{i,j}),
\quad (x,t) \mapsto \big(P_{i,j}(x,t),t,s_{i,j}(x,t)\big)
\end{align*}
where the map $s_{i,j}\colon \mathrm{im}(\Psi_{i,j}) 
\to (-r_{i,j},r_{i,j})$ represents a signed distance function
\begin{align}\label{SignedDistanceTwoPhase}
s_{i,j}(x,t) := 
\begin{cases}
					\dist(x,{\bar{I}_{i,j}}(t)), & (x,t)\in 
					\Psi_{i,j}\big({\bar I_{i,j}}{\times} [0,r_{i,j})\big), \\
					-\dist(x,{\bar{I}_{i,j}}(t)), & (x,t)\in 
					\Psi_{i,j}\big({\bar I_{i,j}}{\times} (-r_{i,j},0)\big),
					\end{cases}
\end{align}
and the map $P_{i,j}\colon \mathrm{im}(\Psi_{i,j}) 
\to\bigcup_{t\in [0,T]}{\bar I_{i,j}}(t)$ represents in each time slice
the projection onto the nearest point on the interface in the sense that
\begin{align}
P_{i,j}(x,t) := P_{{\bar I_{i,j}}(t)}(x) 
= \argmin_{y\in\bar I_{i,j}(t)} |y-x|,
\quad  (x,t)\in \mathrm{im}(\Psi_{\bar I_{i,j}}). 
\end{align}
Note that we have the identity
\begin{align}\label{defPI}
P_{i,j}(x,t) 
= x - s_{i,j}(x,t)\bar{\vec{n}}_{i,j}\big(P_{i,j}(x,t),t\big)
\in{\bar I_{i,j}}(t),
\quad  (x,t)\in \mathrm{im}(\Psi_{i,j}). 
\end{align}

As a consequence of our regularity assumptions on $\bar I_{i,j}$,
see again Definition~\ref{DefinitionStrongSolution},
we also know that (for the former, one may consult
Lemma~\ref{LemmaPropertiesSignedDistanceTwoPhase} below)
\begin{align}
\label{eq:regSignedDistanceProjection}
s_{i,j} \in (C^0_tC^5_x\cap C^1_tC^3_x)(\overline{\mathrm{im}(\Psi_{i,j})}),
\quad
P_{i,j} \in (C^0_tC^4_x\cap C^1_tC^2_x)(\overline{\mathrm{im}(\Psi_{i,j})}).
\end{align}
We may now introduce extensions of the unit normal~$\bar{\vec{n}}_{i,j}$ and
the scalar mean curvature~$H_{i,j}$ (oriented with respect to~$\bar{\vec{n}}_{i,j}$)
of the interface~$\bar I_{i,j}$ to the space-time domain~$\mathrm{im}(\Psi_{i,j})$.
Slightly abusing notation, we define
\begin{align}
\label{def:extensionNormal}
\bar{\vec{n}}_{i,j} \colon \mathrm{im}(\Psi_{i,j})
\to \mathbb{S}^1, \quad &(x,t) \mapsto  \nabla s_{i,j}(x,t),
\\
\label{def:extensionCurvature}
H_{i,j} \colon \mathrm{im}(\Psi_{i,j})
\to \Rd[], \quad &(x,t) \mapsto  (-\Delta s_{i,j})(P_{i,j}(x,t),t).
\end{align}
We note as a consequence of the definitions that
\begin{align}
\label{eq:regNormalCurvature}
\bar{\vec{n}}_{i,j} \in (C^0_tC^4_x\cap C^1_tC^2_x)(\overline{\mathrm{im}(\Psi_{i,j})}), \quad
H_{i,j} \in (C^0_tC^3_x\cap C^1_tC^1_x)(\overline{\mathrm{im}(\Psi_{i,j})}).
\end{align}

The following result provides a (two-phase version of a) gradient flow calibration for a single \textit{connected} interface.
Note that the velocity field $B$ can accomodate arbitrary tangential components, a fact we will exploit when constructing a velocity field for general networks in Section~\ref{sec:networkConstruction}.

\begin{lemma}\label{LemmaBoundsLocalConstructionsTwoPhase}
Let $d=2$ and $P \in \mathbb{N}$, $P\geq 2$. Let $(\bar{\Omega}_1,\ldots,\bar{\Omega}_P)$  
be a strong solution to multiphase mean curvature flow in the sense of Definition~\ref{DefinitionStrongSolution}. 
Fix~$i,j\in\{1,\ldots,P\}$ with $i\neq j$ such that~$\bar I_{i,j}=\bigcup_{t\in [0,T]}\bar{I}_{i,j}(t){\times}\{t\}$
is a non-trivial interface. Let $r_{i,j}\in (0,1]$ be an admissible localization radius for~$\bar I_{i,j}$
in the sense of Definition~\ref{DefinitionStrongSolutionTwoPhase}. Fix a space-time connected component
(of which there are finitely many) $\mathcal{T}=\bigcup_{t\in [0,T]}\mathcal{T}(t){\times}\{t\} \subset \bar I_{i,j}$
of the interface~$\bar I_{i,j}$. Denote by~$\Psi_{\mathcal{T}}$ the restriction of the
diffeomorphism~\eqref{DiffeoTubularNeighborhood} to~$\mathcal{T}{\times} (-r_{i,j},r_{i,j})$,
and its image by $\mathrm{im}(\Psi_{\mathcal{T}}):=\Psi_{\mathcal{T}}(\mathcal{T}{\times} (-r_{i,j},r_{i,j}))$.
	
	Let $\gamma \in C^0_tC^2_x(\overline{\mathrm{im}(\Psi_{\mathcal{T}}}))$ be an arbitrary map, and define the tangent vector field 
	\begin{align}
	\label{def:tangent}
	\bar\tau_{i,j} := J^\mathsf{T} \vec{\bar n}_{i,j}\colon\mathrm{im}(\Psi_{i,j})\to\mathbb{S}^1, \quad \bar\tau_{i,j}
	\in (C^0_tC^4_x\cap C^1_tC^2_x)(\overline{\mathrm{im}(\Psi_{i,j})}),
	\end{align}
	where $J$ denotes the counter-clockwise rotation by $90^\circ$.
	Then the vector fields $\xi_{i,j}\colon \mathrm{im}(\Psi_{\mathcal{T}})\to \mathbb{S}^1$
	and $B\colon \mathrm{im}(\Psi_{\mathcal{T}})\to \Rd[2]$ given by
	\begin{align}
	\label{DefinitionXiTwoPhase}
	\xiTwoPh_{i,j} &:= \bar{\vec{n}}_{i,j},
	\\
	\label{DefinitionVelocityTwoPhase}
	\BTwoPh &:= H_{i,j}\bar{\vec{n}}_{i,j} + 
	\gamma \bar\tau_{i,j}
	\end{align}
	satisfy $\xi_{i,j} \in (C^0_tC^4_x\cap C^1_tC^2_x)(\overline{\mathrm{im}(\Psi_{\mathcal{T}}}))$,
	$B\in C^0_tC^2_x(\overline{\mathrm{im}(\Psi_{\mathcal{T}}}))$, with corresponding quantitative estimates
	\begin{align}
	\label{eq:estimatesTwoPhaseXi}
	r_{i,j}^k|\nabla^k\xi_{i,j}| &\leq C, && k\in \{0,1,\ldots,4\},
	\\
	\label{eq:estimatesTwoPhaseXiTimeDeriv}
	r_{i,j}^{k+2}|\partial_t\nabla^k\xi_{i,j}| &\leq C,
	&& k\in \{0,1,2\},
	\\
	\label{eq:estimatesTwoPhaseVel}
	r_{i,j}^k|\nabla^k B| &\leq Cr_{i,j}^{-1} 
	+ C\sum_{l=0}^k r_{i,j}^l|\nabla^{l}\gamma|,
	&& k\in\{0,1,2\},
	\end{align}
	throughout the space-time domain~$\mathrm{im}(\Psi_{\mathcal{T}})$.	Moreover, it holds
	\begin{align}
	\label{eq:transportSignedDistanceB}
	\partial_t s_{i,j} + (B\cdot\nabla) s_{i,j} &= 0,
	\\
	\label{TransportEquationXiTwoPhase}
	\partial_t\xiTwoPh_{i,j} + (\BTwoPh\cdot\nabla)\xiTwoPh_{i,j} 
	+ (\nabla\BTwoPh)^\mathsf{T}\xiTwoPh_{i,j} &= 0, \\
	\label{LengthConservationTwoPhase}
	\xiTwoPh_{i,j}\cdot\partial_t\xiTwoPh_{i,j} 
	+ \xiTwoPh_{i,j}\cdot(\BTwoPh\cdot\nabla)\xiTwoPh_{i,j} &= 0, \\
	\label{DissipTwoPhase}
	|\BTwoPh\cdot\xiTwoPh_{i,j} + \nabla\cdot\xiTwoPh_{i,j}| &\leq Cr^{-2}_{i,j}\dist(\cdot,\bar I_{i,j})
	\end{align}
	throughout the space-time domain~$\mathrm{im}(\Psi_{\mathcal{T}})$.
	The constant in the estimates~\eqref{eq:estimatesTwoPhaseXi}, \eqref{eq:estimatesTwoPhaseXiTimeDeriv}, \eqref{eq:estimatesTwoPhaseVel} 
	and~\eqref{DissipTwoPhase} is independent of~$r_{i,j}$.
	
Furthermore, if we choose $\gamma$ to satisfy
	\begin{align}
	\label{RequirementForGamma}
	\bar{\vec{n}}_{i,j} \cdot \nabla \gamma=\gamma H_{i,j} - ({\bar{\tau}}_{i,j}\cdot \nabla) H_{i,j}
	\end{align}
	on the interface $\bar I_{i,j}$, we have the additional property
	\begin{align}
	\label{AdditionalPropertyTwoPhase}
	\big|\nabla B : \big(\xi_{i,j} \otimes J\xi_{i,j}+J\xi_{i,j} \otimes \xi_{i,j}\big)\big| + |\nabla B : \xi_{i,j} \otimes \xi_{i,j}| \leq C \dist(x,{\bar{I}}_{i,j}(t)).
	\end{align}
\end{lemma}

\begin{proof}
For ease of notation, we omit all indices, superscripts, 
	and arguments for the rest of the proof unless specifically required otherwise.
	Since~$\Psi$ represents in each time slice a tubular neighborhood diffeomorphism
	on scale~$r\in (0,1]$, we have $\max_{k=0,\ldots,5} r^k|\nabla^k s| \leq Cr$
	throughout~$\mathrm{im}(\Psi)$. From the definitions~\eqref{def:extensionNormal},
	\eqref{def:tangent}, \eqref{def:extensionCurvature} and~\eqref{defPI}, we then deduce
	$\max_{k=0,\ldots,4} r^k(|\nabla^k \bar{\vec{n}}| {+} |\nabla^k \bar\tau|
	{+}|\nabla^k P|)\leq C$ and $\max_{k=0,\ldots,3} r^k|\nabla^k H| \leq Cr^{-1}$.
	Due to \eqref{TransportSignedDistanceTwoPhase} and  \eqref{SignedDistanceNormal},
	it holds $\partial_t s = -H$. Hence,
	$\max_{k=0,\ldots,3}r^{k+2}|\partial_t \nabla^k s|\leq Cr$,
	$\max_{k=0,1,2} r^k(|\partial_t\nabla^k \bar{\vec{n}}| {+} |\partial_t\nabla^k \bar\tau|
	{+}|\partial_t\nabla^k P|)\leq C$ and finally
	$\max_{k=0,1} r^{k+2}|\partial_t\nabla^k H| \leq Cr^{-1}$.
	The estimates~\eqref{eq:estimatesTwoPhaseXi}--\eqref{eq:estimatesTwoPhaseVel}
	now directly follow from the definitions~\eqref{DefinitionXiTwoPhase}--\eqref{DefinitionVelocityTwoPhase}.
	
	It follows from \eqref{TransportSignedDistanceTwoPhase} and  \eqref{SignedDistanceNormal} below, 
	as well as from the orthogonality $\bar\tau \cdot \bar{\vec{n}}=0$  that the tangential term in the 
	definition of $\BTwoPh$ does not have an effect on the transport equation~\eqref{TransportSignedDistanceTwoPhase} for 
	the signed distance $s$, i.e., we have
	\begin{align*}
	\partial_t s = -\big(H\bar {\vec{n}}\cdot\nabla\big) s
	=-\big(\BTwoPh\cdot\nabla\big) s.
	\end{align*}
	We may take the gradient of this identity so that by definition of $\xiTwoPh$ we have
	\begin{align*}
	\partial_t\xiTwoPh = \nabla\partial_t s = 
	-\big(\BTwoPh\cdot\nabla\big)\xiTwoPh
	-\big(\nabla\BTwoPh\big)^\mathsf{T}\xiTwoPh,
	\end{align*}
	which proves \eqref{TransportEquationXiTwoPhase}. The validity of \eqref{LengthConservationTwoPhase}
	is evident from the fact that $|\xiTwoPh|^2 \equiv 1$.
	For the identity \eqref{DissipTwoPhase}, note first that
	$\BTwoPh\cdot\xiTwoPh = \bar {\vec{n}}\cdot\xiTwoPh = H$
	as a consequence of the orthogonality $\bar\tau \cdot \bar{\vec{n}}=0$. 
	By definition~\eqref{def:extensionNormal} and definition~\eqref{DefinitionXiTwoPhase},
	it holds $\nabla\cdot\xi = \Delta s$. Hence, $B\cdot\xi=H = -\nabla\cdot\xi + O(r^{-2}\dist(\cdot,\bar I))$
	in view of the definition~\eqref{eq:regNormalCurvature} and the regularity estimates for the signed distance.	
	This concludes the proof upon noticing that \eqref{AdditionalPropertyTwoPhase} follows by a straightforward computation.
\end{proof}

The preceding result relies on a number of well-known properties of the signed distance and the nearest point projection.
For further reference, we present them here in a separate statement.

\begin{lemma}\label{LemmaPropertiesSignedDistanceTwoPhase}
Let $d=2$ and $P \in \mathbb{N}$, $P\geq 2$. Let $(\bar{\Omega}_1,\ldots,\bar{\Omega}_P)$  
be a strong solution to multiphase mean curvature flow in the sense of Definition~\ref{DefinitionStrongSolution}. 
Fix~$i,j\in\{1,\ldots,P\}$ with $i\neq j$ such that~$\bar I_{i,j}=\bigcup_{t\in [0,T]}\bar{I}_{i,j}(t){\times}\{t\}$
is a non-trivial interface. Let $r_{i,j}\in (0,1]$ be an admissible localization radius for~$\bar I_{i,j}$
in the sense of Definition~\ref{DefinitionStrongSolutionTwoPhase}.

Then $s_{i,j} \in (C^0_tC^5_x\cap C^1_tC^3_x)(\overline{\mathrm{im}(\Psi_{i,j})})$.
The time evolution of the signed distance~$s_{i,j}$
is moreover given by transport along the flow of the mean curvature vector field 
in the sense that we have
\begin{align}\label{TransportSignedDistanceTwoPhase}
\partial_t s_{i,j} = -\big(H_{i,j}\bar{\vec{n}}_{i,j}\cdot\nabla\big) s_{i,j}
\quad \text{throughout }\mathrm{im}(\Psi_{i,j}).
\end{align}
The gradient of the projection map~\eqref{defPI} is given by
\begin{align}\label{GradientProjectionInterface}
\nabla P_{i,j} = \bar{\tau}_{i,j}\otimes
\bar{\tau}_{i,j} - s_{i,j}\nabla\vec{\bar{n}}_{i,j}
\quad \text{throughout }\mathrm{im}(\Psi_{i,j}).
\end{align}
Finally, for all $(x,t)\in\mathrm{im}(\Psi_{i,j})$
the derivatives of the signed distance $s_{i,j}$ are subject to the relations
\begin{align}
\label{SignedDistanceNormal}
\nabla s_{i,j}(x,t)  = \nabla s_{i,j}(y,t)|_{y=P_{i,j}(x,t)} & = \bar{\vec{n}}_{i,j}(x,t), \\
\label{LengthConservation1}
\nabla s_{i,j}(x,t)\cdot\partial_t\nabla s_{i,j}(x,t) &= 0, \\
\label{LengthConservation2}
\big(\nabla s_{i,j}(x,t)\cdot\nabla\big) \nabla s_{i,j}(x,t) &= 0,
\\
\label{TimeDerivativeSignedDistance}
\partial_t s_{i,j}(x,t) &=\partial_t s_{i,j}(y,t)|_{y=P_{i,j}(x,t)}.
\end{align}
\end{lemma} 

\begin{proof}
The representation of $s_{i,j}$ as a component of the inverse of $\Psi_{i,j}$ initially gives the regularity 
$s_{i,j} \in (C^0_tC^4_x\cap C^1_tC^2_x)(\overline{\mathrm{im}(\Psi_{i,j})})$.
A proof of the well-known identities~\eqref{TransportSignedDistanceTwoPhase}--\eqref{TimeDerivativeSignedDistance}
was given for instance in \cite[Lemma 10]{FischerHensel} with
the only difference being the precise form of the normal velocity of the evolving family of interfaces. Note that for instance \eqref{LengthConservation1} and \eqref{LengthConservation2} follow immediately from differentiating the constraint $|\nabla s_{i,j}|^2=1$ with respect to time and space, respectively.
The higher regularity for the signed distance $s_{i,j}$ and its time derivative $\partial_t s_{i,j}$ 
finally follows from~\eqref{eq:regNormalCurvature} and the identity~\eqref{SignedDistanceNormal}.
\end{proof}

\section{Gradient flow calibrations at a triple junction}
\label{SectionLocalConstructionsTriod}
The aim of this section is to construct a gradient flow calibration
in the model case of three regular interfaces meeting at a single triple junction.
The space-time trajectory of such a triple junction will be denoted
by $\mathcal{T}=\bigcup_{t\in [0,T]}\mathcal{T}(t){\times}\{t\}$
where $\mathcal{T}(t)\subset\Rd[2]$ is a singleton for all~$t\in [0,T]$.
For simplicity, we assume throughout the section 
that the triple junction consists of interfaces between the phases $1$, $2$, and $3$.
We will also use cyclical indices $i=1,2,3$ throughout the section, i.\,e.\ for simplicity we identify $i=0$ with $i=3$, $i=4$ with $i=1$, and so on; for instance, we may write $\xi_{0,1}$ instead of $\xi_{3,1}$ etc.

Similar to the previous one, the constructions provided in this
section are local in the sense that they are restricted to a sufficiently small space-time
neighborhood of the evolving triple junction~$\mathcal{T}$.
We first formalize this by introducing the notion of an \textit{admissible localization radius
$r=r_{\mathcal{T}}\in (0,1]$ for the triple junction~$\mathcal{T}$} in Definition~\ref{def:locRadiusTripleJunction}. 
We then state the main result of this section,
Proposition~\ref{prop:xi_triple_junction}, which provides all relevant properties of the 
constructed calibrations.

The construction of a calibration $\xi_{i,j}$ for $i,j \in \{1,2,3\}$ with $i\neq j$ 
along with an associated velocity field $B$ proceeds in three steps.
First, we extend the normal of the interface ${\bar{I}}_{i,j}$ of the strong solution to 
auxiliary vector fields $\widetilde\xi_{i,j}$ defined on the natural domain 
$\mathbb{H}_{i,j}:=\mathrm{im}(\Psi_{i,j}) \cap \bigcup_{t\in [0,T]} B_{r}(\mathcal{T}(t)) {\times} \{t\}$, 
see Figure~\ref{fig:triod}, on which the nearest point-projection onto ${\bar{I}}_{i,j}$ is well-defined and regular; 
see Definition~\ref{DefinitionStrongSolutionTwoPhase} and the subsequent discussion.
One should think of $\widetilde\xi_{i,j}$ as the main building block for the vector field
$\xi_{i,j}$ on the domain $\mathbb{H}_{i,j}$ containing the corresponding interface~${\bar{I}}_{i,j}$.
Similarly, we also construct auxiliary velocity fields $B_{i,j}$ on $\mathbb{H}_{i,j}$ by 
choosing its normal component as an extension of the scalar mean curvature $H_{i,j}$ of the interface ${\bar{I}}_{i,j}$. However, let us emphasize that we do not define $\widetilde\xi_{i,j}$ by just extending the unit normal vector field of ${\bar{I}}_{i,j}$ using the nearest point projection; indeed, to satisfy certain compatibility conditions, a more refined choice becomes necessary, see below for a more detailed discussion.

In the second step, we aim to identify a candidate vector field for the definition of $\xi_{i,j}$
outside of its natural domain of definition $\mathbb{H}_{i,j}$. 
The guiding principle is to make sure that the Herring angle condition at the triple junction
\begin{align}
\label{eq:auxHerring}
\sigma_{1,2}\bar{\vec{n}}_{1,2} + \sigma_{2,3}\bar{\vec{n}}_{2,3} + \sigma_{3,1}\bar{\vec{n}}_{3,1} = 0,
\end{align}
is satisfied by the calibrations $(\xi_{1,2},\xi_{2,3},\xi_{3,1})$
in the whole neighborhood of the triple junction:
\begin{align}
\label{eq:auxHerringAngleConditionExtensions}
	\sigma_{1,2}\xi_{1,2} + \sigma_{2,3} \xi_{2,3} + \sigma_{3,1} \xi_{3,1} = 0.
\end{align}
This allows us to define vector fields $(\xi_1,\xi_2,\xi_3)$ such that $\sigma_{i,i+1}\xi_{i,i+1}=\xi_i-\xi_{i+1}$
holds true for all cyclical indices $i=1,2,3$. The latter identity in turn is precisely the property of gradient flow calibrations necessary to differentiate the relative entropy functional in time, see for example equation~\eqref{RelativeEntropyRewritten}.

In order to achieve \eqref{eq:auxHerringAngleConditionExtensions} we note that it represents an angle condition.
As the union of the domains $\mathbb{H}_{i,i+1}$ for $i=1,2,3$ covers a neighborhood of the triple junction, 
see Figure~\ref{fig:triple_wedges}, we would like to define $\xi_{i+1,i-1}$ and 
$\xi_{i-1,i}$ on $\mathbb{H}_{i,i+1}$ by simply rotating $\widetilde \xi_{i,i+1}$, see Figure~\ref{fig:triple_rotation}.

However, as these domains overlap, see Figure~\ref{fig:triod}, we 
will have to interpolate between the competing definitions of the calibrations and velocities.
To this end, we partition the neighborhood of the triple junction into six wedges
centered at the triple junction as indicated in Figure~\ref{fig:triod_wedges}, 
three of which are denoted by $W_{i,j}=W_{j,i}$
and the remaining three by $W_i$. We require that
%\begin{align*}
%\label{geometric_smallness_intro}
$B_{r}(\mathcal{T}(t))\cap{\bar{I}}_{i,j}\subset W_{i,j} \cup \mathcal{T}(t)
\subset\mathbb{H}_{i,j}$,
%\end{align*}
see Figure~\ref{fig:triod_wedges}, the first inclusion corresponding 
to a geometric smallness condition for the interfaces away from the triple junction. 
For the remaining three wedges it is required that %$W_i\subset\bar\Omega_i$ and
$W_i\subset\bigcap_{j\neq i}\mathbb{H}_{i,j}$, see again Figure~\ref{fig:triod_wedges}. 
We will refer to these wedges as \textit{interpolation wedges}
since on them we will interpolate between the two competing definitions for $\xi_{i,i+1}$ and $B$.

\begin{figure}
	\centering
     \subcaptionbox{\label{fig:triod}}{
	  \centering
 	  \vspace{-\baselineskip}
	  \begin{tikzpicture}[scale=2.9]
	   	\node at (85:.9) {\phantom{$W_{1}$}};
		\node at (185:.9) {\phantom{$W_{2}$}};
		\node at (302:.9) {\phantom{$W_{2}$}};
		
		\node at (145:.9) {\phantom{$W_{1,2}$}};
		\node at (255:.9) {\phantom{$W_{2,3}$}};
		\node at (17:.9) {\phantom{$W_{3,1}$}};
	  \begin{scope}
	  	\clip (0,0) circle [radius=.8];
	 	\fill[pattern=north west lines,pattern color=blue,opacity=.2] (110:.8) -- (1,1.3) -- (1,-1)--  (290:.95) -- (110:.8);
		\fill[pattern=north east lines,pattern color=red,opacity=.2] (60:.8) -- (-1,1.3) -- (-1,-1)--  (240:.95) -- (60:.8);

	  	\draw[thick,color=red,opacity=.6] (0,0) arc (60:90:2);
		\draw[thick,opacity=.4] (0,0) arc (130:155:2);
		\draw[thick,color=blue] (0,0) arc (110:60:1);
		
		\draw[color=red,opacity=.6] (60:-.9)--(60:.8);
		\draw[opacity=.4] (130:-.8)--(130:.9);
		\draw[color=blue] (110:-.8)--(110:.8);
		
		\begin{scope}
			\clip (130:-1.5)--(130:1.5) -- (-2,2) -- (-2,-2) -- (130:-1.5);
		\foreach \i in {-35,...,35}{
				\draw[opacity=.2] (-1,-\i*0.025) -- (1,-\i*0.025);
			};
		\end{scope}

		\end{scope}
		
			\node[fill=white,inner sep=1pt] at (85:.25){$\bar{\Omega}_1$};
			\node[fill=white,inner sep=1pt] at (190:.35){$\bar{\Omega}_2$};
			\node[fill=white,inner sep=1pt] at (310:.3){$\bar{\Omega}_3$};
			
			\node at (3:.9){$\bar{I}_{3,1}$};
			\node at (157:.9){$\bar{I}_{1,2}$};
			\node at (230:.9){$\bar{I}_{2,3}$};
			
%			\node at (70:.9) {$\mathbb{H}_{1,2}$};
%		\node at (140:.9) {$\mathbb{H}_{2,3}$};
%		\node at (100:.9) {$\mathbb{H}_{3,1}$};
			
	\end{tikzpicture}
     }
     \subcaptionbox{\label{fig:triod_wedges}}{
	  \centering
	 \begin{tikzpicture}[scale=2.9]
	 \begin{scope}
	 	\clip (0,0) circle [radius=0.8];
	 	
	 	\begin{scope}
	 		\clip (0,0) -- (70:1.5) --(100:1.5) -- (0,0);
	 		\foreach \i in {0,...,15}
			{
				\draw[color=blue] (-.5,\i*.05) -- (.5,\i*.05);
				\draw[color=red,opacity=.6] (-.5,\i*.05+.025 ) -- (.5,\i*.05 +.025);
			};
	 	\end{scope}
	 	
	 		\begin{scope}
	 			\clip (0,0) -- (170:1.5) --(200:1.5) -- (0,0);
	 			\foreach \i in {-15,...,15}
				{
					\draw[opacity=.4] ($(120:1.5) + (210:\i*.05)$) -- ($(120:-1.5) + (210:\i*.05)$);
					\draw[color=red, opacity=.6] ($(120:1.5) + (210:\i*.05+.025)$) -- ($(120:-1.5) + (210:\i*.05+ .025)$);
				};
	 		\end{scope}
	 		
	 		\begin{scope}
	 			\clip (0,0) -- (295:1.5) --(305:1.5) -- (0,0);
	 			\foreach \i in {-15,...,15}
				{
					\draw[opacity=.3] ($(240:1.5) + (330:\i*.05)$) -- ($(240:-1.5) + (330:\i*.05)$);
					\draw[color=blue] ($(240:1.5) + (330:\i*.05+.025)$) -- ($(240:-1.5) + (330:\i*.05+ .025)$);
				};
	 		\end{scope}
	 	
		\draw[thick,color=red,opacity=.6] (0,0) arc (60:90:2);
		\draw[thick,opacity=.4] (0,0) arc (130:155:2);
		\draw[thick,color=blue] (0,0) arc (110:60:1);
		
		\draw[dashed,color=red,opacity=.6] (60:-.9)--(60:.8);
		\draw[dashed,opacity=.4] (130:-.8)--(130:.9);
		\draw[dashed,color=blue] (110:-.8)--(110:.8);

		\draw (0,0) -- (70:.8);
		\draw (0,0) -- (100:.8);

		\draw (0,0) -- (170:1);
		\draw (0,0) -- (200:1);

		\draw (0,0) -- (295:.8);
		\draw (0,0) -- (305:.8);

	\end{scope}
		
		\node at (85:.9) {$W_{1}$};
		\node at (185:.9) {$W_{2}$};
		\node at (302:.9) {$W_{3}$};
		
		\node at (145:.9) {$W_{1,2}$};
		\node at (255:.9) {$W_{2,3}$};
		\node at (17:.9) {$W_{3,1}$};
	\end{tikzpicture}
     }
     \caption{a) Sketch of a triple junction with phases $\bar{\Omega}_1$, $\bar{\Omega}_2$, and $\bar{\Omega}_3$; 
		and the corresponding interfaces. The bottom left to top right hatched region is the 
		domain $\mathbb{H}_{1,2}$, the horizontally hatched region is $\mathbb{H}_{2,3}$, and 
		the top left to bottom right hatching represents $\mathbb{H}_{3,1}$. b) The interpolation 
		wedges, shown as hatched, are given by $W_1$, $W_2$ and $W_3$. The remaining wedges 
		$W_{1,2}$, $W_{2,3}$ and $W_{3,1}$ contain the corresponding interfaces. }
        \label{fig:triod_and_wedges}
\end{figure}
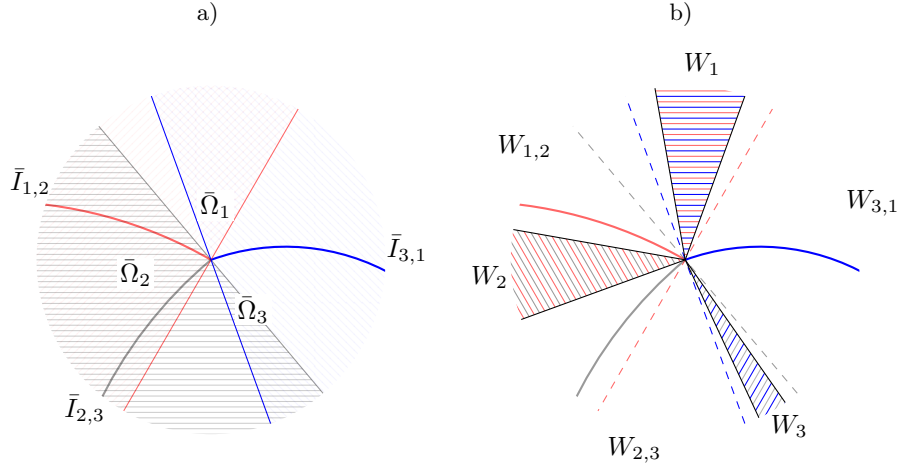

It turns out that in order to preserve our gradient flow calibration properties \eqref{TransportEquationXiTriod} and \eqref{DissipTriod} during the gluing construction, we need $C^1$ compatibility of these three constructions of the vector field $\xi_{i,i+1}$ and the velocity field $B$ at the triple junction. As we shall see, this necessitates a Taylor expansion ansatz for the construction of the vector fields $\xi_{i,i+1}$: While for a two-phase interface we were able to define $\xi_{i,i+1}$ by extending the unit normal vector field $\vec{n}_{i,i+1}$ of the interface ${\bar{I}}_{i,i+1}$ by orthogonal projection, even in the wedge $W_{i,i+1}$ (where the projection onto ${\bar{I}}_{i,i+1}$ is still well-defined) we now need to employ a more general ansatz of the form
\begin{align*}
\widetilde\xi_{i,i+1}(x,t) & := 
\Big(1-\frac{1}{2} \widehat \alpha^2_{i,i+1}(P_{\bar I_{i,i+1}}x,t)
	s_{i,i+1}^2(x,t)\Big) \bar{\vec{n}}_{i,i+1}(P_{\bar I_{i,i+1}}x,t)
\\&~~~~
+\widehat \alpha_{i,i+1}(P_{\bar I_{i,i+1}}x,t)
s_{i,i+1}(x,t)\bar{\tau}_{i,i+1}(P_{\bar I_{i,i+1}}x,t)
\end{align*}
for a suitably chosen function $\widehat \alpha_{i,i+1}$ (and with $s_{i,i+1}$ denoting the signed distance to the interface ${\bar{I}}_{i,i+1}$ and $\bar{\tau}_{i,i+1}:=J^\mathsf{T}\bar{\vec{n}}_{i,i+1}$). Note that the ansatz in particular features first-order terms in the (signed) distance to the interface (the role of the term involving $s_{i,i+1}^2$ being just that of an approximate normalization of the overall vector). It will turn out that for a suitable choice of the (a priori arbitrary) values of $\widehat \alpha_{i,i+1}$ at the triple junction, our ansatz ensures that the three values of $\nabla \widetilde\xi_{i,i+1}$ according to the three definitions of $\xi_{i,i+1}$ in the wedges $W_{i,i+1}$, $W_{i-1,i}$, and $W_{i+1,i+2}$ coincide at the triple junction. To see this, we also exploit the compatibility conditions satisfied by a strong solution to multiphase mean curvature flow at the triple junction.

Concerning the transport velocity field $B$, we observe that only its normal component $B\cdot \bar{\vec{n}}_{i,i+1}$ is defined naturally on the interface ${\bar{I}}_{i,i+1}$ of the strong solution, being given there by the mean curvature $H_{i,i+1}$ of the interface. Again, we shall require $C^1$ compatibility at the triple junction for the three different constructions. This motivates the ansatz in the wedge $W_{i,i+1}$
\begin{align*}
\widetilde B_{(i,i+1)}(x,t) 
		&:= H_{i,i+1}(P_{\bar I_{i,i+1}}x,t) \bar{\vec{n}}_{i,i+1}(P_{\bar I_{i,i+1}}x,t)
		\\&~~~~
		+ \widehat\alpha_{i,i+1}(P_{\bar I_{i,i+1}}x,t) \bar{\tau}_{i,i+1}(P_{\bar I_{i,i+1}}x,t) 
		\\&~~~~
		+ \beta_{i,i+1}(P_{\bar I_{i,i+1}}x,t) s_{i,i+1}(x,t) \bar{\tau}_{i,i+1}(P_{\bar I_{i,i+1}}x,t)
\end{align*}
(it turns out that a term of the form $s_{i,i+1} \bar{\vec{n}}_{i,i+1}$ is not needed). By a suitable choice of the $\bar\tau_{i,i+1} \cdot \nabla \widehat\alpha_{i,i+1}$ and the $\beta_{i,i+1}$ at the triple junction, using also again the compatibility conditions for the strong solution at the triple junction, we can again achieve compatibility of the three definitions of $B$ and $\nabla B$ in the three wedges $W_{i,i+1}$, $W_{i-1,i}$, $W_{i+1,i+2}$.

\begin{definition}
\label{def:locRadiusTripleJunction}
Let $d=2$ and $P \in \mathbb{N}$, $P\geq 2$. Let $(\bar{\Omega}_1,\ldots,\bar{\Omega}_P)$ 
be a strong solution to multiphase mean curvature flow in the sense of Definition~\ref{DefinitionStrongSolution}. 
Let $\mathcal{T}=\bigcup_{t\in [0,T]}\mathcal{T}(t){\times}\{t\}$ be an evolving triple junction present
in the network of interfaces of~$\bar\Omega$, 
and assume for simplicity that it is formed by the phases~$1,2$ and~$3$.
For each~$i\in\{1,2,3\}$, denote by $\mathcal{T}_{i,i+1}=\bigcup_{t\in [0,T]}\mathcal{T}_{i,i+1}(t){\times}\{t\}$
the unique space-time connected component of~$\bar I_{i,i+1}$ with an endpoint at the triple junction,
and let~$r_{i,i+1}\in (0,1]$ be an admissible localization
radius for the interface~$\bar I_{i,i+1}$ in the sense of Definition~\ref{DefinitionStrongSolutionTwoPhase}.

We call a scale~$r=r_{\mathcal{T}}\in (0,r_{1,2}\wedge r_{2,3} \wedge r_{3,1}]$
an \emph{admissible localization radius for the triple junction~$\mathcal{T}$} if 
there exists a wedge decomposition of the space-time neighborhood
$\mathcal{U}_r := \bigcup_{t\in [0,T]} B_r(\mathcal{T}(t)){\times}\{t\}$
of the triple junction in the following precise sense:
\begin{itemize}[leftmargin=0.7cm]
\item[i)] For each~$i\in\{1,2,3\}$ there exist sets $W_{i,i+1} := 
\bigcup_{t\in [0,T]} W_{i,i+1}(t){\times}\{t\}$ and
$W_{i}:=\bigcup_{t\in [0,T]} W_i(t){\times}\{t\}$
(in order to not rely on cyclical notation in later sections, 
we also define $W_{i+1,i}:=W_{i,i+1}$ for all $i\in\{1,2,3\}$)
subject to the following requirements:

First, for each~$t\in [0,T]$
the six sets~$(W_{i,i+1}(t))_{i\in\{1,2,3\}}$ and~$(W_{i}(t))_{i\in\{1,2,3\}}$
are pairwise disjoint, non-empty open subsets of~$B_{r}(\mathcal{T}(t))$ such that
\begin{align}
\label{eq:decompByWedges}
\bigcup_{i\in\{1,2,3\}} \overline{W_{i,i+1}(t)} \cup \overline{W_{i}(t)} 
= \overline{B_r(\mathcal{T}(t))}.
\end{align} 

Second, there exist six time-dependent unit vectors~$(X^{i}_{i,i+1},X^{i+1}_{i,i+1})_{i\in\{1,2,3\}}$
of class $C^1([0,T])$ such that for all~$i\in\{1,2,3\}$
and all~$t\in [0,T]$ we have
\begin{align}
\label{def:interfaceWedge}
W_{i,i+1}(t) &= \big(\mathcal{T}(t){+}\big\{\gamma_1 X^{i}_{i,i+1}(t) {+} 
\gamma_2 X^{i+1}_{i,i+1}(t)\colon \gamma_1,\gamma_2\in (0,\infty)\big\}\big) \cap B_{r}(\mathcal{T}(t)),
\\
\label{def:interpolWedge}
W_{i}(t) &= \big(\mathcal{T}(t){+}\big\{\gamma_1 X^{i}_{i,i+1}(t) {+} 
\gamma_2 X^{i}_{i-1,i}(t)\colon \gamma_1,\gamma_2\in (0,\infty)\big\}\big) \cap B_{r}(\mathcal{T}(t)).
\end{align}
For all~$i\in\{1,2,3\}$, the scalar products $X^{i}_{i,i+1}\cdot X^{i+1}_{i,i+1}\in (0,1)$  
and $X^{i}_{i,i+1}\cdot X^{i}_{i-1,i}$ are constant in time, and their values
only depend on the surface tensions.

Third, we require that for all~$i\in\{1,2,3\}$ and all~$t\in [0,T]$ it holds
\begin{align}
\label{eq:inlcusionInterfaceWedge}
B_{r}(\mathcal{T}(t)) \cap\mathcal{T}_{i,i+1}(t) &\subset W_{i,i+1}(t) \cup \mathcal{T}(t) 
\subset \mathbb{H}_{i,i+1}(t),
\\
\label{eq:inclusionInterpolWedge}
W_{i}(t) &\subset 
\mathbb{H}_{i,i+1}(t) \cap \mathbb{H}_{i,i-1}(t),
\end{align}
with the space-time domains~$\mathbb{H}_{i,i+1}:=\bigcup_{t\in [0,T]} \mathbb{H}_{i,i+1}(t){\times}\{t\}$
being defined by $\mathbb{H}_{i,i+1}(t) := \{x\in\Rd[2]\colon (x,t)\in \mathrm{im}(\Psi_{i,i+1})\}
\cap B_{r}(\mathcal{T}(t))$, $t\in [0,T]$.
\item[ii)] There exists a constant $C=C(\sigma)>0$ depending only on
					 the surface tensions such that for all~$i\in\{1,2,3\}$
\begin{align}
\label{eq:compDistances1}
&\max\{\dist(\cdot,\mathcal{T}), \dist(\cdot,\bar I_{i,i+1}), \dist(\cdot,\bar I_{i-1,i}) \} 
\leq C\min_{j=1,2,3}\dist(\cdot,\bar I_{j,j+1}) \quad\text{in } W_{i},
\\ \label{eq:compDistances3}
&\dist(\cdot,\bar I_{i,i+1})
\leq C\min_{j=1,2,3}\dist(\cdot,\bar I_{j,j+1}) \quad \text{in } W_{i,i+1}, 
\\
\label{eq:compDistances2}
&\dist(x,\mathcal{T}) \leq C\dist(\cdot,\bar I_{i,i+1})
\quad \text{in } W_{i-1,i} \cup W_{i+1,i-1}.
\end{align}
\end{itemize}
In view of the properties~\emph{\eqref{def:interfaceWedge}--\eqref{eq:inclusionInterpolWedge}},
we call each $W_{i,i+1}$ an \emph{interface wedge}, and each $W_{i}$ an \emph{interpolation wedge}. 
\end{definition}

The following lemma ensures the existence of an admissible localization radius
for a triple junction of the strong solution; in particular, that we can indeed find wedges with the desired properties.
Its proof is deferred to the end of Subsection~\ref{subsection:step_2}.

\begin{lemma}
\label{lem:existenceLocRadius}
Let the assumptions of Definition~\ref{def:locRadiusTripleJunction} be in place.
Then there exists an admissible localization radius for the triple junction~$\mathcal{T}$.
In fact, one may choose $r=\frac{1}{C}(r_{1,2}\wedge r_{2,3} \wedge r_{3,1})$ for a constant
$C=C(\sigma)\geq 1$ depending only on the surface tensions at the triple junction.
\end{lemma}

As a final remark concerning the construction of the calibrations 
and the velocity, one has to make sure that they have sufficiently 
high regularity at the triple junction. Naively, one might choose
 the auxiliary vector fields $\widetilde\xi_{i,j}$ as in the case of a 
single connected interface from the previous section, i.e., 
$\widetilde\xi_{i,j}:=\bar{\vec{n}}_{i,j}$
on $\mathbb{H}_{i,j}$. However, this ansatz after the rotation and interpolation steps only provides 
continuous vector fields $\xi_{i,j}$ which in general already fail to be 
Lipschitz at the triple junction, as we will see later.
Hence, in the first step we will employ a more careful expansion ansatz 
in terms of the signed distance function to ${\bar{I}}_{i,j}$,
see \eqref{AnsatzAuxiliaryXiHalfSpace}.

We are now in a position to state the main result of this section,
namely the existence of a gradient flow calibration in the
vicinity of an evolving triple junction.

\begin{proposition}
\label{prop:xi_triple_junction}
Let $d=2$ and $P \in \mathbb{N}$, $P\geq 2$. Let $(\bar{\Omega}_1,\ldots,\bar{\Omega}_P)$
be a strong solution to multiphase mean curvature flow in the sense of Definition~\ref{DefinitionStrongSolution}. 
Let $\mathcal{T}=\bigcup_{t\in [0,T]}\mathcal{T}(t){\times}\{t\}$ be an evolving triple junction present
in the network of interfaces of the strong solution, and assume for simplicity that it is formed by the phases~$1,2$ and~$3$.
Let $r=r_{\mathcal{T}}\in (0,1]$ be an associated admissible localization radius for the triple junction~$\mathcal{T}$
as given by Lemma~\ref{lem:existenceLocRadius}. In particular, for all distinct~$i,j\in\{1,2,3\}$,
let~$r_{i,j}$ be an admissible localization radius for~$\bar I_{i,j}$ in the sense of 
Definition~\ref{DefinitionStrongSolutionTwoPhase}.

Then there exists a constant~$\hat C=\hat C(\bar\Omega)\geq 1$, depending only on~$\bar\Omega$
but independent of~$(r_{i,j})_{i,j\in\{1,2,3\},i\neq j}$, so that
the radius~$\hat r:=\hat C^{-1} r$ has the following properties: 
Define $\mathcal{U}_{\hat r}:=\bigcup_{t \in [0,T]} B_{\hat r}(\mathcal{T}(t))\times\{t\}$. 
For all $i,j \in \{1,2,3\}$ with $i\neq j$, there exist continuous extensions
of the unit-normal vector fields and a continuous velocity field
\begin{align*}
	\xiTrJ_{i,j}\colon \mathcal{U}_{\hat r} \to \Rd[2], 
	\quad \BTrJ \colon \mathcal{U}_{\hat r} \to \Rd[2],
\end{align*}
which are of regularity $\xiTrJ_{i,j} \in (C^0_tC^2_x\cap C^1_tC^0_x)
(\overline{\mathcal{U}_{\hat r}}\setminus\mathcal{T})$
respectively $\BTrJ \in C^0_tC^2_x(\overline{\mathcal{U}_{\hat r}}\setminus\mathcal{T})$, and which are
furthermore subject to the following properties:
\begin{itemize}[leftmargin=0.7cm]
		\item[i)] It holds $\xi_{i,j}(x,t) = \bar{\vec{n}}_{i,j}(x,t)$ for all $t\in [0,T]$ and
				for all $x \in \mathcal{T}_{i,j}(t)\cap B_{\hat r}(\mathcal{T}(t))$,
				where~$\mathcal{T}_{i,j}$ is the unique space-time connected component of~$\bar I_{i,j}$
				with an endpoint at the triple junction~$\mathcal{T}$.
				We also have $|\xi_{i,j}(x,t)|=1$ for all $(x,t) \in \mathcal{U}_{\hat r}$.
				Expressing the triple junction in form of~$\mathcal{T}(t)=\{p(t)\}$,
				it holds~$B(p(t),t)=\frac{\mathrm{d}}{\mathrm{d}t}p(t)$ for all~$t\in [0,T]$.
		\item[ii)] We have the skew-symmetry relation $\xi_{i,j}=-\xi_{j,i}$.
		\item[iii)] The family of vector fields $(\xi_{i,j})_{i\neq j}$ satisfies the Herring angle condition \eqref{eq:auxHerring}
		            in the entire neighborhood of the triple junction, i.e., it holds for all $(x,t) \in \mathcal{U}_{\hat r}$
			\begin{align}
			\label{xi_compatibility}
				\sigma_{1,2}\xi_{1,2}(x,t) + \sigma_{2,3}\xi_{2,3}(x,t) + \sigma_{3,1}\xi_{3,1}(x,t) = 0.
			\end{align}
		\item[iv)] There exists a constant~$C=C(\bar\Omega)>0$, depending only on the strong solution~$\bar\Omega$
							 but independent of~$\hat r$, such that
							 throughout $\mathcal{U}_{\hat r}\setminus\mathcal{T}$ and for all $i,j\in\{1,2,3\}$ with $i\neq j$, 
							 we have the bounds
			\begin{align}
				\label{TransportEquationXiTriod}
				|\partial_t\xiTrJ_{i,j} + (\BTrJ\cdot\nabla)\xiTrJ_{i,j} + (\nabla\BTrJ)^\mathsf{T}\xiTrJ_{i,j}| 
				&\leq C\hat r^{-3}\dist(\cdot,{\bar{I}_{i,j}}),  
				\\
				\label{DissipTriod}
				|(\nabla\cdot\xiTrJ_{i,j}) + \BTrJ\cdot\xiTrJ_{i,j}| &\leq 
				C\hat r^{-2}\dist(\cdot,{\bar{I}_{i,j}}),
				\\
				\label{LengthConservationXiTriodNormalized}
				\xiTrJ_{i,j}\cdot\partial_t\xiTrJ_{i,j} + \xiTrJ_{i,j}\cdot(\BTrJ\cdot\nabla)\xiTrJ_{i,j} &= 0,
			\end{align}
			as well as
			\begin{align}
			\label{AdditionalPropertyThreePhase}
			\big|\nabla B : \big(\xi_{i,j} \otimes J\xi_{i,j}+J\xi_{i,j} \otimes \xi_{i,j}\big)\big| + |\nabla B : \xi_{i,j} \otimes \xi_{i,j}| \leq C \dist(x,{\bar{I}}_{i,j}(t)).
			\end{align}
\item[v)] Finally, there exists a constant~$C=C(\bar\Omega)>0$, depending only on the strong solution~$\bar\Omega$
							 but independent of~$\hat r$,
such that
\begin{align}
\label{boundDerivativesXi}
\hat r^2|\partial_t\xi_{i,j}| \leq C, \quad
\hat r^k|\nabla^k\xi_{i,j}| &\leq C, 
&& k\in\{0,1,2\},
\\\label{boundDerivativeB}
\hat r^k|\nabla^k B| &\leq C\hat r^{-1}, && k\in\{0,1,2\}
\end{align}
throughout the space-time domain $\mathcal{U}_{\hat r}\setminus\mathcal{T}$.
\end{itemize}
\end{proposition}

\subsection{Construction close to individual interfaces}\label{subsection:step_1}
For all what follows in this subsection,
let the assumptions of Proposition~\ref{prop:xi_triple_junction}
and the notation of Section~\ref{SectionLocalConstructionsTwoPhase} 
and Definition~\ref{def:locRadiusTripleJunction} be in place.
In this subsection, we for $i=1,2,3$ first introduce the previously discussed auxiliary vector fields $\widetilde\xi_{i,i+1}$ 
as extensions of the normals $\bar{\vec{n}}_{i,i+1}$ of the interfaces $\bar I_{i,j}$ to the domains $\mathbb{H}_{i,i+1}$.

We would like to define $\widetilde \xi_{i,i+1}$, and later also a candidate for the velocity field $B$, 
by an expansion ansatz in terms of the signed distance function $s_{i,i+1}$ to the interface ${\bar{I}}_{i,i+1}$,
see~\eqref{SignedDistanceTwoPhase}. To this end, we introduce two sets of coefficient functions $\alpha_{i,i+1}$ and $\beta_{i,i+1}$.
Recalling the definitions~\eqref{defPI}, 
\eqref{def:extensionNormal}, \eqref{def:extensionCurvature}, \eqref{def:tangent},
the ansatz for the extension $\widetilde\xi_{i,i+1}$ of the normal 
vector field $\bar{\vec{n}}_{i,i+1}|_{{\bar{I}}_{i,i+1}}$ then is
\begin{equation}
\label{AnsatzAuxiliaryXiHalfSpace}
\begin{aligned}
	\widetilde\xi_{i,i+1}(x,t)  
	& := 
	\bar{\vec{n}}_{i,i+1}(x,t)
	\\&~~~~
	+\alpha_{i,i+1}(x,t)
	s_{i,i+1}(x,t)\bar{\tau}_{i,i+1}(x,t) 
	\\&~~~~ 
	-\frac{1}{2} \alpha^2_{i,i+1}(x,t)
	s_{i,i+1}^2(x,t)\bar{\vec{n}}_{i,i+1}(x,t) .
\end{aligned}
\end{equation}
Furthermore, we set $\widetilde \xi_{i+1,i} := - \widetilde \xi_{i,i+1}$ for $t\in [0,T]$, $x \in \mathbb{H}_{i,i+1}(t)$, and $i\in\{1,2,3\}$.

Apart from the family of vector fields $(\xi_{i,j})_{i\neq j}$, the notion of gradient flow calibrations also requires a suitably defined velocity field $B$. For its construction in the vicinity
of a triple junction, we introduce in a first step certain auxiliary symmetric 
velocity fields $\widetilde B_{(i,j)}=\widetilde B_{(j,i)}$.
To this end, we employ the expansion ansatz
\begin{equation}\label{AnsatzAuxiliaryVelocityHalfSpace}
\begin{aligned}
		\widetilde B_{(i,i+1)}(x,t) 
		&:= H_{i,i+1}(x,t) \bar{\vec{n}}_{i,i+1}(x,t)
		\\&~~~~
		+ \alpha_{i,i+1}(x,t) \bar{\tau}_{i,i+1}(x,t) 
		\\&~~~~
		+ \beta_{i,i+1}(x,t) s_{i,i+1}(x,t) \bar{\tau}_{i,i+1}(x,t)
\end{aligned}
\end{equation}
for every $i\in\{1,2,3\}$, $t\in [0,T]$ and $x \in \mathbb{H}_{i,i+1}(t)$. We also set $\widetilde B_{(i+1,i)} := \widetilde{B}_{(i,i+1)}$.

To complete the definition of $\widetilde \xi_{i,i+1}$ and $\widetilde B_{(i,i+1)}$, it remains to specify $\alpha_{i,i+1}$ and $\beta_{i,i+1}$. We construct $\alpha_{i,i+1}$ as
\begin{align}
\label{eq:defAlpha}
\alpha_{i,i+1}\colon \mathbb{H}_{i,i+1} \to \Rd[],\quad
(x,t) \mapsto \widehat\alpha_{i,i+1}(P_{i,i+1}(x,t),t),
\end{align}
being defined by projection onto~$\bar I_{i,i+1}$ in terms of the solution
\begin{align}
\widehat\alpha_{i,i+1}\colon\bigcup_{t\in[0,T]}
\mathcal{T}_{i,i+1}(t)\times\{t\} \to \mathbb{R}
\end{align}
to the following ODE posed on the space-time connected component~$\mathcal{T}_{i,i+1}$ 
of the interface~${\bar{I}}_{i,i+1}$ with initial
condition at the triple junction $\mathcal{T}(t)=\{p(t)\}$:
\begin{align}
\label{ODE_alpha}
  \begin{cases}
	\widehat\alpha_{i,i+1}(p(t),t) & = \bar{\tau}_{i,i+1}(p(t),t)\cdot\frac{\mathrm{d}}{\mathrm{d}t}p(t) \\
	\left(\bar{\tau}_{i,i+1}(x,t) \cdot \nabla \right) \widehat\alpha_{i,i+1}(x,t) & = H_{i,i+1}^2(x,t),
	\qquad\qquad\quad x\in  \mathcal{T}_{i,i+1}(t).
  \end{cases}
\end{align}
Second, we define for each~$i\in\{1,2,3\}$
the function $\beta_{i,i+1}\colon\mathbb{H}_{i,i+1} \to \mathbb{R}$ 
by means of
\begin{align}\label{def_beta}
	\beta_{i,i+1} := - \alpha_{i,i+1} H_{i,i+1}
	- (\bar{\tau}_{i,i+1} \cdot \nabla) H_{i,i+1}.
\end{align}
We next briefly present the regularity properties of $\widetilde \xi_{i,i+1}$.

\begin{lemma}
\label{lemma:regularity_tilde_xi}
Let the assumptions of Proposition~\ref{prop:xi_triple_junction}
be in place, in particular the notation of Definition~\ref{def:locRadiusTripleJunction}.
For all phases $i\in\{1,2,3\}$, the auxiliary vector field
$\widetilde\xi_{i,i+1}$ is of class $(C^0_tC^2_x\cap C^1_tC^0_x)(\overline{\mathbb{H}_{i,i+1}})$.
More precisely, we have the estimates
\begin{align}
\label{eq:boundsDerivativesTildeXi}
|\widetilde\xi_{i,i+1}| +
r_{i,i+1}|\nabla \widetilde\xi_{i,i+1}|
+ r_{i,i+1}^2\big(|\nabla^2\widetilde\xi_{i,i+1}|{+}|\partial_t\widetilde\xi_{i,i+1}|\big)
\leq C
\end{align}
for some~$C=C(\bar\Omega)>0$ only depending on~$\bar\Omega$
but independent of~$(r_{i,j})_{i,j\in\{1,2,3\},i\neq j}$.
\end{lemma}

\begin{proof}
\textit{Step 1 (Qualitative differentiability):}
In view of the expansion ansatz~\eqref{AnsatzAuxiliaryXiHalfSpace},
the regularity~\eqref{eq:regSignedDistanceProjection} of the signed distance~$s_{i,i+1}$,
the regularity~\eqref{eq:regNormalCurvature} of the normal~$\bar {\vec{n}}_{i,i+1}$, 
and the regularity~\eqref{def:tangent} of the tangent~$\bar\tau_{i,i+1}$,
it suffices to prove that $\alpha_{i,i+1}\in (C^0_tC^2_x\cap C^1_tC^0_x)(\overline{\mathbb{H}_{i,i+1}})$
to conclude $\widetilde\xi_{i,i+1} \in (C^0_tC^2_x\cap C^1_tC^0_x)(\overline{\mathbb{H}_{i,i+1}})$.

We start with the time regularity of the initial value of the ODE~\eqref{ODE_alpha}.
Using the evolution equation 
$\frac{\mathrm{d}}{\mathrm{d}t} p(t) \cdot \bar{\vec{n}}_{i,i+1}(p(t),t) = H_{i,i+1}(p(t),t)$ 
at the triple junction we get
\begin{align}\label{eq:evolutionTripleJunction}
	\frac{\mathrm{d}}{\mathrm{d}t} \tj(t) = H_{i,i+1}(p(t),t) \bar{\vec{n}}_{i,i+1}(p(t),t)  
	+ \Big( \bar{\tau}_{i,i+1}(p(t),t) \cdot \frac{\mathrm{d}}{\mathrm{d}t} \tj(t) \Big) \bar{\tau}_{i,i+1}(p(t),t)
\end{align}
for $i\in\{1,2,3\}$.
Note that this identity is equivalent to the second-order compatibility condition~\eqref{SecondOrderCompDef}.
We can now identify the term in the parenthesis as
$\alpha_{i,i+1}(p(t),t)$ due to the intial value of the ODE~\eqref{ODE_alpha} and multiply the 
above equation with the rotation matrix~$J$ in order to deduce
\begin{align}\label{eq:compatiblity_xi0}
	- H_{1,2} \,  \bar{\tau}_{1,2}  + \alpha_{1,2} \,  \bar{\vec{n}}_{1,2} &  = 
	- H_{2,3} \,  \bar{\tau}_{2,3}  + \alpha_{2,3} \,  \bar{\vec{n}}_{2,3}  = 
	- H_{3,1} \,  \bar{\tau}_{3,1}  + \alpha_{3,1} \,  \bar{\vec{n}}_{3,1}
\end{align}
at the triple junction.

For $i\neq j$, we then define $c_{i,j}:=\vec{\bar{n}}_{i,i+1}(p(t),t)\cdot\vec{\bar{n}}_{j,j+1}(p(t),t)$ and
$d_{i,j}:=\vec{\bar{n}}_{i,i+1}(p(t),t)\cdot\bar{\tau}_{j,j+1}(p(t),t)$ and notice that they are indeed constant in time 
due to only depending on the angles between interfaces determined by the surface tensions.
Furthermore, note $|c_{i,j}| <1$ as the surface tensions satisfy the triangle inequality.
Multiplying~\eqref{eq:compatiblity_xi0} with the normal $\vec{\bar{n}}_{i,i+1}(p(t),t)$ thus yields
\begin{align*}
\alpha_{i,i+1}(p(t),t)=-H_{j,j+1}(p(t),t)d_{i,j}+\alpha_{j,j+1}(p(t),t)c_{i,j}
\end{align*}
for all $i\neq j$ and all~$t\in [0,T]$. Switching the roles of $i$ and $j$ in the previous formula entails 
\begin{align}
\label{eq:repInitialValueAlpha}
	\alpha_{i,i+1}(p(t),t)
=-(1{-}c_{i,j}^2)^{-1} \big(H_{j,j+1}(p(t),t)d_{i,j}+H_{i,i+1}(p(t),t)d_{i,j}c_{i,j}\big)
\end{align}
for all $i\neq j$ and all~$t\in [0,T]$. Hence, we deduce~$t\mapsto \alpha_{i,i+1}(p(t),t) \in C^1([0,T])$.

We proceed by explicitly integrating the ODE~\eqref{ODE_alpha}, and exploiting
the regularity~\eqref{eq:regNormalCurvature} of the extended scalar mean curvature~$H_{i,i+1}$,
as well as the regularity of the space-time curve~$\mathcal{T}_{i,i+1}$.
Let us make this argument explicit.
To this end, we first choose a $C^5$ diffeomorphic parametrization
$\gamma_0\colon [0,1]\to \mathcal{T}_{i,i+1}(0)$ of the initial curve $\mathcal{T}_{i,i+1}(0)$
such that $\gamma_0(0)=p(0)$,
and then define $\gamma_t(s):=\psi^t(\gamma_0(s))$ for all $(s,t)\in [0,1]{\times}[0,T]$
by means of the flow maps from Definition~\ref{DefinitionSmoothlyEvolvingPartition}.
Capturing orientation by means of the constant $c_{\pm}=\bar\tau_{i,i+1}(\gamma_t(s),t)
\cdot\frac{\partial_s\gamma_t(s)}{|\partial_s\gamma_t(s)|}\in\{\pm 1\}$,
we set
\begin{align}
\label{eq:ansatzCoefficient0}
\widetilde\alpha_{i,i+1}(s,t) := 
\bar{\tau}_{i,i+1}(p(t),t)\cdot\frac{\mathrm{d}}{\mathrm{d}t}p(t)
+ c_{\pm}\int_0^s H^2_{i,i+1}(\gamma_t(\ell),t)
|\partial_s\gamma_t(\ell)|\,\mathrm{d}\ell
\end{align}
for all $(s,t) \in [0,1]{\times}[0,T]$, and then have
\begin{align}
\label{eq:ansatzCoefficient1}
\widehat\alpha_{i,i+1}(x,t) = \widetilde\alpha_{i,i+1}\big((\gamma_t)^{-1}(x),t\big) 
\end{align}
for all $t\in [0,T]$ and all $x\in \mathcal{T}_{i,i+1}(t)$. 
The validity of~\eqref{ODE_alpha} is indeed a simple consequence
of the ansatz~\eqref{eq:ansatzCoefficient0}, the definition~\eqref{eq:ansatzCoefficient1} 
and the chain rule. The required regularity
$\alpha_{i,i+1}\in (C^0_tC^2_x\cap C^1_tC^0_x)(\overline{\mathbb{H}_{i,i+1}})$ in turn
follows from the regularity~\eqref{eq:regSignedDistanceProjection} of the projection,
the regularity~\eqref{def:tangent} of the tangent, the regularity~\eqref{eq:regNormalCurvature} 
of the curvature, and the regularity condition~\textit{ii)} of Definition~\ref{DefinitionSmoothlyEvolvingPartition}.

\textit{Step 2 (Quantitative estimates):}
Since in each time slice the map~$\Psi_{i,i+1}$ from~\eqref{DiffeoTubularNeighborhood}
represents a tubular neighborhood diffeomorphism on scale~$r_{i,i+1}\in (0,1]$, we deduce
\begin{align}
\label{Bound2ndDerivativeSignedDistance}
r_{i,i+1}^k|\nabla^{k}s_{i,i+1}|\leq Cr_{i,i+1},\quad k\in\{0,1,2,3,4,5\},
\end{align}
and thus from the definitions~\eqref{def:extensionNormal} 
and~\eqref{def:tangent} that
\begin{align}
\label{Bound2ndDerivativeNormalTangent}
r_{i,i+1}^k|\nabla^{k}\bar{\vec{n}}_{i,i+1}| + r^k|\nabla^{k}\bar\tau_{i,i+1}|
\leq C, \quad k\in\{0,1,2,3,4\}.
\end{align}
The previous estimates in addition entail the following bounds for the nearest-point projections
due to~\eqref{defPI} (in form of $P_{i,i+1}(x,t)=x{-}s_{i,i+1}(x,t)\nabla s_{i,i+1}(x,t)$) 
and the (extensions of the) scalar mean curvatures  
due to~\eqref{def:extensionCurvature}
\begin{align}
\label{BoundProjection}
r_{i,i+1}^k|\nabla^k P_{i,i+1}| &\leq Cr_{i,i+1}, &&\quad k\in\{1,2,3,4\},
\\ \label{BoundMeanCurvature}
r_{i,i+1}^k|\nabla^{k} H_{i,i+1}| &\leq Cr_{i,i+1}^{-1}, &&\quad k\in\{0,1,2,3\}.
\end{align}
As a consequence of the evolution equation~\eqref{TransportSignedDistanceTwoPhase}
for the signed distance, we also obtain the following estimate
on the time derivatives
\begin{equation}
\begin{aligned}
\label{boundTimeDerivatives}
r_{i,i+1}|\partial_t s_{i,i+1}| &+ r_{i,i+1}^2|\partial_t \bar{\vec{n}}_{i,i+1}|
+ r_{i,i+1}^2|\partial_t \bar\tau_{i,i+1}| 
\\&
+ r_{i,i+1}|\partial_t P_{i,i+1}|
+ r_{i,i+1}^3|\partial_t H_{i,i+1}|
\leq C.
\end{aligned}
\end{equation}
It then follows from the representations~\eqref{eq:repInitialValueAlpha}
and~\eqref{eq:evolutionTripleJunction} that
\begin{align}
\label{eq:boundsInitialValues}
r_{i,i+1}|\alpha_{i,i+1}(p(t),t)| + r_{i,i+1}\Big|\frac{\mathrm{d}}{\mathrm{d}t}p(t)\Big|
+ r_{i,i+1}^3\Big|\frac{\mathrm{d}}{\mathrm{d}t}\alpha_{i,i+1}(p(t),t)\Big|
\leq C 
\end{align}
for all~$t\in [0,T]$.

We next claim that
\begin{align}
\label{BoundCoefficientsAlpha}
\max_{k=0,1,2} r_{i,i+1}^{k}|\nabla^k\alpha_{i,i+1}|
+ r_{i,i+1}^2|\partial_t\alpha_{i,i+1}| &\leq Cr_{i,i+1}^{-1}.
\end{align}
Once this is established, the asserted bound~\eqref{eq:boundsDerivativesTildeXi} 
for the derivatives of the vector fields $\widetilde\xi_{i,i+1}$
can then be directly inferred from the ansatz~\eqref{AnsatzAuxiliaryXiHalfSpace}
and the above regularity estimates.
The estimate~\eqref{BoundCoefficientsAlpha}, however, is a consequence of 
the regularity estimates~\eqref{Bound2ndDerivativeNormalTangent}--\eqref{eq:boundsInitialValues}
and the representations~\eqref{eq:defAlpha}--\eqref{ODE_alpha}
in form of~$\nabla\alpha_{i,i+1}=H^2_{i,i+1}(\bar\tau_{i,i+1}\otimes\bar\tau_{i,i+1} : \nabla P_{i,i+1})\bar\tau_{i,i+1}$.
For later reference, we note that
\begin{align}
\label{eq:errorDerivAlpha}
(\bar\tau_{i,i+1}\cdot\nabla)\alpha_{i,i+1}
= H^2_{i,i+1} + O(r_{i,i+1}^{-3}|s_{i,i+1}|)
\end{align}
due to~\eqref{GradientProjectionInterface}, \eqref{Bound2ndDerivativeSignedDistance}
and~\eqref{BoundMeanCurvature}.
\end{proof}

Ultimately, the point of the ansatz \eqref{AnsatzAuxiliaryXiHalfSpace}
is to ensure both \eqref{xi_compatibility} throughout $B_r(\mathcal{T}(t))$ and sufficiently high regularity of $\xi_{i,j}$ 
at the triple junction. Moreover, the relations \eqref{ODE_alpha} and \eqref{def_beta} also
holding true on the interface away from the triple junction turns out to be crucial to obtain the 
estimates \eqref{TransportEquationXiTriod} and \eqref{DissipTriod} 
on the whole space-time domain. The first step towards these goals are the following relations, 
which in particular yield that---after rotation $R_{(i,j)}$---the vector fields 
are compatible to second order at the triple junction:

\begin{lemma}
\label{lemma:compatibility_xi}
  Let the assumptions of Proposition~\ref{prop:xi_triple_junction} be in place.
  For each pair $i,j\in\{1,2,3\}$ there exist uniquely determined rotations $R_{(i,j)} \in SO(2)$,
	only depending on the restriction~$(\sigma_{i,j})_{i,j = 1,2,3}$ of the admissible matrix of surface tensions
	for the given strong solution~$\bar\Omega$, such that 
	\begin{align}\label{def_rotation}
		\bar{\vec{n}}_{i,i+1}(\cdot,t) & = R_{(i,j)} \bar{\vec{n}}_{j,j+1}(\cdot,t)
		\quad\text{at } \mathcal{T}(t)
	\end{align}
	for all $t\in [0,T]$, and
	\begin{align}
		R_{(i,j)} R_{(j,i)}& = \operatorname{Id},\label{rotation_inverse}\\
		R_{(i,i-1)}R_{(i-1,i+1)}R_{(i+1,i)} &= \operatorname{Id}.\label{rotation_full_circle}
	\end{align}
	Furthermore, the ansatz \eqref{AnsatzAuxiliaryXiHalfSpace} satisfies
	the first-order compatibility conditions at the triple junction:
	\begin{align}
	  \label{compatibility_xi_zero}
		\widetilde \xi_{i,i+1}(\cdot,t) &= R_{(i,j)} \widetilde \xi_{j,j+1}(\cdot,t)
		&&\text{at } \mathcal{T}(t),
		\\
		\label{compatibility_xi_first}
		\nabla \widetilde \xi_{i,i+1}(\cdot,t) & = 
		\nabla \big( R_{(i,j)} \widetilde \xi_{j,j+1} \big)(\cdot,t)
		&&\text{at } \mathcal{T}(t),
	\end{align}
	for all $t\in [0,T]$.
\end{lemma}

\begin{proof}
	Identity \eqref{def_rotation} uniquely defines $R_{(i,j)}$.
	It is immediate from the ansatz \eqref{AnsatzAuxiliaryXiHalfSpace} and \eqref{def_rotation} that 
	the zero-order condition \eqref{compatibility_xi_zero} is satisfied.	
	The two properties \eqref{rotation_inverse} and \eqref{rotation_full_circle} follow from
	\begin{align}
		R_{(i,j)} R_{(j,i)} \bar{\vec{n}}_{i,i+1} & = \bar{\vec{n}}_{i,i+1},\\
		R_{(i,i-1)}R_{(i-1,i+1)}R_{(i+1,i)}\bar{\vec{n}}_{i,i+1} & = \bar{\vec{n}}_{i,i+1},
	\end{align}
	which follow straightforwardly from iterating \eqref{def_rotation}.
	Therefore, it is sufficient to prove the remaining statement~\eqref{compatibility_xi_first}
	for $j=i+1$, as it then follows automatically for $j=i-1$
	by \eqref{rotation_inverse} and	\eqref{rotation_full_circle} that
	at~$\mathcal{T}(t)$ it holds
	\begin{align*}
	\nabla \big( R_{(i,i-1)} \widetilde \xi_{i-1,i} \big)(\cdot,t)
	&= R_{(i,i+1)}\nabla \big( R_{(i+1,i-1)} \widetilde \xi_{i-1,i} \big)(\cdot,t)
	\\&
	= R_{(i,i+1)}\nabla \big( \widetilde \xi_{i+1,i-1} \big)(\cdot,t)
	\\&
	=  \nabla \big( R_{(i,i+1)} \widetilde \xi_{i+1,i-1} \big)(\cdot,t)
	= \nabla\widetilde\xi_{i,i+1}(\cdot,t).
	\end{align*}

	For ease of notation, we also fix the index $i$ and omit all indices, superscripts, 
	and arguments for the rest of the proof unless specifically required otherwise.
	The ansatz \eqref{AnsatzAuxiliaryXiHalfSpace} then reads
	\begin{equation}\label{AnsatzXiWedge}
		\widetilde \xi = 
		\bar{\vec{n}} + \alpha s \bar{\tau} 
		- \frac{1}{2} \alpha^2 s^2 \bar{\vec{n}}. 
	\end{equation}
By definition~\eqref{def:extensionNormal}, $\nabla^2 s$ being symmetric,
the identity~\eqref{LengthConservation2}, and the orthogonality relation~$\bar\tau\cdot\bar{\vec{n}}=0$
we have~$\nabla\bar{\vec{n}}=\Delta s \, \bar\tau\otimes\bar\tau$.
Hence, by the definitions~\eqref{def:tangent} and~\eqref{def:extensionCurvature}
as well as the estimate~\eqref{Bound2ndDerivativeSignedDistance}, we then get
\begin{align}
  \label{eq:derivativeNormal}
	\nabla \bar{\vec{n}} &= - H
	\bar{\tau} \otimes \bar{\tau} + O(r^{-2}|s|),
	\\
	\label{eq:derivativeTangent}
	\nabla \bar{\tau} &= \phantom{+} H
	\bar{\vec{n}}\otimes\bar{\tau}+ O(r^{-2}|s|).
\end{align}
As a result we infer from this and~\eqref{BoundCoefficientsAlpha}
\begin{equation}\label{eq:xi_first_derivative}
\begin{aligned}
		\nabla \widetilde \xi & =  - H
		\, \bar{\tau} \otimes \bar{\tau} 
		+ 
		\alpha 
		\, \bar{\tau} \otimes \bar{\vec{n}} 
		+ O(r^{-2}|s|).
\end{aligned}
\end{equation}
This in turn yields
\begin{align}\label{eq:xi_first_derivative_tj}
		\nabla \widetilde \xi & = \bar{\tau}\otimes \left(- H \,  \bar{\tau}  + \alpha \, \bar{\vec{n}}\right)
		\quad\text{at the triple junction } \mathcal{T}.
\end{align}

Now we are in a position to prove the compatibility condition 
\eqref{compatibility_xi_first}.
By~\eqref{def_rotation} and $J\bar\tau=\bar{\vec{n}}$, see \eqref{def:tangent},
we obtain
\begin{align}\label{rotated_tangent}
	\bar\tau_{i,i+1} = R_{(i,j)} \bar\tau_{j,j+1}
	\quad\text{at the triple junction } \mathcal{T}.
\end{align}
Moreover, expressing the evolving triple junction in form of $\mathcal{T}(t)=\{p(t)\}$ for all $t\in [0,T]$,
it follows from the evolution equation $\frac{\mathrm{d}}{\mathrm{d}t}p\cdot\bar{\vec{n}}_{i,i+1}=H_{i,i+1}$
and the choice of the initial value in the ODE~\eqref{ODE_alpha} that
\begin{align}
\label{eq:compatiblity_xi}
\frac{\mathrm{d}}{\mathrm{d}t} p
= \phantom{+}H_{1,2}\bar{\vec{n}}_{1,2} + \alpha_{1,2}\bar\tau_{1,2}
&= \phantom{+}H_{2,3}\bar{\vec{n}}_{2,3} + \alpha_{2,3}\bar\tau_{2,3}
= \phantom{+}H_{3,1}\bar{\vec{n}}_{3,1} + \alpha_{3,1}\bar\tau_{3,1},
\\
\label{eq:compatiblity_xi2}
-H_{1,2}\bar\tau_{1,2} + \alpha_{1,2}\bar{\vec{n}}_{1,2}
&= -H_{2,3}\bar\tau_{1,2} + \alpha_{2,3}\bar{\vec{n}}_{2,3}
= -H_{3,1}\bar\tau_{1,2} + \alpha_{3,1}\bar{\vec{n}}_{3,1}
\end{align} 
at the triple junction~$\mathcal{T}$ (the latter follows from the former by multiplication with~$J$).
Therefore, by~\eqref{eq:xi_first_derivative_tj}, \eqref{rotated_tangent} 
and~\eqref{eq:compatiblity_xi2} we indeed at~$\mathcal{T}$ get
\begin{align*}
	\nabla \big( R_{(i,j)} \widetilde \xi_{j,j+1} \big)	 
	&= R_{(i,j)} \bar\tau_{j,j+1}\otimes \big(-H_{j,j+1}\bar\tau_{j,j+1} + \alpha_{j,j+1}\bar{\vec{n}}_{j,j+1}\big) \\
	& =\bar\tau_{i,i+1}\otimes\big(-H_{i,i+1}\bar\tau_{i,i+1}  + \alpha_{i,i+1}\bar{\vec{n}}_{i,i+1}\big)\\
	& =\nabla \widetilde \xi_{i,i+1}.
\end{align*}
This concludes the proof of Lemma~\ref{lemma:compatibility_xi}.
\end{proof}

We next discuss the regularity properties of our construction for $\widetilde B_{(i,i+1)}$.
\begin{lemma}
\label{lemma:regularity_tilde_B}
Let the assumptions of Proposition~\ref{prop:xi_triple_junction} be in place,
in particular the notation of Definition~\ref{def:locRadiusTripleJunction}.
For all phases $i\in\{1,2,3\}$, the auxiliary velocity field
$\widetilde B_{(i,i+1)}$ is of class $C^0_tC^2_x(\overline{\mathbb{H}_{i,i+1}})$.
More precisely, we have the estimates
\begin{align}
\label{eq:boundsDerivativesTildeB}
|\widetilde B_{(i,i+1)}|
+ r_{i,i+1}|\nabla\widetilde B_{(i,i+1)}|
+ r_{i,i+1}^2|\nabla^2\widetilde B_{(i,i+1)}| \leq Cr_{i,i+1}^{-1}
\end{align}
for some~$C=C(\bar\Omega)>0$, depending only on~$\bar\Omega$
but independent of~$(r_{i,j})_{i,j\in\{1,2,3\},i\neq j}$.
\end{lemma}

\begin{proof}
In view of the expansion ansatz~\eqref{AnsatzAuxiliaryVelocityHalfSpace}
and the ingredients of the proof of Lemma~\ref{lemma:regularity_tilde_xi},
it suffices to prove that $\beta_{i,i+1}\in C^0_tC^2_x(\overline{\mathbb{H}_{i,i+1}})$
with corresponding estimate
\begin{align}
\label{BoundCoefficientsBeta}
|\nabla^k\beta_{i,i+1}| &\leq C r_{i,i+1}^{-k-2}, && k\in\{0,1,2\}.
\end{align}
Recalling the definition~\eqref{def_beta} of the coefficients~$\beta_{i,i+1}$,
the bound~\eqref{BoundCoefficientsBeta} is immediate 
from~\eqref{BoundCoefficientsAlpha}, \eqref{Bound2ndDerivativeNormalTangent},
\eqref{BoundMeanCurvature}, and~\eqref{boundTimeDerivatives}.
\end{proof}

We again have to make sure that our ansatz~\eqref{AnsatzAuxiliaryVelocityHalfSpace}
for the auxiliary velocity fields satisfies a first-order compatibility condition at the triple junction.

\begin{lemma}
\label{lemma:compatiblity_B}
Let the assumptions of Proposition~\ref{prop:xi_triple_junction} be in place.
Expressing the evolving triple junction in form of $\mathcal{T}(t)=\{p(t)\}$ for all $t\in [0,T]$,
for every $i,j\in\{1,2,3\}$ the ansatz \eqref{AnsatzAuxiliaryVelocityHalfSpace} then satisfies
	\begin{align}
	\label{compatibility_B_zero}
	\widetilde B_{(i,i+1)}(\tj(t),t) &= \widetilde B_{(j,j+1)}(\tj(t),t)
	= \frac{\mathrm{d}}{\mathrm{d}t} \tj(t), 
	\\
	\label{compatibility_B_first}
	\nabla\widetilde B_{(i,i+1)}(p(t),t) &= \nabla\widetilde B_{(j,j+1)}(\tj(t),t),	
	\\
	\label{short_nabla_B_tilde}
	\nabla \widetilde B_{(i,i+1)} & = -\beta_{i,i+1} J + O(r^{-3}|s_{i,i+1}|),
	\end{align}
	for all $t\in [0,T]$.
\end{lemma}

\begin{proof}
  We again fix the index $i$ and omit all indices, superscripts, 
	and function arguments unless specifically required.	At the triple junction, we have
	\begin{align}
		\widetilde B(\tj(t),t) = \frac{\mathrm{d}}{\mathrm{d}t} \tj(t)
	\end{align}
	by~\eqref{eq:compatiblity_xi} and the ansatz~\eqref{AnsatzAuxiliaryVelocityHalfSpace}. 
	This of course proves \eqref{compatibility_B_zero}. 
	
	An explicit computation making use of the ansatz \eqref{AnsatzAuxiliaryVelocityHalfSpace}, 
	the estimates~\eqref{eq:derivativeNormal} and~\eqref{eq:derivativeTangent},
	the choices of the coefficients~\eqref{ODE_alpha} and~\eqref{def_beta}---in 
	particular~\eqref{eq:errorDerivAlpha}---as well as the 
	estimates~\eqref{BoundCoefficientsAlpha} and~\eqref{BoundCoefficientsBeta}
	moreover gives
	\begin{equation}\label{nabla_B_tilde}
	\begin{aligned}
		\nabla \widetilde B & = \left(- H^2 
		+(\bar{\tau} \cdot \nabla  \alpha) \right)\bar{\tau}\otimes \bar{\tau}
		\\
		& \quad  +( (\bar{\tau}\cdot \nabla) H + \alpha H ) 
		\bar{\vec{n}}\otimes \bar{\tau}   
		\\
		& \quad  + \beta \bar{\tau} \otimes \bar{\vec{n}} + O(r^{-3}|s|)
		\\
		& = \beta \left(\bar{\tau} \otimes \bar{\vec{n}}
		- \bar{\vec{n}} \otimes \bar{\tau} \right)+ O(r^{-3}|s|).
	\end{aligned}
	\end{equation}
	As we have $\left(\bar{\tau} \otimes \bar{\vec{n}} - \bar{\vec{n}} \otimes \bar{\tau}  \right) \bar{\vec{n}} 
	= \bar{\tau} = - J \bar{\vec{n}}$ and 
	$\left(\bar{\tau} \otimes \bar{\vec{n}} - \bar{\vec{n}} \otimes \bar{\tau} \right) \bar{\tau}
	= - \bar{\vec{n}} = -J \bar{\tau}$	it follows that	$\left(\bar{\tau} \otimes \bar{\vec{n}}
	- \bar{\vec{n}} \otimes \bar{\tau} \right) = -J$, where 
	we recall that $J$ denotes the counter-clockwise rotation by $90^\circ$. Therefore we get \eqref{short_nabla_B_tilde}.
	Hence, \eqref{compatibility_B_first} holds true once we established that
	$\beta_{1,2}=\beta_{2,3}=\beta_{3,1}$ at the triple junction.
	This, however, follows from a combination of the definition~\eqref{def_beta},
	the choice of the initial value in the ODE~\eqref{ODE_alpha}, and the
	third-order compatibility condition~\eqref{ThirdOrderCompDef}.
\end{proof}

In a preparatory step towards the proof of \eqref{TransportEquationXiTriod} and \eqref{DissipTriod}, 
we now present the corresponding estimates for the (rotated) auxiliary vector fields $\widetilde \xi_{i,i+1}$
and the auxiliary velocity fields $\widetilde B_{(i,i+1)}$ on their respective domains of definition.

\begin{lemma}\label{EquationsTildeConstructions}
  Let the assumptions of Proposition~\ref{prop:xi_triple_junction} be in place,
	in particular the notation of Definition~\ref{def:locRadiusTripleJunction}.
	Then there exists a constant~$C=C(\bar\Omega)>0$, depending only on~$\bar\Omega$
but independent of~$(r_{i,j})_{i,j\in\{1,2,3\},i\neq j}$, such that the following holds:
	 For every $i,j\in\{1,2,3\}$ and throughout the space-time 
	domain~$\mathbb{H}_{j,j+1}$ we have 
	\begin{align}
		\label{TransportEquationXiTildeTriod}
		\begin{split}
		& \big|\partial_tR_{(i,j)}\widetilde\xi_{j,j+1} 
		+ (\widetilde B_{(j,j+1)}\cdot\nabla)R_{(i,j)}\widetilde\xi_{j,j+1}
		+(\nabla\widetilde B_{(j,j+1)})^\mathsf{T}R_{(i,j)}\widetilde\xi_{j,j+1}\big|
		\\&
		 \leq Cr_{j,j+1}^{-3}\dist(\cdot,{\bar{I}}_{j,j+1}),
	\end{split}
\end{align}
as well as	
\begin{align}
		\label{DissipXiTildeTriod}
		\big|\nabla\cdot R_{(i,j)}\widetilde\xi_{j,j+1}
		+\widetilde B_{(j,j+1)}\cdot R_{(i,j)}\widetilde\xi_{j,j+1}\big|
		& \leq Cr_{j,j+1}^{-2}\dist(\cdot,{\bar{I}}_{j,j+1}),
		\\
	  \label{LengthXiTildeTriod}
		\big|1-|R_{(i,j)}\widetilde \xi_{j,j+1}|^2\big| 
		&\leq  Cr_{j,j+1}^{-4}\dist^4(\cdot,{\bar{I}}_{j,j+1}),
		\\
		\label{DerivativesLengthXiTilde1}
		\big|\nabla|R_{(i,j)}\widetilde \xi_{j,j+1}|^2\big| 
		&\leq Cr_{j,j+1}^{-4}\dist^3(\cdot,{\bar{I}}_{j,j+1}),
				\\
		\label{DerivativesLengthXiTilde2}
		\big|\partial_t|R_{(i,j)}\widetilde \xi_{j,j+1}|^2\big| 
		&\leq Cr_{j,j+1}^{-5}\dist^3(\cdot,{\bar{I}}_{j,j+1}),
		\\
		\label{LengthConservationXiTildeTriod}
		\big|\partial_t|R_{(i,j)}\widetilde\xi_{j,j+1}|^2
		+ (\widetilde B_{(j,j+1)}\cdot\nabla)|R_{(i,j)}\widetilde\xi_{j,j+1}|^2 \big| 
		&\leq  Cr_{j,j+1}^{-6}\dist^3(\cdot,{\bar{I}}_{j,j+1}).
	\end{align}
	We also have for all pairs $i,j\in\{1,2,3\}$ with $i\neq j$ throughout the 
	intersection $\mathbb{H}_{i,i+1}\cap \mathbb{H}_{j,j+1}$ that
	(with $r_{\mathrm{min}}:=r_{1,2}\wedge r_{2,3} \wedge r_{3,1}$)
	\begin{align}
	\label{boundCompatibilityXi}
	|R_{(i,j)}\widetilde\xi_{j,j+1}-R_{(i,j-1)}\widetilde\xi_{j-1,j}| &\leq
	Cr_{\min}^{-2}\dist^2(\cdot,\mathcal{T}),
	\\\label{boundCompatibilityGradientXi}
	|\nabla R_{(i,j)}\widetilde\xi_{j,j+1}-\nabla R_{(i,j-1)}\widetilde\xi_{j-1,j}| &\leq
	Cr_{\min}^{-2}\dist(\cdot,\mathcal{T}),
	%\\\label{boundCompatibilityTimeDerivXi}
	%|\partial_t R_{(i,j)}\widetilde\xi_{j,j+1}-\partial_t R_{(i,j-1)}\widetilde\xi_{j-1,j}| &\leq
	%Cr_{\min}^{-3}\dist(\cdot,\mathcal{T}),
	\\
	\label{boundCompatibilityVelocity}
	|\widetilde B_{(i,i+1)} - \widetilde B_{(j,j+1)}| &\leq Cr_{\min}^{-3}\dist^{2}(\cdot,\mathcal{T}),
		\\\label{boundCompatibilityGradientVelocity}
	|\nabla \widetilde B_{(i,i+1)}-\nabla \widetilde B_{(j,j+1)}| &\leq
	Cr_{\min}^{-3}\dist(\cdot,\mathcal{T}).
	\end{align}
\end{lemma}

\begin{proof}
	By the ansatz~\eqref{AnsatzAuxiliaryXiHalfSpace}
	and $R_{(i,j)} \in SO(2)$ we have
	\begin{equation}
	\begin{aligned}
	\label{eq:ComputeLengthOfXiTilde}
		|R_{(i,j)}\widetilde\xi_{j,j+1}|^2 &= \Big(1-\frac{1}{2}\alpha^2_{j,j+1}s_{j,j+1}^2\Big)^2 
		+ \alpha^2_{j,j+1}s_{j,j+1}^2
		\\
		&=1 + \frac{1}{4}\alpha^4_{j,j+1}s_{j,j+1}^4
	\end{aligned}
	\end{equation}
	from which together with~\eqref{Bound2ndDerivativeSignedDistance}, \eqref{BoundCoefficientsAlpha}, 
	\eqref{boundTimeDerivatives}, and~\eqref{eq:boundsDerivativesTildeB}
	the estimates~\eqref{LengthXiTildeTriod}--\eqref{LengthConservationXiTildeTriod} 
	immediately follow.
	
	To prove the estimates~\eqref{TransportEquationXiTildeTriod}--\eqref{DissipXiTildeTriod}, 
	let $i,j\in\{1,2,3\}$ be fixed.
	For what follows, we omit all indices and function arguments unless specifically required.
	Plugging in the ansatz~\eqref{AnsatzAuxiliaryXiHalfSpace} for~$\widetilde\xi$
	and introducing the commutator~$[C,D]:=CD-DC$ for matrices~$C,D\in\Rd[d{\times}d]$, we obtain
	\begin{align*}
	\partial_t R\widetilde\xi {+} (\widetilde B\cdot\nabla)R\widetilde\xi 
	{+} (\nabla\widetilde B)^{\mathsf{T}}R\widetilde\xi
	&= \Big(1{-}\frac{1}{2}\alpha^2s^2\Big)R\big(\partial_t \bar{\vec{n}} 
	+ (\widetilde B\cdot\nabla)\bar{\vec{n}} 
	+ (\nabla\widetilde B)^{\mathsf{T}}\bar{\vec{n}}\big)
	\\&~~~
	+ \alpha sR\big(\partial_t \bar\tau
	+ (\widetilde B\cdot\nabla)\bar\tau 
	+ (\nabla\widetilde B)^{\mathsf{T}}\bar\tau\big)
	\\&~~~
	+ \alpha\big(\partial_t s + (\widetilde B\cdot\nabla)s\big)
	\big(R\bar\tau - \alpha s R\bar{\vec{n}}\big)
	\\&~~~
	+ [(\nabla\widetilde B)^{\mathsf{T}},R]\tilde\xi
	\\&~~~
	+ \big(\partial_t\alpha +\tilde B \cdot \nabla \alpha\big) s\big(R\bar\tau - \alpha s R\bar{\vec{n}}\big).
	\end{align*}
	By the ansatz~\eqref{AnsatzAuxiliaryVelocityHalfSpace}, the auxiliary
	velocity~$\widetilde B$ only corrects~$H\bar{\vec{n}}$ in tangential direction.
	Hence, the identities~\eqref{TransportEquationXiTwoPhase} and~\eqref{eq:transportSignedDistanceB}
	are applicable and we obtain
	\begin{align*}
	\partial_t \bar{\vec{n}} 
	+ (\widetilde B\cdot\nabla)\bar{\vec{n}} 
	+ (\nabla\widetilde B)^{\mathsf{T}}\bar{\vec{n}} = 0,
	\quad \partial_t s + (\widetilde B\cdot\nabla)s = 0
	\end{align*}
	throughout~$\mathbb{H}_{j,j+1}$. Recalling the definition~$\bar\tau=J\bar{\vec{n}}$, 
	cf.\ \eqref{def:tangent}, we deduce from the previous display
	\begin{align*}
	\partial_t \bar\tau
	+ (\widetilde B\cdot\nabla)\bar\tau 
	+ (\nabla\widetilde B)^{\mathsf{T}}\bar\tau
	= [(\nabla\widetilde B)^{\mathsf{T}},J]\bar{\vec{n}}
	\end{align*}
	throughout~$\mathbb{H}_{j,j+1}$. Hence, recalling~\eqref{short_nabla_B_tilde}
	and using the fact that $[J^{\mathsf{T}},R]=0$ on account of both matrices being rotations in the plane we get
	\begin{align*}
	[(\nabla\widetilde B)^{\mathsf{T}},R] = O(r^{-3}|s|),
	\quad [(\nabla\widetilde B)^{\mathsf{T}},J] = O(r^{-3}|s|)
	\end{align*}
	throughout~$\mathbb{H}_{j,j+1}$. Together with the estimate~\eqref{BoundCoefficientsAlpha},
	the previous four displays in combination imply~\eqref{TransportEquationXiTildeTriod}.

We turn to the proof of~\eqref{DissipXiTildeTriod}. 
Due to the computation \eqref{eq:xi_first_derivative} of $\nabla \widetilde \xi$
we have on the one hand
\begin{align}\label{divxi_1}
\begin{split}
\nabla\cdot R\widetilde\xi
&= - H( R\bar\tau \cdot \bar\tau )
+\alpha
(R\bar\tau \cdot  \vec{\bar{n}} )
+O(r^{-2}|s|).
\end{split}
\end{align}
On the other hand, making use of the
definitions \eqref{AnsatzAuxiliaryXiHalfSpace} 
and \eqref{AnsatzAuxiliaryVelocityHalfSpace} of $\widetilde\xi$
and $\widetilde B$ we obtain
\begin{align}\label{divxi_2}
\begin{split}
\widetilde B\cdot R\widetilde\xi 
&= H\vec{\bar{n}}\cdot R\vec{\bar{n}}
+\alpha
(\bar\tau\cdot R\vec{\bar{n}})
+O(r^{-2}|s|).
\end{split}
\end{align}
Furthermore, recalling $J\bar\tau=\bar{\vec{n}}$, $J^\mathsf{T}=J^{-1}=-J$, and $[J^\mathsf{T},R]=0$ gives
\begin{align*}
	R\bar{\tau} \cdot \bar{\tau} & = R J^{-1} \bar{\vec{n}} \cdot \bar{\tau} 
	= R \bar{\vec{n}} \cdot J\bar{\tau} = R \bar{\vec{n}} \cdot  \bar{\vec{n}},
	\\
	R\bar{\tau} \cdot \bar{\vec{n}}  & =  R J^{-1} \bar{\vec{n}} \cdot \bar{\vec{n}} 
	= R \bar{\vec{n}} \cdot J\bar{\vec{n}} = - R \bar{\vec{n}} \cdot \bar{\tau}.
\end{align*}
Therefore, we can combine \eqref{divxi_1} and \eqref{divxi_2} to yield the estimate \eqref{DissipXiTildeTriod}.

We proceed with the verification of the bounds~\eqref{boundCompatibilityXi} 
and~\eqref{boundCompatibilityGradientXi}.
As by~\eqref{compatibility_xi_zero} and~\eqref{compatibility_xi_first} the Taylor 
polynomials at the triple junction of the functions $R_{(i,j)}\widetilde\xi_{j,j+1}$ 
and $R_{(i,j-1)}\widetilde\xi_{j-1,j}$ agree up to first order, 
the estimate \eqref{boundCompatibilityXi} follows by bounding 
the remainders using~\eqref{eq:boundsDerivativesTildeXi}.
One can argue similarly for the estimate \eqref{boundCompatibilityGradientXi}.
On the basis of \eqref{compatibility_B_zero},  
\eqref{compatibility_B_first} and \eqref{eq:boundsDerivativesTildeB}, 
the estimates \eqref{boundCompatibilityVelocity} and \eqref{boundCompatibilityGradientVelocity} 
follow by the same argument.
\end{proof}

\subsection{Gluing construction by interpolation}\label{subsection:step_2}
Throughout this subsection,
let again the assumptions of Proposition~\ref{prop:xi_triple_junction}
and the notation of Section~\ref{SectionLocalConstructionsTwoPhase} 
and Definition~\ref{def:locRadiusTripleJunction} be in place.
As we discussed in the previous subsection, the auxiliary vector fields $\widetilde\xi_{i,i+1}$
and the auxiliary velocity fields $\widetilde B_{(i,i+1)}$ serve as
the definition of the vector fields $\xi_{i,i+1}$ and the velocity field $B$ on
the interface wedge $W_{i,i+1}$, see Figure~\ref{fig:triod_wedges} for the partition
of the neighborhood of the triple junction.

The next step is to extend $\xi_{i,i+1}$ and $B$ to the entirety of 
the space-time domain. As we want Herring's angle condition \eqref{eq:auxHerringAngleConditionExtensions}
to hold throughout the ball $B_r(\mathcal{T}(t))$ we are essentially forced to set 
$\xi_{i,i+1} = R_{(i,j)} \xi_{j,j+1}$ for all $i,j\in\{1,2,3\}$ 
wherever the latter is defined, and where $R_{(i,j)}$ is given in Lemma \ref{lemma:compatibility_xi}.
As their domains of definition $\mathbb{H}_{i,i+1}$ overlap,
we resort to an interpolation procedure on the 
interpolation wedges $W_{i}$, see again Figure~\ref{fig:triod_wedges}.
We similarly deal with the issue of combining the velocity fields $\widetilde B_{(i,i+1)}$ into a single field.
To this end, we first define suitable interpolation functions which move and rotate with the evolving triple junction.

\begin{lemma}
\label{lemma:interpolation_functions}
  Let the assumptions of Proposition~\ref{prop:xi_triple_junction} be in place,
	in particular the notation of Definition~\ref{def:locRadiusTripleJunction}.
	Then there exists a constant~$C=C(\bar\Omega)>0$, depending only on~$\bar\Omega$
	but independent of~$(r_{i,j})_{i,j\in\{1,2,3\},i\neq j}$, 
	and interpolation functions
	\begin{align*}
		\lambda_i\colon \bigcup_{t\in [0,T]}
		\big(B_{r}(\mathcal{T}(t)) \cap \overline{W}_{i}(t)\big)\setminus\mathcal{T}(t)\times\{t\} &\to [0,1]
	\end{align*}
	for every $i\in\{1,2,3\}$	which satisfy the following properties:
	\begin{itemize}[leftmargin=0.7cm]
		\item[i)] It holds for all~$t\in [0,T]$ that
							\begin{align}
							\label{LambdaLeftBoundary}
							\lambda_i(x,t) &= 0 \qquad\text{for}\quad 
							x\in\big(\partial W_{i}(t)\cap\partial W_{i,i+1}(t)\big)\setminus\mathcal{T}(t),
							\\
							\label{LambdaRightBoundary}
							\lambda_i(x,t) &= 1 \qquad\text{for}\quad 
							x\in\big(\partial W_{i}(t)\cap\partial W_{i-1,i}(t)\big)\setminus\mathcal{T}(t).
							\end{align}
		\item[ii)] We have the estimates ($r_{\mathrm{min}}:=r_{1,2}\wedge r_{2,3}\wedge r_{3,1}$)
			\begin{align}
				\label{boundslambda1}
				|\nabla\lambda_i(x,t)| &\leq  C\dist(x,\mathcal{T}(t))^{-1}, \quad
				|\partial_t \lambda_i(x,t)| \leq Cr_{\mathrm{min}}^{-1}\dist(x,\mathcal{T}(t))^{-1},
				\\
				\label{boundslambda2}
				|\nabla^2 \lambda_i (x,t)| &\leq  C\dist(x,\mathcal{T}(t))^{-2}
			\end{align}
			for all $t\in [0,T]$ and all $x \in \big(B_{r}(\mathcal{T}(t)) \cap \overline{W}_{i}(t)\big) 
			\setminus\mathcal{T}(t)$.
			Furthermore, it holds
			\begin{align}
			  \label{first_derivative_lambda_vanishes_on_wedge}
				\nabla \lambda_i(x,t) & = 0,\quad \partial_t \lambda_i (x,t) = 0, 
				\\
				\label{second_derivative_lambda_vanishes_on_wedge}
				\nabla^2 \lambda_i(x,t) & = 0%, \quad \nabla \partial_t \lambda_i(x,t)  = 0
			\end{align}
			for all $t\in [0,T]$ and all $x \in \big(B_r(\mathcal{T}(t))\cap\partial W_i(t)\big) \setminus \mathcal{T}(t)$.
		\item[iii)] Expressing the evolving triple junction via~$\mathcal{T}(t)=\{p(t)\}$ for all~$t\in [0,T]$,
		we have a bound on the advective derivative 
			\begin{align}\label{advectionlambda}
				\Big| \partial_t  \lambda_i(x,t) + \Big(\frac{\mathrm{d}}{\mathrm{d}t} 
				\tj(t) \cdot \nabla\Big)\lambda_i(x,t) \Big| & \leq Cr_{\mathrm{min}}^{-2}
			\end{align}
			for all $t \in [0,T]$ and all $x \in  \big(B_{r}(\mathcal{T}(t)) 
			\cap \overline{W}_{i}(t)\big)\setminus\mathcal{T}(t)$.
	\end{itemize}
\end{lemma}

\begin{proof}
Due to~\eqref{def:interpolWedge}, the interpolation wedge $W_i(t)$ is the
restriction to~$B_r(\mathcal{T}(t))$ of the interior of the conical hull
spanned by two unit vectors $X^{i}_{i,i+1}(t)$ and $X^{i}_{i-1,i}(t)$, whereas $W_{i,i+1}(t)$ 
is the restriction to~$B_r(\mathcal{T}(t))$ of the interior
of the conical hull spanned by unit vectors $X^{i}_{i,i+1}(t)$ and $X^{i+1}_{i,i+1}(t)$ due to~\eqref{def:interfaceWedge}.   
In particular, we can represent
$\partial W_i(t) \cap \partial W_{i,i+1}(t) =  \{\gamma X^{i}_{i,i+1}(t)\colon \gamma\geq 0\}$ 
and $\partial W_i(t) \cap \partial W_{i-1,i}(t) = \{\gamma X^{i}_{i-1,i}(t)\colon \gamma\geq 0\}$.
As the vectors $X^{i}_{i,i+1}(t)$ and $X^{i}_{i-1,i}(t)$ 
can be expressed as a (fixed-in-time) linear combination of the unit-normals 
$\bar{\vec{n}}_{i,j}(p(t),t)$ at the triple junction, we have
due to~\eqref{eq:boundsInitialValues}, \eqref{Bound2ndDerivativeNormalTangent} 
and~\eqref{boundTimeDerivatives} the bounds 
\begin{align}
\label{eq:boundsOpeningVectors}
\Big|\frac{\mathrm{d}}{\mathrm{d}t} X^{i}_{i,i+1}(t)\Big|
+ \Big|\frac{\mathrm{d}}{\mathrm{d}t} X^{i}_{i-1,i}(t)\Big|
\leq Cr^{-2}_{\mathrm{min}} \leq Cr^{-1}_{\mathrm{min}}\dist(x,\mathcal{T}(t))^{-1}
\end{align}
for all~$t\in [0,T]$, all~$x\in B_r(\mathcal{T}(t))$, and all~$i\in\{1,2,3\}$.

By Definition~\ref{def:locRadiusTripleJunction}, the opening angle~$\theta_i$ of the 
interpolation wedge~$W_i$, defined by $\cos(\theta_i) =X^{i}_{i,i+1}(t)\cdot X^{i}_{i-1,i}(t) \in (0,1)$, 
is time-independent and satisfies $\theta_i\in (0,\frac{\pi}{2})$.
(The angles only depend on~$\bar\Omega$ through the surface tensions.)
Let $\widetilde\lambda\colon\Rd[]\to [0,1]$ be any smooth function such that
$\widetilde\lambda\equiv 0$ on $(-\infty,\frac{1}{3}]$ and 
$\widetilde\lambda\equiv 1$ on $[\frac{2}{3},\infty)$.
We define
\begin{align*}
\lambda_i(x,t):=\widetilde\lambda\Bigg(\frac{1{-}X^{i}_{i,i+1}(t)\cdot\frac{x{-}p(t)}{|x{-}p(t)|}}{1{-}\cos\theta_i}\Bigg).
\end{align*}
Then the properties \eqref{LambdaLeftBoundary}--\eqref{second_derivative_lambda_vanishes_on_wedge}
are immediate consequences of the definitions and the bounds~\eqref{eq:boundsOpeningVectors}
and~\eqref{eq:boundsInitialValues}; cf.\ also the subsequent computation.

It remains to check the bound \eqref{advectionlambda} on the advective derivative.
To this end, we abbreviate $\lambda_i(x,t)=\widehat\lambda_i\big(X^{i}_{i,i+1}(t)\cdot\frac{x{-}p(t)}{|x{-}p(t)|}\big)$ 
with $\widehat\lambda_i(a):=\widetilde\lambda(\tfrac{1-a}{1-\cos \theta_i})$
and simply compute
\begin{align*}
&\partial_t\lambda_i(x,t) 
\\
&= -\widehat\lambda'_i\frac{X^{i}_{i,i+1}(t)}{|x{-}p(t)|}\cdot\Big( \Id {-} \frac{x{-}p(t)}{|x{-}p(t)|} 
\otimes \frac{x{-}p(t)}{|x{-}p(t)|}  \Big)
\frac{\mathrm{d}}{\mathrm{d}t}p(t)
+\widehat\lambda'_i\frac{x{-}p(t)}{|x{-}p(t)|}\cdot\frac{\mathrm{d}}{\mathrm{d}t}X^{i}_{i,i+1}(t)
\\&
=-\Big(\frac{\mathrm{d}}{\mathrm{d}t}\tj(t) \cdot \nabla\Big)\lambda_i(x,t)
+\widehat\lambda'_i\frac{x{-}p(t)}{|x{-}p(t)|}\cdot\frac{\mathrm{d}}{\mathrm{d}t}X^{i}_{i,i+1}(t)
\end{align*}
where $\widehat\lambda'_i$ is evaluated at $X^{i}_{i,i+1}(t)\cdot\frac{x{-}p(t)}{|x{-}p(t)|}$.
From this, the last remaining claim~\eqref{advectionlambda} immediately follows due to
the estimate~\eqref{eq:boundsOpeningVectors}.
\end{proof}

Equipped with these interpolating functions we are finally in the position 
to prove the main result of this section.

\begin{proof}[Proof of Proposition~\ref{prop:xi_triple_junction}]
\textit{Step 1: Interpolation of the vector fields.}
We define (not yet normalized) extensions of the normal vector fields~$\bar{\vec{n}}_{i,j}|_{\bar I_{i,j}}$
on the space-time neighborhood of the triple junction 
$\mathcal{U}_r:=\bigcup_{t\in [0,T]}B_r(\mathcal{T}(t))\times\{t\}$ as follows:
\begin{align}
\label{def:xi_triple_junction}
  \widehat\xi_{i,i+1}(x,t) :=
  	\begin{cases}
  		 R_{(i,j)} \widetilde \xi_{j,j+1}(x,t) & \text{ if } x \in W_{j,j+1}(t),\\
  		\!\begin{aligned}
	  		 & (1{-}\lambda_j(x,t)) R_{(i,j)} \widetilde \xi_{j,j+1}(x,t)\\
  			  &\quad +  \lambda_j(x,t)R_{(i,j-1)} \widetilde \xi_{j-1,j} (x,t)
  		 \end{aligned} 
  		 & \text{ if } x \in \overline{W}_{j}(t),
  	\end{cases}
\end{align}
and $\widehat\xi_{i+1,i} := -\widehat\xi_{i,i+1}$ for $i\in\{1,2,3\}$.
The velocity field is given by
\begin{align}
\label{def:velocity_triple_junction}
	B(x,t) := \begin{cases}
		\widetilde B_{(j,j+1)}(x,t) & \text{ if } x \in W_{j,j+1}(t),\\
		\begin{aligned}
		& (1{-}\lambda_{j}(x,t))  \widetilde B_{(j,j+1)}(x,t) \\
		&\quad + \lambda_j(x,t) \widetilde B_{(j-1,j)}(x,t) 
		\end{aligned}
		& \text{ if } x \in \overline{W}_{j}(t).
	\end{cases}
\end{align}
In the subsequent steps of the proof, we first establish all required properties
in terms of the vector fields $\widehat\xi_{i,j}$ and $B$. Only in the penultimate step we
will choose the radius $\hat r = \hat r(\bar\chi) \leq r$ and define
unit-length vector fields $\xi_{i,j}$ by normalization of the vector fields
$\widehat\xi_{i,j}$ defined in \eqref{def:xi_triple_junction} above. The last step
is then devoted to verify the required properties for the normalized vector
fields $\xi_{i,j}$.

\textit{Step 2: Regularity of $\widehat\xi_{i,j}$ and $B$, the estimates \eqref{boundDerivativesXi}
and \eqref{boundDerivativeB}, and properties i)--iii).}
We first remark that the above definitions make sense due to the 
second inclusion in~\eqref{eq:inlcusionInterfaceWedge}
and the inclusion in~\eqref{eq:inclusionInterpolWedge}. 
Indeed, these inclusions are precisely what is needed so
that the building blocks $\widetilde\xi_{i,i+1}$ and $\widetilde B_{(i,i+1)}$ are only
evaluated on their domains of definition. 

For every $i\in\{1,2,3\}$, we obtain $\widehat\xi_{i,i+1}(x,t) = \widetilde \xi_{i,i+1}(x,t) = \bar{\vec{n}}_{i,i+1}(x,t)$ 
for all $t \in [0,T]$ and all $x \in \mathcal{T}_{i,i+1}(t)\cap B_{r}(\mathcal{T}(t))$ from the first inclusion 
in~\eqref{eq:inlcusionInterfaceWedge} and the ansatz~\eqref{AnsatzAuxiliaryXiHalfSpace},
taking care of property~i); obviously except for the normalization condition away from the interfaces. The second property
$\widehat\xi_{i,j} = - \widehat\xi_{j,i}$ for $i,j \in \{1,2,3\}$ with $i \neq j$ holds by definition.
For every $j\in\{1,2,3\}$ we moreover have
\begin{align*}
	\sigma_{1,2} \widehat\xi_{1,2} + \sigma_{2,3} \widehat\xi_{2,3} + \sigma_{3,1}  \widehat\xi_{3,1} \equiv 
	\left( \sigma_{1,2} R_{(1,j)}  + \sigma_{2,3} R_{(2,j)}   + \sigma_{3,1} R_{(3,j)}    
	\right) \widetilde \xi_{j,j+1} =0
\end{align*}
on $W_{j,j+1}(t)$ by the defining property \eqref{def_rotation} of the rotations $R_{(i,j)}$. 
A similar argument ensures validity of \eqref{xi_compatibility} on the interpolation wedges $\overline{W}_{j}(t)$.

By the compatibility condition \eqref{compatibility_xi_zero} 
for the auxiliary vector fields $\widetilde\xi_{j,j+1}$ at the triple junction,
as well as the conditions \eqref{LambdaLeftBoundary} and \eqref{LambdaRightBoundary} 
 for the interpolation functions, 
the vector fields $\widehat\xi_{i,j}$ are continuous. Similarly, their first and second derivatives are continuous across
the boundaries of the interpolation wedges 
$\bigcup_{t\in [0,T]}\big(\big(B_r(\mathcal{T}(t))\cap \partial W_i(t)\big)\setminus\mathcal{T}(t)\big)\times\{t\}$ 
by the properties \eqref{first_derivative_lambda_vanishes_on_wedge} and \eqref{second_derivative_lambda_vanishes_on_wedge}
of the interpolation functions.

Moreover, all spatial derivatives up to second order are bounded in $\mathcal{U}_r\setminus\mathcal{T}$
with the asserted estimate given by \eqref{boundDerivativesXi}. Indeed, in the interface wedges~$W_{j,j+1}$
this follows from the estimates~\eqref{eq:boundsDerivativesTildeXi}
and the definition~\eqref{def:xi_triple_junction}. On the closure of the interpolation wedges~$W_j$,
we first compute using the definition~\eqref{def:xi_triple_junction}
\begin{align}
\label{derivative_interpolated_Xi}
\nabla\widehat\xi_{i,i+1} 
&= (1{-}\lambda_j) \nabla R_{(i,j)} \widetilde \xi_{j,j+1}
+ \lambda_j \nabla R_{(i,j-1)} \widetilde \xi_{j-1,j}
\\& \nonumber
~~~ - (R_{(i,j)} \widetilde \xi_{j,j+1}{-}R_{(i,j-1)} \widetilde \xi_{j-1,j})\nabla\lambda_j,
\\ \label{second_derivative_interpolated_Xi}
\nabla^2\widehat\xi_{i,i+1} &= (1{-}\lambda_j) \nabla^2 R_{(i,j)} \widetilde \xi_{j,j+1}
+ \lambda_j \nabla^2 R_{(i,j-1)} \widetilde \xi_{j-1,j}
\\&~~~ \nonumber
- 2 (\nabla R_{(i,j)} \widetilde \xi_{j,j+1}{-}\nabla R_{(i,j-1)} \widetilde \xi_{j-1,j})\nabla\lambda_j
\\&~~~ \nonumber
- (R_{(i,j)} \widetilde \xi_{j,j+1}{-}R_{(i,j-1)} \widetilde \xi_{j-1,j})\nabla^2\lambda_j.
\end{align}
Now, the bound~\eqref{boundDerivativesXi} with respect to spatial derivatives 
follows from the controlled blowup~\eqref{boundslambda1} and~\eqref{boundslambda2}
of the interpolation functions, the estimates~\eqref{eq:boundsDerivativesTildeXi}, \eqref{boundCompatibilityXi}
and~\eqref{boundCompatibilityGradientXi} for the auxiliary vector fields~$\widetilde\xi_{j,j+1}$,
as well as the estimate~\eqref{eq:compDistances1}.
In total, this proves~$\widehat\xi_{i,j}\in C^0_tC^2_x(\overline{\mathcal{U}_r}\setminus\mathcal{T})$. 
The other property $\widehat\xi_{i,j}\in C^1_tC^0_x(\overline{\mathcal{U}_r}\setminus\mathcal{T})$
together with the asserted bound~\eqref{boundDerivativesXi} in terms of the
time derivative follows similarly making use of Lemma~\ref{lemma:regularity_tilde_xi}, 
\eqref{boundCompatibilityXi}, \eqref{boundslambda1}, \eqref{eq:compDistances1} 
and the computation on the closure of $W_j$
\begin{align*}
\partial_t\widehat\xi_{i,i+1} &= (1{-}\lambda_j) \partial_t R_{(i,j)} \widetilde \xi_{j,j+1}
+ \lambda_j \partial_t R_{(i,j-1)} \widetilde \xi_{j-1,j} 
\\&~~~
- (R_{(i,j)} \widetilde \xi_{j,j+1}{-}R_{(i,j-1)} \widetilde \xi_{j-1,j})\partial_t\lambda_j.
\end{align*}

We proceed with the regularity of the velocity field $B$. First, 
by the compatibility condition~\eqref{compatibility_B_zero}  
for the auxiliary velocity fields~$\widetilde B_{(j,j+1)}$ at the triple junction,
as well as the conditions~\eqref{LambdaLeftBoundary} and~\eqref{LambdaRightBoundary} 
for the interpolation functions, the velocity field~$B$ is continuous. 
The asserted bound~\eqref{boundDerivativeB}
is a consequence of the definition~\eqref{def:velocity_triple_junction},
the estimates~\eqref{eq:boundsDerivativesTildeB}, \eqref{boundCompatibilityVelocity}
and~\eqref{boundCompatibilityGradientVelocity} for the auxiliary velocity fields, 
the controlled blowup~\eqref{boundslambda1} 
of the interpolation functions, the estimate~\eqref{eq:compDistances1}
as well as the computation
\begin{align}
\label{derivative_interpolated_B}
\nabla B &= (1{-}\lambda_{j})  \nabla\widetilde B_{(j,j+1)}
		+ \lambda_j \nabla\widetilde B_{(j-1,j)} 
		+ (\widetilde B_{(j-1,j)}{-}\widetilde B_{(j,j+1)})\nabla\lambda_j,
\\\label{second_derivative_interpolated_B}
\nabla^2 B &= (1{-}\lambda_{j})  \nabla^2\widetilde B_{(j,j+1)}
		+ \lambda_j \nabla^2\widetilde B_{(j-1,j)} 
		\\&~~~\nonumber
		+ 2 (\nabla\widetilde B_{(j-1,j)}{-}\nabla\widetilde B_{(j,j+1)})\nabla\lambda_j
		+ (\widetilde B_{(j-1,j)}{-}\widetilde B_{(j,j+1)})\nabla^2\lambda_j
\end{align}
on the closure of~$W_j$. This proves~$\BTrJ \in C^0_tC^2_x(\overline{\mathcal{U}_r}\setminus\mathcal{T})$.

\textit{Step 3: Proof of the estimate ($r_{\mathrm{min}}:=r_{1,2}\wedge r_{2,3}\wedge r_{3,1}$)}
\begin{align}
\label{eq:prelimEstimateTimeDerivative}
|\partial_t \widehat\xiTrJ_{i,j} + (\BTrJ\cdot\nabla)\widehat\xiTrJ_{i,j} 
+ (\nabla\BTrJ)^\mathsf{T}\widehat\xiTrJ_{i,j}| 
				&\leq Cr_{\mathrm{min}}^{-3}\dist(\cdot,{\bar{I}_{i,j}})
\quad\text{in } \mathcal{U}_r.
\end{align}

By the skew-symmetry $\widehat\xi_{i,j}=-\widehat\xi_{j,i}$, we only have 
to prove~\eqref{eq:prelimEstimateTimeDerivative} for $j=i+1$.
Let $i\in\{1,2,3\}$. First, we remark that 
the validity of~\eqref{TransportEquationXiTriod} for the vector field~$\widehat\xiTrJ_{i,i+1}$ 
on the interface wedges~$W_{j,j+1}$ for all $j=1,2,3$ follows from the estimate~\eqref{TransportEquationXiTildeTriod},
the definitions~\eqref{def:xi_triple_junction} and~\eqref{def:velocity_triple_junction},
and the estimate~\eqref{eq:compDistances3}.
Hence, it remains to prove the bound~\eqref{eq:prelimEstimateTimeDerivative} for~$\widehat\xiTrJ_{i,i+1}$
on each interpolation wedge $W_{j}$, $j\in\{1,2,3\}$.
In the interpolation wedge $W_j$, it is our goal to show simply 
	\begin{align*}
	|\left(\partial_t+ (B\cdot \nabla) + (\nabla B)^\mathsf{T} \right)\widehat \xi_{i,i+1}|\leq C \dist(\cdot,\mathcal{T}),
	\end{align*}
	as we may then use the equivalence $\dist(x,\mathcal{T})\leq C \dist(x,\bar I_{i,i+1})$ valid for all $i$ in the interpolation wedges $W_j$.

To this end, let us fix $j\in\{1,2,3\}$.
For the sake of readability, let us introduce the abbreviations, $\lambda=\lambda_j$, $R=R_{(i,j)}$, $R'=R_{(i,j-1)}$,
$\widetilde\xi=\widetilde\xi_{j,j+1}$, $\widetilde\xi'=\widetilde\xi_{j-1,j}$,
$\widetilde B=\widetilde B_{(j,j+1)}$ and $\widetilde B'=\widetilde B_{(j-1,j)}$.
Using the product rule and the definition \eqref{def:xi_triple_junction} of~$\widehat\xi_{i,i+1}$ 
on the closure of the interpolation wedge~$W_j$, we have
	\begin{align}\label{advection_interpolation}
	  \begin{split}
		\left(\partial_t+ (B\cdot \nabla) + (\nabla B)^\mathsf{T} \right)\widehat \xi_{i,i+1}
		&= 
		 (1-\lambda) \left( \partial_t + (B\cdot \nabla) + (\nabla B)^\mathsf{T}  \right) R \tilde \xi \\
		&~~~+ \lambda  \left( \partial_t + (B\cdot \nabla) + (\nabla B)^\mathsf{T}  \right) R'\tilde \xi'
		\\&~~~ +\big(\partial_t \lambda + (B\cdot \nabla) \lambda \,\big)(R'\widetilde \xi' - R \widetilde \xi).
	  \end{split}
	\end{align}
	
	We want to manipulate the first two right-hand side terms to make the advection equations \eqref{TransportEquationXiTildeTriod} appear. To this end, we write $B=\widetilde B + \lambda (\widetilde B' - \widetilde B)$ and obtain
	\begin{align*}
		 \left( \partial_t + (B\cdot \nabla) + (\nabla B)^\mathsf{T}  \right) R \tilde \xi 
		 =
		 &  \big( \partial_t + (\widetilde B\cdot \nabla) + (\nabla \widetilde B)^\mathsf{T}  \big) R \tilde \xi 
		 \\&+ \big(\lambda (\widetilde B' - \widetilde B) \cdot \nabla\big) R\widetilde \xi
		 + \lambda \big(\nabla\widetilde B' - \nabla\widetilde B\big)^\mathsf{T} R\widetilde \xi
		 \\& + \big((\widetilde B' - \widetilde B)\cdot R\widetilde \xi \,\big) \nabla \lambda .
	\end{align*}
	Using the compatibility conditions~\eqref{boundCompatibilityVelocity}--\eqref{boundCompatibilityGradientVelocity} 
	for the auxiliary velocity fields
	alongside with the bounds~\eqref{eq:boundsDerivativesTildeXi}, 
	\eqref{boundslambda1}, and the estimate~\eqref{eq:compDistances1} 
	one shows that the last three right-hand side terms are 
	of order $O(r_{\mathrm{min}}^{-3}\dist(\cdot,\bar I_{i,i+1}))$.
	By~\eqref{TransportEquationXiTildeTriod} and~\eqref{eq:compDistances1} the first term on the right-hand 
	side is also of order $O(r_{\mathrm{min}}^{-3}\dist(\cdot,\bar I_{i,i+1}))$.
	
	Consequently, the first term on the right-hand side of equation~\eqref{advection_interpolation} is of 
	required order.	A similar argument shows that the second one is, too.
	Finally, also the third term is of the desired order by the bounds~\eqref{boundslambda1} 
	on~$\lambda$, the second-order compatibility~\eqref{boundCompatibilityXi},
	and the estimate~\eqref{eq:compDistances1}, concluding the	proof of~\eqref{eq:prelimEstimateTimeDerivative}.

\textit{Step 4: Proof of the estimate ($r_{\mathrm{min}}:=r_{1,2}\wedge r_{2,3}\wedge r_{3,1}$)} 
\begin{align}
\label{eq:prelimEstimateMotionByCurvature}
|\nabla\cdot\widehat\xiTrJ_{i,j} + \BTrJ\cdot\widehat\xiTrJ_{i,j}| 
				&\leq Cr_{\mathrm{min}}^{-2}\dist(\cdot,{\bar{I}_{i,j}})
\quad\text{in } \mathcal{U}_r.
\end{align}

Let $i\in\{1,2,3\}$, and by the skew-symmetry $\widehat\xi_{i,j}=-\xi_{j,i}$, 
it again suffices to prove~\eqref{eq:prelimEstimateMotionByCurvature} in terms of~$\widehat\xi_{i,i+1}$.
Note that because of~\eqref{def:xi_triple_junction}--\eqref{def:velocity_triple_junction}, 
\eqref{DissipXiTildeTriod}, and~\eqref{eq:compDistances3}
it only remains to prove~\eqref{eq:prelimEstimateMotionByCurvature} for the vector
field~$\widehat\xiTrJ_{i,i+1}$ in the closure of the interpolation wedges~$W_{j}$, $j\in\{1,2,3\}$.
We again fix $j\in\{1,2,3\}$ and use the same abbreviations as in the previous step.

We proceed similarly as in the proof of~\eqref{eq:prelimEstimateTimeDerivative}.
Making use of the definition \eqref{def:xi_triple_junction} we get 
\begin{align*}
\nabla\cdot\widehat\xiTrJ_{i,i+1} &= (1{-}\lambda)\nabla\cdot R\widetilde\xi+\lambda\nabla\cdot R'\widetilde\xi' + 
\big((R'\widetilde\xi'{-}R\widetilde\xi)\cdot\nabla\big)\lambda.
\end{align*}
By the controlled blowup~\eqref{boundslambda1} of the interpolation functions,  
the compatibility estimate~\eqref{boundCompatibilityXi}, 
the approximate mean curvature flow equation~\eqref{DissipXiTildeTriod}
and the estimate~\eqref{eq:compDistances1} it then follows
\begin{align*}
\nabla\cdot\widehat\xiTrJ_{i,i+1} &=-(1-\lambda)\widetilde B\cdot R\widetilde\xi
-\lambda \widetilde B'\cdot R'\widetilde\xi'
+O(r_{\mathrm{min}}^{-2}\dist(\cdot,{\bar{I}_{i,i+1}})).
\end{align*}
Finally, the compatibility estimates~\eqref{boundCompatibilityXi}
and~\eqref{boundCompatibilityVelocity} in conjunction with 
definitions~\eqref{def:xi_triple_junction}--\eqref{def:velocity_triple_junction}
and the estimate~\eqref{eq:compDistances1} imply the desired bound~\eqref{eq:prelimEstimateMotionByCurvature}.

\textit{Step 5: Proof of the estimates ($r_{\mathrm{min}}:=r_{1,2}\wedge r_{2,3}\wedge r_{3,1}$)}
\begin{align}
\label{LengthXiTriod}
\big|1-|\widehat\xiTrJ_{i,j}|^2\big| &\leq Cr^{-2}_{\mathrm{min}}\dist^2(\cdot,\bar {I}_{i,j}) 
&&\text{in } \mathcal{U}_r,
\\
\label{DerivativesLengthXi}
r_{\mathrm{min}}^2\big|\partial_t|\widehat\xiTrJ_{i,j}|^2\big|
+ r_{\mathrm{min}}\big|\nabla|\widehat\xiTrJ_{i,j}|^2\big| 
&\leq Cr_{\mathrm{min}}^{-1}\dist(\cdot,\bar I_{i,j})
&&\text{in } \mathcal{U}_r.
\end{align}

Let $i\in\{1,2,3\}$. The validity of~\eqref{LengthXiTriod} resp.\ \eqref{DerivativesLengthXi}
for the vector field~$\widehat\xiTrJ_{i,i+1}$ in interface wedges~$W_{j,j+1}$, $j\in\{1,2,3\}$,
is directly implied by the definition~\eqref{def:xi_triple_junction}, the bound~\eqref{eq:compDistances3},
as well as the estimates~\eqref{LengthXiTildeTriod}
resp.\ \eqref{DerivativesLengthXiTilde1}--\eqref{DerivativesLengthXiTilde2}. 

For all $j\in\{1,2,3\}$, we then may compute on the closure of the interpolation wedge $W_{j}$
by~\eqref{def:xi_triple_junction} and adding zero several times
\begin{align}
\nonumber
|\widehat\xiTrJ_{i,i+1}|^2 
&= \lambda^2|R\widetilde\xi|^2
+ (1{-}\lambda)^2|R'\widetilde\xi'|^2
+ 2\lambda(1{-}\lambda)(R\widetilde\xi\cdot R'\widetilde\xi')
\\\label{length_nonconvexity}
&= 1 - \lambda(1{-}\lambda)|R\widetilde\xi - R'\widetilde\xi'|^2
+\lambda(|R\widetilde\xi|^2{-}1) + (1{-}\lambda)(|R'\widetilde\xi'|^2{-}1).
\end{align}
Hence, the estimates~\eqref{LengthXiTriod} and~\eqref{DerivativesLengthXi}
are the result of the estimates~\eqref{eq:boundsDerivativesTildeXi}, \eqref{boundCompatibilityXi},
\eqref{LengthXiTildeTriod}--\eqref{DerivativesLengthXiTilde2}, \eqref{boundslambda1}
and~\eqref{eq:compDistances1}.

\textit{Step 6: Choice of $\hat r = \hat r(\bar\Omega) \leq r$ and definition
of normalized vector fields $\xi_{i,j}$.}
We first define $\hat r := r\wedge \frac{1}{\sqrt{2C}}(r_{1,2}\wedge r_{2,3} \wedge r_{3,1})$
with~$C>0$ being the constant of~\eqref{LengthXiTriod}. Note then that~\eqref{LengthXiTriod}
implies
\begin{align}
\label{eq:nondegenerateLength}
\frac{1}{2} \leq |\widehat\xi_{i,j}|^2 \leq \frac{3}{2} \qquad
\text{in } \mathcal{U}_{\hat r}=\bigcup_{t\in[0,T]}B_{\hat r}(\mathcal{T}(t)){\times}\{t\}
\end{align}
for all $i,j\in\{1,2,3\}$ with $i\neq j$.
We may then define
\begin{align}
\label{def:normalizedXi}
\xi_{i,j}(x,t) := \frac{\widehat\xi_{i,j}(x,t)}{|\widehat\xi_{i,j}(x,t)|} \qquad
\text{for all } (x,t)\in \mathcal{U}_{\hat r}
\end{align}
and all $i,j\in\{1,2,3\}$ with $i\neq j$.
It remains to verify the asserted properties in terms of the vector fields~$\xi_{i,j}$ and~$B$
on the restricted space-time domain~$\mathcal{U}_{\hat r}$.

\textit{Step 7: Conclusion.}
Since $\xi_{i,j}(x,t)=\widehat\xi_{i,j}(x,t)$ for all $t\in [0,T]$
and all $x\in \mathcal{T}_{i,j}(t)\cap B_{\hat r}(\mathcal{T}(t))$, property~\textit{i)} is an immediate
consequence of definition~\eqref{def:normalizedXi}. 
Note that~\eqref{LengthConservationXiTriodNormalized} trivially follows. Obviously, the
skew-symmetry relation in property~\textit{ii)} carries over from
$\widehat\xi_{i,j}$ to $\xi_{i,j}$. Validity of the Herring angle 
condition~\eqref{xi_compatibility} in terms of the vector fields $\xi_{i,j}$
also follows immediately from their definition~\eqref{def:normalizedXi}, the fact that the vector fields~$\widehat\xi_{i,j}$ already
satisfy~\eqref{xi_compatibility}, and the fact that $|\widehat \xi_{1,2}|=|\widehat \xi_{2,3}|=|\widehat \xi_{3,1}|$. Indeed, recall that the vector fields
$\widehat\xi_{1,2},\,\widehat\xi_{2,3}$ resp.\ $\widehat\xi_{3,1}$ can be obtained from
each of the other ones by a rotation, see \eqref{def:xi_triple_junction} and Lemma~\ref{lemma:compatibility_xi}.

For a proof of~\eqref{boundDerivativesXi} (recall that the estimate~\eqref{boundDerivativeB}
is already part of~\textit{Step~2}), we simply compute
\begin{align}
\label{eq:derivativeNormalizedXi}
(\partial_t,\nabla)\xi_{i,j} = \frac{1}{|\widehat\xi_{i,j}|}
\Big(\mathrm{Id}{-}\frac{\widehat\xi_{i,j}}{|\widehat\xi_{i,j}|}\otimes
\frac{\widehat\xi_{i,j}}{|\widehat\xi_{i,j}|}\Big)(\partial_t,\nabla)\widehat\xi_{i,j}. 
\end{align} 
Because of~\eqref{eq:nondegenerateLength},
the estimate $\hat r|\nabla\xi_{i,j}|
+ \hat r^2|\partial_t\xi_{i,j}|\leq C$
throughout~$\mathcal{U}_{\hat r}\setminus\mathcal{T}$ thus follows
from the corresponding estimate in terms of~$\widehat\xi_{i,j}$
from~\textit{Step~2} of this proof. One proceeds similarly
for the required estimate on the second-order spatial derivative.

It therefore remains to argue that the estimates~\eqref{TransportEquationXiTriod}
and~\eqref{DissipTriod} hold true.
Using the product rule and the choice of~$\hat r$ in the previous step, we may on~$\mathcal{U}_{\hat r}$ compute
\begin{align*}
&\left( \partial_t + (B\cdot \nabla) +(\nabla B)^\mathsf{T}\right) \frac{\widehat\xi_{i,j}}{|\widehat\xi_{i,j}|}\\
&~~=\frac1{|\widehat\xi_{i,j}|} \left( \partial_t + (B\cdot \nabla) +(\nabla B)^\mathsf{T}\right)\widehat\xi_{i,j}
-\frac1{2|\widehat \xi_{i,j}|^3} \widehat\xi_{i,j} \left( \partial_t + (B\cdot \nabla) \right) |\widehat \xi_{i,j}|^2
\end{align*}
By~\eqref{eq:prelimEstimateTimeDerivative} and~\eqref{eq:nondegenerateLength}, 
the first right-hand side term is of the order $O(\hat r^{-3}\dist(\cdot,\bar I_{i,j}))$.
To handle the second term, it suffices to apply the estimate~\eqref{DerivativesLengthXi},
the estimate on the magnitude of the velocity~$|B|\leq C\hat r^{-1}$ from \textit{Step~2}, 
and the estimate~\eqref{eq:nondegenerateLength}. 
This proves the estimate \eqref{TransportEquationXiTriod}.

We now turn to the proof of \eqref{DissipTriod}. 
Here, we compute on~$\mathcal{U}_{\hat r}$ by means of the choice of~$\hat r$ in the
previous step
\begin{align*}
\nabla\cdot\frac{\widehat\xiTrJ_{i,j}}{|\widehat\xiTrJ_{i,j}|} 
&=\frac{\nabla\cdot\widehat\xiTrJ_{i,j}}{|\widehat\xiTrJ_{i,j}|}
-\frac{(\widehat\xiTrJ_{i,j}\cdot\nabla)|\widehat\xiTrJ_{i,j}|^2}{2|\widehat\xiTrJ_{i,j}|^3}.
\end{align*}
It is immediate from the estimates~\eqref{eq:nondegenerateLength} and~\eqref{DerivativesLengthXi}
to estimate the second term as being of order $O(\hat r^{-2}\dist(\cdot,{\bar{I}_{i,j}}))$. 
Using the approximate mean curvature flow equation~\eqref{eq:prelimEstimateMotionByCurvature} 
for the first term and the definition~\eqref{def:normalizedXi} of~$\xi_{i,j}$ then yields
\begin{align*}
	\nabla\cdot\frac{\widehat\xiTrJ_{i,j}}{|\widehat\xiTrJ_{i,j}|}  
	= -B\cdot\frac{\widehat\xiTrJ_{i,j}}{|\widehat\xiTrJ_{i,j}|}  
	+ O\big(\hat r^{-2}\dist(\cdot,{\bar{I}_{i,j}}) \big) 
	= -B \cdot \xi_{i,j} + O\big(\hat r^{-2}\dist(\cdot,{\bar{I}_{i,j}})\big).
\end{align*}
In total, this gives~\eqref{DissipTriod}.
\end{proof}

Finally, we provide the elementary-geometric proof for the existence of wedges
with the desired properties.

\begin{proof}[Proof of Lemma~\ref{lem:existenceLocRadius}]
We recall some notation in conjunction with Definition~\ref{DefinitionStrongSolutionTwoPhase}.
For each (cyclic)~$i\in\{1,2,3\}$ and all~$t\in [0,T]$, the unit vector~$\bar{\vec{t}}_{i,i+1}(p(t),t)$
denotes the tangent of~$\bar I_{i,i+1}(t)$ at the triple junction~$\mathcal{T}(t)=\{p(t)\}$,
with the orientation chosen such that it ``points away'' from the curve~$\bar I_{i,i+1}(t)$.
Define then~$\bar\tau_{i,i+1}(t) := -\bar{\vec{t}}_{i,i+1}(p(t),t)$ and
$\mathbb{H}_{\bar\tau_{i,i+1}}(t)=\{x\in\Rd[2]\colon (x{-}p(t))\cdot\bar\tau_{i,i+1}(t) > 0\}$.
Note that
	\begin{align}\label{wedges_balance_of_forces}
		\sigma_{1,2}\bar\tau_{1,2}(t) + \sigma_{2,3} \bar\tau_{2,3}(t) 
		+ \sigma_{3,1} \bar\tau_{3,1}(t) & = 0, \quad t\in [0,T].
	\end{align}
	Using the balance of forces condition~\eqref{wedges_balance_of_forces} 
	together with the strict triangle inequality~\eqref{TriangleInequalitySurfaceTensions} we see 
	that there exist constant-in-time angles~$\theta_i\in (0,\pi)$ 
	such that~$\cos(\theta_i) = \bar\tau_{i,i+1}(t) \cdot \bar\tau_{i-1,i}(t)$ for~$i=1,2,3$
	and~$t\in [0,T]$. For the following argument, see also Figure~\ref{fig:wedges_proof}.
	
	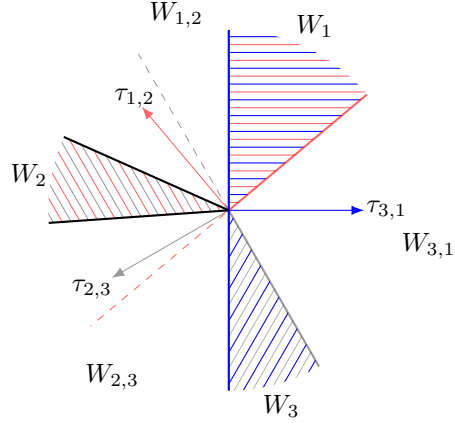
\begin{figure}
	  \begin{tikzpicture}[scale=3]
	  	\tikzmath{\l = .6;\L=.9;};
	  	\begin{scope}
	  		 \clip (0,0) circle [radius = .8];
	  		 \begin{scope}
	 			\clip (0,0) -- (40:1.5) --(90:1.5) -- (0,0);
	 			\foreach \i in {0,...,15}
				{
					\draw[color=blue] (-.8,\i*.07) -- (.8,\i*.07);
					\draw[color=red, opacity=.6] (-.8,\i*.07+.035 ) -- (.8,\i*.07 +.035);
				};
	 		\end{scope}
	 		
	 		\begin{scope}
	 			\clip (0,0) -- (156:1.5) --(184:1.5) -- (0,0);
	 			\foreach \i in {0,...,15}
				{
					\draw[opacity=.4] ($(120:1.5) + (210:\i*.07)$) -- ($(120:-1.5) + (210:\i*.07)$);
					\draw[color=red, opacity=.6] ($(120:1.5) + (210:\i*.07+.035)$) -- ($(120:-1.5) + (210:\i*.07+ .035)$);
				};
	 		\end{scope}
	 		
	 		\begin{scope}
	 			\clip (0,0) -- (270:1.5) --(300:1.5) -- (0,0);
	 			\foreach \i in {0,...,15}
				{
					\draw[opacity=.3] ($(240:1.5) + (330:\i*.07)$) -- ($(240:-1.5) + (330:\i*.07)$);
					\draw[color=blue] ($(240:1.5) + (330:\i*.07+.035)$) -- ($(240:-1.5) + (330:\i*.07+ .035)$);
				};
	 		\end{scope}
	 	
	  	\draw[-Latex,color=blue] (0,0) -- (0:\l);
	  	\node at ($(0:\l+.1)$) {$\tau_{3,1}$};
	  	\draw[-Latex,color=red,opacity=.6] (0,0) -- (130:\l);
	  	\node at ($(130:\l+.05)$) {$\tau_{1,2}$};
	  	\draw[-Latex,opacity=.4] (0,0) -- (210:\l);
	  	\node at ($(210:\l+.1)$) {$\tau_{2,3}$};
%		\draw[thick] (0,0) arc (60:90:2);
%		\draw[thick] (0,0) arc (130:155:2);
%		\draw[thick] (0,0) arc (110:60:1);
		
		\draw[dashed,color=red,opacity=.6] ($(40:-\L)$)--(40:0);
		
		\draw[dashed,opacity=.4] (120:0)--(120:\L);
%%		\draw[dashed] (110:-.8)--(110:.8);

%		
		\draw[color=red,opacity=.6,thick] (0,0) -- (40:\L);
		\draw[color=blue,thick] (0,0) -- (90:\L);

		\draw[thick] (0,0) -- (156:1);
		\draw[thick] (0,0) -- (184:1);

		\draw[opacity=.4,thick] (0,0) -- (300:\L);
		\draw[color=blue,thick] (0,0) -- (270:\L);
		
		\end{scope}
		
%		\node at ($(50:\L-.1)$) {$\mathbb{H}_{1,2}$};
%		\node at (130:\L) {$\mathbb{H}_{2,3}$};
%		\node at (80:\L) {$\mathbb{H}_{3,1}$};
				
		\node at  ($(285:\L)$)  {$W_{3}$};
		\node at ($(65:\L)$) {$W_{1}$};
		\node at ($(170:\L)$) {$W_{2}$};
		
		\node at (105:.9) {$W_{1,2}$};
		\node at (235:.9) {$W_{2,3}$};
		\node at (350:.9) {$W_{3,1}$};
	  \end{tikzpicture}
	  \caption{If the angle between two tangent vectors is less than~$90^\circ$, 
		we trisect it to obtain the desired interpolation wedge, see for example~$W_2$. 
		Otherwise, we take the corresponding intersection of the half-spaces, as is done 
		for $W_1$ and $W_3$. The wedges $W_{1,2}$, $W_{2,3}$ and $W_{3,1}$ lie inbetween. 
		\label{fig:wedges_proof}}
	\end{figure}
	
	If~$\theta_i > \frac{\pi}{2}$ we may define~$X^i_{i,i+1}(t), X^{i}_{i-1,i}(t) \in \mathbb{S}^1$ 
	such that the cone~$C_i(t):=\mathcal{T}(t) + \{\gamma_1 X^i_{i,i+1}(t) + \gamma_2 X^{i}_{i-1,i}(t) 
	\colon \gamma_1,\gamma_2 \in (0,\infty) \}$
	satisfies~$C_i(t)=\mathbb{H}_{\bar\tau_{i,i+1}}(t)\cap \mathbb{H}_{\bar\tau_{i-1,i}}(t)$.
	Otherwise, we choose~$X^i_{i,i+1}(t), X^{i}_{i-1,i}(t) \in \mathbb{S}^1$ 
	such that the cone~$C_i(t):=\mathcal{T}(t) + \{\gamma_1 X^i_{i,i+1}(t) + \gamma_2 X^{i}_{i-1,i}(t) 
	\colon \gamma_1,\gamma_2 \in (0,\infty) \}$	is the middle third of the 
	cone~$\{\gamma_1 \bar\tau_{i,i+1}(t) + \gamma_2 \bar\tau_{i-1,i}(t) \colon \gamma_1,\gamma_2 \in (0,\infty)\}$.
	In both cases, defining for~$i\in\{1,2,3\}$ and~$t\in [0,T]$ the cone
	$C_{i,i+1}(t) := \mathcal{T}(t) + \{\gamma_1 X^{i}_{i,i+1}(t) {+} 
	\gamma_2 X^{i+1}_{i,i+1}(t)\colon \gamma_1,\gamma_2\in (0,\infty)\}$ we then have
	\begin{align}
	\label{eq:inclusionInterpolCone}
	C_{i}(t)	&\subset \mathbb{H}_{\bar\tau_{i,i+1}}(t)\cap \mathbb{H}_{\bar\tau_{i-1,i}}(t),
	\\
	\label{eq:inclusionInterfaceCone}
	C_{i,i+1}(t) &\subset \mathbb{H}_{\bar\tau_{i,i+1}}(t), 
	\\
	\label{eq:decompFullSpace}
	\bigcup_{i=1,2,3} \overline{C_i(t)} \cup \overline{C_{i,i+1}(t)} &= \Rd[2],
	\\
	\label{eq:inclusionTangent}
	p(t) + \tau_{i,i+1}(t) &\in C_{i,i+1}(t) 
	\end{align}
	for all~$i\in \{1,2,3\}$ and all~$t\in [0,T]$.
	
	Let~$r\in (0, r_{1,2}\wedge r_{2,3}\wedge r_{3,1}]$, and for~$i\in\{1,2,3\}$
	and~$t\in [0,T]$ define~$W_{i}(t):=C_i(t)\cap B_r(\mathcal{T}(t))$
	and~$W_{i,i+1}(t):=C_{i,i+1}(t)\cap B_{r}(\mathcal{T}(t))$. As~\eqref{eq:decompByWedges}
	follows immediately from~\eqref{eq:decompFullSpace} it suffices to argue that
	there exists a constant~$C=C(\sigma)\geq 1$, depending only on the surface tensions at the triple junction,
	such that~$r:=\frac{1}{C}(r_{1,2}\wedge r_{2,3}\wedge r_{3,1})$ gives rise to the 
	inclusions~\eqref{eq:inlcusionInterfaceWedge}--\eqref{eq:inclusionInterpolWedge}
	and the comparability of distances in form of~\eqref{eq:compDistances1}--\eqref{eq:compDistances3}.
	
	First, \eqref{eq:inclusionInterpolWedge} follows from~\eqref{eq:inclusionInterpolCone} and
	the fact that~$\mathbb{H}_{\bar\tau_{i,i+1}}(t)\cap B_{r}(\mathcal{T}(t))$
	is included in the $t$-time slice of the image of the diffeomorphism from~\eqref{DiffeoTubularNeighborhood},
	see~\eqref{eq:imageDiffeo}. Analogously, one derives the second inclusion of~\eqref{eq:inlcusionInterfaceWedge}
	from~\eqref{eq:inclusionInterfaceCone}. For the first inclusion of~\eqref{eq:inlcusionInterfaceWedge},
	i.e., the curve trapping condition, one may argue as follows. On one side, it follows
	from the endpoint ball condition~\textit{ii)} of Definition~\ref{DefinitionStrongSolutionTwoPhase}
	and~$r\leq r_{1,2}\wedge r_{2,3}\wedge r_{3,1}$
	that~$\mathcal{T}_{i,i+1}(t)\cap B_{r}(\mathcal{T}(t))\subset 
	\overline{\mathbb{H}_{\bar\tau_{i,i+1}}(t)} \cap B_{r}(\mathcal{T}(t))$.
	On the other side, based on the ball condition~\textit{i)} of Definition~\ref{DefinitionStrongSolutionTwoPhase}
	at the triple junction~$\mathcal{T}(t)=\{p(t)\}$, we may sharpen this inclusion to
	\begin{align*}
	&\mathcal{T}_{i,i+1}(t)\cap B_{r}(\mathcal{T}(t))
	\\&
	\subset \Big(\overline{\mathbb{H}_{\bar\tau_{i,i+1}}(t)} \cap B_{r}(\mathcal{T}(t))\Big)
	\setminus \Big(B_{r}\big(p(t){+}r\bar{\vec{n}}_{i,i+1}(p(t),t)\big)
	\cup B_{r}\big(p(t){-}r\bar{\vec{n}}_{i,i+1}(p(t),t)\big)\Big).
	\end{align*}
	Hence, the first inclusion of~\eqref{eq:inlcusionInterfaceWedge} follows
	after choosing~$r\in (0, r_{1,2}\wedge r_{2,3}\wedge r_{3,1}]$ sufficiently small,
	with a proportionality constant depending only on the opening angles of the interface cones~$C_{i,i+1}$.
	
	We turn to the proof of the estimates~\eqref{eq:compDistances1}--\eqref{eq:compDistances2}.
	The estimate~\eqref{eq:compDistances3} is a consequence of the first inclusion of~\eqref{eq:inlcusionInterfaceWedge},
	the fact that the interface wedges~$W_{i,i+1}$, $i\in\{1,2,3\}$, are separated from each
	other by the interpolation wedges~$W_{i}$, $i\in\{1,2,3,\}$,
	and that within~$B_{r}(\mathcal{T}(t))$ the distance to~$\mathcal{T}_{i,i+1}$ equals
	the distance to~$\bar I_{i,i+1}$ by Definition~\ref{DefinitionStrongSolutionTwoPhase}
	and~$r\in (0, r_{1,2}\wedge r_{2,3}\wedge r_{3,1}]$. The estimate~\eqref{eq:compDistances2}
	follows from similar considerations, exploiting again that the interface wedges are
	separated from each other by the interpolation wedges.
	Also the argument for the proof
	of~\eqref{eq:compDistances1} is analogous; at least once we improved the curve trapping
	condition~\eqref{eq:inlcusionInterfaceWedge} to a wedge which is strictly included in~$W_{i,i+1}$. 
	A possible choice for such a wedge is to simply bisect
	the angles formed by $\bar\tau_{i,i+1},X^{i}_{i,i+1}$ and~$\bar\tau_{i,i+1},X^{i+1}_{i,i+1}$,
	respectively. The improvement of~\eqref{eq:inlcusionInterfaceWedge} then follows
	from possibly reducing~$r\in (0, r_{1,2}\wedge r_{2,3}\wedge r_{3,1}]$ even further.
	This in turn can be done again at the cost of a proportionality constant depending only on the
	surface tensions at the triple junction.
\end{proof}

\subsection{Local compatibility estimates}
We conclude this section with a result verifying
that the local constructions at a triple junction
from Proposition~\ref{prop:xi_triple_junction} 
are (in a certain sense) suitable perturbations of
the respective local constructions
from Lemma~\ref{LemmaBoundsLocalConstructionsTwoPhase} 
with respect to interfaces meeting at the triple junction.
It is precisely at this stage where we rely on the 
freedom to choose a tangential component for the
local velocity field from Lemma~\ref{LemmaBoundsLocalConstructionsTwoPhase}.

\begin{proposition}
\label{prop:localCompatibilityEstimates}
Let $d=2$ and $P \in \mathbb{N}$, $P\geq 2$. Let $\bar\Omega=(\bar{\Omega}_1,\ldots,\bar{\Omega}_P)$ 
be a strong solution to multiphase mean curvature flow in the sense of Definition~\ref{DefinitionStrongSolution}.
Let~$i,j\in\{1,\ldots,P\}$ such that~$i\neq j$ and~$\bar I_{i,j}$ is a non-trivial interface.
Denote by~$\mathcal{T}_c$ a space-time connected component of~$\bar I_{i,j}$, and assume that
$\mathcal{T}_c$ connects two evolving triple junctions~$\mathcal{T}_{p_{+}}$ and~$\mathcal{T}_{p_{-}}$, respectively.
Let~$\hat r_{p_+},\hat r_{p_{-}}\in (0,1]$ denote the associated localization scales 
from Proposition~\ref{prop:xi_triple_junction},
respectively. Finally, denote by~$(\xi_{i,j}^c,B^c)$ the local vector fields from 
Lemma~\ref{LemmaBoundsLocalConstructionsTwoPhase}.

Then there exists a choice of the tangential component~$\gamma_c$ of~$B^c$ satisfying
\begin{align}
\label{eq:estimateDerivTangentialComp}
\max_{k=0,1,2} (\hat r_{p_+} \wedge \hat r_{p_-} \wedge \ell)^{k+1} |\nabla^k \gamma_c| \leq C,
\quad 3\ell := \min_{t\in [0,T]} \dist(\mathcal{T}_{p_+}(t),\mathcal{T}_{p_-}(t)),
\end{align}
throughout~$\mathrm{im}(\Psi_{\mathcal{T}_c})$ as well as \eqref{RequirementForGamma} on $\mathcal{T}_c$,
so that at each of the two triple junctions~$\mathcal{T}_{p}$, $p\in\{p_+,p_-\}$, the
local vector fields~$(\xi^{p}_{i,j},B^{p})$ from~Proposition~\ref{prop:xi_triple_junction} 
(at scale~$\hat r_{p}$) may be chosen so that they are locally compatible
with~$(\xi^c_{i,j},B^c)$ in the sense that
\begin{align}
\label{eq:localComp1}
\big|\xi_{i,j}^c{-}\xi_{i,j}^p\big| 
+ \hat r_p\big|(\nabla\xi^c_{i,j}{-}\nabla\xi_{i,j}^p)^\mathsf{T}\xi_{i,j}^c\big|
&\leq C\hat r_p^{-1}\dist(\cdot,\bar I_{i,j}),
\\
\label{eq:localComp2}
\big|(\xi_{i,j}^c{-}\xi_{i,j}^p)\cdot\xi_{i,j}^c\big|
&\leq C\hat r_p^{-2}\dist^2(\cdot,\bar I_{i,j}),
\\
\label{eq:localComp3}
\big|B^p {-} B ^c\big|
&\leq C\hat r_p^{-3}\dist^2(\cdot,\bar I_{i,j}),
\\
\label{eq:localComp4}
\big|\nabla B^p {-} \nabla B ^c\big|
&\leq C\hat r_p^{-3}\dist(\cdot,\bar I_{i,j})
\end{align}
in the region~$B_{\frac{1}{2}(\hat r_p \wedge \ell)}(\mathcal{T}_p(t)) \cap 
\big(W^p_{i,j}(t)\cup W^p_{i}(t) \cup W^p_{j}(t)\big)$
for all~$t\in [0,T]$ (where the wedges~$W^p_{i,j},W^p_{i},W^p_{j}$ are the ones
from Definition~\ref{def:locRadiusTripleJunction} 
with respect to the triple junction~$\mathcal{T}_p$).
The constant~$C>0$ in the above estimates~\emph{\eqref{eq:estimateDerivTangentialComp}--\eqref{eq:localComp4}}
may depend on~$\bar\Omega$, but is independent of~$\hat r_{p_+}$, $\hat r_{p_-}$ and~$\ell$.
\end{proposition}

\begin{proof}
The proof is split into three steps.

\textit{Step 1: Choice of vector fields.} We take~$(\xi_{i,j}^{p_\pm},B^{p_\pm})$
as constructed in the proof of Proposition~\ref{prop:xi_triple_junction}.
Moreover, we take~$(\xi_{i,j}^c,B^c)$ as defined in Lemma~\ref{LemmaBoundsLocalConstructionsTwoPhase}
with the following choice of the tangential component~$\gamma_c$. 
Let~$\theta$ be a smooth cutoff function with~$\theta(r)=1$ for~$|r|\leq \frac{1}{2}$
and~$\theta\equiv 0$ for~$|r|\geq 1$. We then define
\begin{align}
\label{def:tangentialVelocity}
\gamma_c := \theta\Big(\frac{\dist(\cdot,\mathcal{T}_{p_+})}{\ell\wedge\hat r_{p_+}}\Big)B^{p_+}\cdot\bar\tau_{i,j} 
					 + \theta\Big(\frac{\dist(\cdot,\mathcal{T}_{p_-})}{\ell\wedge\hat r_{p_-}}\Big)B^{p_-}\cdot\bar\tau_{i,j}
					\quad \text{on } \mathcal{T}_c,
\end{align}
and extend this definition to $\mathrm{im}(\Psi_{\mathcal{T}_c})$ by a suitable Taylor expansion to match~\eqref{RequirementForGamma}.
By the choice of the cutoff~$\theta$, this is indeed well-defined. The regularity
estimate~\eqref{eq:estimateDerivTangentialComp} is a direct consequence of the definition~\eqref{def:tangentialVelocity}
and the estimates~\eqref{Bound2ndDerivativeNormalTangent} and~\eqref{boundDerivativeB}.
Note that~\eqref{eq:estimateDerivTangentialComp} 
in turn updates the estimate~\eqref{eq:estimatesTwoPhaseVel} to
\begin{align}
\label{eq:estimatesTwoPhaseVelFinal}
\max_{k=0,1,2} (\hat r_{p_+} \wedge \hat r_{p_-} \wedge \ell)^{k+1} |\nabla^kB^c| \leq C
\quad\text{in } \mathrm{im}(\Psi_{\mathcal{T}_c}),
\end{align}
with the constant~$C>0$ being independent of~$\hat r_{p_+}$, $\hat r_{p_-}$ and~$\ell$.

\textit{Step 2: Proof of~\eqref{eq:localComp3} and~\eqref{eq:localComp4}.}
Let~$p\in \{p_+,p_-\}$. First, we note that for all~$t\in [0,T]$ it 
holds~$B_{\frac{1}{2}(\hat r_p \wedge \ell)}(\mathcal{T}_p(t)) \cap 
\big(W^p_{i,j}(t)\cup W^p_{i}(t) \cup W^p_{j}(t)\big) \subset \mathrm{im}(\Psi_{\mathcal{T}_c})$
due to~\eqref{eq:inlcusionInterfaceWedge}--\eqref{eq:inclusionInterpolWedge}.
By means of the regularity estimates~\eqref{boundDerivativeB} 
and~\eqref{eq:estimatesTwoPhaseVelFinal}, the choice of the cutoff function~$\theta$, and
the definition~\eqref{def:tangentialVelocity} of the tangential velocity of~$B^c$, it thus suffices to prove~$B^c = B^p$
within the interface wedge~$W_{i,j}^p(t)\cap B_{\frac{1}{2}(\hat r_p \wedge \ell)}(\mathcal{T}_p(t))$
for all~$t\in [0,T]$. However, by~\eqref{def:tangentialVelocity} the two
vector fields agree in tangential direction. Their normal component in turn equals~$H_{i,j}\bar{\vec{n}}_{i,j}$,
which is evident for~$B^c$ from definition~\eqref{DefinitionVelocityTwoPhase}, and for~$B^p$ from
the definitions~\eqref{AnsatzAuxiliaryVelocityHalfSpace} and~\eqref{def:velocity_triple_junction}.

\textit{Step 3: Proof of~\eqref{eq:localComp1} and~\eqref{eq:localComp2}.}
Let again~$p\in \{p_+,p_-\}$. Thanks to the regularity estimates~\eqref{eq:estimatesTwoPhaseXi}
resp.\ \eqref{boundDerivativesXi} and the fact~$(\nabla\xi^c_{i,j})^\mathsf{T}\xi^c_{i,j}
=\frac{1}{2}\nabla|\xi^c_{i,j}|^2=0$,
the asserted bounds~\eqref{eq:localComp1} and~\eqref{eq:localComp2}
follow once we assured ourselves of the validity of~$\xi^c_{i,j}-\xi^p_{i,j}=0$
and~$(\nabla\xi_{i,j}^p)^\mathsf{T}\xi_{i,j}^c=0$
along the local interface segment~$\mathcal{T}_{c}(t)\cap B_{\frac{1}{2}(\hat r_p \wedge \ell)}(\mathcal{T}_p(t))$
for all~$t\in [0,T]$. The former is immediate from both vector fields being extensions of
the unit normal~$\bar{\vec{n}}_{i,j}|_{\bar I_{i,j}}$, whereas the latter then follows
from adding zero and~$|\xi_{i,j}^p|^2\equiv 1$: $(\nabla\xi_{i,j}^p)^\mathsf{T}\xi_{i,j}^c=
(\nabla\xi_{i,j}^p)^\mathsf{T}\xi_{i,j}^p=\frac{1}{2}\nabla|\xi^p_{i,j}|^2=0$.
\end{proof}

\section{Gradient flow calibrations for a regular network}
\label{sec:networkConstruction}
The aim of this section is to prove Theorem~\ref{TheoremExistenceCalibration}: Given 
a strong solution to multiphase mean curvature flow (in the sense of an evolving network of 
smooth curves meeting at triple junctions), we construct a gradient flow calibration by gluing 
together the local constructions from the previous two sections.

More precisely, in Section~\ref{SectionPartitionOfUnity} we define a partition of unity which 
allows us to localize around each topological feature $\mathcal{T}_n$, i.e., a two phase interface 
or a triple junction, for some suitable index $n \in \mathbb{N}$.
We then define the global vector fields $\xi_{i,j}$ for $i,j \in \{1,\ldots, P\}$ with $i \neq j$
and $B$ in Section~\ref{SectionGlobalDefinitions} by gluing together suitable locally defined vector 
fields $\xi^n_{i,j}$ and $B^n$. Most of these vector fields were already constructed in 
Sections~\ref{SectionLocalConstructionsTwoPhase} and \ref{SectionLocalConstructionsTriod}, so that in 
Section~\ref{SectionGlobalDefinitions} we only need to define those vector fields $\xi_{i,j}^n$ 
for which at least one of the two phases $i$ or $j$ is not present at the selected topological feature $\mathcal{T}_n$. 
For their construction we crucially use the coercivity condition of Definition~\ref{DefinitionAdmissibleSurfaceTensions}
on the matrix of surface tensions. 
In Section~\ref{SectionCompatibilityBoundsGluing}, we prove the compatibility between the 
local constructions of the vector fields of adjacent topological features, which then 
allows us in Section~\ref{SectionBoundsTimeEvolutionGlobalConstruction} 
to prove Theorem~\ref{TheoremExistenceCalibration}.

We first describe the necessary notation. Let $\bar\Omega=(\bar{\Omega}_1,\ldots,\bar{\Omega}_P)$ 
be a strong solution for multiphase
mean curvature flow in the sense of Definition~\ref{DefinitionStrongSolution} on some time interval $[0,T]$.
In particular, the family~$\bar\Omega$ is a smoothly evolving regular partition
and the family~$\mathcal{I}=\bigcup_{i\neq j}\bar I_{i,j}$ is a smoothly evolving regular network of interfaces 
in the sense of Definition~\ref{DefinitionSmoothlyEvolvingPartition}. 

We decompose the network of interfaces of the strong solution according to its topological features, 
i.e., into smooth two-phase interfaces on the one hand and triple junctions on the other hand. 
Suppose that the strong solution has $N$ of such topological features $\mathcal{T}_n$, $n \in \{1,\ldots,N\}$. 
We then split $\{1,\ldots,N\}=:\mathcal{C} \cupdot\mathcal{P}$ with the convention 
that $\mathcal{C}$ enumerates the connected components in space-time of the smooth two-phase 
interfaces (being time-evolving curves) and $\mathcal{P}$ enumerates the triple 
junctions (being time-evolving points).
If $p\in \mathcal{P}$, we define $\mathcal{T}_p:=\bigcup_{t\in [0,T]}\mathcal{T}_p(t){\times}\{t\}$
to be the trajectory in space-time described by the triple junction. 
If $c\in \mathcal{C}$, we define $\mathcal{T}_c := \bigcup_{t\in [0,T]}\mathcal{T}_c(t){\times}\{t\} \subset {\bar{I}}_{i,j}$ 
for some $i,j \in \{1,\ldots,P\}$ with $i\neq j$ to be the corresponding
space-time connected component of a two-phase interface ${\bar{I}}_{i,j}$.
We say that the $i$-th phase of the strong solution is
\emph{present at the topological feature $\mathcal{T}_{n}$} for $n \in \{1,\ldots,N\}$
if $\partial\bar\Omega_i\cap\mathcal{T}_{n}\neq\emptyset$.
Otherwise, we say that the phase is \emph{absent at $\mathcal{T}_n$}.
Finally, we write~$c\sim p$ for~$c\in\mathcal{C}$ and~$p\in\mathcal{P}$
if and only if~$\mathcal{T}_c$ has an endpoint at~$\mathcal{T}_p$.
Otherwise, we write~$c\not\sim p$.

For each~$p\in\mathcal{P}$, let~$\hat r_p\in (0,1]$ denote the 
localization scale provided by Proposition~\ref{prop:xi_triple_junction},
and for each~$i,j\in\{1,\ldots,P\}$ such that~$i\neq j$ let~$r_{i,j}\in (0,1]$
be an admissible localization scale for the interface~$\bar I_{i,j}$ in the sense
of Definition~\ref{DefinitionStrongSolutionTwoPhase}. We also define
\begin{align*}
3\ell_{\mathcal{P}} := 1\wedge \min_{t\in [0,T]}\min_{p,p'\in\mathcal{P},\, p\neq p'} 
\dist(\mathcal{T}_{p}(t),\mathcal{T}_{p'}(t)).
\end{align*} 
In words, $\ell_{\mathcal{P}}$ keeps track of the separation of the triple junctions.
Moreover, for each~$c\in\mathcal{C}$ we let
\begin{align*}
3\ell_c &:= 1\wedge \min_{t\in [0,T]}\min_{c'\in\mathcal{C}\setminus\{c\}
\colon \mathcal{T}_{c}\cap\mathcal{T}_{c'}=\emptyset}
\dist(\mathcal{T}_{c}(t),\mathcal{T}_{c'}(t)).
\end{align*}
If~$c\in\mathcal{C}$ refers to a closed loop, then~$\ell_c$ measures
the separation to all other topological features. Otherwise, $c\in\mathcal{C}$
refers to a two-phase interface with two triple junction endpoints, and in this case
$\ell_c$ represents the minimal distance to all other topological features
except for the two triple junctions at its endpoints and the set of two-phase interfaces
also having an endpoint at these triple junctions. 
We then define
\begin{align}
\label{def:minLocScaleTripleJunction}
2r_{\mathcal{P}} &:=
\min_{p\in\mathcal{P}} \hat r_p \wedge \ell_{\mathcal{P}} 
\wedge \min_{c\in\mathcal{C}} \ell_c 
\in (0,1].
\end{align}
Note that~$r_{\mathcal{P}}$ allows for the application of all the 
results from Section~\ref{SectionLocalConstructionsTriod}, and that distinct
triple junctions are well separated. In addition, the $r_{\mathcal{P}}$-ball
around a triple junction~$\mathcal{T}_p$ intersects with the $r_{\mathcal{P}}$-neighborhood
of a two-phase interface~$\mathcal{T}_c$ if and only if~$c\sim p$.

Next, in case~$c\in\mathcal{C}$ does not refer to a closed loop, i.e., 
there exists exactly two $p_{+},p_{-}\in\mathcal{P}$ such that
$c\sim p_{+}$ and $c\sim p_{-}$, we consider
\begin{align*}
3\ell'_c &:= 1\wedge\min_{t\in [0,T]}\min_{\substack{c'\in\mathcal{C}\setminus\{c\} \\
							c'\sim p,\,p\in\{p_{\pm}\}}}\dist\Big(\mathcal{T}_c(t)\setminus 
							\bigcup_{p\in\{p_{\pm}\}}B_{r_{\mathcal{P}}}(\mathcal{T}_p(t)),\mathcal{T}_{c'}(t) \Big).
\end{align*}
The purpose of~$\ell'_c$
is to separate interfaces which meet at the same triple junction; at least outside
of a neighborhood of the latter. We then define
\begin{align*}
2r_{\mathcal{C}} := \min_{i,j\in\{1,\ldots,P\},\,i\neq j} r_{i,j} 
\wedge \min_{c\in\mathcal{C}} \ell_c 
\wedge \min_{c\in\mathcal{C}\colon 
\exists p\in\mathcal{P}\text{ s.t. } c\sim p} \ell'_c \in (0,1]	.
\end{align*} 
Observe that the scale~$r_{\mathcal{C}}$ allows for the application
of all the results from Section~\ref{SectionLocalConstructionsTwoPhase},
and that distinct interfaces are well separated at this scale in the previously described sense.

Finally, it is convenient to define
a minimal localization scale by means of
\begin{align}
\label{def:minLocScale}
\bar r_{\mathrm{min}} := r_{\mathcal{C}} \wedge r_{\mathcal{P}} > 0.
\end{align}

\subsection{Localization of topological features}\label{SectionPartitionOfUnity}
We now introduce a partition of unity $(\eta_{\mathrm{bulk}},\eta_1,\ldots,\eta_N)$, 
where each $\eta_n$ for $n=1,\ldots,N$ localizes in a neighborhood of the corresponding 
topological feature~$\mathcal{T}_n$ as follows:

\begin{lemma}\label{LemmaPartitionOfUnity}
Let $d=2$ and $P \in \mathbb{N}$, $P\geq 2$. Let $\bar\Omega=(\bar{\Omega}_1,\ldots,\bar{\Omega}_P)$ 
be a strong solution to multiphase mean curvature flow in the sense of Definition~\ref{DefinitionStrongSolution},
whose network of interfaces decomposes into $N$ topological features $\mathcal{T}_n$, $n\in\{1,\ldots,N\}$.
Let~$r_{\mathcal{P}},\bar r_{\mathrm{min}}\in (0,1]$ be the localization scales 
defined by~\eqref{def:minLocScaleTripleJunction} and~\eqref{def:minLocScale},
and let~$\mathcal{T}_{\mathcal{P}}:=\bigcup_{p\in\mathcal{P}}\mathcal{T}_p$.

Then, for each $n\in\{1,\ldots,N\}$ there exists a continuous function
\begin{align*}
\eta_n\colon \Rd[2]\times [0,T] \to [0,1]
\end{align*} 
satisfying~$\eta_n\in (C^0_tC^2_x\cap C^1_tC^0_x)(\Rd[2]{\times} [0,T]
\setminus \mathcal{T}_{\mathcal{P}})$
with corresponding estimates
\begin{align}
\label{eq:estimatesDerivEta}
\max_{k=1,2} \bar r_{\mathrm{min}}^k|\nabla^k\eta_n|
+ \bar r_{\mathrm{min}}^2|\partial_t\eta_n| \leq C
\quad\text{in } \Rd[2]{\times} [0,T] \setminus\mathcal{T}_{\mathcal{P}}, 
\end{align}
for some constant~$C>0$, depending only on~$\bar\Omega$ but
not on~$\bar r_{\mathrm{min}}$,
so that the family $(\eta_1,\ldots,\eta_N)$ is a partition of unity in
the following sense: 
\begin{itemize}[leftmargin=0.7cm]
\item[i)] Let $\eta_{\mathrm{bulk}}:=1-\sum_{n=1}^N \eta_n$. Then $\eta_{\mathrm{bulk}}\in [0,1]$
					throughout~$\Rd[2]{\times }[0,T]$. On the evolving network of interfaces
					$\mathcal{I}:=\bigcup_{i\neq j}{\bar{I}}_{i,j}$ we have~$\eta_{\mathrm{bulk}} \equiv 0$.
					Moreover, there exists a constant~$C\geq 1$, depending only on~$\bar\Omega$ but not on~$\bar r_{\mathrm{min}}$,
					such that it holds
					\begin{align}
					\label{eq:coercivityBulkCutOff}
					C^{-1}\big(\bar r_{\mathrm{min}}^{-2}\dist^2(\cdot,\mathcal{I})\wedge 1\big)
					&\leq \eta_{\mathrm{bulk}}
					&&\text{in } \Rd[2]{\times} [0,T] \setminus\mathcal{T}_{\mathcal{P}},
					\\
					\label{eq:upperBoundBulkCutOff}
					\eta_{\mathrm{bulk}} &\leq C\big(\bar r_{\mathrm{min}}^{-2}\dist^2(\cdot,\mathcal{I})\wedge 1\big)
					&&\text{in } \Rd[2]{\times} [0,T] \setminus\mathcal{T}_{\mathcal{P}},
					\\
					\label{eq:boundDerivBulkCutoff}
					|\nabla\eta_{\mathrm{bulk}}| &\leq C\bar r_{\mathrm{min}}^{-1}
					\big(\bar r_{\mathrm{min}}^{-1}\dist(\cdot,\mathcal{I})\wedge 1\big)
					&&\text{in } \Rd[2]{\times} [0,T] \setminus\mathcal{T}_{\mathcal{P}},
					\\
					\label{eq:boundTimeDerivBulkCutoff}
					|\partial_t\eta_{\mathrm{bulk}}| &\leq C\bar r_{\mathrm{min}}^{-2}
					\big(\bar r_{\mathrm{min}}^{-1}\dist(\cdot,\mathcal{I})\wedge 1\big)
					&&\text{in } \Rd[2]{\times} [0,T] \setminus\mathcal{T}_{\mathcal{P}},
					\end{align}
					and if either phase~$i$ or phase~$j$ is absent at a given
					topological feature $n\in\{1,\ldots,N\}$ we have the estimates
					\begin{align}
					\label{GlobalEquationsBoundMinorityPhases}
					\eta_n &\leq C\big(\bar r_{\mathrm{min}}^{-2}\dist^2(\cdot,{\bar{I}}_{i,j})\wedge 1\big)
					&&\text{in } \Rd[2]{\times} [0,T] \setminus\mathcal{T}_{\mathcal{P}},
					\\
					\label{GlobalEquationsBoundGradientMinorityPhases}
					|\nabla\eta_n| &\leq C\bar r_{\mathrm{min}}^{-1}
					\big(\bar r_{\mathrm{min}}^{-1}\dist(\cdot,{\bar{I}}_{i,j})\wedge 1\big)
					&&\text{in } \Rd[2]{\times} [0,T] \setminus\mathcal{T}_{\mathcal{P}},
					\\
					\label{GlobalEquationsBoundTimeDerivMinorityPhases}
					|\partial_t\eta_n| &\leq C\bar r_{\mathrm{min}}^{-2}
					\big(\bar r_{\mathrm{min}}^{-1}\dist(\cdot,{\bar{I}}_{i,j})\wedge 1\big)
					&&\text{in } \Rd[2]{\times} [0,T] \setminus\mathcal{T}_{\mathcal{P}}.
					\end{align}
\item[ii)] For all~$c\in \mathcal{C}$ and~$t\in [0,T]$ it holds
					 \begin{align}
					 \label{LocalizationTwoPhase}
					 \supp\eta_c(\cdot,t)\subset \Psi_{\mathcal{T}_c}(\mathcal{T}_c(t){\times}\{t\}
					 {\times}[\bar r_{\mathrm{min}},\bar r_{\mathrm{min}}])
					 =:\mathrm{im}_{\bar r_{\mathrm{min}}}(\Psi_{\mathcal{T}_c})(t),
					 \end{align}
					 with~$\Psi_{\mathcal{T}_c}$ denoting the restriction to~$\mathcal{T}_c$
					 of the diffeomorphism~\eqref{DiffeoTubularNeighborhood} (assuming
					that~$\mathcal{T}_c\subset\bar I_{i,j}$).
\item[iii)] For all~$p \in  \mathcal{P}$ and~$t\in [0,T]$ it holds
						\begin{align}\label{LocalizationTripleJunction}
						\supp\eta_p(\cdot,t)\subset B_{r_{\mathcal{P}}}(\mathcal{T}_p (t)).
						\end{align}
\item[iv)] Let $p, p'\in\mathcal{P}$ be two distinct triple junctions. Then for all~$t\in [0,T]$ we have
				   \begin{align}\label{LocalizationTwoTripleJunctions}
					 \supp\eta_p(\cdot,t)\cap\supp\eta_{p'}(\cdot,t)\subset 
					 B_{r_{\mathcal{P}}}(\mathcal{T}_p(t))\cap B_{r_{\mathcal{P}}}(\mathcal{T}_{p'}(t))
					 =\emptyset.
					 \end{align}
\item[v)] Let $p \in \mathcal{P}$ be a triple junction and let $c \in \mathcal{C}$ be 
					a two-phase interface. Then $\supp\eta_p\cap\supp\eta_c\neq\emptyset$ if and only if
					$\mathcal{T}_{c}$ has an endpoint at $\mathcal{T}_{p}$.
					In this case and assuming $\mathcal{T}_c\subset {\bar{I}}_{i,j}$ 
					for $i\neq j \in \{1,\ldots, P\}$, it holds for all $t\in [0,T]$ that
					\begin{align}\label{LocalizationTripleJunctionTwoPhase}
					\supp\eta_p(\cdot,t)\cap\supp\eta_c(\cdot,t)
					\subset B_{r_{\mathcal{P}}}(\mathcal{T}_p(t))\cap(W_{i,j}(t)\cup W_{i}(t)\cup W_{j}(t)),
					\end{align}
					where $W_{i,j}$, $W_i$ and $W_j$ are as in Definition~\ref{def:locRadiusTripleJunction}.
\item[vi)] Let $c,c' \in\mathcal{C}$ be two distinct two-phase interfaces. Then we have
					 $\supp\eta_c\cap\supp\eta_{c'}\neq\emptyset$ if and only if both interfaces
					 have an endpoint at the same triple junction $\mathcal{T}_{p},\,p\in \mathcal{P}$.
					 In this case, it holds for all $t\in [0,T]$ that
					 \begin{align}\label{LocalizationTwoInterfaces}
					 \supp\eta_c(\cdot,t)\cap\supp\eta_{c'}(\cdot,t)\subset 
					 B_{r_{\mathcal{P}}}(\mathcal{T}_p(t))\cap W_{i}(t),
					 \end{align}
					 where we assume that $\mathcal{T}_{c}\subset {\bar{I}}_{i,j}$ and $\mathcal{T}_{c'}\subset {\bar{I}}_{k,i}$.
\end{itemize}					
\end{lemma}

\begin{proof}
An illustration of the constructed functions close to a triple junction can be found in Figure~\ref{fig:cutoff}.
For the definition of a partition of unity $(\eta_{\mathrm{bulk}},\eta_1,\ldots,\eta_N)$
with the required localization and coercivity properties we proceed in several steps.

\textit{Step 1: Definition of auxiliary cutoffs.} 
Let $\theta$ be a smooth and even cutoff function with $\theta(s)=1$ 
for $|s|\leq\frac{1}{2}$ and $\theta\equiv 0$ for $|s|\geq 1$. 
Let $\zeta\colon\Rd[]\to[0,\infty)$ be another smooth cutoff function defined by
\begin{align}
\label{QuadraticInterfaceCutOff}
\zeta(s) = (1-s^2)\theta(s^2),
\end{align}
see Figure~\ref{fig:zetas}. Let~$\delta \in (0,1]$ be a constant to be determined later
(independent of~$\bar r_{\mathrm{min}}$).
Based on the profile~$\zeta$, we then introduce for each topological feature~$\mathcal{T}_n$, $n\in\{1,\ldots,N\}$, 
a corresponding cutoff function~$\zeta_n$ as follows.
First, for a given triple junction~$p\in\mathcal{P}$ we define the
associated triple junction cutoff 
\begin{align}
\label{DefinitionAuxiliaryCutoffHalfSpace}
\zeta_p(x,t) := \zeta\Big(\frac{\dist(x,\mathcal{T}_p(t))}{r_{\mathcal{P}}}\Big), \quad
(x,t)\in \Rd[2]{\times} [0,T].
\end{align}
Second, for a given connected component~$c\in\mathcal{C}$ of a
two-phase interface, say $\mathcal{T}_c\subset\bar I_{i,j}$ for some $i,j\in\{1,\ldots,P\}$
with $i\neq j$, we define the 
associated interface cutoff function
\begin{align}
\label{DefinitionAuxiliaryCutoffInterface}
\zeta_c(x,t) := 
\begin{cases}
\zeta\big(\frac{s_{i,j}(x,t)}{\delta \bar r_{\mathrm{min}}}\big),
& (x,t)\in \overline{\mathrm{im}(\Psi_{\mathcal{T}_c})}, \\
0 & \text{else},
\end{cases}
\end{align}
where~$s_{i,j}$ is the signed distance function 
defined in~\eqref{SignedDistanceTwoPhase} 
and~$\mathrm{im}(\Psi_{\mathcal{T}_c})$ is the image of the diffeomorphism~$\Psi_{\mathcal{T}_c}$,
i.e., the restriction to~$\mathcal{T}_c$
of the diffeomorphism~\eqref{DiffeoTubularNeighborhood}.

It follows directly from the definitions~\eqref{QuadraticInterfaceCutOff}--\eqref{DefinitionAuxiliaryCutoffInterface},
the regularity of the signed distance in form of~\eqref{eq:regSignedDistanceProjection}, 
\eqref{Bound2ndDerivativeSignedDistance} and~\eqref{boundTimeDerivatives}, 
as well as~\eqref{eq:boundsInitialValues} that
\begin{align}
\label{eq:suppTripleJunctionCutoff}
\supp \zeta_p(\cdot,t) &\subset B_{r_{\mathcal{P}}}(\mathcal{T}_p(t)),
&& t\in [0,T],
\\
\label{eq:suppInterfaceCutoff}
\supp \zeta_c(\cdot,t) &\subset \Psi_{\mathcal{T}_c}(\mathcal{T}_c(t){\times}\{t\}
					 {\times}[-\delta\bar r_{\mathrm{min}},\delta\bar r_{\mathrm{min}}]),
&& t\in [0,T],
\end{align}
and~$\zeta_p \in (C^0_tC^2_x \cap C^1_tC^0_x)(\Rd[2]{\times}[0,T]\setminus\mathcal{T}_p)$ as well as
$\zeta_c \in (C^0_tC^2_x \cap C^1_tC^0_x)(\overline{\mathrm{im}(\Psi_{\mathcal{T}_c})})$
with corresponding estimates (assuming~$\mathcal{T}_c\subset\bar I_{i,j}$)
\begin{align}
\label{eq:regEstimatesTripleJunctionCutoff}
|1{-}\zeta_p| &\leq C\big(\bar r_{\mathrm{min}}^{-2}\dist^2(\cdot,\mathcal{T}_p) \wedge 1\big)
&&\text{on } \Rd[2]{\times}[0,T]\setminus\mathcal{T}_p,
\\
\label{eq:regEstimatesTripleJunctionCutoff1}
|\nabla^k\zeta_p| &\leq C\bar r_{\mathrm{min}}^{-k}
\big(\bar r_{\mathrm{min}}^{-(2-k)}\dist^{2-k}(\cdot,\mathcal{T}_p) \wedge 1)
&&\text{on } \Rd[2]{\times}[0,T]\setminus\mathcal{T}_p,\,k\in\{1,2\},
\\
\label{eq:regEstimatesTripleJunctionCutoff2}
|\partial_t\zeta_p| &\leq C\bar r_{\mathrm{min}}^{-2}
\big(\bar r_{\mathrm{min}}^{-1}\dist(\cdot,\mathcal{T}_p) \wedge 1\big)
&&\text{on } \Rd[2]{\times}[0,T]\setminus\mathcal{T}_p,
\\
\label{eq:regEstimatesInterfaceCutoff}
|1{-}\zeta_c| &\leq C\big(\bar r_{\mathrm{min}}^{-2}\dist^2(\cdot,\bar I_{i,j}) \wedge 1 \big)
&&\text{on } \overline{\mathrm{im}(\Psi_{\mathcal{T}_c})},
\\
\label{eq:regEstimatesInterfaceCutoff1}
|\nabla^k\zeta_c| &\leq C\bar r_{\mathrm{min}}^{-k}
\big(\bar r_{\mathrm{min}}^{-(2-k)}\dist^{2-k}(\cdot,\bar I_{i,j}) \wedge 1 \big)
&&\text{on } \overline{\mathrm{im}(\Psi_{\mathcal{T}_c})},\,k\in\{1,2\},
\\
\label{eq:regEstimatesInterfaceCutoff2}
|\partial_t\zeta_c| &\leq C\bar r_{\mathrm{min}}^{-2}
\big(\bar r_{\mathrm{min}}^{-1}\dist(\cdot,\bar I_{i,j}) \wedge 1\big)
&&\text{on } \overline{\mathrm{im}(\Psi_{\mathcal{T}_c})}.
\end{align}

\begin{figure}
		\begin{tikzpicture}[scale=2]
			\draw[->] (-1.3,0)--(1.3,0) node [below] {$r$};
			\draw (0.5,-0.01)--(0.5,0.01) node [below] {$1/2$};
			\draw (1,-0.01)--(1,0.01) node [below] {$1$};
			\draw (-0.5,-0.01)--(-0.5,0.01) node [below] {$-1/2$};
			\draw (-1,-0.01)--(-1,0.01) node [below] {$-1$};
			\draw[->] (0,-0.1)--(0,1.3) node [left] {$\zeta(r)$};
			
			\draw [dashed] (0,1)--(-0.3,1) node [left] {$1$};
			
			\draw [thick, color=black, domain=-0.5:0.5, samples=200, smooth]
			plot (\x,{1-\x*\x});
			\draw [dashed, color=black, domain=-1:-0.5, samples=200, smooth]
			plot (\x,{1-\x*\x});
			\draw [dashed, color=black, domain=0.5:1, samples=200, smooth]
			plot (\x,{1-\x*\x});
			\draw[thick,smooth] (.5,.75) to[out=-45,in=180] (0.9,0);
			\draw[thick,smooth] (0.9,0)--(1.2,0);
			\draw[thick,smooth] (-.5,.75) to[out=-135,in=0] (-0.9,0);
			\draw[thick,smooth] (-0.9,0)--(-1.2,0);
		\end{tikzpicture}	
	\caption{The profile $\zeta$ used to construct 
	the cutoff functions for two-phase interfaces and triple junctions.}
	\label{fig:zetas}
\end{figure}
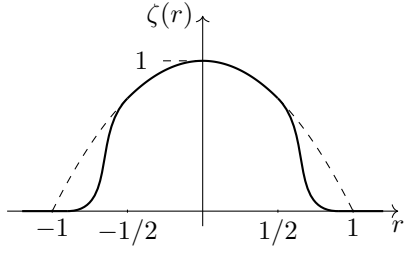

\textit{Step 2: Define $\eta_p$ for triple junctions $p\in\mathcal{P}$.} 
Let us assume that the phases $i,j,k\in\{1,\ldots,P\}$ are present
at the triple junction $\mathcal{T}_p$, and the corresponding interfaces are
denoted by $\mathcal{T}_{c_{i,j}}\subset \bar{I}_{i,j}$, $\mathcal{T}_{c_{j,k}}\subset  \bar{I}_{j,k}$
and $\mathcal{T}_{c_{k,i}}\subset \bar{I}_{k,i}$.

We want to define~$\eta_p$ such that~\eqref{LocalizationTripleJunction} holds true.
Recall from Definition~\ref{def:locRadiusTripleJunction} that $B_{r_{\mathcal{P}}}(\mathcal{T}_p)$
decomposes into six wedges. Three of them, namely the interface wedges $W_{i,j}$, $W_{j,k}$ resp.\ $W_{k,i}$,
contain the interfaces $\mathcal{T}_{c_{i,j}}$, $\mathcal{T}_{c_{j,k}}$ resp.\ $\mathcal{T}_{c_{k,i}}$.
The other three are interpolation wedges denoted by $W_{i}$, $W_{j}$ resp.\ $W_{k}$.

We now have everything in place to move on with the definition of $\eta_p$.
We note that $B_{r_{\mathcal{P}}}(\mathcal{T}_p(t))\cap W_{i,j}(t) \subset 
\mathrm{im}(\Psi_{\mathcal{T}_{c_{i,j}}})$ for all~$t\in [0,T]$ 
due to~\eqref{eq:inlcusionInterfaceWedge} and~\eqref{def:minLocScaleTripleJunction}. 
Therefore, we can begin by setting
\begin{align}\label{DefinitionEtaTripleJunctionWedgeInterface}
\eta_p(x,t) := \zeta_p(x,t)\zeta_{c_{i,j}}(x,t),\quad
t\in [0,T],\,x\in B_{r_{\mathcal{P}}}(\mathcal{T}_p(t))\cap W_{i,j}(t) ,
\end{align}
and analogously on the other interface wedges~$W_{j,k}$ and~$W_{k,i}$. To define~$\eta_p$ on
the interpolation wedges, we use the interpolation parameter built in Lemma~\ref{lemma:interpolation_functions}. 
To clarify the direction of interpolation, i.e., on which boundary of the interpolation wedge the corresponding
interpolation function is equal to one or zero, we make use of the following
notational convention. For the interpolation wedge~$W_i$, say, we denote 
by~$\lambda_{i}^{j,k}$ the interpolation function as built in Lemma~\ref{lemma:interpolation_functions}
and which interpolates from~$j$ to~$k$ in the sense that it is equal 
to one on $(\partial W_{i,j}\cap\partial W_{i})\setminus\mathcal{T}_{p}$ and 
which vanishes on $(\partial W_{k,i}\cap\partial W_{i})\setminus\mathcal{T}_{p}$. We also define
$\lambda_{i}^{k,j}:= 1-\lambda_{i}^{j,k}$ which interpolates on $W_{i}$ in the opposite direction from~$k$ to~$j$.
Analogously, one introduces the interpolation functions on the other interpolation wedges.
We may then define
\begin{equation}
\label{DefinitionEtaTripleJunctionInterpolationWedge}
\begin{aligned}
\eta_p(x,t) &:= \lambda_{i}^{j,k}(x,t)\zeta_p(x,t)\zeta_{c_{i,j}}(x,t)
+(1{-}\lambda_{i}^{j,k})(x,t)\zeta_p(x,t)\zeta_{c_{k,i}}(x,t),
\\&~~~~~
t\in [0,T],\,x\in B_{r_{\mathcal{P}}}(\mathcal{T}_p(t))\cap W_{i}(t),
\end{aligned}
\end{equation}
due to $B_{r_{\mathcal{P}}}(\mathcal{T}_p(t))\cap W_{i}(t) \subset \mathrm{im}(\Psi_{\mathcal{T}_{c_{i,j}}}) 
\cap \mathrm{im}(\Psi_{\mathcal{T}_{c_{k,i}}})$ for all~$t\in [0,T]$, which follows from~\eqref{eq:inclusionInterpolWedge}
and~\eqref{def:minLocScaleTripleJunction}.
We can analogously define~$\eta_p$ on the other two interpolation wedges~$W_{j}$ and~$W_{k}$.
Finally, we define
\begin{align}
\label{DefinitionEtaTripleJunctionAway}
\eta_p(x,t) &:= 0, \quad t\in [0,T],\, x\notin B_{r_{\mathcal{P}}}(\mathcal{T}_p(t)).
\end{align}
We refer to Figure~\ref{fig:cutoff} for an illustration of the construction. 

The localization property~\eqref{LocalizationTripleJunction} is
immediate from the definitions~\eqref{DefinitionEtaTripleJunctionWedgeInterface}--\eqref{DefinitionEtaTripleJunctionAway}
and the property~\eqref{eq:suppTripleJunctionCutoff}, whereas~\eqref{LocalizationTwoTripleJunctions} 
follows from the definition~\eqref{def:minLocScaleTripleJunction}
of the localization scale~$r_{\mathcal{P}}$. Moreover, as a consequence of the
estimates~\eqref{boundslambda1}--\eqref{boundslambda2} for the interpolation parameter, 
the estimates~\eqref{eq:regEstimatesTripleJunctionCutoff}--\eqref{eq:regEstimatesInterfaceCutoff2}
for the auxiliary cutoffs,
the definitions~\eqref{DefinitionEtaTripleJunctionWedgeInterface}--\eqref{DefinitionEtaTripleJunctionAway}
and the trivial estimate~$\dist(\cdot,\bar I_{i,j})\vee \dist(\cdot,\bar I_{j,k}) \vee
\dist(\cdot,\bar I_{k,i}) \leq \dist(\cdot,\mathcal{T}_p)$ throughout~$B_{r_{\mathcal{P}}}(\mathcal{T}_p(t))$
for all~$t\in [0,T]$ (assuming that the phases~$i,j,k\in\{1,\ldots,P\}$ are present at~$\mathcal{T}_p$)
we obtain
\begin{align}
\label{eq:regEstimateTripleJunctionCutoff}
|1{-}\eta_p| &\leq C\big(\bar r_{\mathrm{min}}^{-2}\dist^2(\cdot,\mathcal{T}_p) \wedge 1\big)
&& \text{on } \Rd[2]{\times}[0,T]\setminus\mathcal{T}_p,
\\
\label{eq:regEstimateTripleJunctionCutoff1}
|\nabla^k\eta_p| &\leq C\bar r_{\mathrm{min}}^{-k}
\big(\bar r_{\mathrm{min}}^{-(2-k)}\dist^{2-k}(\cdot,\mathcal{T}_p) \wedge 1\big)
&& \text{on } \Rd[2]{\times}[0,T]\setminus\mathcal{T}_p,\,k\in\{1,2\},
\\
\label{eq:regEstimateTripleJunctionCutoff2}
|\partial_t\eta_p| &\leq C\bar r_{\mathrm{min}}^{-2}
\big(\bar r_{\mathrm{min}}^{-1}\dist(\cdot,\mathcal{T}_p) \wedge 1\big)
&& \text{on } \Rd[2]{\times}[0,T]\setminus\mathcal{T}_p.
\end{align}
These estimates of course imply the asserted bound~\eqref{eq:estimatesDerivEta}
for~$n=p\in\mathcal{P}$. Note also that the error 
estimates~\eqref{GlobalEquationsBoundMinorityPhases}--\eqref{GlobalEquationsBoundTimeDerivMinorityPhases}
are trivially fulfilled by definition~\eqref{def:minLocScaleTripleJunction}
of the localization scale~$r_{\mathcal{P}}$, the property~\eqref{LocalizationTripleJunction}
and the estimate~\eqref{eq:estimatesDerivEta}.

\textit{Step 3: Define $\eta_c$ for $c\in\mathcal{C}$.}
Let $i,j \in \{1,\ldots,P\}$ with $i \neq j$ be such that $\mathcal{T}_c\subset {\bar{I}}_{i,j}$.
If the interface $\mathcal{T}_c$ has no endpoint at a triple junction, i.e., it is a closed loop, we simply set
\begin{align}\label{DefinitionEtaTwoPhaseNoTripleJunction}
\eta_c(x,t) :=\begin{cases}
	 \zeta_{c}(x,t) & \text{ if }
(x,t)\in  \mathrm{im}(\Psi_{\mathcal{T}_c}),\\
 0, &  \text{ else,}
	\end{cases}
\end{align}
where the cutoff~$\zeta_c$ was already defined in~\eqref{DefinitionAuxiliaryCutoffInterface}.

Otherwise, the interface ends in two different triple junctions corresponding to $p, p'\in \mathcal{P}$ with $p\neq p'$.
We will only describe the construction close to $\mathcal{T}_p$, 
as by~\eqref{def:minLocScaleTripleJunction} the triple junctions are separated
on scale~$r_{\mathcal{P}}$ and can thus also be treated separately.
Away from the triple junctions $\mathcal{T}_p$ and $\mathcal{T}_{p'}$, we still define
\begin{align}\label{DefinitionEtaTwoPhaseAwayTripleJunction}
\eta_c(x,t) := \begin{cases}
	\zeta_{c}(x,t) & (x,t) \in
	\mathrm{im}(\Psi_{\mathcal{T}_c})	\setminus \bigcup_{t\in [0,T]}
	\big(B_{r_{\mathcal{P}}}(\mathcal{T}_p(t)) \cup B_{r_{\mathcal{P}}}(\mathcal{T}_{p'}(t))\big) {\times}\{t\}\\
	0 & \text{in } \big(\Rd[2]{\times}[0,T]\setminus \mathrm{im}(\Psi_{\mathcal{T}_c})\big)\setminus\bigcup_{t\in [0,T]} 
	\big(B_{r_{\mathcal{P}}}(\mathcal{T}_p(t)) \cup B_{r_{\mathcal{P}}}(\mathcal{T}_{p'}(t))\big) {\times}\{t\}.
\end{cases}
\end{align}
Near the triple junction, i.e., on $B_{r_{\mathcal{P}}}(\mathcal{T}_p(t))$
for all~$t\in [0,T]$, we aim to modify the definition 
such that~$\eta_c$ is supported within the set $W_{i}\cup W_{j}\cup W_{i,j}$.
To this end, we define
\begin{align}\label{DefinitionEtaTwoPhaseWedgeInterface}
\eta_c(x,t) := \big(1{-}\zeta_p(x,t)\big)\zeta_{c}(x,t),\quad
t\in [0,T],\,x\in B_{r_{\mathcal{P}}}(\mathcal{T}_p(t))\cap W_{i,j}(t),
\end{align}
which is indeed possible in analogy to \eqref{DefinitionEtaTripleJunctionWedgeInterface}, and
where the auxiliary cutoff~$\zeta_p$ was introduced in \eqref{DefinitionAuxiliaryCutoffHalfSpace}.
On the interpolation wedges~$W_i$ resp.\ $W_j$, we again make use of the 
arguments enabling~\eqref{DefinitionEtaTripleJunctionInterpolationWedge} and set
\begin{equation}\label{DefinitionEtaTwoPhaseInterpolationWedge}
\begin{aligned}
\eta_c(x,t) &:= \lambda_{i}^{j,k}(x,t)\big(1{-}\zeta_p(x,t)\big)\zeta_c (x,t),\quad
t\in [0,T],\,x\in B_{r_{\mathcal{P}}}(\mathcal{T}_p(t))\cap W_{i}(t), \\
\eta_c(x,t) &:= \lambda_{j}^{i,k}(x,t)\big(1{-}\zeta_p(x,t)\big)\zeta_c(x,t),\quad
t\in [0,T],\,x\in B_{r_{\mathcal{P}}}(\mathcal{T}_p(t))\cap W_{j}(t), \\
\eta_c(x,t) &:= 0, \qquad
t\in [0,T],\, x\in B_{r_{\mathcal{P}}}(\mathcal{T}_p(t)) \setminus
\big(W_{i,j}(t) \cup W_{i}(t) \cup W_{j}(t)\big),
\end{aligned}
\end{equation}
where $k \in \{1,\ldots,P\}$ corresponds to the third phase present at~$p$.
We refer again to Figure~\ref{fig:cutoff} for an illustration of the construction.

In terms of the required qualitative regularity for~$\eta_c$, the only obstruction
might be the compatibility of~\eqref{DefinitionEtaTwoPhaseAwayTripleJunction}
with~\eqref{DefinitionEtaTwoPhaseInterpolationWedge}.
This is precisely the point where we rely on a suitable choice of the scale~$\delta\in (0,1]$.
As we have seen in the proof of Lemma~\ref{lem:existenceLocRadius}, 
the curve trapping condition of~\eqref{eq:inlcusionInterfaceWedge}
in fact holds on scale~$r_{\mathcal{P}}$ for a wedge strictly contained in the interface wedge~$W_{i,j}$
(e.g., a wedge obtained by angle bisection).
Hence, due to the ball condition of Definition~\ref{DefinitionStrongSolutionTwoPhase},
this improved curve trapping condition, and the
definition~\eqref{def:minLocScaleTripleJunction} of the localization scale~$r_{\mathcal{P}}$ 
we may choose the constant~$\delta\in (0,1]$ small enough, depending only on the surface tensions
associated with~$\bar\Omega$, such that
\begin{align*}
&\overline{\Psi_{\mathcal{T}_c}(\mathcal{T}_c(t){\times}\{t\}
					 {\times}[-\delta r_{\mathcal{P}},\delta r_{\mathcal{P}}])}
\cap \partial B_{r_{\mathcal{P}}}(\mathcal{T}_p(t))
\subset\subset W_{i,j}(t)
\end{align*}
uniformly over all~$t\in [0,T]$. This choice in turn 
ensures continuity of~$\eta_c$, and then based on the 
definitions~\eqref{DefinitionEtaTwoPhaseNoTripleJunction}--\eqref{DefinitionEtaTwoPhaseInterpolationWedge} 
that $\eta_c\in (C^0_tC^2_x\cap C^1_tC^0_x)(\Rd[2]{\times} [0,T]
\setminus \mathcal{T}_{\mathcal{P}})$ since all the constituents of~$\eta_c$
enjoy this regularity (cf.\ \textit{Step~1} for the auxiliary cutoffs
and Lemma~\ref{lemma:interpolation_functions} for the interpolation parameter, respectively).

Next, we may infer the localization property~\eqref{LocalizationTwoPhase} 
from the definitions~\eqref{DefinitionEtaTwoPhaseNoTripleJunction}--\eqref{DefinitionEtaTwoPhaseInterpolationWedge}
and the property~\eqref{eq:suppInterfaceCutoff}.
Moreover, based on the choice~\eqref{def:minLocScale} of the localization scale~$\bar r_{\mathrm{min}}$,
the localization property~\eqref{LocalizationTripleJunction} and the 
definitions~\eqref{DefinitionEtaTwoPhaseWedgeInterface}--\eqref{DefinitionEtaTwoPhaseInterpolationWedge},
one may deduce~\eqref{LocalizationTripleJunctionTwoPhase} and~\eqref{LocalizationTwoInterfaces}.

We move on with the proof of the estimates~\eqref{eq:estimatesDerivEta}
and~\eqref{GlobalEquationsBoundMinorityPhases}--\eqref{GlobalEquationsBoundTimeDerivMinorityPhases}
in terms of~$n=c\in\mathcal{C}$. First, a straightforward application of the 
definitions~\eqref{DefinitionEtaTwoPhaseNoTripleJunction}--\eqref{DefinitionEtaTwoPhaseInterpolationWedge},
the estimates~\eqref{boundslambda1}--\eqref{boundslambda2} for the interpolation parameter, 
and the estimates~\eqref{eq:regEstimatesTripleJunctionCutoff}--\eqref{eq:regEstimatesInterfaceCutoff2}
for the auxiliary cutoffs implies~\eqref{eq:estimatesDerivEta}. Consider then~$c\in\mathcal{C}$
and distinct~$i,j\in\{1,\ldots,P\}$ such that~$\mathcal{T}_c\not\subset \bar I_{i,j}$, i.e.,
either phase~$i$ or phase~$j$ is absent at~$\mathcal{T}_c$. Without loss of generality,
we may assume that there exists~$c'\in\mathcal{C}\setminus\{c\}$ and~$p\in\mathcal{P}$
such that~$\mathcal{T}_{c'}\subset\bar I_{i,j}$, $c\sim p$ and~$c'\sim p$;
and in this regime, it even suffices to restrict to the domain~$B_{r_{\mathcal{P}}}(\mathcal{T}_p(t))$
for all~$t\in [0,T]$. Otherwise, the error
estimates~\eqref{GlobalEquationsBoundMinorityPhases}--\eqref{GlobalEquationsBoundTimeDerivMinorityPhases}
are trivially fulfilled because of~\eqref{LocalizationTwoPhase}, the estimate~\eqref{eq:estimatesDerivEta} and
definition~\eqref{def:minLocScale} of the localization scale~$\bar r_{\mathrm{min}}$.

To prove the error estimates in the remaining regime, we now fully exploit the
fact that a factor of~$1-\zeta_p$ always appears in the definitions~\eqref{DefinitionEtaTwoPhaseWedgeInterface}
and~\eqref{DefinitionEtaTwoPhaseInterpolationWedge}. In particular, by means of
the estimates~\eqref{boundslambda1}--\eqref{boundslambda2} for the interpolation parameter, 
the estimates~\eqref{eq:regEstimatesTripleJunctionCutoff}--\eqref{eq:regEstimatesInterfaceCutoff2}
for the auxiliary cutoffs, and the trivial estimate~$\dist(\cdot,\mathcal{T}_c)\leq \dist(\cdot,\mathcal{T}_p)$
throughout~$B_{r_{\mathcal{P}}}(\mathcal{T}_p(t))$ for all~$t\in [0,T]$, it follows
\begin{align}
\eta_c &\leq C\big(\bar r_{\mathrm{min}}^{-2}\dist^2(\cdot,\mathcal{T}_p) \wedge 1\big)
&& \text{in } B_{r_{\mathcal{P}}}(\mathcal{T}_p(t)),\,t\in [0,T],
\\
|\nabla\eta_c| &\leq C\bar r_{\mathrm{min}}^{-1}
\big(\bar r_{\mathrm{min}}^{-1}\dist(\cdot,\mathcal{T}_p) \wedge 1\big)
&& \text{in } B_{r_{\mathcal{P}}}(\mathcal{T}_p(t)),\,t\in [0,T],
\\
|\partial_t\eta_c| &\leq C\bar r_{\mathrm{min}}^{-2}
\big(\bar r_{\mathrm{min}}^{-1}\dist(\cdot,\mathcal{T}_p) \wedge 1\big)
&& \text{in } B_{r_{\mathcal{P}}}(\mathcal{T}_p(t)),\,t\in [0,T].
\end{align}
These estimates upgrade 
to~\eqref{GlobalEquationsBoundMinorityPhases}--\eqref{GlobalEquationsBoundTimeDerivMinorityPhases}
thanks to the bounds~\eqref{eq:compDistances2} and~\eqref{eq:compDistances1}.

\textit{Step 4: Partition of unity.}
Next, we validate the partition of unity property  
for the family of localization functions $(\eta_1,\ldots,\eta_N)$. 
First of all, it is clear from our definitions 
\eqref{DefinitionEtaTripleJunctionWedgeInterface}--\eqref{DefinitionEtaTwoPhaseInterpolationWedge} 
that~$\eta_n\in [0,1]$ for each topological feature~$n\in\{1,\ldots,N\}$.
Together with the already established localization properties 
\eqref{LocalizationTwoPhase}--\eqref{LocalizationTwoInterfaces} and the definitions
\eqref{DefinitionEtaTripleJunctionWedgeInterface}--\eqref{DefinitionEtaTwoPhaseInterpolationWedge}, 
it also follows that $\sum_{n=1}^N\eta_n\leq 1$ on $\Rd[2]\times [0,T]$
as well as $\sum_{n=1}^N\eta_n\equiv 1$ on the evolving network of interfaces~$\mathcal{I}=\bigcup_{i\neq j}\bar I_{i,j}$.
Hence, we may define the bulk term $\eta_{\mathrm{bulk}}:=1-\sum_{n=1}^N\eta_n\in [0,1]$ and obtain that the
extended family $(\eta_{\mathrm{bulk}},\eta_1,\ldots,\eta_N)$ is indeed a partition of unity on $\Rd[2]\times [0,T]$.

\textit{Step 5: Estimates for the bulk cutoff.}
By the localization properties~\eqref{LocalizationTwoPhase}--\eqref{LocalizationTwoInterfaces}
as well as the choices~\eqref{def:minLocScaleTripleJunction} and~\eqref{def:minLocScale} of the 
localization scales~$r_{\mathcal{P}}$ and~$\bar r_{\mathrm{min}}$,
it suffices to prove~\eqref{eq:upperBoundBulkCutOff}--\eqref{eq:boundDerivBulkCutoff}
in~$\bigcup_{c\in\mathcal{C}}\mathrm{im}_{\bar r_{\mathrm{min}}}(\Psi_{\mathcal{T}_c})
\setminus \bigcup_{p\in\mathcal{P}} \bigcup_{t\in [0,T]}B_{r_{\mathcal{P}}}(\mathcal{T}_p(t)){\times}\{t\}$
and in $\bigcup_{p\in\mathcal{P}}\bigcup_{t\in [0,T]}B_{r_{\mathcal{P}}}(\mathcal{T}_p(t)){\times}\{t\}$,
respectively. We in fact may argue separately for each~$c\in\mathcal{C}$
and each~$p\in\mathcal{P}$. Moreover, for all~$c\in\mathcal{C}$ and all distinct~$i,j\in\{1,\ldots,P\}$
such that~$\mathcal{T}_c\subset\bar I_{i,j}$ it holds
\begin{align}
\label{eq:compNetworkDistanceAwayTripleJunctions}
\dist(\cdot,\bar I_{i,j}) = \dist(\cdot,\mathcal{I})
\quad\text{in } \mathrm{im}_{\bar r_{\mathrm{min}}}(\Psi_{\mathcal{T}_c})
\setminus \bigcup_{p\in\mathcal{P}} \bigcup_{t\in [0,T]}B_{r_{\mathcal{P}}}(\mathcal{T}_p(t)){\times}\{t\},
\end{align}
and similarly for all~$p\in\mathcal{P}$ with present phases~$i,j,k\in\{1,\ldots,P\}$, it holds
\begin{align}
\label{eq:compNetworkDistanceTripleJunctions}
\dist(\cdot,\bar I_{i,j}) \wedge \dist(\cdot,\bar I_{j,k})
\wedge \dist(\cdot,\bar I_{k,i}) = \dist(\cdot,\mathcal{I})
\,\text{ in } \bigcup_{t\in [0,T]}B_{r_{\mathcal{P}}}(\mathcal{T}_p(t)){\times}\{t\}.
\end{align}

First, let~$c\in\mathcal{C}$. Due to the localization
properties~\eqref{LocalizationTwoPhase}--\eqref{LocalizationTwoInterfaces},
the choices~\eqref{def:minLocScaleTripleJunction} and~\eqref{def:minLocScale} of the 
localization scales~$r_{\mathcal{P}}$ and~$\bar r_{\mathrm{min}}$,
as well as the definitions~\eqref{DefinitionEtaTwoPhaseNoTripleJunction}
and~\eqref{DefinitionEtaTwoPhaseAwayTripleJunction} it holds
\begin{align}
\label{eq:bulkCutoffAwayTripleJunctions}
\eta_{\mathrm{bulk}}= 1 {-} \eta_c 
= 1 {-} \zeta_c
\quad\text{in } \mathrm{im}_{\bar r_{\mathrm{min}}}(\Psi_{\mathcal{T}_c})
\setminus \bigcup_{p\in\mathcal{P}} \bigcup_{t\in [0,T]}B_{r_{\mathcal{P}}}(\mathcal{T}_p(t)){\times}\{t\}.
\end{align}
The upper bounds~\eqref{eq:upperBoundBulkCutOff}--\eqref{eq:boundTimeDerivBulkCutoff}
are therefore an immediate consequence 
of the bounds~\eqref{eq:regEstimatesInterfaceCutoff}--\eqref{eq:regEstimatesInterfaceCutoff2}, respectively, together 
with~\eqref{eq:compNetworkDistanceAwayTripleJunctions} and~\eqref{eq:compNetworkDistanceTripleJunctions}.
The coercivity estimate~\eqref{eq:coercivityBulkCutOff} in turn follows
from the choice~\eqref{QuadraticInterfaceCutOff} of the quadratic cutoff profile.

Second, consider~$p\in\mathcal{P}$ and assume that the pairwise distinct phases~$i,j,k\in\{1,\ldots,P\}$
are present at~$\mathcal{T}_p$. Modulo a permutation of the indices, it suffices to
consider the two unique two-phase interfaces~$\mathcal{T}_{c_{i,j}}\subset\bar I_{i,j}$ 
and~$\mathcal{T}_{c_{k,i}}\subset\bar I_{k,i}$
so that~$c_{i,j}\sim p$ and~$c_{k,i}\sim p$, and then to prove the desired estimates on the interface wedge~$W_{i,j}$
and the interpolation wedge~$W_{i}$. In this regime, due to the localization
properties~\eqref{LocalizationTwoPhase}--\eqref{LocalizationTwoInterfaces},
the choices~\eqref{def:minLocScaleTripleJunction} and~\eqref{def:minLocScale} of the 
localization scales~$r_{\mathcal{P}}$ and~$\bar r_{\mathrm{min}}$,
as well as the 
definitions~\eqref{DefinitionEtaTripleJunctionWedgeInterface}--\eqref{DefinitionEtaTripleJunctionInterpolationWedge}
resp.\ \eqref{DefinitionEtaTwoPhaseWedgeInterface}--\eqref{DefinitionEtaTwoPhaseInterpolationWedge},
it holds
\begin{align}
\label{eq:bulkCutoffTripleJunctionsInterfaceWedge}
\eta_{\mathrm{bulk}} &= 1 {-} \eta_{c_{i,j}} {-} \eta_{p}
= 1 {-} \zeta_{c_{i,j}}
&&\text{in } B_{r_{\mathcal{P}}}(\mathcal{T}_p(t)) \cap W_{i,j}(t),
\\
\label{eq:bulkCutoffTripleJunctionsInterpolationWedge}
\eta_{\mathrm{bulk}} &= 1 {-} \eta_{c_{i,j}} {-} \eta_{c_{k,i}} {-} \eta_{p}
\\ \nonumber
&= \lambda^{j,k}_{i}(1 {-} \zeta_{c_{i,j}}) + (1{-}\lambda^{j,k}_{i})(1 {-} \zeta_{c_{k,i}})
&&\text{in } B_{r_{\mathcal{P}}}(\mathcal{T}_p(t)) \cap W_{i}(t)
\end{align}
for all~$t\in [0,T]$.
The upper bounds~\eqref{eq:upperBoundBulkCutOff}--\eqref{eq:boundTimeDerivBulkCutoff}
therefore follow from the estimates~\eqref{eq:regEstimatesInterfaceCutoff}--\eqref{eq:regEstimatesInterfaceCutoff2}, 
the bound~\eqref{boundslambda1}
for the interpolation parameter, the estimates~\eqref{eq:compDistances1} and~\eqref{eq:compDistances3},
as well as the estimates~\eqref{eq:compNetworkDistanceAwayTripleJunctions} 
and~\eqref{eq:compNetworkDistanceTripleJunctions}. The coercivity estimate~\eqref{eq:coercivityBulkCutOff} in turn
is again implied by~\eqref{QuadraticInterfaceCutOff}.
\end{proof}

\begin{figure}
\includegraphics[scale=0.12]{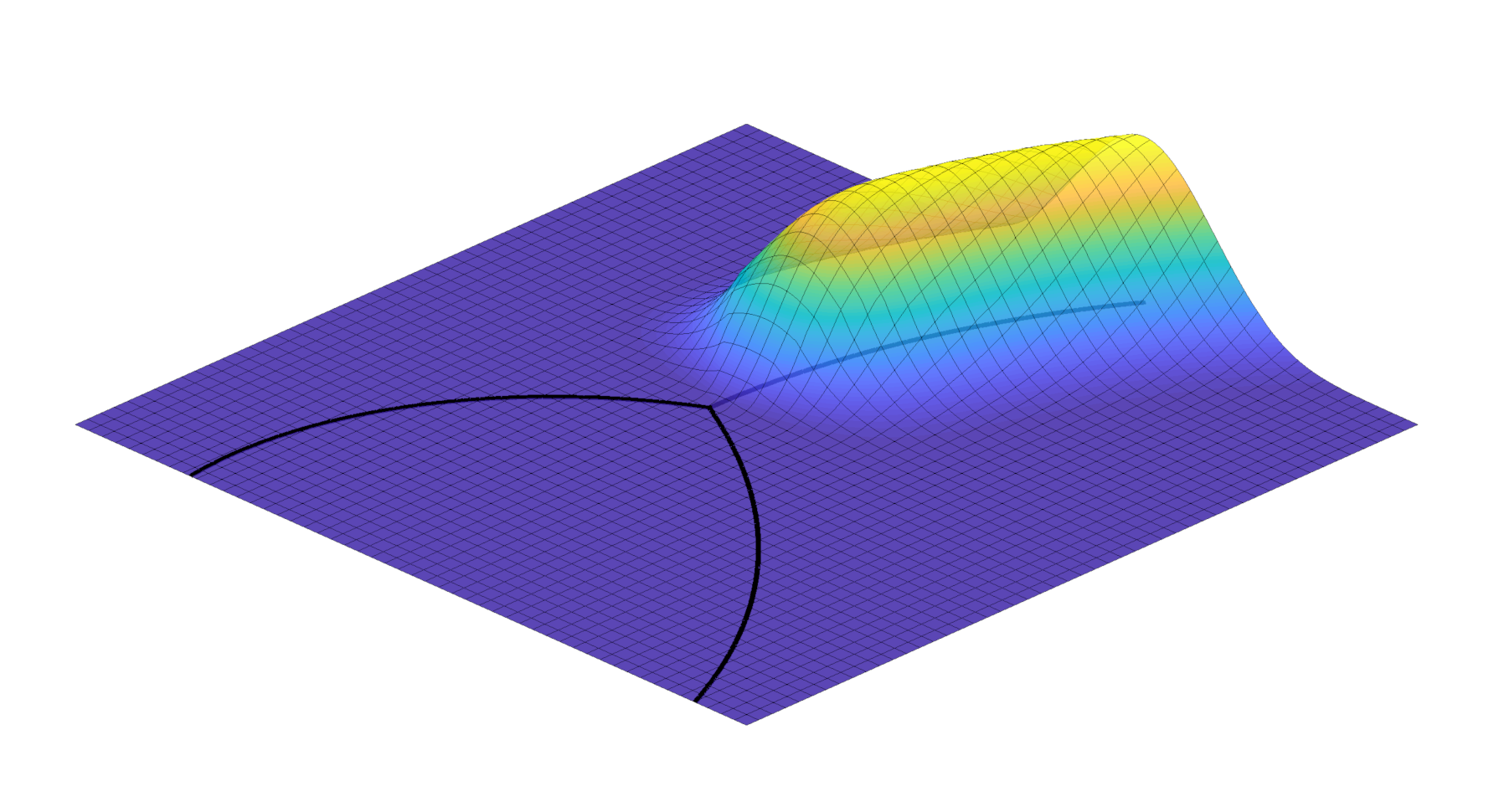}
\,
\includegraphics[scale=0.12]{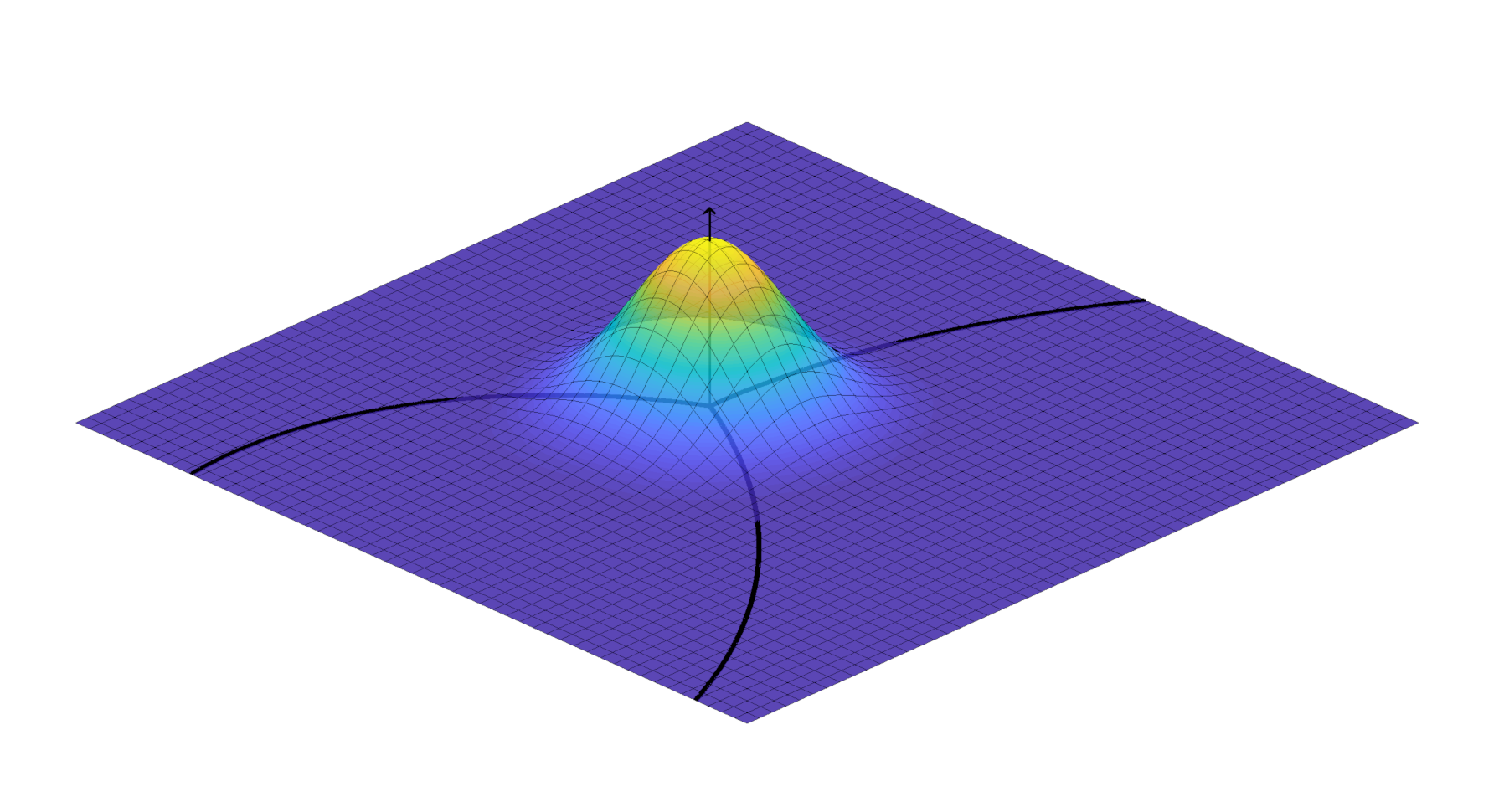}
\\
\includegraphics[scale=0.12]{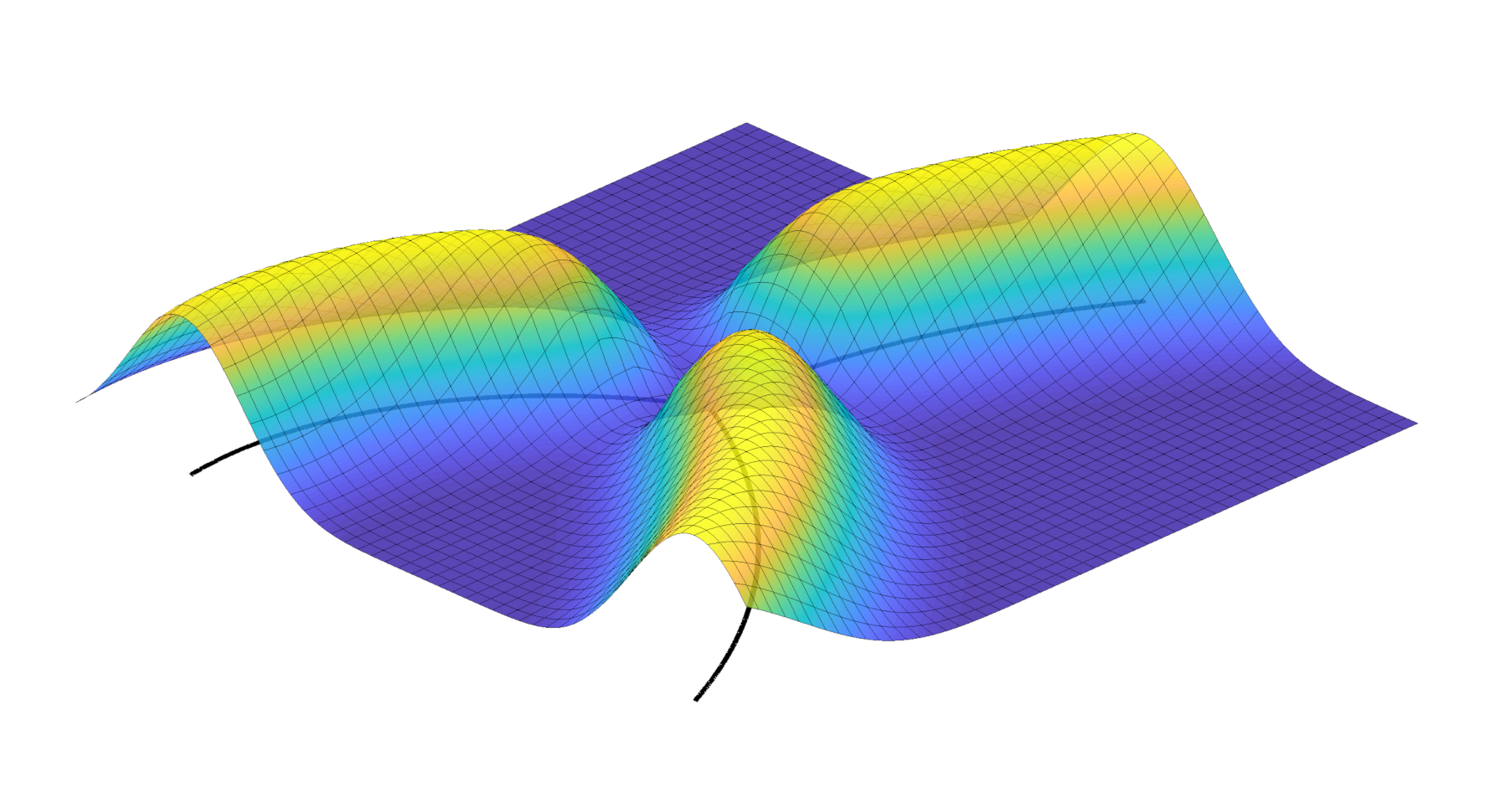}
\,
\includegraphics[scale=0.12]{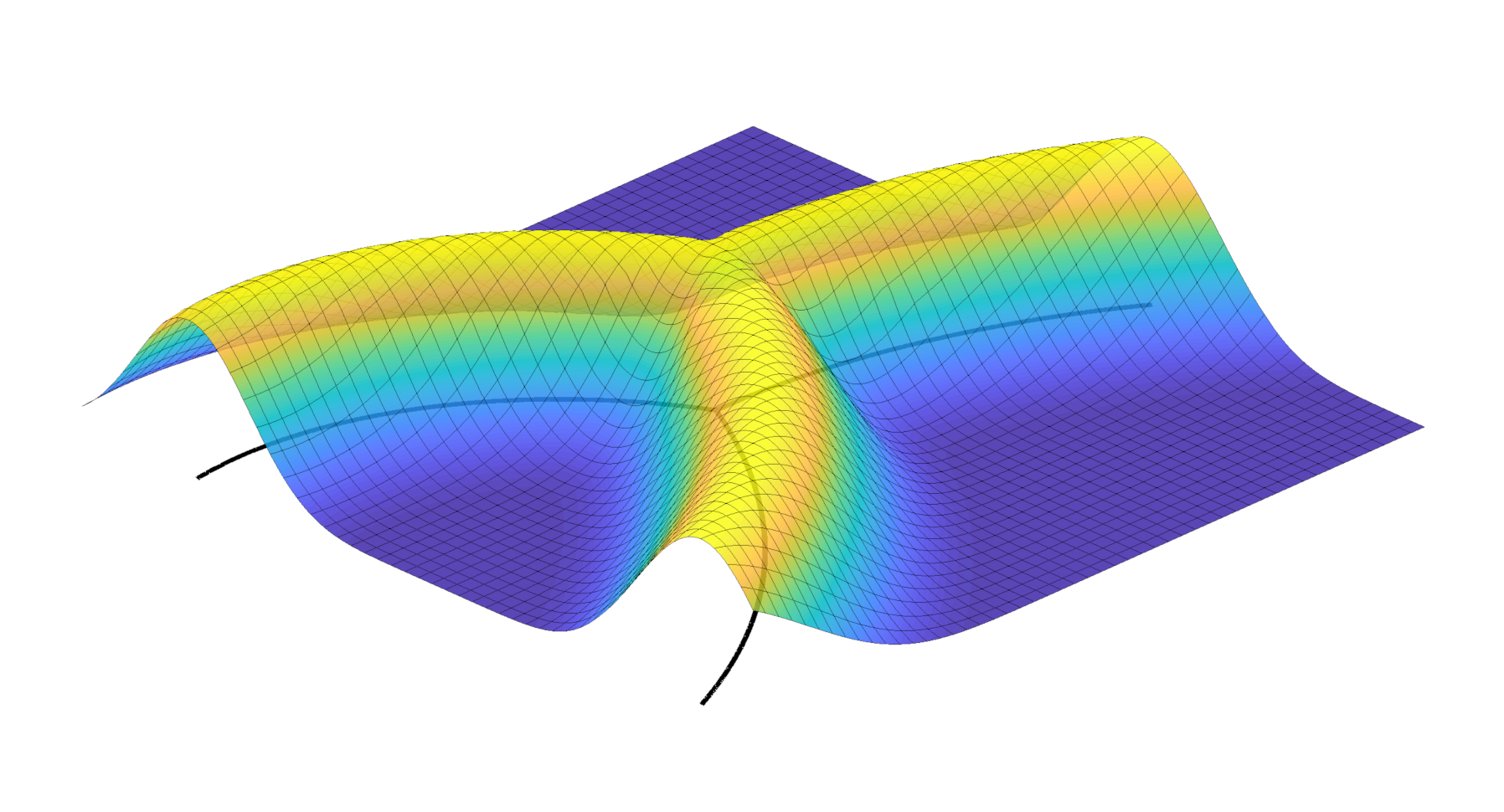}
\caption{The different functions $\eta_n$ for $n \in \mathcal{C}\cup \mathcal{P}$ in the partition of unity at a single triple junction $\mathcal{T}_p$ for $p \in \mathcal{P}$: The function $\eta_c$ for a single two-phase interface $c\in \mathcal{C}$ ending at the triple junction (top left), the function $\eta_p$ for the triple junction itself (top right), the sum of all two-phase localization functions at a triple junction (bottom left), and the sum of all localization functions $\sum_n \eta_n$ 
(bottom right). Observe that the sum of all localization functions equals $1$ on the interfaces in the strong solution, but decays quadratically away from them.}
\label{fig:cutoff}
\end{figure}

\subsection{Global construction of the calibration}\label{SectionGlobalDefinitions}
In this section, we glue together the local constructions to define the global 
extensions~$\xi_{i,j}$ and~$B$ of the normal vector fields and velocity field, respectively.

The idea for the construction of the vector fields~$\xi_{i,j}$ for $i,j \in \{1,\ldots, P\}$ with $i \neq j$ is as follows.
First, we provide the definition of local vector fields~$\xi^n_{i,j}$ for~$n \in \{1,\ldots,N\}$ 
in the support of the associated localization function~$\eta_n$ for each topological 
feature~$\mathcal{T}_n$. If both phases~$i$ and~$j$
are \emph{present} at~$\mathcal{T}_n$, 
we define~$\xi^n_{i,j}$ by means of the local constructions provided in Section~\ref{SectionLocalConstructionsTwoPhase} 
for the model problem of a smooth manifold and Section~\ref{SectionLocalConstructionsTriod} for the model problem 
of a triple junction. This, however, leaves open the question of the definition of the vector fields~$\xi_{i,j}^n$
for phases \emph{absent} at~$\mathcal{T}_n$. It turns out that this issue is related to 
the conditions of global stability between the phases.
In particular, we would like to ensure that at a given topological feature~$\mathcal{T}_n$, 
our relative entropy functional provides a length control for those interfaces which are not present at~$\mathcal{T}_n$. 
For this purpose, we rely on the stability condition for an admissible matrix of surface tensions in the sense 
of Definition~\ref{DefinitionAdmissibleSurfaceTensions} \emph{iii)}.

\begin{lemma}\label{LemmaLocalConstructionsNetwork}
Let $d=2$ and $P \in \mathbb{N}$, $P\geq 2$.
Let $\bar\Omega=(\bar{\Omega}_1,\ldots,\bar{\Omega}_P)$ 
be a strong solution to multiphase mean curvature flow in the sense of Definition~\ref{DefinitionStrongSolution}.
Let $(\eta_{\mathrm{bulk}},\eta_1,\ldots,\eta_N)$ be a partition of unity as constructed in 
Lemma~\ref{LemmaPartitionOfUnity}. In particular, let~$\bar r_{\mathrm{min}}\in (0,1]$ 
be the localization scale defined by~\eqref{def:minLocScale},
and~$\mathcal{T}_{\mathcal{P}}:=\bigcup_{p\in\mathcal{P}}\mathcal{T}_p$. 
Let $i,j\in\{1,\ldots,P\}$ be distinct phases 
and let $n\in\{1,\ldots,N\}$ correspond to a topological feature.
Given
\begin{align}\label{def_support_space_time}
	\mathcal{U}_n := \bigcup_{t\in [0,T]} \{x \in \Rd[2]: \eta_n(x,t) >0 \} \times \{t\}
\end{align}
there exist continuous vector fields
\begin{align*}
\xi^n_{i,j}\colon \mathcal{U}_n &\to\Rd[2], \\
\xi^n_{i}\colon \mathcal{U}_n&\to\Rd[2],
\end{align*}
satisfying the following properties:
\begin{itemize}[leftmargin=0.7cm]
\item[i)] It holds $\xi^n_{i,j},\xi^n_{i}\in \big(C^0_tC^2_x\cap C^1_tC^0_x\big)
				  (\overline{\mathcal{U}_n}\setminus\mathcal{T}_{\mathcal{P}})$,
					and there exists~$C>0$, which may depend on~$\bar\Omega$ but not 
					on~$\bar r_{\mathrm{min}}$, such that 
					throughout~$\mathcal{U}_n\setminus\mathcal{T}_{\mathcal{P}}$
					\begin{align}
					\label{eq:regularityEstimatesLocalXi}
					\max_{k=0,1,2} \bar r_{\mathrm{min}}^k|\nabla^k\xi^n_{i,j}|
					+ \bar r_{\mathrm{min}}^2 |\partial_t\xi^n_{i,j}| \leq C.
					\end{align}
\item[ii)] On~$\mathcal{U}_n$ we have $\xi^n_{i,j}=-\xi^n_{j,i},\,|\xi^n_{i,j}|\leq 1$ as well as 
					 \begin{align}\label{CalibrationLocalConstruction}
					 \sigma_{i,j}\xi^n_{i,j}=\xi_i^n-\xi_j^n.
					 \end{align}
\item[iii)] If the phases~$i$ and~$j$ are both present at the topological feature~$\mathcal{T}_n$, 
						then~$\xi^n_{i,j}$ coincides on~$\mathcal{U}_n$ with the explicit two-phase construction from 
						Lemma~\ref{LemmaBoundsLocalConstructionsTwoPhase} in case of~$n\in \mathcal{C}$, 
						respectively the triple junction construction from Proposition~\ref{prop:xi_triple_junction} 
						in case of~$n\in \mathcal{P}$. 
\item[iv)] There exists a constant~$b=b(\sigma)\in (0,1)$, depending only on the surface tension
					 matrix associated with the strong solution~$\bar\Omega$, with the property that
					 if either phase~$i$ or~$j$ is absent at the topological feature~$\mathcal{T}_n$, 
					 then throughout~$\mathcal{U}_n$ we have
					 \begin{align}\label{CoercivityMinorityPhase}
					 |\xi^n_{i,j}|\leq b<1.
					 \end{align}
\item[v)] In case of equal surface tensions $\sigma_{i,j}=\sigma_{j,k}=\sigma_{k,i}$, we have $\xi_k^n \cdot \xi_{i,j}^n=0$.
\end{itemize}
\end{lemma}

\begin{proof}
The proof consists of two parts distinguishing between the topological
features present in the network of interfaces of the strong solution.

\textit{Step 1: Consider the case $n=c \in \mathcal{C}$.}
We first assume that both phases~$i$ and~$j$ are present at the two-phase
interface $\mathcal{T}_{c}$, i.e., $\mathcal{T}_c\subset {\bar{I}}_{i,j}$. We then define the
vector field~$\smash{\xi_{i,j}^c}$ on~$\smash{\mathcal{U}_c}$ as in Lemma~\ref{LemmaBoundsLocalConstructionsTwoPhase}.
Note that by the localization property~\eqref{LocalizationTwoPhase} 
and the definition~\eqref{def:minLocScale},
we are indeed in the setting of Section~\ref{SectionLocalConstructionsTwoPhase}.
In particular, $\xi^c_{i,j}=-\xi^c_{j,i}$ and~$\xi^c_{i,j}$ coincides with~$\bar{\vec{n}}_{i,j}$ 
on $\supp\eta_c\cap {\bar{I}}_{i,j}$.
Furthermore, let us define the vector fields $\xi^c_{i}$ and $\xi^c_{j}$
as $\xi_i^c:=\smash{\frac{\sigma_{i,j}}{2} \xi_{i,j}^c}$ resp.\ as
$\xi_j^c:=\smash{\frac{\sigma_{i,j}}{2} \xi_{j,i}^c}$. This ensures that the 
desired formula~\eqref{CalibrationLocalConstruction} is indeed satisfied. 
Moreover, the regularity estimate~\eqref{eq:regularityEstimatesLocalXi}
follows from~\eqref{eq:estimatesTwoPhaseXi} and~\eqref{eq:estimatesTwoPhaseXiTimeDeriv}.

Now, let us assume that at least one of the phases $i$ or $j$ is absent at the
two-phase interface $\mathcal{T}_{c}$. To be specific, we fix $m,l\in\{1,\ldots,P\}$
with $m\neq l$ such that $\mathcal{T}_{c}\subset {\bar{I}}_{m,l}$. The idea now is to
first define vector fields $\xi^c_{i}$ and $\xi^c_{j}$ and then define $\xi^c_{i,j}$
by means of \eqref{CalibrationLocalConstruction} such that \eqref{CoercivityMinorityPhase}
holds true. To this end, we rely on the strict triangle inequality \eqref{TriangleInequalitySurfaceTensions}
for the given matrix of surface tensions, a direct 
consequence of our stability assumption Definition~\ref{DefinitionAdmissibleSurfaceTensions}~\emph{iii)}. 
Let us define
\begin{align*}
\xi^c_i &:= \frac{1}{2}(\sigma_{l,i}\xi^c_{m,l}+\sigma_{m,i}\xi^c_{l,m}),
\end{align*}
and analogously for $\xi^c_{j}$. Note that this is indeed well-defined since we have already provided a definition
of the vector fields $\xi^c_{m,l}=-\xi^c_{l,m}$ on the right-hand side as 
they are assumed to be associated to phases present at $\mathcal{T}_c$. 
This definition is also consistent with the previous one because of the convention $\sigma_{l,l}=\sigma_{m,m}=0$. 
We may then compute plugging in the definitions
\begin{align*}
\xi^c_{i,j}:=\frac{\xi^c_i-\xi^c_j}{\sigma_{i,j}} = 
\frac{1}{2}\Big(\frac{\sigma_{l,i}-\sigma_{l,j}}{\sigma_{i,j}}\xi^c_{m,l}
+\frac{\sigma_{m,i}-\sigma_{m,j}}{\sigma_{i,j}}\xi^c_{l,m}\Big).
\end{align*}
Hence, \eqref{CoercivityMinorityPhase} holds true because we have 
$|\frac{\sigma_{l,i}-\sigma_{l,j}}{\sigma_{i,j}}|<1$ and  
$|\frac{\sigma_{m,i}-\sigma_{m,j}}{\sigma_{i,j}}|<1$ due to 
the strict triangle inequality \eqref{TriangleInequalitySurfaceTensions},
whereas~\eqref{eq:regularityEstimatesLocalXi} follows
because~$\xi^c_{m,l}=-\xi^c_{l,m}$ is already subject to the same bound.

We note that in case of equal surface tensions $\sigma_{i,j}=\sigma_{j,k}=\sigma_{k,i}$, these definitions ensure that
\begin{align}
\label{OrthogonalityTwoPhase}
\xi_k^c \cdot \xi_{i,j}^c =0
\end{align}
holds for $\mathcal{T}_c\subset\bar I_{i,j}$.

\textit{Step 2: Consider the case $n=p \in \mathcal{P}$.}
Again, we first assume that both phases $i$ and $j$ are present
at the triple junction $\mathcal{T}_{p}$, i.e., a connected component of the 
interface ${\bar{I}}_{i,j}$ has an endpoint at $\mathcal{T}_{p}$. Note that by the 
localization property~\eqref{LocalizationTripleJunction} and the definition~\eqref{def:minLocScaleTripleJunction},
we may apply Proposition~\ref{prop:xi_triple_junction}.
Therein, we constructed a vector field in the support of $\eta_p$ we now call $\xi^p_{i,j}$.
In particular, $\xi^p_{i,j}=-\xi^p_{j,i}$ and $\xi^p_{i,j}$ coincides with $\bar{\vec{n}}_{i,j}$ 
on $\supp\eta_p\cap {\bar{I}}_{i,j}$.

Assume now that $k\in\{1,\ldots,P\}$ is the third phase being present 
at the triple junction $\mathcal{T}_{p}$. By construction, we have
$\sigma_{i,j}\xi^{p}_{i,j}+\sigma_{j,k}\xi^{p}_{j,k}+\sigma_{k,i}\xi^{p}_{k,i}=0$
on the support of $\eta_p$. Defining then the vector field $\xi^p_{i}$
as $\xi^p_{i}:=\frac{1}{3}(\sigma_{i,j}\xi^p_{i,j}+\sigma_{i,k}\xi^p_{i,k})$,
and analogously for~$\xi^p_{j}$ and~$\xi^p_{k}$, we indeed obtain \eqref{CalibrationLocalConstruction}.
The remaining claimed properties follow from Proposition~\ref{prop:xi_triple_junction}.

We note that for equal surface tensions $\sigma_{i,j}=\sigma_{j,k}=\sigma_{k,i}$, these definitions imply directly
\begin{align}
\label{OrthogonalityThreePhase}
\xi_k^p \cdot \xi_{i,j}^p =0.
\end{align}

In order to define~$\xi^p_{i,j}$ if at least one of the phases~$i$ or~$j$ 
is absent at the triple junction, we define the vector fields~$\xi_i^p$ and~$\xi_j^p$ 
as time-independent affine combinations of the previously defined vector fields 
using the stability condition Definition~\ref{DefinitionAdmissibleSurfaceTensions}~\emph{iii)}.

To be specific, we assume that the distinct phases $k,l,m\in\{1,\ldots,P\}$ are present at~$\mathcal{T}_p$.
We then employ the stability condition Definition~\ref{DefinitionAdmissibleSurfaceTensions}~\emph{iii)}, 
that is, there exists a non-degenerate $(P-1)$-simplex $(q_1,\ldots, q_P)$ in~$\Rd[P-1]$ 
such that $\sigma_{i',j'} = | q_{i'}-q_{j'}|$ for all $i',j' \in \{1,\ldots,P\}$. In particular, 
the triangle $(q_k,q_l,q_m)$ is non-degenerate and spans a plane~$E$ in~$\Rd[P-1]$, 
which we may isometrically identify with~$\Rd[2]$ via an affine map~$\phi\colon E \to \Rd[2]$. 
We furthermore denote the orthogonal projection onto~$E$ by~$\pi$.
See Figure~\ref{fig:embedding} for a sketch.

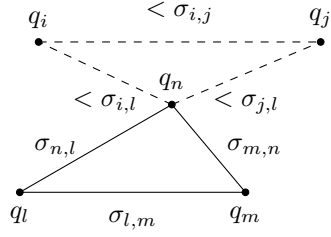
\begin{figure}
 	\begin{tikzpicture}[scale=5]
			\tikzmath{\l = .6;};
			\coordinate (A) at (0,0);
			\coordinate (B) at (30:\l);
			\draw [name path=A--B,color=white] (A) -- (B);
			\coordinate (C) at (0:\l);
			\coordinate (D) at ($(0:\l)+(130:\l)$);
			\draw [name path=C--D,color=white] (C) -- (D);
			\coordinate (E) at (intersection of A--B and C--D);
			
		  	\draw (0,0) -- node[label= below:{$\sigma_{l,m}$}] {} (0:\l);
		  	\draw (0:\l) --  node[label= right:{$\sigma_{m,n}$}] {}  (E);
		  	\draw (E) --node[label= left:{$\sigma_{n,l}$}] {}  (0,0);
		  	
		  	\filldraw (0,0) circle[radius=.25pt];
			\node[below, outer sep=2pt] (0,0) {$q_l$};
			\filldraw (\l,0) circle[radius=.25pt];
			\node[below, outer sep=2pt] at (\l,0) {$q_m$};
			\filldraw (E) circle[radius=.25pt];
			\node[above, outer sep=2pt] at (E) {$q_n$};
			
			\filldraw (.05,.4) circle[radius=.25pt];
			\node[above, outer sep=2pt] at (.05,.4) {$q_i$};
			\draw[dashed] (E) --node[label= below:{$<\sigma_{i,l}$}] {} (.05,.4);
			\filldraw (.8,.4) circle[radius=.25pt];
			\node[above, outer sep=2pt] at (.8,.4) {$q_j$};
			\draw[dashed] (E) --node[label= below:{$<\sigma_{j,l}$}] {} (.8,.4);
			\draw[dashed] (.05,.4) --node[label= above:{$<\sigma_{i,j}$}] {} (.8,.4);
		\end{tikzpicture}
		\caption{Sketch of the $l^2$-embedding of~$\sigma$ in the case that~$i$ and~$j$ 
		correspond to absent phases, projected into the plane $E$ containing $q_k$, $q_l$ and $q_m$. \label{fig:embedding}}
\end{figure}

In order to prepare the proof of the coercivity condition~\eqref{CoercivityMinorityPhase} we claim
\begin{align}\label{claim_chortness}
	\left|\pi q_{i} - \pi q_{j}\right| < b \sigma_{i,j}
\end{align}
for some $b \in (0,1)$, which we prove by considering two cases:

If exactly one of the two indices, say, $j$ corresponds to a phase being present at~$\mathcal{T}_p$, then $ \pi q_j = q_j$. 
Note that due to the simplex $(q_1,\ldots,q_P)$ being non-degenerate, also the $3$-simplex $(q_k,q_l,q_m,q_i)$ 
is non-degenerate, so that~$q_i$ cannot lie in the plane~$E$.
Therefore, we have $ \pi q_i \neq q_i$, so that
\begin{align*}
	 | \pi q_i -  \pi q_j|^2 
	< |  q_i -  \pi q_j |^2+ | \pi q_j - q_j|^2 
	= |q_i - q_j|^2 = \sigma_{i,j}^2,
\end{align*}
the latter by Definition~\ref{DefinitionAdmissibleSurfaceTensions}~\emph{iii)}. 
This implies the strict inequality in this subcase.
 
If both~$i$ and~$j$ correspond to phases being absent at~$\mathcal{T}_p$, 
we consider the orthogonal projection on the three dimensional affine
space~$\tilde E$ spanned by $(q_j,q_k,q_l,q_m)$, as well as the orthogonal 
projection~$\tilde \pi$ onto~$\tilde E$. As the $4$-simplex $(q_i,q_j,q_k,q_l,q_m)$ is non-degenerate, 
we have $\tilde \pi q_i \neq q_i$ and $\pi q_i = \pi \circ \tilde \pi q_i$.
Therefore, we have
\begin{align*}
	| \pi q_i -  \pi q_j|^2 \leq | q_i - \tilde \pi q_j  |^2 
	< | q_i - \tilde \pi q_j  |^2 + | \tilde \pi q_j - q_i|^2 
	= |q_i - q_j|^2 = \sigma_{i,j}^2,
\end{align*}
allowing us to conclude as in the previous case.

We now proceed with the definition of $\xi^p_{i'}$ for all $i'\in \{1,\ldots,P\}$.
As $(q_k,q_l,q_m)$ is non-degenerate and $\phi$ is isometric, also the 
triangle $(\phi q_k, \phi q_l, \phi q_m)$ is non-degenerate.
Therefore, there exist unique $\hat \lambda^{i'}_k,\hat \lambda^{i'}_l,\hat \lambda^{i'}_m \in \Rd[]$ 
such that $\hat \lambda^{i'}_k+\hat \lambda^{i'}_l + \hat \lambda^{i'}_m =1$ and
\begin{align*}
	\phi\circ \pi q_{i'} = \hat \lambda^{i'}_k \phi q_{k} + \hat \lambda^{i'}_l  \phi q_l + \hat \lambda^{i'}_m  \phi q_m.
\end{align*}
We may then on $\mathcal{U}_p$ define
\begin{align}\label{affine_definition}
	\xi^p_{i'} := \hat \lambda^{i'}_k \xi^p_{k} + \hat \lambda^{i'}_l  \xi^p_{l} + \hat \lambda^{i'}_m \xi^p_{m},
\end{align}
as well as $\xi^p_{i,j}$ and $\xi^p_{j,i}$ via \eqref{CalibrationLocalConstruction}.
By uniqueness of the coefficient, these definitions are consistent with the previous ones.

The claimed properties~\emph{i)} and~\emph{iii)} immediately follow from 
Proposition~\ref{prop:xi_triple_junction}. The identity~$\xi^p_{i,j}=-\xi^p_{j,i}$, 
\eqref{CalibrationLocalConstruction}, and~\eqref{eq:regularityEstimatesLocalXi} 
are straightforward consequences of the definition and again Proposition~\ref{prop:xi_triple_junction}. 
Therefore, we only have to prove~\eqref{CoercivityMinorityPhase} in order to get $|\xi^p_{i,j}| \leq 1$. 
To this end, we argue as follows:

Again by non-degeneracy of $(\phi q_k, \phi q_l, \phi q_m)$ for 
all $(x,t) \in \mathcal{U}_p$ there exist a unique matrix  $A(x,t) \in \Rd[2\times2]$ and $y(x,t) \in \Rd[2]$ such that
\begin{align}\label{rotation_plus_shift}
	\xi^p_{i'}(x,t) = A(x,t) \phi\circ \pi q_{i'} + y(x,t).
\end{align}
for all $i'=k,l,m$.
As \eqref{affine_definition} constitutes an affine combination, this equality even holds for all $i'\in \{1,\ldots,P\}$.
Furthermore, we have that the matrix $A$ is orthogonal, i.e., $A(x,t) \in \mathcal{O}$ 
for all $(x,t) \in \mathcal{U}_p$, since by Proposition~\ref{prop:xi_triple_junction}~\emph{i)} we have
\begin{align*}
	|A ( \phi\circ \pi q_{i'} -\phi\circ \pi q_{j'})| = |\xi^p_{i'} -\xi^p_{j'}| 
	= \sigma_{i',j'} |\xi^p_{i',j'}| = \sigma_{i',j'} = | \phi\circ \pi q_{i'} -\phi\circ \pi q_{j'}|
\end{align*}
and the triangle $(\phi q_k, \phi q_l, \phi q_m)$ is non-degenerate.
As $A$ is orthogonal and $\phi$ is isometric, we have by~\eqref{claim_chortness} that
\begin{align}
	|\xi^p_{i} - \xi^p_{i}| = |A ( \phi\circ \pi q_{i} -\phi\circ \pi q_{j})|
	= |\pi q_{i} - \pi q_{j}| < b |q_i - q_j| = b \sigma_{ij},
\end{align}
which together with~\eqref{CalibrationLocalConstruction} gives~\emph{iv)}.
\end{proof}

\begin{figure}
\includegraphics[scale=0.15]{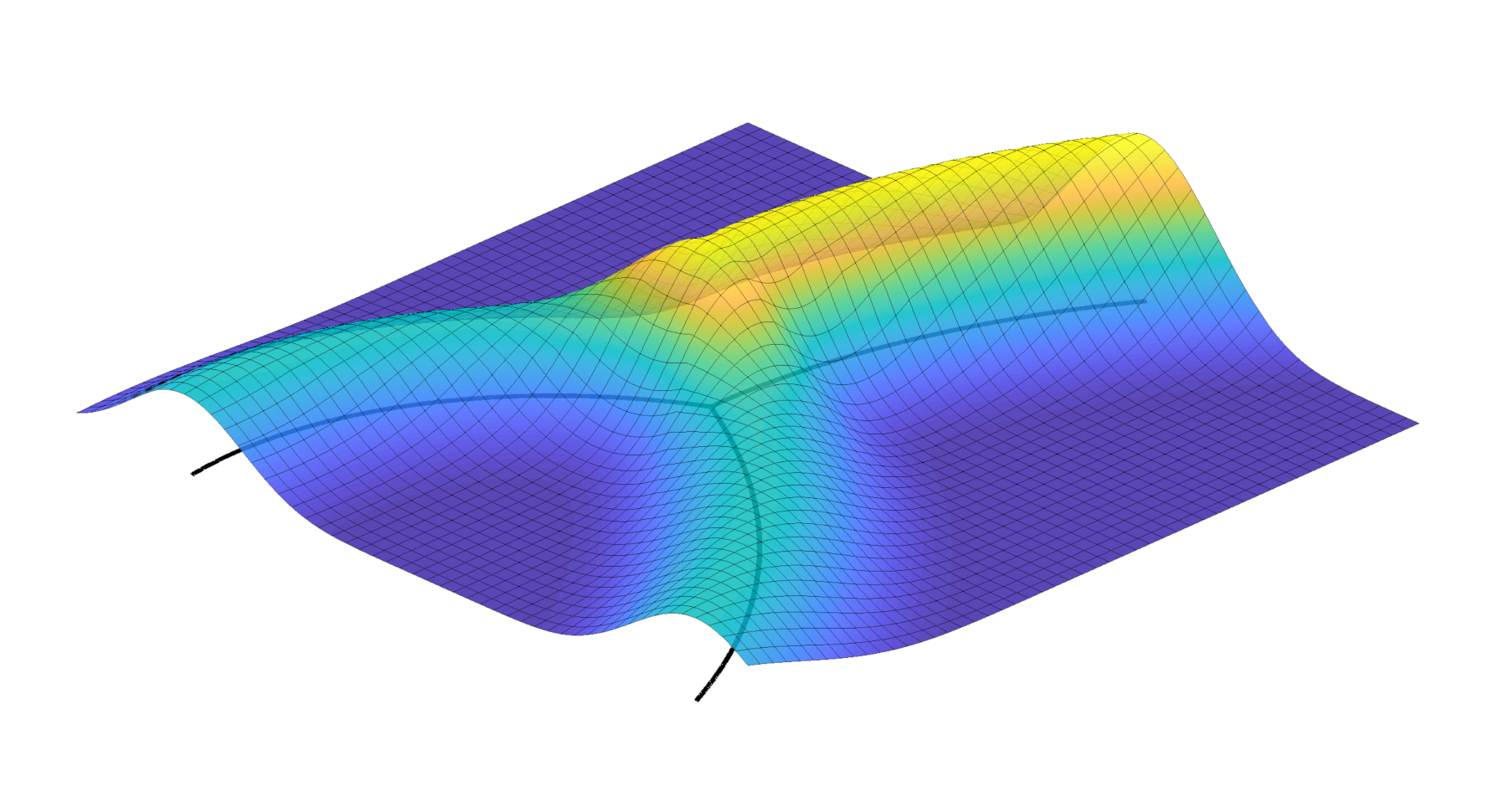}
\caption{Plot of the length of the vector field $\xi_{i,j}$. Observe that the length is~$1$ 
on the interface ${\bar{I}}_{i,j}$ of the strong solution, but decays quadratically away from 
it to a value strictly smaller than~$1$, even on the other interfaces ${\bar{I}}_{i,p}$ and 
${\bar{I}}_{j,p}$. As a consequence, the integral $\int_{I_{i,j}} 1-\vec{n}_{i,j}\cdot \xi_{i,j} \,\mathrm{d}\mathcal{H}^{1}$
 provides an upper bound for the interface error functional 
$c\smash{\int_{I_{i,j}} \min\{\dist^2(x,{\bar{I}}_{i,j}),\,1\} \,\mathrm{d}\mathcal{H}^{1}}$.}
\label{fig:lengthXi}
\end{figure}

Now we may define the global extensions $\xi_{i,j}=-\xi_{j,i}$ of the unit normal 
vector fields between the phases~$i$ and~$j$ in the strong solution by gluing the local definitions by 
means of the partition of unity $(\eta_{\mathrm{bulk}},\eta_1,\ldots,\eta_N)$ from Lemma~\ref{LemmaPartitionOfUnity}. 

\begin{construction}
\label{DefinitionGlobalXi}
Let $d=2$ and $P \in \mathbb{N}$, $P\geq 2$.
Let $\bar\Omega=(\bar{\Omega}_1,\ldots,\bar{\Omega}_P)$ 
be a strong solution to multiphase mean curvature flow in the sense of Definition~\ref{DefinitionStrongSolution}.
Let $(\eta_{\mathrm{bulk}},\eta_1,\ldots,\eta_N)$ be a partition of unity as constructed in 
Lemma~\ref{LemmaPartitionOfUnity}.
Let $i,j\in\{1,\ldots,P\}$ with $i\neq j$, and let for all $n \in \{1,\ldots, N\}$ 
the local vector fields $\xi^n_{i,j}=-\xi^n_{j,i}$
be given as in Lemma~\ref{LemmaLocalConstructionsNetwork}.
We then define
\begin{align}\label{GlobalXi}
\xi_{i,j}(x,t) &:= \sum_{n=1}^N\eta_n(x,t) \xi_{i,j}^n(x,t)
\end{align}
for all $x\in \Rd[2]$ and all $t\in [0,T]$.
\end{construction}

We proceed with the derivation of the coercivity condition provided by
the length of the vector fields $\xi_{i,j}$ as defined by 
Construction~\ref{DefinitionGlobalXi}. For an illustration we
refer to Figure~\ref{fig:lengthXi}.

\begin{lemma}
\label{lem:coercivityLenghtXi}
Let $d=2$ and $P \in \mathbb{N}$, $P\geq 2$.
Let $\bar\Omega=(\bar{\Omega}_1,\ldots,\bar{\Omega}_P)$ 
be a strong solution to multiphase mean curvature flow in the sense of Definition~\ref{DefinitionStrongSolution}.
Let $(\eta_{\mathrm{bulk}},\eta_1,\ldots,\eta_N)$ be a partition of unity as constructed in 
Lemma~\ref{LemmaPartitionOfUnity}. In particular, let~$\bar r_{\mathrm{min}}\in (0,1]$ 
be the localization scale defined by~\eqref{def:minLocScale}.
Let~$\xi_{i,j}$ for $i,j \in \{1,\ldots,P\}$ with $i\neq j$ be the family of vector
fields provided by Construction~\ref{DefinitionGlobalXi}. Then there exists 
a constant $C\geq 1$, depending only on~$\bar\Omega$ but not on~$\bar r_{\mathrm{min}}$,
such that for all $i,j\in\{1,\ldots,P\}$ with $i\neq j$ it holds
\begin{align}
\label{eq:coercivityLengthXi}
\frac{1}{C}\big(\bar r_{\mathrm{min}}^{-2}\dist^2(\cdot,{\bar{I}}_{i,j})\wedge 1 \big)
\leq 1-|\xi_{i,j}|.
\end{align}
Furthermore, in case of equal surface tensions $\sigma_{i,j}=\sigma_{j,k}=\sigma_{k,i}$, we have
\begin{align}
\label{Orthogonalityxik}
|\xi_k \cdot \xi_{i,j}| \leq C \dist(\cdot,\bar I_{i,j}).
\end{align}
\end{lemma}

\begin{proof}
Let~$(x,t)\in\Rd[2]{\times}[0,T]$
and $i,j\in\{1,\ldots,P\}$ with $i\neq j$. The asserted estimate~\eqref{eq:coercivityLengthXi} 
is trivially fulfilled for~$(x,t)\notin\supp\xi_{i,j}$. By the definition~\eqref{GlobalXi} 
we may therefore assume that there exists a topological feature~$n \in \{1,\ldots,N\}$ such that
$(x,t)\in\supp\eta_n$ and that $\eta_n(x,t) = \max\{\eta_{n'}(x,t): 1\leq n'\leq N \}$.
Because of the localization properties~\eqref{LocalizationTwoTripleJunctions}--\eqref{LocalizationTwoInterfaces},
we may additionally assume $\eta_n(x,t) \geq \frac{1}{4}$.
Otherwise, $|\xi_{i,j}| \leq \frac{3}{4}$ on account of the local vector fields having
at most unit length.

If either phase~$i$ or phase~$j$ is absent at the topological feature~$\mathcal{T}_n$, 
we argue as follows. Using $b\in (0,1)$ from~\eqref{CoercivityMinorityPhase} we compute
\begin{align*}
	|\xi_{i,j}|  & =\left|\eta_n\xi^n_{i,j} 
	+ \sum_{n'\in\{1,\ldots,N\}\setminus\{n\}}\eta_{n'}\xi^{n'}_{i,j} \right|  
	\leq \eta_n b +\sum_{n'\in\{1,\ldots,N\}\setminus\{n\}}\eta_{n'}\\
	& \leq 1 - \eta_n (1-b).
\end{align*}
Due to $\eta_n(x,t) \geq \frac{1}{4}$ we deduce 
$1- |\xi_{i,j}(x,t)| \geq  \frac{1}{4}(1-b) \in (0,1) $. 
Therefore the estimate~\eqref{eq:coercivityLengthXi} holds in this case.

Next, we assume that both phases~$i$ and~$j$ are present at~$\mathcal{T}_n$.
In the regime~$n=c\in\mathcal{C}$, it follows from~$(x,t)\in\supp\eta_c$, the localization
properties~\eqref{LocalizationTwoPhase} and~\eqref{LocalizationTripleJunctionTwoPhase},
the definitions~\eqref{def:minLocScale} and~\eqref{def:minLocScaleTripleJunction} of the localization
scales~$r_{\mathcal{P}}$ and~$\bar r_{\mathrm{min}}$, as well as 
the estimates~\eqref{eq:compDistances1} and~\eqref{eq:compDistances3}
that $\dist(x,\bar I_{i,j}(t)) \leq C\dist(x,\mathcal{I}(t))$.
Hence, \eqref{eq:coercivityLengthXi} is implied 
by the coercivity estimate~\eqref{eq:coercivityBulkCutOff} for the bulk cutoff
and the definition~\eqref{GlobalXi}.

If~$n=p\in\mathcal{P}$, denote by~$k\in\{1,\ldots,P\}$ the third phase present
at~$\mathcal{T}_p$ next to the phases~$i$ and~$j$. If~$x\in B_{r_{\mathcal{P}}}(\mathcal{T}_p(t))
\setminus \big(W_{j,k}(t)\cup W_{k,i}(t) \cup W_{k}(t)\big)$, 
then by~\eqref{eq:compDistances1} and~\eqref{eq:compDistances3}
it again holds $\dist(x,\bar I_{i,j}(t)) \leq C\dist(x,\mathcal{I}(t))$
so that~\eqref{eq:coercivityLengthXi} follows as before.
Thus, assume that~$x\in B_{r_{\mathcal{P}}}(\mathcal{T}_p(t))
\cap \big(W_{j,k}(t)\cup W_{k,i}(t) \cup W_{k}(t)\big)$. Figure~\ref{fig:lengthXi}
serves as an illustration for the subsequent argument, for which we
in fact assume that~$x\in W_k(t)$ (the argument in case of interface
wedges is similar). Based on the definition~\eqref{GlobalXi},
the localization properties~\eqref{LocalizationTwoTripleJunctions}--\eqref{LocalizationTwoInterfaces},
the coercivity estimate~\eqref{CoercivityMinorityPhase},
and the definitions~\eqref{DefinitionEtaTripleJunctionInterpolationWedge}, \eqref{DefinitionEtaTwoPhaseInterpolationWedge}
as well as~\eqref{DefinitionAuxiliaryCutoffHalfSpace},
we estimate at~$(x,t)$
\begin{align*}
1 - |\xi_{i,j}| &\geq 1 - \big(\eta_p + b\eta_{c_{k,i}} + b\eta_{c_{j,k}}\big)
\\&
= \lambda^{i,j}_k \Big(1 - \big(b\zeta_{c_{k,i}} + (1{-}b)\zeta_p\zeta_{c_{k,i}}\big)\Big)
+ (1{-}\lambda^{i,j}_k) \Big(1 - \big(b\zeta_{c_{j,k}} + (1{-}b)\zeta_p\zeta_{c_{j,k}}\big)\Big)
\\&
\geq (1-b)(1-\zeta_p) \geq (1-b)
\big(\bar r_{\mathrm{min}}^{-2}\dist^2(x,\mathcal{T}_p) \wedge 1\big).
\end{align*}
The trivial estimate~$\dist(x,\mathcal{T}_p(t)) \geq \dist(x,\bar I_{i,j}(t))$
therefore allows to conclude.

To show \eqref{Orthogonalityxik}, we simply use item v) from Lemma~\ref{LemmaLocalConstructionsNetwork} as well as the compatibility conditions on the vector fields $(\xi_{i,j})_{i\neq j}$.
\end{proof}

For a global definition of the velocity field $B$, we proceed analogously, i.e.,
we first provide a definition for local velocity fields $B^n$ for each topological
feature $\mathcal{T}_n$ with $n\in \{1,\ldots,N\}$ and then glue them together by means of the partition of unity
$(\eta_{\mathrm{bulk}},\eta_1,\ldots,\eta_N)$ from Lemma~\ref{LemmaPartitionOfUnity}. 

\begin{construction}\label{DefinitionGlobalB}
Let $d=2$ and $P \in \mathbb{N}$, $P\geq 2$.
Let $\bar\Omega=(\bar{\Omega}_1,\ldots,\bar{\Omega}_P)$ 
be a strong solution to multiphase mean curvature flow in the sense of Definition~\ref{DefinitionStrongSolution}.
Let $(\eta_{\mathrm{bulk}},\eta_1,\ldots,\eta_N)$ be a partition of unity as constructed in 
Lemma~\ref{LemmaPartitionOfUnity}. 

Let~$n \in \{1,\ldots,N\}$, and recalling the notation \eqref{def_support_space_time},
we define a continuous vector field
\begin{align*}
B^n\colon\mathcal{U}_n &\mapsto\Rd[2]
\end{align*}
as follows: in case of~$n\in\mathcal{C}$ we take~$B^n$ as the restriction to~$\mathcal{U}_n$
of the two-phase velocity field from Lemma~\ref{LemmaBoundsLocalConstructionsTwoPhase}.
More precisely, in case the curve~$\mathcal{T}_c$ connects two triple junctions,
the tangential component of~$B^n$ is chosen as in Proposition~\ref{prop:localCompatibilityEstimates};
otherwise, we simply let the tangential component vanish.
In case of~$n\in\mathcal{P}$ we take~$B^n$ as the restriction to~$\mathcal{U}_n$
of the triple junction velocity field from Proposition~\ref{prop:xi_triple_junction}.

We finally define a global velocity field by means of
\begin{align}\label{GlobalVelocityField}
B(x,t):=\sum_{n=1}^N \eta_n(x,t) B^n(x,t)
\end{align}
for all $x\in \Rd[2]$ and all $t\in [0,T]$.
\end{construction}

We briefly present the regularity properties of the family of 
local velocity fields from Construction~\ref{DefinitionGlobalB}.

\begin{lemma}
\label{LemmaLocalConstructionVelocityNetwork}
In the setting of Construction~\ref{DefinitionGlobalB}, for all~$n\in\{1,\ldots,N\}$
the associated local velocity field satisfies $B^n\in C^0_tC^2_x(\overline{\mathcal{U}_n}\setminus\mathcal{T}_{\mathcal{P}})$,
$\mathcal{T}_{\mathcal{P}}:=\bigcup_{p\in\mathcal{P}}\mathcal{T}_p$.
Moreover, there exists~$C>0$, which may depend on~$\bar\Omega$ but not on
the localization scale~$\bar r_{\mathrm{min}}$ from~\eqref{def:minLocScale}, 
such that throughout~$\mathcal{U}_n\setminus\mathcal{T}_{\mathcal{P}}$ it holds
\begin{align}
\label{eq:regularityEstimatesLocalVelocities}
\max_{k=0,1,2} \bar r_{\mathrm{min}}^k|\nabla^k B^n|
\leq C\bar r_{\mathrm{min}}^{-1}.
\end{align}
\end{lemma}

\begin{proof}
For~$n=c\in\mathcal{C}$ the estimate~\eqref{eq:regularityEstimatesLocalVelocities}
follows from~\eqref{eq:estimatesTwoPhaseVel} and~\eqref{eq:estimateDerivTangentialComp},
which in turn are indeed applicable thanks to the localization property~\eqref{LocalizationTwoPhase} 
and the definition~\eqref{def:minLocScale}. In case of~$n=p\in\mathcal{P}$,
we may apply Proposition~\ref{prop:xi_triple_junction} due to the 
localization property~\eqref{LocalizationTripleJunction} and the definition~\eqref{def:minLocScaleTripleJunction},
so that~\eqref{boundDerivativeB} implies~\eqref{eq:regularityEstimatesLocalVelocities}.
\end{proof}

Equipped with the definition of the global velocity field~$B$, we may now prove
a suitable estimate on the advective derivative of the bulk cutoff~$\eta_{\mathrm{bulk}}$ from 
Lemma~\ref{LemmaPartitionOfUnity}.

\begin{lemma}\label{LemmaAdvectionPartitionOfUnity}
Let $d=2$ and $P \in \mathbb{N}$, $P\geq 2$. Let $\bar\Omega=(\bar{\Omega}_1,\ldots,\bar{\Omega}_P)$ 
be a strong solution to multiphase mean curvature flow in the sense of Definition~\ref{DefinitionStrongSolution}. 
Let~$\eta_{\mathrm{bulk}}$ be the bulk cutoff from 
Lemma~\ref{LemmaPartitionOfUnity}, $\bar r_{\mathrm{min}}\in (0,1]$
the localization scale defined by~\eqref{def:minLocScale},
and $\mathcal{T}_{\mathcal{P}}:=\bigcup_{p\in\mathcal{P}}\mathcal{T}_p$.
Let~$B$ be the global velocity field from Construction~\ref{DefinitionGlobalB}. 
Denote by $\mathcal{I}:=\bigcup_{t\in [0,T]}\bigcup_{i\neq j}{\bar{I}}_{i,j}(t)\times\{t\}$
the evolving network of interfaces.
Then there exists a constant~$C>0$, depending only on the strong solution~$\bar\Omega$
but not on~$\bar r_{\mathrm{min}}$, such that
\begin{align}
\label{AdvectionEquationBulkLocalization}
|\partial_t\eta_{\mathrm{bulk}}+(B\cdot\nabla)\eta_{\mathrm{bulk}}|
&\leq C\bar r_{\mathrm{min}}^{-2}
\big(\bar r_{\mathrm{min}}^{-2}\dist^2(\cdot,\mathcal{I})\wedge 1\big)
\end{align}
in $\Rd[2]{\times} [0,T]\setminus \mathcal{T}_{\mathcal{P}}$. Moreover, 
for all~$n\in\{1,\ldots,N\}$ and all distinct~$i,j\in\{1,\ldots,P\}$
such that either phase~$i$ or phase~$j$ is absent at~$\mathcal{T}_n$ it holds
\begin{align}
\label{eq:advectionWrongCutoffs}
|\partial_t\eta_{n}+(B\cdot\nabla)\eta_{n}|
&\leq C\bar r_{\mathrm{min}}^{-2}
\big(\bar r_{\mathrm{min}}^{-2}\dist^2(\cdot,\bar I_{i,j})\wedge 1\big)
\end{align}
in $\Rd[2]{\times} [0,T]\setminus \mathcal{T}_{\mathcal{P}}$.
\end{lemma}

\begin{proof}
The estimate~\eqref{eq:advectionWrongCutoffs} is trivially fulfilled
in case of~$n=p\in\mathcal{P}$ by~\eqref{eq:estimatesDerivEta}, \eqref{LocalizationTripleJunction} 
and the definition~\eqref{def:minLocScaleTripleJunction} of the localization scale~$r_{\mathcal{P}}$.
Hence, let us reserve notation for the proof of~\eqref{eq:advectionWrongCutoffs} by
fixing~$c''\in\mathcal{C}$ and distinct phases~$i',j'\in\{1,\ldots,P\}$
such that at least one of them is absent at~$\mathcal{T}_{c''}$.

We now split the proof into two parts, first establishing the asserted 
estimates along two-phase interfaces~$\mathcal{T}_{c}$ and away from triple
junctions, and second in the vicinity of triple junctions adjacent to~$\mathcal{T}_{c}$. 
More precisely, by the localization properties~\eqref{LocalizationTwoPhase}--\eqref{LocalizationTwoInterfaces}
and the choices~\eqref{def:minLocScaleTripleJunction}--\eqref{def:minLocScale} of the 
localization scales~$r_{\mathcal{P}}$ and~$\bar r_{\mathrm{min}}$,
it suffices to prove~\eqref{AdvectionEquationBulkLocalization}
in~$\bigcup_{c\in\mathcal{C}}\mathrm{im}_{\bar r_{\mathrm{min}}}(\Psi_{\mathcal{T}_{c}})
\setminus \bigcup_{p\in\mathcal{P}} \bigcup_{t\in [0,T]}B_{r_{\mathcal{P}}}(\mathcal{T}_p(t)){\times}\{t\}$
and in $\bigcup_{p\in\mathcal{P}}\bigcup_{t\in [0,T]}B_{r_{\mathcal{P}}}(\mathcal{T}_p(t)){\times}\{t\}$,
respectively. We in fact may argue separately for each~$c\in\mathcal{C}$
and each~$p\in\mathcal{P}$.

\textit{Step 1: Estimates close to $\mathcal{T}_{c}$ and away from triple junctions.}
	In this step, we restrict ourselves to the region 
	$\mathrm{im}_{\bar r_{\mathrm{min}}}(\Psi_{\mathcal{T}_{c}})
	\setminus \bigcup_{p\in\mathcal{P}} \bigcup_{t\in [0,T]}B_{r_{\mathcal{P}}}(\mathcal{T}_p(t)){\times}\{t\}$. 
	To fix notation, let $i,j\in \{1,\ldots,P\}$ be such that~$c$ refers to a 
	two-phase interface $\mathcal{T}_{c} \subset {\bar{I}}_{i,j}$.
	Recalling~\eqref{eq:bulkCutoffAwayTripleJunctions}, we register that
	\begin{align}
		\label{eq:away from tj eta}
		\eta_{\mathrm{bulk}}& = 1- \eta_{c} ,
		\\ 
		\label{eq:away from tj etac and zeta}
		\eta_{c} &= \zeta_{c}= \zeta\Big(\frac{s_{i,j}}{\delta \bar r_{\mathrm{min}}}\Big),
		\\
		\label{eq:away from tj B}
		B&= \eta_{c} B^{c},
		\end{align}
		in $\mathrm{im}_{\bar r_{\mathrm{min}}}(\Psi_{\mathcal{T}_{c}})
	\setminus \bigcup_{p\in\mathcal{P}} \bigcup_{t\in [0,T]}B_{r_{\mathcal{P}}}(\mathcal{T}_p(t)){\times}\{t\}$.

For~\eqref{AdvectionEquationBulkLocalization}, we first observe that the signed 
distance function is transported by~$B^{c}$, cf.\ \eqref{eq:transportSignedDistanceB}. By the chain rule, 
this also holds for~$\zeta_{c}$, i.e.,
\begin{align}
\label{eq:transportInterfaceCutoff}
\partial_t\zeta_{c} + (B^{c}\cdot\nabla)\zeta_{c} = 0
\quad\text{in } \mathrm{im}(\Psi_{\mathcal{T}_{c}}).
\end{align} 
Hence, using~\eqref{eq:away from tj B}, \eqref{eq:away from tj eta}, 
the quadratic order of~$\eta_{\mathrm{bulk}}$ from~\eqref{eq:upperBoundBulkCutOff},
and the regularity estimates~\eqref{eq:estimatesDerivEta}
and~\eqref{eq:regularityEstimatesLocalVelocities} we obtain
\begin{align}
\label{eq:auxEstimateAdvectiveDerivative}
|\partial_t \zeta_c + (B \cdot \nabla ) \zeta_c | = 
\eta_{\mathrm{bulk}} | (B^{c} \cdot \nabla) \zeta_{c} | \leq C\bar r_{\mathrm{min}}^{-2}
\big(\bar r_{\mathrm{min}}^{-2}\dist^2(\cdot,\mathcal{I})\wedge 1\big)
\end{align}
in the region~$\mathrm{im}_{\bar r_{\mathrm{min}}}(\Psi_{\mathcal{T}_{c}})
\setminus \bigcup_{p\in\mathcal{P}} \bigcup_{t\in [0,T]}B_{r_{\mathcal{P}}}(\mathcal{T}_p(t)){\times}\{t\}$.
By~\eqref{eq:away from tj eta} and~\eqref{eq:away from tj etac and zeta},
this is equivalent to~\eqref{AdvectionEquationBulkLocalization}.

For a proof of~\eqref{eq:advectionWrongCutoffs} throughout
$\mathrm{im}_{\bar r_{\mathrm{min}}}(\Psi_{\mathcal{T}_{c}})
\setminus \bigcup_{p\in\mathcal{P}} \bigcup_{t\in [0,T]}B_{r_{\mathcal{P}}}(\mathcal{T}_p(t)){\times}\{t\}$, 
we may assume without loss of generality that~$c''=c$;
otherwise, the estimate~\eqref{eq:advectionWrongCutoffs} is trivially fulfilled
by~\eqref{LocalizationTwoPhase} and the definition~\eqref{def:minLocScale} 
of the localization scale~$\bar r_{\mathrm{min}}$. However, if~$c''=c$
then the above argument already yields the claim thanks to~\eqref{eq:away from tj eta}, \eqref{eq:away from tj etac and zeta}
and~\eqref{eq:auxEstimateAdvectiveDerivative}.

\textit{Step 2: Estimates close to $\mathcal{T}_c$ and in the vicinity of triple junctions.}
Now, consider~$p\in\mathcal{P}$ and assume that the pairwise distinct phases~$i,j,k\in\{1,\ldots,P\}$
are present at~$\mathcal{T}_p$. Modulo a permutation of the indices, it suffices to
consider the two unique two-phase interfaces~$\mathcal{T}_{c_{i,j}}\subset\bar I_{i,j}$ 
and~$\mathcal{T}_{c_{k,i}}\subset\bar I_{k,i}$ so that~$c:=c_{i,j}\sim p$ and~$c':=c_{k,i}\sim p$, 
and then to prove the desired estimate~\eqref{AdvectionEquationBulkLocalization} on the interface wedge~$W_{i,j}$
and the interpolation wedge~$W_{i}$. 

In this step, let us turn to the interface wedge~$W_{i,j}$. 
The interpolation wedge~$W_i$ will be discussed in \textit{Step~3}. 
With respect to~\eqref{eq:advectionWrongCutoffs},
it then suffices to work in the regime~$c''\sim p$ and~$c''=c$;
otherwise, the estimate~\eqref{eq:advectionWrongCutoffs}
is again fulfilled for trivial reasons thanks to~\eqref{LocalizationTripleJunctionTwoPhase}
and~\eqref{LocalizationTwoInterfaces}.
Based on~\eqref{DefinitionEtaTwoPhaseWedgeInterface}
and~\eqref{eq:bulkCutoffTripleJunctionsInterfaceWedge} we then have
\begin{align}
	\label{eq:at tj etac}
	\eta_c &= (1-\zeta_p) \zeta_c ,
	\\
	\label{eq:at tj eta etas}
	\eta_{\mathrm{bulk}} &= 1-\eta_c-\eta_p = 1-\zeta_c,
	\\
	\label{eq:at tj B Bc Bp}
	B&= \eta_c B^c + \eta_p B^p,
\end{align}
throughout~$B_{r_{\mathcal{P}}}(\mathcal{T}_p(t)) \cap W_{i,j}(t)$ for all~$t\in [0,T]$.

For the estimate on the advective derivative of the bulk cutoff, using~\eqref{eq:at tj eta etas}
and the transport equation for the interface cutoff~\eqref{eq:transportInterfaceCutoff}
(which is applicable throughout~$B_{r_{\mathcal{P}}}(\mathcal{T}_p(t)) \cap W_{i,j}(t)$ for all~$t\in [0,T]$
due to~\eqref{eq:inlcusionInterfaceWedge}) we obtain
\begin{align*}
 \partial_t\zeta_c 
&= -(B^c\cdot\nabla)\zeta_c
= -\big(B\cdot\nabla\big)\zeta_c 
- \eta_{\mathrm{bulk}}(B^c\cdot\nabla)\zeta_c
-\eta_p\big((B^p{-}B^c)\cdot\nabla\big)\zeta_c
\end{align*}
in $B_{r_{\mathcal{P}}}(\mathcal{T}_p(t)) \cap W_{i,j}(t)$ for all~$t\in [0,T]$.
In particular, because of~\eqref{eq:upperBoundBulkCutOff}, \eqref{eq:regularityEstimatesLocalVelocities},
\eqref{eq:regEstimatesInterfaceCutoff1}, \eqref{eq:inlcusionInterfaceWedge}, \eqref{eq:estimatesDerivEta},
\eqref{eq:localComp3}, \eqref{eq:compDistances3}, and finally~\eqref{eq:compNetworkDistanceTripleJunctions}
this entails
\begin{align}\label{eq:dt zetac Wij} 
\big|\partial_t\zeta_c+\big(B\cdot\nabla\big)\zeta_c\big|
\leq C\bar r_{\mathrm{min}}^{-2}
\big(\bar r_{\mathrm{min}}^{-2}\dist^2(\cdot,\mathcal{I})\wedge 1\big)
\end{align}
in $B_{r_{\mathcal{P}}}(\mathcal{T}_p(t)) \cap W_{i,j}(t)$ for all~$t\in [0,T]$.
By the representation~\eqref{eq:at tj eta etas}, 
this is equivalent to~\eqref{AdvectionEquationBulkLocalization}.

To obtain the asserted bound on the advective derivative of the interface cut-off~$\eta_c$, 
we use that since~$\zeta_p$ is only a smooth function of the distance to the triple 
point~$\mathcal{T}_p(t)=\{p(t)\}$ (performing an excusable abuse of notation), it satisfies the transport 
equation~$\partial_t \zeta_p + (\frac{\mathrm{d}}{\mathrm{d}t}p(t)\cdot \nabla) \zeta_p =0$
throughout~$\Rd[2]{\times}[0,T]\setminus\mathcal{T}_{\mathcal{P}}$.
By Proposition~\ref{prop:xi_triple_junction}~\textit{i)}, the partition of unity
property of the family~$(\eta_1,\ldots,\eta_N)$, and the
regularity estimates~\eqref{eq:regularityEstimatesLocalVelocities} resp.\
\eqref{eq:estimatesDerivEta}, it follows that~$|B-B(p(t),t)| \leq 
C\bar r_{\mathrm{min}}^{-2}\dist(\cdot,\mathcal{T}_p)$
in $B_{r_{\mathcal{P}}}(\mathcal{T}_p(t)) \cap 
(W_{i,j}(t) \cup W_{i}(t) \cup W_{j}(t))$ for all~$t\in [0,T]$. This in turn implies
by means of~\eqref{DefinitionAuxiliaryCutoffHalfSpace}
\begin{align}
\label{eq:transportTripleJunctionCutoff}
|\partial_t\zeta_p + \big(B\cdot\nabla)\zeta_p| 
\leq C\bar r_{\mathrm{min}}^{-2}r_{\mathcal{P}}^{-2}\dist^2(\cdot,\mathcal{T}_p(t))
\leq C\bar r_{\mathrm{min}}^{-2}(1 - \zeta_p)
\end{align}
in $B_{r_{\mathcal{P}}}(\mathcal{T}_p(t)) \cap 
(W_{i,j}(t) \cup W_{i}(t) \cup W_{j}(t))$ for all~$t\in [0,T]$. 
Hence, when restricting to the interface wedge we obtain from
the combination of~\eqref{eq:at tj etac}, the product rule, \eqref{eq:dt zetac Wij},
\eqref{eq:transportTripleJunctionCutoff} and finally~\eqref{GlobalEquationsBoundMinorityPhases}
that the desired estimate~\eqref{eq:advectionWrongCutoffs} indeed holds true
in $B_{r_{\mathcal{P}}}(\mathcal{T}_p(t)) \cap W_{i,j}(t)$ for all~$t\in [0,T]$.

\textit{Step 3: Estimates in interpolation wedges at triple junctions.}
We turn to the proof of~\eqref{AdvectionEquationBulkLocalization}
and~\eqref{eq:advectionWrongCutoffs} on the interpolation wedge~$W_i$. 
Recall to this end the notation fixed at the beginning of \textit{Step~2}.
With respect to proving~\eqref{eq:advectionWrongCutoffs},
it suffices to consider~$c''\sim p$ and~$c''\in\{c,c'\}$,
and thus up to a relabeling~$c''=c$; otherwise, the estimate~\eqref{eq:advectionWrongCutoffs}
follows trivially because of~\eqref{LocalizationTripleJunctionTwoPhase}
and~\eqref{LocalizationTwoInterfaces}. 

Because of~\eqref{DefinitionEtaTwoPhaseInterpolationWedge}
and~\eqref{eq:bulkCutoffTripleJunctionsInterpolationWedge}, it then holds
(abbreviating~$\lambda:=\lambda^{j,k}_i$)
\begin{align}
\label{eq:repWrongCutoffInterpolationWedge2}
\eta_c &= \lambda(1-\zeta_p)\zeta_c,
\\
\label{eq:repBulkCutoffInterpolationWedge2}
\eta_{\mathrm{bulk}} &= 1 - \eta_c - \eta_{c'} - \eta_{p}
											= \lambda (1 {-} \zeta_c) + (1 {-} \lambda)(1 {-} \zeta_{c'}),
\\
\label{eq:repVelocityInterpolationWedge2}
B &= \eta_cB^c + \eta_{c'}B^{c'} + \eta_p B^p,
\end{align}
throughout~$B_{r_{\mathcal{P}}}(\mathcal{T}_p(t)) \cap W_{i}(t)$ for all~$t\in [0,T]$.

Based on the second identity of~\eqref{eq:repBulkCutoffInterpolationWedge2}
and~\eqref{eq:repVelocityInterpolationWedge2},
we may split the task of estimating the advective derivative of the bulk cutoff
as follows:
\begin{align*}
\partial_t\eta_{\mathrm{bulk}} + (B\cdot\nabla)\eta_{\mathrm{bulk}}
=: I + II,
\end{align*} 
where we defined
\begin{align*}
I &:= (1{-}\zeta_c)(\partial_t {+} B\cdot\nabla)\lambda
			+ (1{-}\zeta_{c'})(\partial_t {+} B\cdot\nabla)(1{-}\lambda),
\\
II &:= \lambda\big(\partial_t{+}B\cdot\nabla\big)(1{-}\zeta_c)
			 +(1{-}\lambda)\big(\partial_t{+}B\cdot\nabla\big)(1{-}\zeta_{c'}).
\end{align*}

We estimate term by term. For an estimate of~$II$, we argue in a similar fashion to \textit{Step~2}.
More precisely, applying~\eqref{eq:repBulkCutoffInterpolationWedge2}
and the transport equation for the interface cutoff~\eqref{eq:transportInterfaceCutoff}
(which is applicable throughout~$B_{r_{\mathcal{P}}}(\mathcal{T}_p(t)) \cap W_{i}(t)$ for all~$t\in [0,T]$
due to~\eqref{eq:inclusionInterpolWedge}) we have
\begin{align*}
 \partial_t\zeta_c 
= -\big(B\cdot\nabla\big)\zeta_c 
- \eta_{\mathrm{bulk}}(B^c\cdot\nabla)\zeta_c
- \eta_{c'}\big((B^{c'}{-}B^c)\cdot\nabla\big)\zeta_c
- \eta_p\big((B^p{-}B^c)\cdot\nabla\big)\zeta_c
\end{align*}
in $B_{r_{\mathcal{P}}}(\mathcal{T}_p(t)) \cap W_{i}(t)$ for all~$t\in [0,T]$.
Replacing the use of~\eqref{eq:inlcusionInterfaceWedge} by~\eqref{eq:inclusionInterpolWedge}
and the use of~\eqref{eq:compDistances3} by~\eqref{eq:compDistances1}, we may rely
on the otherwise same argument entailing~\eqref{eq:dt zetac Wij} to deduce that
(adding also zero in form of~$B^{c'}{-}B^c=(B^{c'}{-}B^p)+(B^p{-}B^c)$)
\begin{align}\label{eq:dt zetac Wi} 
\big|\partial_t\zeta_c+\big(B\cdot\nabla\big)\zeta_c\big|
\leq C\bar r_{\mathrm{min}}^{-2}
\big(\bar r_{\mathrm{min}}^{-2}\dist^2(\cdot,\mathcal{I})\wedge 1\big)
\end{align}
in $B_{r_{\mathcal{P}}}(\mathcal{T}_p(t)) \cap W_{i}(t)$ for all~$t\in [0,T]$.
Of course, the same estimate holds true in terms of~$\zeta_{c'}$.
Hence, $|II| \leq C\bar r_{\mathrm{min}}^{-2}
\big(\bar r_{\mathrm{min}}^{-2}\dist^2(\cdot,\mathcal{I})\wedge 1\big)$
in~$B_{r_{\mathcal{P}}}(\mathcal{T}_p(t)) \cap W_{i}(t)$ for all~$t\in [0,T]$
as desired.

We turn to the estimate of~$I$. Adding zero and relying on~\eqref{eq:repBulkCutoffInterpolationWedge2}
as well as~\eqref{eq:repVelocityInterpolationWedge2}, we observe that it holds
\begin{align*}
(\partial_t {+} B\cdot\nabla)\lambda
&= (\partial_t{+}B^p\cdot\nabla)\lambda 
+ \big(\eta_c(B^c{-}B^p) {+} \eta_{c'}(B^{c'}{-}B^p) 
{-}\eta_{\mathrm{bulk}}B^p\big)\cdot\nabla\lambda.
\end{align*}
By familiar arguments in combination with the controlled blowup~\eqref{boundslambda1}
of the derivative of the interpolation parameter, one checks that the
second right hand side term of the previous display is of the order~$O(\bar r_{\mathrm{min}}^{-2})$.
The first right hand side term is of the same order thanks to~\eqref{eq:compDistances1} and the bound~\eqref{advectionlambda}
on the advective derivative of the interpolation parameter
(for which we may freely pass from~$B^p$ to~$B^p(p(t),t)$, abusing
again notation in form of~$\mathcal{T}_p(t)=\{p(t)\}$, cf.\
Proposition~\ref{prop:xi_triple_junction}~\textit{i)} and
the estimate~\eqref{boundDerivativeB}). Hence,
\begin{align}
\label{eq:advectionInterpolParameterGlobalVel}
|\partial_t\lambda + (B\cdot\nabla)\lambda| \leq C\bar r_{\mathrm{min}}^{-2}.
\end{align}
By~\eqref{eq:regEstimatesInterfaceCutoff} and~\eqref{eq:compDistances1}, we thus obtain
$|(1{-}\zeta_c)(\partial_t {+} B\cdot\nabla)\lambda|
\leq C\bar r_{\mathrm{min}}^{-2}(r_{\mathrm{min}}^{-2}\dist^2(\cdot,\mathcal{I}) \wedge 1)$.
Arguing analogously one also bounds the term~$(1{-}\zeta_{c'})(\partial_t {+} B\cdot\nabla)(1{-}\lambda)$
to the same order, so that in summary~\eqref{AdvectionEquationBulkLocalization}
follows in the region~$B_{r_{\mathcal{P}}}(\mathcal{T}_p(t)) \cap W_{i}(t)$ for all~$t\in [0,T]$.

We finally provide the proof of~\eqref{eq:advectionWrongCutoffs} in the given interpolation wedge.
When computing the advective derivative of~$\eta_c$, it follows
from~\eqref{eq:repWrongCutoffInterpolationWedge2},  the product rule, 
\eqref{eq:dt zetac Wi}, \eqref{eq:transportTripleJunctionCutoff}
and~\eqref{GlobalEquationsBoundMinorityPhases} that we only need to additionally control the 
term when the derivative falls onto the interpolation parameter. 
However, since we already have~\eqref{eq:advectionInterpolParameterGlobalVel}
at our disposal, it follows from~\eqref{eq:regEstimatesTripleJunctionCutoff} that
\begin{align*}
|(\partial_t{+}B\cdot\nabla)\lambda| (1-\zeta_p) \zeta_c
\leq C\bar r_{\mathrm{min}}^{-2}\big(\bar r_{\mathrm{min}}^{-2}
\dist^2(\cdot,\mathcal{T}_p) \wedge 1\big),
\end{align*}
which by~\eqref{eq:compDistances1} (or a trivial argument if either~$i'$
or~$j'$ is absent at~$\mathcal{T}_p$) entails a bound of required order.
This in turn concludes the proof.
\end{proof}

\subsection{Global compatibility estimates}\label{SectionCompatibilityBoundsGluing}
We next lift the local compatibility estimates from Proposition~\ref{prop:localCompatibilityEstimates}
to compatibility estimates between the global and local constructions.
These technical estimates will be needed in order to derive
the estimates~\eqref{TransportEquationXi}--\eqref{Dissip} for the global constructions from
the corresponding ones for the local constructions in Lemma~\ref{LemmaBoundsLocalConstructionsTwoPhase}
and Proposition~\ref{prop:xi_triple_junction}.

\begin{lemma}\label{LemmaCompatibilityLocalConstructions}
Let $d=2$ and $P \in \mathbb{N}$, $P\geq 2$. Let $\bar\Omega=(\bar{\Omega}_1,\ldots,\bar{\Omega}_P)$ 
be a strong solution to multiphase mean curvature flow in the sense of Definition~\ref{DefinitionStrongSolution}. 
Let $(\eta_{\mathrm{bulk}},\eta_1,\ldots,\eta_N)$ be a partition of unity as constructed in 
Lemma~\ref{LemmaPartitionOfUnity}. In particular, let~$\bar r_{\mathrm{min}}\in (0,1]$ 
be the localization scale defined by~\eqref{def:minLocScale} and~$\mathcal{T}_{\mathcal{P}}
:=\bigcup_{p\in\mathcal{P}}\mathcal{T}_p$.
Let $(\xi^n_{i,j})_{n\in\{1,\ldots,N\}}$ be the local vector fields 
from Lemma~\ref{LemmaLocalConstructionsNetwork} as well as $(B^n)_{n\in\{1,\ldots,N\}}$ be the local velocity fields 
from Construction~\ref{DefinitionGlobalB}. Let $\xi_{i,j}$ be the global vector 
fields from Construction~\ref{DefinitionGlobalXi}, and let $B$ be the global velocity field from 
Construction~\ref{DefinitionGlobalB}.

Then, the local and global constructions are compatible in the sense that
for all topological features~$n\in\{1,\ldots,N\}$, and all distinct
phases~$i,j\in\{1,\ldots,P\}$ such that both~$i$ and~$j$ are present at~$\mathcal{T}_n$, 
the following estimates are satisfied
\begin{align}
\label{CompatibilityBound1}
\mathds{1}_{\supp\eta_n}\big|\xi_{i,j}-\xi^n_{i,j}\big|
&\leq C\big(\bar r_{\mathrm{min}}^{-1}\dist(\cdot,{\bar{I}}_{i,j})\wedge 1\big),
\\\label{CompatibilityBound2}
\mathds{1}_{\supp\eta_n}\big|(\xi_{i,j}-\xi^n_{i,j})\cdot\xi^n_{i,j}\big|
&\leq C\big(\bar r_{\mathrm{min}}^{-2}\dist^2(\cdot,{\bar{I}}_{i,j})\wedge 1\big),
\\\label{CompatibilityBound3}
\mathds{1}_{\supp\eta_n}\big|B-B^n\big|
&\leq C\bar r_{\mathrm{min}}^{-1}
\big(\bar r_{\mathrm{min}}^{-2}\dist^2(\cdot,{\bar{I}}_{i,j})\wedge 1\big),+
\\\label{CompatibilityBound4}
\mathds{1}_{\supp\eta_n}\big|\nabla B-\nabla B^n\big|
&\leq C\bar r_{\mathrm{min}}^{-2}
\big(\bar r_{\mathrm{min}}^{-1}\dist(\cdot,{\bar{I}}_{i,j})\wedge 1\big)
\end{align}
throughout~$\Rd[2]{\times} [0,T]\setminus \mathcal{T}_{\mathcal{P}}$. The constant~$C>0$
may depend on the strong solution~$\bar\Omega$, but is independent
of~$\bar r_{\mathrm{min}}$.
\end{lemma}

For the proof of Lemma~\ref{LemmaCompatibilityLocalConstructions}, recall that we decomposed 
$\{1,\ldots,N\}=:\mathcal{C} \cupdot\mathcal{P}$ with the convention that $\mathcal{C}$ enumerates the connected components in space-time of the smooth two-phase interfaces and $\mathcal{P}$ enumerates the triple junctions.
If $p\in \mathcal{P}$, we defined $\mathcal{T}_p$ to be the trajectory in space-time described by the triple junction. 
If $c\in \mathcal{C}$, we defined $\mathcal{T}_c \subset {\bar{I}}_{i,j}$ for some $i,j \in \{1,\ldots,P\}$ with $i\neq j$ to be the corresponding space-time connected component of a two-phase interface ${\bar{I}}_{i,j}$.
We further write~$c\sim p$ for~$c\in\mathcal{C}$ and~$p\in\mathcal{P}$
if and only if~$\mathcal{T}_c$ has an endpoint at~$\mathcal{T}_p$. 
Note finally that two distinct phases~$i,j\in\{1,\ldots,P\}$ are simultaneously present at
a topological feature~$\mathcal{T}_n$, $n\in\{1,\ldots,N\}$, if and only if~$\mathcal{T}_n\subset\bar I_{i,j}$.

\begin{proof}
We aim to reduce the situation to the local compatibility estimates from
Proposition~\ref{prop:localCompatibilityEstimates}. Such a reduction
argument turns out to be possible due to the localization
properties~\eqref{LocalizationTwoTripleJunctions}--\eqref{LocalizationTwoInterfaces},
the estimates~\eqref{eq:upperBoundBulkCutOff}--\eqref{GlobalEquationsBoundGradientMinorityPhases},
and our assumption that both phases~$i$ and~$j$ are present at the selected topological feature.
For all what follows, let~$n\in\{1,\ldots,N\}$ and~$i,j\in\{1,\ldots,P\}$ such that~$i\neq j$
as well as~$\mathcal{T}_n\subset\bar I_{i,j}$.
For notational convenience, we abbreviate for the purpose of the proof~$\bar r:= \bar r_{\mathrm{min}}$
and~$d_{i,j}:=\dist(\cdot,\bar I_{i,j})$.

\textit{Step 1: Proof of~\eqref{CompatibilityBound1}.}
We insert the definition~\eqref{GlobalXi}
which in combination with the estimates~\eqref{eq:upperBoundBulkCutOff},
\eqref{GlobalEquationsBoundMinorityPhases} and~\eqref{eq:regularityEstimatesLocalXi} yields
\begin{align}
\nonumber
\mathds{1}_{\supp\eta_n}(\xi_{i,j} {-} \xi^n_{i,j})
&= -\mathds{1}_{\supp\eta_n}\eta_{\mathrm{bulk}}\xi^n_{i,j}
+ \sum_{n'=1,n'\neq n}^N \mathds{1}_{\supp\eta_n}\eta_{n'}(\xi^{n'}_{i,j} {-} \xi^n_{i,j})
\\& \label{eq:auxReduction1}
= \sum_{\substack{n'=1,n'\neq n \\ \mathcal{T}_{n'}\subset\bar I_{i,j}}}^N 
\mathds{1}_{\supp\eta_n}\eta_{n'}(\xi^{n'}_{i,j} {-} \xi^n_{i,j})
+ O(\bar r^{-2}d_{i,j}^2\wedge 1).
\end{align}
Next, the localization
properties~\eqref{LocalizationTwoTripleJunctions}--\eqref{LocalizationTwoInterfaces}
allow to represent the remaining right hand side terms in form of
\begin{align*}
\sum_{\substack{n'=1,n'\neq n \\ \mathcal{T}_{n'}\subset\bar I_{i,j}}}^N 
\mathds{1}_{\supp\eta_n}\eta_{n'}(\xi^{n'}_{i,j} {-} \xi^n_{i,j})
&
= \sum_{p\in\mathcal{P},\mathcal{T}_p\subset\bar I_{i,j}}\sum_{c\in\mathcal{C},c\sim p}
\mathds{1}_{n=c}\mathds{1}_{\supp\eta_c}\eta_p(\xi^p_{i,j} {-} \xi^c_{i,j})
\\&\hspace*{-4ex}
+ \sum_{c\in\mathcal{C},\mathcal{T}_c\subset\bar I_{i,j}}\sum_{p\in\mathcal{P},c\sim p}
\mathds{1}_{n=p}\mathds{1}_{\supp\eta_p}\eta_c(\xi^c_{i,j} {-} \xi^p_{i,j})
\\&\hspace*{-4ex}
+ \sum_{c\in\mathcal{C},\mathcal{T}_c\subset\bar I_{i,j}}\sum_{p\in\mathcal{P},c\sim p}
\sum_{\substack{c'\in\mathcal{C},c'\neq c \\ c'\sim p}}
\mathds{1}_{n=c'}\mathds{1}_{\supp\eta_{c'}}\eta_c(\xi^c_{i,j} {-} \xi^{c'}_{i,j}).
\end{align*}
The \textit{assumption}~$\mathcal{T}_n\subset\bar I_{i,j}$ furthermore
enables us to post-process the previous identity as follows
\begin{align*}
\sum_{\substack{n'=1,n'\neq n \\ \mathcal{T}_{n'}\subset\bar I_{i,j}}}^N 
\mathds{1}_{\supp\eta_n}\eta_{n'}(\xi^{n'}_{i,j} {-} \xi^n_{i,j})
&= \sum_{p\in\mathcal{P},\mathcal{T}_p\subset\bar I_{i,j}}
\sum_{\substack{c\in\mathcal{C},\mathcal{T}_c\subset\bar I_{i,j} \\ c\sim p}}
\mathds{1}_{n=c}\mathds{1}_{\supp\eta_c}\eta_p(\xi^p_{i,j} {-} \xi^c_{i,j})
\\&~~~
+ \sum_{c\in\mathcal{C},\mathcal{T}_c\subset\bar I_{i,j}}
\sum_{\substack{p\in\mathcal{P},\mathcal{T}_p\subset\bar I_{i,j} \\ c\sim p}}
\mathds{1}_{n=p}\mathds{1}_{\supp\eta_p}\eta_c(\xi^c_{i,j} {-} \xi^p_{i,j}).
\end{align*}
We are now in a position to apply Proposition~\ref{prop:localCompatibilityEstimates}.
More precisely, thanks to the localization property~\eqref{LocalizationTripleJunctionTwoPhase}
and the definition~\eqref{def:minLocScale} we have the estimate~\eqref{eq:localComp1} 
at our disposal, implying that
\begin{align*}
\sum_{\substack{n'=1,n'\neq n \\ \mathcal{T}_{n'}\subset\bar I_{i,j}}}^N 
\mathds{1}_{\supp\eta_n}\eta_{n'}(\xi^{n'}_{i,j} {-} \xi^n_{i,j})
= O(\bar r^{-1}d_{i,j}\wedge 1),
\end{align*}
at least under our assumption of~$\mathcal{T}_n\subset\bar I_{i,j}$.
This concludes the argument for~\eqref{CompatibilityBound1}.

\textit{Step 2: Proof of~\eqref{CompatibilityBound2}.}
Multiplying~\eqref{eq:auxReduction1} by~$\xi^n_{i,j}$
and afterwards running through the same argument
as in \textit{Step~1} entails
\begin{align*}
&\sum_{\substack{n'=1,n'\neq n \\ \mathcal{T}_{n'}\subset\bar I_{i,j}}}^N 
\mathds{1}_{\supp\eta_n}\eta_{n'}(\xi^{n'}_{i,j} {-} \xi^n_{i,j})\cdot\xi^n_{i,j}
\\&
= \sum_{p\in\mathcal{P},\mathcal{T}_p\subset\bar I_{i,j}}
\sum_{\substack{c\in\mathcal{C},\mathcal{T}_c\subset\bar I_{i,j} \\ c\sim p}}
\mathds{1}_{n=c}\mathds{1}_{\supp\eta_c}\eta_p(\xi^p_{i,j} {-} \xi^c_{i,j})\cdot\xi^c_{i,j}
\\&~~~
+ \sum_{c\in\mathcal{C},\mathcal{T}_c\subset\bar I_{i,j}}
\sum_{\substack{p\in\mathcal{P},\mathcal{T}_p\subset\bar I_{i,j} \\ c\sim p}}
\mathds{1}_{n=p}\mathds{1}_{\supp\eta_p}\eta_c(\xi^c_{i,j} {-} \xi^p_{i,j})\cdot\xi^p_{i,j}
+ O(\bar r^{-2}d^2_{i,j}\wedge 1).
\end{align*}
Adding zero in the second right hand side term of the previous display 
in form of $(\xi^c_{i,j} {-} \xi^p_{i,j})\cdot\xi^p_{i,j} = - |\xi^c_{i,j} {-} \xi^p_{i,j}|^2
+ (\xi^c_{i,j} {-} \xi^p_{i,j})\cdot\xi^c_{i,j}$, and then applying the local 
compatibility estimates~\eqref{eq:localComp2} and~\eqref{eq:localComp1},
we deduce~\eqref{CompatibilityBound2}.

\textit{Step 3: Proof of~\eqref{CompatibilityBound3}.}
Using the definition~\eqref{GlobalVelocityField}, the regularity 
estimates~\eqref{eq:regularityEstimatesLocalVelocities}
and the local compatibility estimate~\eqref{eq:localComp3}
instead of~\eqref{GlobalXi}, \eqref{eq:regularityEstimatesLocalXi} 
and~\eqref{eq:localComp1}, respectively, and substituting~$(B,B^n)$
for~$(\xi_{i,j},\xi^n_{i,j})$ in the argument of \textit{Step~1}
directly implies~\eqref{CompatibilityBound3}.

\textit{Step 4: Proof of~\eqref{CompatibilityBound4}.}
We give some details here, as in comparison to \textit{Step~1} or \textit{Step~3} the argument
in favor of~\eqref{CompatibilityBound4} involves an additional (though simple) reduction step. 
Starting with the definition~\eqref{GlobalVelocityField},
the estimates~\eqref{eq:upperBoundBulkCutOff}, \eqref{GlobalEquationsBoundMinorityPhases} 
and~\eqref{eq:regularityEstimatesLocalVelocities}, and in addition the product rule we obtain
\begin{align*}
&\mathds{1}_{\supp\eta_n} (\nabla B {-} \nabla B^n)
\\&
= -\mathds{1}_{\supp\eta_n} \eta_{\mathrm{bulk}} \nabla B^n
+ \sum_{n'=1,n'\neq n}^N \mathds{1}_{\supp\eta_n}\eta_{n'}(\nabla B^{n'} {-} \nabla B^n)
+ \sum_{n'=1}^N\mathds{1}_{\supp\eta_n} B^{n'}\otimes\nabla\eta_{n'}
\\&
= \sum_{\substack{n'=1,n'\neq n \\ \mathcal{T}_{n'}\subset\bar I_{i,j}}}^N 
\mathds{1}_{\supp\eta_n}\eta_{n'}(\nabla B^{n'} {-} \nabla B^n)
+ \sum_{n'=1}^N\mathds{1}_{\supp\eta_n} B^{n'}\otimes\nabla\eta_{n'}
\\&~~~
+ O\big(\bar r^{-2}(\bar r^{-2}d_{i,j}^2\wedge 1)\big).
\end{align*}
The first right hand side term is estimated to desired 
order~$O\big(\bar r^{-2}(\bar r^{-1}d_{i,j}\wedge 1)\big)$
based on the local compatibility estimate~\eqref{eq:localComp4}
and the above familiar reduction arguments. Adding zero in the second right
hand side term moreover entails
\begin{align*}
&\sum_{n'=1}^N\mathds{1}_{\supp\eta_n} B^{n'}\otimes\nabla\eta_{n'}
\\&
= \sum_{n'=1,n'\neq n}^N\mathds{1}_{\supp\eta_n} (B^{n'}{-}B^n)\otimes\nabla\eta_{n'}
-\mathds{1}_{\supp\eta_n} B^n\otimes\nabla\eta_{\mathrm{bulk}}.
\end{align*}
The previous reduction arguments in combination with
the local compatibility estimate~\eqref{eq:localComp3}, 
the upper bound~\eqref{eq:boundDerivBulkCutoff} for the gradient 
of the bulk cutoff, as well as the
regularity estimates~\eqref{eq:estimatesDerivEta} 
and~\eqref{eq:regularityEstimatesLocalVelocities}
thus show that $\sum_{n'=1}^N\mathds{1}_{\supp\eta_n} B^{n'}\otimes\nabla\eta_{n'}$
is of order $O\big(\bar r^{-2}(\bar r^{-1}d_{i,j} \wedge 1)\big)$. This concludes the proof.
\end{proof}

\subsection{Approximate transport and mean curvature flow equations}
\label{SectionBoundsTimeEvolutionGlobalConstruction}
We derive the global (or network) version of our previous bounds
from Lemma~\ref{LemmaBoundsLocalConstructionsTwoPhase} and  Proposition~\ref{prop:xi_triple_junction}, 
which are valid for the model problem of a smooth manifold and a triple junction, respectively.

\begin{lemma}
\label{LemmaGlobalCoercivityBounds}
Let $d=2$ and $P \in \mathbb{N}$, $P\geq 2$. Let $\bar\Omega=(\bar{\Omega}_1,\ldots,\bar{\Omega}_P)$ 
be a strong solution to multiphase mean curvature flow in the sense of Definition~\ref{DefinitionStrongSolution}. 
Let next~$\bar r_{\mathrm{min}}\in (0,1]$ be the localization scale defined by~\eqref{def:minLocScale}
and~$\mathcal{T}_{\mathcal{P}}:=\bigcup_{p\in\mathcal{P}}\mathcal{T}_p$.
Let $(\xi^n_{i,j})_{n\in\{1,\ldots,N\}}$ be the local vector fields 
from Lemma~\ref{LemmaLocalConstructionsNetwork} as well as $(B^n)_{n\in\{1,\ldots,N\}}$ be the local velocity fields 
from Construction~\ref{DefinitionGlobalB}. Let $\xi_{i,j}$ be the global vector 
fields from Construction~\ref{DefinitionGlobalXi}, and let $B$ be the global velocity field from 
Construction~\ref{DefinitionGlobalB}.

Then there exists a constant~$C>0$, depending only on the strong solution~$\bar\Omega$
but not on~$\bar r_{\mathrm{min}}$, so that we have the estimates
\begin{align}
\label{TransportEquationXiGlobal}
|\partial_t\xi_{i,j} + ( B\cdot\nabla)\xi_{i,j} + (\nabla B)^\mathsf{T}\xi_{i,j}| &\leq 
C\bar r_{\mathrm{min}}^{-2}\big(\bar r_{\mathrm{min}}^{-1}\dist(\cdot,{\bar{I}}_{i,j})\wedge 1\big), 
\\
\label{DissipGlobal}
|(\nabla\cdot\xi_{i,j}) +  B\cdot\xi_{i,j}| &\leq 
C\bar r_{\mathrm{min}}^{-1}\big(\bar r_{\mathrm{min}}^{-1}\dist(\cdot,{\bar{I}}_{i,j})\wedge 1\big),
\\
\label{LengthConservationGlobal}
\big|\xi_{i,j}\cdot\partial_t\xi_{i,j} + \xi_{i,j}\cdot(B\cdot\nabla)\xi_{i,j}\big| &\leq 
C\bar r_{\mathrm{min}}^{-2}\big(\bar r_{\mathrm{min}}^{-2}\dist^2(\cdot,{\bar{I}}_{i,j})\wedge 1\big) 
\end{align}
in $\Rd[2]{\times} [0,T]\setminus \mathcal{T}_{\mathcal{P}}$, for all $i,j\in\{1,\ldots,P\}$ with $i\neq j$.

Furthermore, the additional estimates mentioned in Remark~\ref{RemarkFurtherCalibrationProperties}
\begin{align}
\label{EstimateBnn}
|\nabla B : \xi_{i,j} \otimes \xi_{i,j}|(x,t)
&\leq C 
\dist(x,{\bar{I}}_{i,j}(t))
\qquad\text{for all }i\neq j,
\\
\label{EstimateBSkew}
\big|\nabla B : \big(\xi_{i,j} \otimes J\xi_{i,j}+J\xi_{i,j} \otimes \xi_{i,j}\big)\big|(x,t)
&\leq C 
\dist(x,{\bar{I}}_{i,j}(t))
\qquad\text{for all }i\neq j,
\end{align}
may be enforced, where the matrix $J$ denotes the counter-clockwise
rotation by $90^\circ$.
\end{lemma}
\begin{proof}
Let~$i,j\in\{1,\ldots,P\}$ such that~$i\neq j$.
For notational convenience, we again abbreviate for the purpose of the proof~$\bar r:= \bar r_{\mathrm{min}}$
and~$d_{i,j}:=\dist(\cdot,\bar I_{i,j})$. Recall that the distinct phases~$i$ and~$j$
are both present at a given topological feature~$\mathcal{T}_n$, $n\in\{1,\ldots,N\}$,
if and only if~$\mathcal{T}_n\subset\bar I_{i,j}$.

\textit{Step 1: Proof of~\eqref{TransportEquationXiGlobal}.}
By the product rule, the definition~\eqref{GlobalXi}, the regularity
estimates~\eqref{eq:regularityEstimatesLocalXi} and~\eqref{eq:regularityEstimatesLocalVelocities},
as well as the error 
estimates~\eqref{GlobalEquationsBoundMinorityPhases}--\eqref{GlobalEquationsBoundTimeDerivMinorityPhases}
we compute
\begin{align*}
\partial_t\xi_{i,j} {+} (B\cdot\nabla)\xi_{i,j}
&= \sum_{n=1,\mathcal{T}_n\subset\bar I_{i,j}}^N
\eta_n(\partial_t{+}B\cdot\nabla)\xi^n_{i,j}
+ \sum_{n=1,\mathcal{T}_n\subset\bar I_{i,j}}^N
\xi^n_{i,j}(\partial_t{+}B\cdot\nabla)\eta_n
\\&~~~
+ O\big(\bar r^{-2}(\bar r^{-1}d_{i,j}\wedge 1)\big).
\end{align*}
Next, it follows from adding zero, the compatibility estimate~\eqref{CompatibilityBound1},
the regularity bound~\eqref{eq:estimatesDerivEta},
and again~\eqref{eq:regularityEstimatesLocalVelocities}, \eqref{GlobalEquationsBoundGradientMinorityPhases}
and~\eqref{GlobalEquationsBoundTimeDerivMinorityPhases} that
\begin{align*}
\sum_{n=1,\mathcal{T}_n\subset\bar I_{i,j}}^N
\xi^n_{i,j}(\partial_t{+}B\cdot\nabla)\eta_n
&= \sum_{n=1,\mathcal{T}_n\subset\bar I_{i,j}}^N
\xi_{i,j} (\partial_t{+}B\cdot\nabla)\eta_n
+ O\big(\bar r^{-2}(\bar r^{-1}d_{i,j}\wedge 1)\big)
\\&
= -\xi_{i,j}(\partial_t{+}B\cdot\nabla)\eta_{\mathrm{bulk}}
+ O\big(\bar r^{-2}(\bar r^{-1}d_{i,j}\wedge 1)\big).
\end{align*}
Thanks to the compatibility estimate~\eqref{CompatibilityBound3}
and the regularity estimate~\eqref{eq:regularityEstimatesLocalXi},
we also have
\begin{align*}
\sum_{n=1,\mathcal{T}_n\subset\bar I_{i,j}}^N
\eta_n(B\cdot\nabla)\xi^n_{i,j}
= \sum_{n=1,\mathcal{T}_n\subset\bar I_{i,j}}^N
\eta_n(B^n\cdot\nabla)\xi^n_{i,j} 
+ O\big(\bar r^{-2}(\bar r^{-1}d_{i,j}\wedge 1)\big).
\end{align*}
Together with the upper bounds~\eqref{eq:boundDerivBulkCutoff} resp.\ \eqref{eq:boundTimeDerivBulkCutoff} 
for the bulk cutoff and the regularity estimate~\eqref{eq:regularityEstimatesLocalVelocities},
the previous three displays combine to
\begin{align}
\label{eq:auxTransportXi1}
\partial_t\xi_{i,j} {+} (B\cdot\nabla)\xi_{i,j}
&= \sum_{n=1,\mathcal{T}_n\subset\bar I_{i,j}}^N
\eta_n(\partial_t{+}B^n\cdot\nabla)\xi^n_{i,j}
+ O\big(\bar r^{-2}(\bar r^{-1}d_{i,j}\wedge 1)\big).
\end{align}

In a next step, we compute based on the product rule,
the definitions~\eqref{GlobalXi} and~\eqref{GlobalVelocityField},
the error estimate~\eqref{GlobalEquationsBoundMinorityPhases},
the regularity estimates~\eqref{eq:regularityEstimatesLocalVelocities}
and~\eqref{eq:estimatesDerivEta},
as well as the compatibility estimate~\eqref{CompatibilityBound4}
\begin{align}
\nonumber
(\nabla B)^\mathsf{T}\xi_{i,j} 
&= \sum_{n=1,\mathcal{T}_n\subset\bar I_{i,j}}^N
\eta_n (\nabla B)^\mathsf{T}\xi^n_{i,j}
+ O\big(\bar r^{-2}(\bar r^{-1}d_{i,j}\wedge 1)\big)
\\& \label{eq:auxTransportXi2}
= \sum_{n=1,\mathcal{T}_n\subset\bar I_{i,j}}^N
\eta_n (\nabla B^n)^\mathsf{T}\xi^n_{i,j}
+ O\big(\bar r^{-2}(\bar r^{-1}d_{i,j}\wedge 1)\big).
\end{align}
Hence, in view of~\eqref{eq:auxTransportXi1} and~\eqref{eq:auxTransportXi2}
we reduced the task to the local evolution equations at topological
features for which both phases~$i$ and~$j$ are present:
\begin{align*}
\partial_t\xi_{i,j} {+} (B\cdot\nabla)\xi_{i,j}
{+} (\nabla B)^\mathsf{T}\xi_{i,j}
&= \sum_{n=1,\mathcal{T}_n\subset\bar I_{i,j}}^N
\eta_n \big(\partial_t\xi_{i,j}^n {+} (B^n\cdot\nabla)\xi^n_{i,j}
{+}(\nabla B^n)^\mathsf{T}\xi^n_{i,j}\big)
\\&~~~
+ O\big(\bar r^{-2}(\bar r^{-1}d_{i,j}\wedge 1)\big).
\end{align*}
To conclude that~\eqref{TransportEquationXiGlobal} holds, it thus only
remains to observe that the bounds on the local evolution equations~\eqref{TransportEquationXiTwoPhase}
and~\eqref{TransportEquationXiTriod},
respectively, are applicable due to the localization
properties~\eqref{LocalizationTwoPhase}--\eqref{LocalizationTripleJunction} 
and the definitions~\eqref{def:minLocScaleTripleJunction}--\eqref{def:minLocScale}.

\textit{Step 2: Proof of~\eqref{DissipGlobal}.}
We proceed in the same style as for the proof of~\eqref{TransportEquationXiGlobal}.
On one side, it is immediate from the definitions~\eqref{GlobalXi} and~\eqref{GlobalVelocityField},
the error estimate~\eqref{GlobalEquationsBoundMinorityPhases},
the regularity estimates~\eqref{eq:regularityEstimatesLocalXi}
and~\eqref{eq:regularityEstimatesLocalVelocities},
as well as the compatibility estimate~\eqref{CompatibilityBound3}
\begin{align*}
B\cdot\xi_{i,j} &= \sum_{n=1,\mathcal{T}_n\subset\bar I_{i,j}}^N
\eta_n B\cdot\xi^n_{i,j} + O\big(\bar r^{-1}(\bar r^{-1}d_{i,j}\wedge 1)\big)
\\&
= \sum_{n=1,\mathcal{T}_n\subset\bar I_{i,j}}^N
\eta_n B^n\cdot\xi^n_{i,j} + O\big(\bar r^{-1}(\bar r^{-1}d_{i,j}\wedge 1)\big).
\end{align*}
On the other side, we have by the definition~\eqref{GlobalXi},
the product rule, the error 
estimates~\eqref{GlobalEquationsBoundMinorityPhases}--\eqref{GlobalEquationsBoundGradientMinorityPhases},
the regularity estimates~\eqref{eq:regularityEstimatesLocalXi} 
and~\eqref{eq:estimatesDerivEta}, the compatibility estimate~\eqref{CompatibilityBound1},
and finally the upper bound~\eqref{eq:boundDerivBulkCutoff} for the bulk cutoff
\begin{align*}
\nabla\cdot\xi_{i,j} &= 
\sum_{n=1,\mathcal{T}_n\subset\bar I_{i,j}}^N \eta_n(\nabla\cdot\xi^n_{i,j}) 
+ \sum_{n=1,\mathcal{T}_n\subset\bar I_{i,j}}^N (\xi^n_{i,j}\cdot\nabla)\eta_n
+ O\big(\bar r^{-1}(\bar r^{-1}d_{i,j}\wedge 1)\big)
\\&
= \sum_{n=1,\mathcal{T}_n\subset\bar I_{i,j}}^N \eta_n(\nabla\cdot\xi^n_{i,j}) 
+ \sum_{n=1,\mathcal{T}_n\subset\bar I_{i,j}}^N (\xi_{i,j}\cdot\nabla)\eta_n
+ O\big(\bar r^{-1}(\bar r^{-1}d_{i,j}\wedge 1)\big)
\\&
= \sum_{n=1,\mathcal{T}_n\subset\bar I_{i,j}}^N \eta_n(\nabla\cdot\xi^n_{i,j}) 
-(\xi_{i,j}\cdot\nabla)\eta_{\mathrm{bulk}}
+ O\big(\bar r^{-1}(\bar r^{-1}d_{i,j}\wedge 1)\big)
\\&
= \sum_{n=1,\mathcal{T}_n\subset\bar I_{i,j}}^N \eta_n(\nabla\cdot\xi^n_{i,j}) 
+ O\big(\bar r^{-1}(\bar r^{-1}d_{i,j}\wedge 1)\big).
\end{align*}
The previous two displays in total imply
\begin{align*}
\nabla\cdot\xi_{i,j} + B\cdot\xi_{i,j}
&= \sum_{n=1,\mathcal{T}_n\subset\bar I_{i,j}}^N 
\eta_n \big(\nabla\cdot\xi^n_{i,j} {+} B^n\cdot\xi^n_{i,j}\big) 
+ O\big(\bar r^{-1}(\bar r^{-1}d_{i,j}\wedge 1)\big),
\end{align*}
so that~\eqref{DissipGlobal} follows due to
its local counterparts~\eqref{DissipTwoPhase} 
and~\eqref{DissipTriod}, respectively.

\textit{Step 3: Proof of~\eqref{LengthConservationGlobal}.}
We first claim that
\begin{equation}
\begin{aligned}
\label{eq:auxTransportXi3}
&\xi_{i,j}\cdot(\partial_t{+}B\cdot\nabla)\xi_{i,j}
\\
&= \sum_{\substack{n,n'=1 \\ \mathcal{T}_n,\mathcal{T}_{n'}\subset\bar I_{i,j}}}^N
\eta_n\eta_{n'}\xi^n_{i,j}\cdot(\partial_t{+}B^{n'}\cdot\nabla)\xi^{n'}_{i,j}
+ O\big(\bar r^{-2}(\bar r^{-2}d_{i,j}^2 \wedge 1)\big).
\end{aligned}
\end{equation}
For a proof of~\eqref{eq:auxTransportXi3} one may argue as follows.
First, plugging in the definition~\eqref{GlobalXi}, applying the product rule, and making use
of the error estimate~\eqref{GlobalEquationsBoundMinorityPhases}
as well as the regularity estimates~\eqref{eq:regularityEstimatesLocalXi}
and~\eqref{eq:estimatesDerivEta} entails
\begin{align*}
\xi_{i,j}\cdot\partial_t\xi_{i,j}
&=
\sum_{n=1,\mathcal{T}_n\subset\bar I_{i,j}}^N \eta_n\xi^n_{i,j}\cdot\partial_t\xi_{i,j}
+ O\big(\bar r^{-2}(\bar r^{-2}d_{i,j}^2 \wedge 1)\big)
\\&
= \sum_{\substack{n,n'=1 \\ \mathcal{T}_n,\mathcal{T}_{n'}\subset\bar I_{i,j}}}^N
\eta_n\eta_{n'}\xi^n_{i,j}\cdot\partial_t\xi^{n'}_{i,j}
+ \sum_{n=1,\mathcal{T}_n\subset\bar I_{i,j}}^N\sum_{n'=1}^N
\eta_{n}(\xi^n_{i,j}\cdot\xi^{n'}_{i,j})\partial_t\eta_{n'}
\\&~~~
+ O\big(\bar r^{-2}(\bar r^{-2}d_{i,j}^2 \wedge 1)\big).
\end{align*}
Substituting the differential operator~$(B\cdot\nabla)$ for~$\partial_t$,
and recalling in addition to the above ingredients the regularity
estimate~\eqref{eq:regularityEstimatesLocalVelocities} 
as well as the compatibility estimate~\eqref{CompatibilityBound3}
(which allows to switch from~$B$ to~$B^{n'}$)
then also yields
\begin{align*}
\xi_{i,j}\cdot(B\cdot\nabla)\xi_{i,j}
&=
\sum_{\substack{n,n'=1 \\ \mathcal{T}_n,\mathcal{T}_{n'}\subset\bar I_{i,j}}}^N
\eta_n\eta_{n'}\xi^n_{i,j}\cdot(B^{n'}\cdot\nabla)\xi^{n'}_{i,j}
\\&~~~
+ \sum_{n=1,\mathcal{T}_n\subset\bar I_{i,j}}^N\sum_{n'=1}^N
\eta_{n}(\xi^n_{i,j}\cdot\xi^{n'}_{i,j})(B\cdot\nabla)\eta_{n'}
+ O\big(\bar r^{-2}(\bar r^{-2}d_{i,j}^2 \wedge 1)\big).
\end{align*}
Observe that the combination of the previous two displays
already generates the first right hand side term of~\eqref{eq:auxTransportXi3}.

We proceed by first splitting the sum over topological features~$n'\in\{1,\ldots,N\}$,
adding zero several times in the resulting first term,
then applying the compatibility estimates~\eqref{CompatibilityBound1}, \eqref{CompatibilityBound2}
and~\eqref{eq:localComp1}, and finally recalling the regularity estimate~\eqref{eq:estimatesDerivEta}
which results in the estimate (of course, only terms with~$\supp\eta_n\cap\supp\eta_{n'}\neq\emptyset$
are relevant in the subsequent sums)
\begin{align*}
&\sum_{n=1,\mathcal{T}_n\subset\bar I_{i,j}}^N\sum_{n'=1}^N
\eta_{n}(\xi^n_{i,j}\cdot\xi^{n'}_{i,j})\partial_t\eta_{n'}
\\&
=\sum_{\substack{n,n'=1 \\ \mathcal{T}_n,\mathcal{T}_{n'}\subset\bar I_{i,j}}}^N
\eta_{n}(\xi^n_{i,j}\cdot\xi^{n'}_{i,j})\partial_t\eta_{n'}
+ \sum_{\substack{n,n'=1 \\ \mathcal{T}_n\subset\bar I_{i,j},
\mathcal{T}_{n'}\not\subset\bar I_{i,j}}}^N
\eta_{n}(\xi^n_{i,j}\cdot\xi^{n'}_{i,j})\partial_t\eta_{n'}
\\&
=\sum_{\substack{n,n'=1 \\ \mathcal{T}_n,\mathcal{T}_{n'}\subset\bar I_{i,j}}}^N
\eta_{n}\big(|\xi_{i,j}|^2{-}|\xi^{n}_{i,j}{-}\xi_{i,j}|^2
+(\xi^n_{i,j}{-}\xi_{i,j})\cdot\xi^n_{i,j}
+\xi^{n'}_{i,j}\cdot (\xi^{n'}_{i,j}{-}\xi_{i,j})\big)\partial_t\eta_{n'}
\\&~~~
+ \sum_{\substack{n,n'=1 \\ \mathcal{T}_n,\mathcal{T}_{n'}\subset\bar I_{i,j}}}^N
\eta_{n}(\xi^n_{i,j}{-}\xi^{n'}_{i,j})\cdot(\xi^{n'}_{i,j}{-}\xi_{i,j})\partial_t\eta_{n'}
+ \sum_{\substack{n,n'=1 \\ \mathcal{T}_n\subset\bar I_{i,j},
\mathcal{T}_{n'}\not\subset\bar I_{i,j}}}^N
\eta_{n}(\xi^n_{i,j}\cdot\xi^{n'}_{i,j})\partial_t\eta_{n'}
\\&
= \sum_{\substack{n,n'=1 \\ \mathcal{T}_n,\mathcal{T}_{n'}\subset\bar I_{i,j}}}^N
\eta_n|\xi_{i,j}|^2\partial_t\eta_{n'}
+ \sum_{\substack{n,n'=1 \\ \mathcal{T}_n\subset\bar I_{i,j},
\mathcal{T}_{n'}\not\subset\bar I_{i,j}}}^N
\eta_{n}(\xi^n_{i,j}\cdot\xi^{n'}_{i,j})\partial_t\eta_{n'}
+ O\big(\bar r^{-2}(\bar r^{-2}d_{i,j}^2 \wedge 1)\big).
\end{align*}
Based on the regularity estimates~\eqref{eq:estimatesDerivEta} 
and~\eqref{eq:regularityEstimatesLocalVelocities}, we may again
substitute the differential operator~$(B\cdot\nabla)$ for~$\partial_t$
in the previous computation, which in turn by two applications of the crucial 
estimate~\eqref{eq:advectionWrongCutoffs} and finally
an application of the bulk cutoff estimates~\eqref{AdvectionEquationBulkLocalization}
resp.\ \eqref{eq:upperBoundBulkCutOff} allows to deduce
\begin{align*}
&\sum_{n=1,\mathcal{T}_n\subset\bar I_{i,j}}^N\sum_{n'=1}^N
\eta_{n}(\xi^n_{i,j}\cdot\xi^{n'}_{i,j})(\partial_t {+} B\cdot\nabla)\eta_{n'}
\\&
= \sum_{n,\mathcal{T}_n\subset\bar I_{i,j}}^N\sum_{n'=1}^N
\eta_n|\xi_{i,j}|^2(\partial_t {+} B\cdot\nabla)\eta_{n'}
\\&~~~
+ \sum_{\substack{n,n'=1 \\ \mathcal{T}_n\subset\bar I_{i,j},
\mathcal{T}_{n'}\not\subset\bar I_{i,j}}}^N
\eta_{n}(\xi^n_{i,j}\cdot\xi^{n'}_{i,j})(\partial_t{+}B\cdot\nabla)\eta_{n'}
+ O\big(\bar r^{-2}(\bar r^{-2}d_{i,j}^2 \wedge 1)\big)
\\&
= -(1{-}\eta_{\mathrm{bulk}})|\xi_{i,j}|^2(\partial_t{+}B\cdot\nabla)\eta_{\mathrm{bulk}}
+ O\big(\bar r^{-2}(\bar r^{-2}d_{i,j}^2 \wedge 1)\big)
=  O\big(\bar r^{-2}(\bar r^{-2}d_{i,j}^2 \wedge 1)\big).
\end{align*} 
In particular, we obtain the asserted estimate~\eqref{eq:auxTransportXi3}.

It remains to post-process the right hand side term of~\eqref{eq:auxTransportXi3}.
In view of~\eqref{LengthConservationTwoPhase} and~\eqref{LengthConservationXiTriodNormalized},
it suffices to get rid of the ``off-diagonal'' terms~$n\neq n'\in\{1,\ldots,N\}$
with~$\mathcal{T}_n\subset\bar I_{i,j}$, $\mathcal{T}_{n'}\subset\bar I_{i,j}$
and~$\supp\eta_{n}\cap\supp\eta_{n'}\neq\emptyset$.
For each such pair of topological features we may add zero several times to rewrite
(recall again
the local identities~\eqref{LengthConservationTwoPhase} and~\eqref{LengthConservationXiTriodNormalized})
\begin{align*}
&\xi^n_{i,j}\cdot(\partial_t{+}B^{n'}\cdot\nabla)\xi^{n'}
\\&
= \xi^n_{i,j}\cdot\big(\partial_t\xi^{n'}_{i,j}
{+}(B^{n'}\cdot\nabla)\xi^{n'}_{i,j}{+}(\nabla B^{n'})^{\mathsf{T}}\xi^{n'}_{i,j}\big)
- \xi^n_{i,j}(\nabla B^{n'})^{\mathsf{T}}\xi^{n'}_{i,j}
\\&
= (\xi^n_{i,j}{-}\xi^{n'}_{i,j})\cdot\big(\partial_t\xi^{n'}_{i,j}
{+}(B^{n'}\cdot\nabla)\xi^{n'}_{i,j}{+}(\nabla B^{n'})^{\mathsf{T}}\xi^{n'}_{i,j}\big)
+(\xi^{n'}_{i,j} {-} \xi^n_{i,j})(\nabla B)^{\mathsf{T}}\xi^{n'}_{i,j}
\\&~~~
+ (\xi^{n'}_{i,j} {-} \xi^n_{i,j})(\nabla B^{n'}{-}\nabla B)^{\mathsf{T}}\xi^{n'}_{i,j}.
\end{align*}
Hence, summing the previous identity over the relevant topological features,
then matching terms which correspond to the previous computation but with
the roles of~$n$ and~$n'$ being reversed, and finally using the 
compatibility estimates~\eqref{CompatibilityBound4} resp.\ \eqref{eq:localComp1}
as well as the local evolution equations~\eqref{TransportEquationXiTwoPhase}
and~\eqref{TransportEquationXiTriod} we infer that
\begin{align*}
\sum_{\substack{n,n'=1 \\ \mathcal{T}_n,\mathcal{T}_{n'}\subset\bar I_{i,j}}}^N
\eta_n\eta_{n'}\xi^n_{i,j}\cdot(\partial_t{+}B^{n'}\cdot\nabla)\xi^{n'}_{i,j}
= O\big(\bar r^{-2}(\bar r^{-2}d_{i,j}^2 \wedge 1)\big).
\end{align*}
This in turn constitutes the required upgrade of~\eqref{eq:auxTransportXi3}.

Similarly, the bounds \eqref{EstimateBnn} and \eqref{EstimateBSkew} are a consequence of the corresponding bounds from Lemma~\ref{LemmaBoundsLocalConstructionsTwoPhase}, Proposition~\ref{prop:xi_triple_junction} and the compatibility estimates.
\end{proof}

\subsection{Existence of gradient flow calibrations: Proof of Theorem~\ref{TheoremExistenceCalibration}}
Let us summarize our results from the previous sections to conclude with
a proof of the main result.

\begin{proof}[Proof of Theorem~\ref{TheoremExistenceCalibration}]
Let $(\xi_{i,j})_{i\neq j}$ be the family of global vector fields 
from Construction~\ref{DefinitionGlobalXi}.
Let $i,j \in \{1,\ldots,P\}$ with $i\neq j$. The coercivity condition 
\eqref{LengthControlXi} immediately follows from Lemma~\ref{lem:coercivityLenghtXi}.
The formula \eqref{Calibrations} follows from the corresponding local version \eqref{CalibrationLocalConstruction}
and the definition \eqref{GlobalXi}. Moreover, that $\xi_{i,j}(x,t)=\bar{\vec{n}}_{i,j}(x,t)$ holds true 
for all $t\in [0,T]$ and $x\in {\bar{I}}_{i,j}(t)$
is a consequence of Lemma~\ref{LemmaLocalConstructionsNetwork}~\textit{iii)}  
and that $(\eta_1,\ldots,\eta_N)$ is a partition of unity on the network of interfaces of the strong solution 
(see Lemma~\ref{LemmaPartitionOfUnity}~\textit{i)}).

Finally, let $B$ be the global velocity field from Construction~\ref{DefinitionGlobalB}.
The validity of the equations \eqref{TransportEquationXi}, \eqref{LengthConservation}
and \eqref{Dissip} is then the content of Lemma~\ref{LemmaGlobalCoercivityBounds}.
\end{proof}

\section{Existence of transported weights: Proof of Lemma~\ref{LemmaWeightedVolumeControl}}
\label{SectionVolumeError}
The aim of this section is to establish the existence of a family of transported weights
in the case of $d=2$ and an underlying strong solution of multiphase
mean curvature flow.

\begin{proof}[Proof of Lemma~\ref{LemmaWeightedVolumeControl}]
We again make use of the description of the network of interfaces
of the strong solution in terms of its underlying topological features,
namely two-phase interfaces and triple junctions. Assume that there
is a total of~$N\in\mathbb{N}$ such topological features present.
Recall then that we decomposed 
$\{1,\ldots,N\}=:\mathcal{C} \cupdot\mathcal{P}$ with the convention that $\mathcal{C}$ 
enumerates the connected components in space-time of the smooth two-phase interfaces 
and $\mathcal{P}$ enumerates the triple junctions.
If $p\in \mathcal{P}$, we defined $\mathcal{T}_p$ to be the trajectory in space-time described by the triple junction. 
If $c\in \mathcal{C}$, we defined $\mathcal{T}_c \subset {\bar{I}}_{i,j}$ 
for some $i,j \in \{1,\ldots,P\}$ with $i\neq j$ to be the corresponding space-time 
connected component of a two-phase interface ${\bar{I}}_{i,j}$.
We further write~$c\sim p$ for~$c\in\mathcal{C}$ and~$p\in\mathcal{P}$
if and only if~$\mathcal{T}_c$ has an endpoint at~$\mathcal{T}_p$. 

Let now~$r_{\mathcal{P}}$ and~$\bar r_{\mathrm{min}}$ be the localization
scales from~\eqref{def:minLocScaleTripleJunction} and~\eqref{def:minLocScale}.
We then choose a large-scale cutoff~$R>0$ such that for all~$t\in [0,T]$ a suitable
neighborhood of the network of interfaces at time~$t$ is compactly
supported in the ball $B_R(0)$:
\begin{align}
\label{eq:cutoffScale}
\bigcup_{p\in\mathcal{P}} B_{r_{\mathcal{P}}}(\mathcal{T}_p(t)) \cup
\Big(\bigcup_{c\in\mathcal{C}}\mathrm{im}_{\bar r_{\mathrm{min}}}(\Psi_{\mathcal{T}_c})(t)
\setminus\bigcup_{p\in\mathcal{P}} B_{r_{\mathcal{P}}}(\mathcal{T}_p(t))\Big)
\subset\subset B_R(0),
\end{align}
where we abbreviated~$\mathrm{im}_{\bar r_{\mathrm{min}}}(\Psi_{\mathcal{T}_c})(t)
:=\Psi_{\mathcal{T}_c}(\mathcal{T}_c(t)
{\times}\{t\}{\times}[-\bar r_{\mathrm{min}},\bar r_{\mathrm{min}}])$
for~$t\in [0,T]$, and where~$\Psi_{\mathcal{T}_c}$ refers to the restriction
of the diffeomorphism~\eqref{DiffeoTubularNeighborhood} to~$\mathcal{T}_c$
(assuming~$\mathcal{T}_c\subset\bar I_{i,j}$).

The idea for the proof is to construct in the first part a family of
weight functions $(\hat\vartheta_i)_{i\in\{1,\ldots,P\}}$ which satisfies 
all the requirements of Definition~\ref{def:transportedWeights}  
but violates the integrability condition $\hat\vartheta_i\in L^1_{x,t}(\Rd[2]\times [0,T])$.
To overcome the integrability issue at the end of the proof, we introduce
a smooth and concave function $\kappa\colon [0,\infty)\to [0,1]$
such that $\kappa(r)=1$ for $r\geq 1$, $\kappa'(r)\in (0,2)$ for $r\in (0,1)$
and $\kappa(0)=0$. Note that $\kappa$ represents an upper concave approximation
of $r\mapsto r\wedge 1$ on the interval $[0,\infty)$. We next define an integrable weight
$\eta_R\in W^{1,\infty}_x(\Rd[2])\cap W^{1,1}_x(\Rd[2])$
by means of
\begin{align}
\label{eq:intWeight}
\eta_R(x) := \kappa(\exp(R-|x|)),\quad x\in\Rd[2],
\end{align}
whose spatial gradient is now subject to the following convenient estimate
\begin{align}
\label{eq:gradientBoundIntWeight}
|\nabla\eta_R| \leq C|\eta_R| \quad\text{in } \Rd[2].
\end{align}
We will then define $\vartheta_i:=\eta_R\hat\vartheta_i$,
and verify in a second part that all the requirements of Definition~\ref{def:transportedWeights} 
are indeed satisfied for this choice of weight functions. 

\textit{Step 1: Construction of $(\hat\vartheta_i)_{i\in\{1,\ldots,P\}}$.}
Let $\vartheta\colon\Rd[]\to\Rd[]$ be a 
truncation of the identity with $\vartheta(r)=r$ for $|r|\leq\frac{1}{2}$, 
$\vartheta(r)= -1$ for $r\leq -1$, $\vartheta(r)=1$ for $r\geq 1$,  
$0\leq\vartheta'\leq 2$ as well as $|\vartheta''|\leq C$.
Fix $i\in\{1,\ldots,P\}$.
For purely technical reasons (similar to the one described in \textit{Step~3},
Proof of Lemma~\ref{LemmaPartitionOfUnity}), we need to introduce
another constant~$\delta\in (0,1]$ which will be determined
in the course of the proof (depending only on the surface tensions
associated with the strong solution).

We start with the definition of~$\hat\vartheta_i$ away from the
(relevant part of the) network of interfaces. To this end,
we define subsets~$\mathcal{P}_i\subset\mathcal{P}$ and~$\mathcal{C}_i\subset\mathcal{C}$
which collect those triple junctions and two-phase interfaces for which
the phase~$i$ is present, respectively. We then define for all~$t\in [0,T]$
\begin{align}
\label{eq:defAuxWeightBulk1}
&\hat\vartheta_i(\cdot,t) := -1 
\\& \nonumber
\text{in } \bar\Omega_i(t)\setminus \bigcup_{p\in\mathcal{P}_i} B_{r_{\mathcal{P}}}(\mathcal{T}_p(t)) \cup
\Big(\bigcup_{c\in\mathcal{C}_i}\mathrm{im}_{\bar r_{\mathrm{min}}}(\Psi_{\mathcal{T}_c})(t)
\setminus\bigcup_{p\in\mathcal{P}_i} B_{r_{\mathcal{P}}}(\mathcal{T}_p(t))\Big),
\\
\label{eq:defAuxWeightBulk2}
&\hat\vartheta_i(\cdot,t) := \phantom{+}1 
\\& \nonumber
\text{in } \big(\Rd[2]\setminus\bar\Omega_i(t)\big)
\setminus \bigcup_{p\in\mathcal{P}_i} B_{r_{\mathcal{P}}}(\mathcal{T}_p(t)) \cup
\Big(\bigcup_{c\in\mathcal{C}_i}\mathrm{im}_{\bar r_{\mathrm{min}}}(\Psi_{\mathcal{T}_c})(t)
\setminus\bigcup_{p\in\mathcal{P}_i} B_{r_{\mathcal{P}}}(\mathcal{T}_p(t))\Big).
\end{align}
By the definitions~\eqref{def:minLocScaleTripleJunction} and~\eqref{def:minLocScale}
of the scales~$r_{\mathcal{P}}$ and~$\bar r_{\mathrm{min}}$, we may
provide the further construction of~$\hat\vartheta_i$ separately within
$\mathrm{im}_{\bar r_{\mathrm{min}}}(\Psi_{\mathcal{T}_c})(t)
\setminus\bigcup_{p\in\mathcal{P}_i} B_{r_{\mathcal{P}}}(\mathcal{T}_p(t))$
for each~$c\in\mathcal{C}_i$ and within~$B_{r_{\mathcal{P}}}(\mathcal{T}_p(t))$
for each~$p\in\mathcal{P}_i$, respectively.

For each~$c\in\mathcal{C}_i$, and assuming for notational concreteness 
that~$\mathcal{T}_c\subset\bar I_{i,j}$ for some~$j\in\{1\ldots,P\}\setminus\{i\}$,
we simply define for all~$t\in [0,T]$
\begin{align}
\label{DefinitionWeightTwoPhase}
\hat\vartheta_i(\cdot,t) := \vartheta\Big(\frac{s_{i,j}(\cdot,t)}
{\delta\bar r_{\mathrm{min}}}\Big),
\quad \text{in } \mathrm{im}_{\bar r_{\mathrm{min}}}(\Psi_{\mathcal{T}_c})(t)
\setminus\bigcup_{p\in\mathcal{P}_i} B_{r_{\mathcal{P}}}(\mathcal{T}_p(t)),
\end{align} 
where the signed distance~$s_{i,j}$ was introduced in~\eqref{SignedDistanceTwoPhase}.

Now, consider a triple junction~$p\in\mathcal{P}_i$. We assume that
the pairwise distinct phases present at~$\mathcal{T}_p$ are given by~$i,j,k\in\{1,\ldots,P\}$.
Recall from Definition~\ref{def:locRadiusTripleJunction} that $B_{r_{\mathcal{P}}}(\mathcal{T}_p)$
decomposes into six wedges. Three of them, namely the interface wedges $W_{i,j}$, $W_{j,k}$ resp.\ $W_{k,i}$,
contain the interfaces $\mathcal{T}_{c_{i,j}}$, $\mathcal{T}_{c_{j,k}}$ resp.\ $\mathcal{T}_{c_{k,i}}$.
The other three are interpolation wedges denoted by $W_{i}$, $W_{j}$ resp.\ $W_{k}$.
For the definition of~$\hat\vartheta_i$ on the latter wedges, we rely on
the interpolation parameter built in Lemma~\ref{lemma:interpolation_functions}. 
To clarify the direction of interpolation, i.e., on which boundary of the interpolation wedge the corresponding
interpolation function is equal to one or zero, we make use of the following
notational convention. For the interpolation wedge~$W_i$, say, we denote 
by~$\lambda_{i}^{j,k}$ the interpolation function as built in Lemma~\ref{lemma:interpolation_functions}
and which interpolates from~$j$ to~$k$ in the sense that it is equal 
to one on $(\partial W_{i,j}\cap\partial W_{i})\setminus\mathcal{T}_{p}$ and 
which vanishes on $(\partial W_{k,i}\cap\partial W_{i})\setminus\mathcal{T}_{p}$. We also define
$\lambda_{i}^{k,j}:= 1-\lambda_{i}^{j,k}$ which interpolates on $W_{i}$ in the opposite direction from~$k$ to~$j$.
Analogously, one introduces the interpolation functions on the other interpolation wedges.

We now define the weight function~$\hat\vartheta_i$ 
for all~$t\in [0,T]$ on the ball~$B_{r_{\mathcal{P}}}(\mathcal{T}_p(t))$
as follows:
\begin{align}
\label{DefinitionWeightWedgeInterface}
\hat\vartheta_i(\cdot,t) := \vartheta\Big(\frac{s_{i,j}(\cdot,t)}
{\delta\bar r_{\mathrm{min}}}\Big),
\quad \text{in } W_{i,j}(t)\cap B_{r_{\mathcal{P}}}(\mathcal{T}_p(t)),
\end{align}
and analogously on the interface wedge~$W_{i,k}$, whereas we interpolate on
the interpolation wedge~$W_{i}$ by means of
\begin{align}
\label{DefinitionWeightInterpolationWedge}
\hat\vartheta_i(\cdot,t) := 
\lambda_{i}^{j,k}(\cdot,t)\vartheta\Big(\frac{s_{i,j}(\cdot,t)}
{\delta\bar r_{\mathrm{min}}}\Big)
+\lambda_{i}^{k,j}(\cdot,t)\vartheta\Big(\frac{s_{i,k}(\cdot,t)}
{\delta\bar r_{\mathrm{min}}}\Big),
\,\,\text{in } W_{i}(t)\cap B_{r_{\mathcal{P}}}(\mathcal{T}_p(t)).
\end{align} 
Furthermore, we define
\begin{align}
\label{DefinitionWeightComplementWedge}
\hat\vartheta_i(\cdot,t) := \vartheta\Big(\frac{\dist(\cdot,\mathcal{T}_p(t))}{\delta\bar r_{\mathrm{min}}}\Big),
\quad \text{in } W_{j,k}(t)\cap B_{r_{\mathcal{P}}}(\mathcal{T}_p(t)),
\end{align}
whereas we again interpolate
on the interpolation wedge~$W_j$ via
\begin{align}
\label{DefinitionWeightInterpolationWedge2}
\hat\vartheta_i(\cdot,t) := 
\lambda_{j}^{k,i}(\cdot,t)\vartheta\Big(\frac{\dist(\cdot,\mathcal{T}_p(t))}{\delta\bar r_{\mathrm{min}}}\Big)
+\lambda_{j}^{i,k}(\cdot,t)\vartheta\Big(\frac{s_{i,j}(\cdot,t)}{\delta\bar r_{\mathrm{min}}}\Big),
\,\, \text{in } W_{j}(t)\cap B_{r_{\mathcal{P}}}(\mathcal{T}_p(t)),
\end{align} 
and analogously for the interpolation wedge~$W_k$.

\textit{Step 2: Regularity of $(\hat\vartheta_i)_{i\in\{1,\ldots,P\}}$.}
First of all, it is immediate from the above 
definitions~\eqref{eq:defAuxWeightBulk1}--\eqref{DefinitionWeightInterpolationWedge2} 
that the coercivity properties of Definition~\ref{def:transportedWeights}
hold true as required. Choosing~$\delta\in (0,1]$ as in
\textit{Step~3}, Proof of Lemma~\ref{LemmaPartitionOfUnity},
ensures that the definitions~\eqref{DefinitionWeightWedgeInterface}--\eqref{DefinitionWeightInterpolationWedge2}
close to triple junctions are compatible with the bulk 
definitions~\eqref{eq:defAuxWeightBulk1}--\eqref{eq:defAuxWeightBulk2}.
In particular, the asserted regularity $\hat\vartheta_i\in W^{1,\infty}_{x,t}(\Rd[2]\times [0,T])$
for the auxiliary weight functions is now a consequence of the regularity~\eqref{eq:regSignedDistanceProjection} 
of the signed distance functions as well as the controlled blowup \eqref{boundslambda1}
of the first-order derivatives of the interpolation parameter. In terms of estimates, it holds
\begin{align}
\label{eq:regEstimateAuxWeight}
\max_{k=0,1} \bar r_{\mathrm{min}}^k|\nabla^k\hat\vartheta_i|
+ \bar r_{\mathrm{min}}^2|\partial_t\hat\vartheta_i|
\leq C \quad\text{in } \Rd[2]{\times} [0,T]\setminus\bigcup_{p\in\mathcal{P}_i}\mathcal{T}_p,
\end{align}
for a constant~$C>0$ which may depend on the strong solution~$\bar\Omega$,
but which is independent of~$\bar r_{\mathrm{min}}$.

\textit{Step 3: Estimate for the advective derivatives of~$(\hat\vartheta_i)_{i\in\{1,\ldots,P\}}$.}
For a proof of the bound~\eqref{AdvectionEquationVolumeControl}
on the advective derivative with respect to the auxiliary weight~$\hat\vartheta_i$, 
it suffices to work
in the regions~$\bigcup_{c\in\mathcal{C}_i}\mathrm{im}_{\bar r_{\mathrm{min}}}(\Psi_{\mathcal{T}_{c}})
\setminus \bigcup_{p\in\mathcal{P}_i} \bigcup_{t\in [0,T]}B_{r_{\mathcal{P}}}(\mathcal{T}_p(t)){\times}\{t\}$
and~$\bigcup_{p\in\mathcal{P}_i}\bigcup_{t\in [0,T]}B_{r_{\mathcal{P}}}(\mathcal{T}_p(t)){\times}\{t\}$,
respectively. We in fact may argue separately for each~$c\in\mathcal{C}_i$
and each~$p\in\mathcal{P}_i$. The argument turns out to be almost analogous
to the one for the proof of~\eqref{AdvectionEquationBulkLocalization}; a connection which we will make
precise in the subsequent steps to avoid unnecessary repetition.

\textit{Substep 1: Estimate near~$\partial\bar\Omega_i$ but away from triple junctions.}
Let~$c\in\mathcal{C}_i$, and assume for concreteness that~$\mathcal{T}_c\subset\bar I_{i,j}$.
It follows from the definition~\eqref{DefinitionWeightTwoPhase} that~$\hat\vartheta_i$
is a smooth function of the signed distance~$s_{i,j}$ throughout the space-time
domain~$\mathrm{im}_{\bar r_{\mathrm{min}}}(\Psi_{\mathcal{T}_{c}})
\setminus \bigcup_{p\in\mathcal{P}_i} \bigcup_{t\in [0,T]}B_{r_{\mathcal{P}}}(\mathcal{T}_p(t)){\times}\{t\}$. 
Hence, due to~\eqref{eq:regEstimateAuxWeight}
the otherwise exact same argument guaranteeing~\eqref{eq:auxEstimateAdvectiveDerivative} entails
\begin{align}
\label{eq:auxAdvectionWeightSubstep1}
|\partial_t\hat\vartheta_i + (B\cdot\nabla)\hat\vartheta_i|
\leq C\bar r_{\mathrm{min}}^{-2}(\bar r_{\mathrm{min}}^{-1}\dist(\cdot,\bar I_{i,j}) \wedge 1)
\leq C\bar r_{\mathrm{min}}^{-2}|\hat\vartheta_i|
\end{align} 
in~$\mathrm{im}_{\bar r_{\mathrm{min}}}(\Psi_{\mathcal{T}_{c}})
\setminus \bigcup_{p\in\mathcal{P}_i} \bigcup_{t\in [0,T]}B_{r_{\mathcal{P}}}(\mathcal{T}_p(t)){\times}\{t\}$.
The last inequality follows due to~$\vartheta$ being a truncation of unity.

\textit{Substep 2: Estimate at triple junction in interface wedges containing~$\partial\bar\Omega_i$.}
Consider~$p\in\mathcal{P}_i$, and let $c\in\mathcal{C}$ such that~$c\sim p$
and~$\mathcal{T}_c\subset\bar I_{i,j}$. We provide the required estimate
in the interface wedge~$W_{i,j}(t)\cap B_{r_{\mathcal{P}}}(\mathcal{T}_p(t))$ for all~$t\in [0,T]$.
In this case, definition~\eqref{DefinitionWeightWedgeInterface} applies so that~$\hat\vartheta$
is again a smooth function of the signed distance~$s_{i,j}$. Recalling~\eqref{eq:regEstimateAuxWeight},
we may thus apply the argument in favor of~\eqref{eq:dt zetac Wij} to deduce again
\begin{align}
\label{eq:auxAdvectionWeightSubstep2}
|\partial_t\hat\vartheta_i + (B\cdot\nabla)\hat\vartheta_i|
\leq C\bar r_{\mathrm{min}}^{-2}(\bar r_{\mathrm{min}}^{-1}\dist(\cdot,\bar I_{i,j}) \wedge 1)
\leq C\bar r_{\mathrm{min}}^{-2}|\hat\vartheta_i|,
\end{align}
this time throughout~$W_{i,j}(t)\cap B_{r_{\mathcal{P}}}(\mathcal{T}_p(t))$ for all~$t\in [0,T]$.

\textit{Substep 3: Estimate at triple junction in interface wedge not containing~$\partial\bar\Omega_i$.}
Let~$p\in\mathcal{P}_i$, and let~$j,k\in\{1,\ldots,P\}$ denote the other two distinct
phases which are present at~$\mathcal{T}_p$ next to~$i$. We aim to estimate
the advective derivative of~$\hat\vartheta_i$ in the interface wedge
$W_{j,k}(t)\cap B_{r_{\mathcal{P}}}(\mathcal{T}_p(t))$ for all~$t\in [0,T]$.
Note that thanks to~\eqref{DefinitionWeightComplementWedge}, the
auxiliary weight~$\hat\vartheta_i$ is a smooth function of the distance to
the triple junction. Hence, we may simply follow the argument resulting in~\eqref{eq:transportTripleJunctionCutoff}
and obtain together with~\eqref{eq:regEstimateAuxWeight} that
\begin{align}
\label{eq:auxAdvectionWeightSubstep3}
|\partial_t\hat\vartheta_i + (B\cdot\nabla)\hat\vartheta_i|
\leq C\bar r_{\mathrm{min}}^{-2}(\bar r_{\mathrm{min}}^{-1}\dist(\cdot,\mathcal{T}_p) \wedge 1)
\leq C\bar r_{\mathrm{min}}^{-2}|\hat\vartheta_i|
\end{align}
in the region~$W_{j,k}(t)\cap B_{r_{\mathcal{P}}}(\mathcal{T}_p(t))$ for all~$t\in [0,T]$.

\textit{Substep 4: Estimate at triple junction in interpolation wedges.}
Let the notation of \textit{Substep~3} in place. On the interpolation wedge~$W_i$,
the auxiliary weight is defined by means of~\eqref{DefinitionWeightInterpolationWedge},
i.e., one interpolates between two smooth functions of the signed distances~$s_{i,j}$ and~$s_{k,i}$, respectively.
Hence, we may estimate based on the product rule, the estimate~\eqref{eq:advectionInterpolParameterGlobalVel},
the bound~\eqref{eq:regEstimateAuxWeight}, the fact that~$\lambda_{i}^{j,k}=1{-}\lambda_{i}^{k,j}$,
the argument establishing~\eqref{eq:dt zetac Wi}, and finally~\eqref{eq:compDistances1}
\begin{align}
\nonumber
|\partial_t\hat\vartheta_i + (B\cdot\nabla)\hat\vartheta_i|
&\leq C\bar r_{\mathrm{min}}^{-2}\Big|\vartheta\Big(\frac{s_{i,j}(\cdot,t)}
{\delta\bar r_{\mathrm{min}}}\Big) {-} \vartheta\Big(\frac{s_{k,i}(\cdot,t)}
{\delta\bar r_{\mathrm{min}}}\Big)\Big|
\\&~~~\nonumber
+ C\bar r_{\mathrm{min}}^{-2}\lambda_{i}^{j,k}
(\bar r_{\mathrm{min}}^{-1}\dist(\cdot,\bar I_{i,j}) \wedge 1)
\\&~~~\nonumber
+ C\bar r_{\mathrm{min}}^{-2}\lambda_{i}^{k,j}
(\bar r_{\mathrm{min}}^{-1}\dist(\cdot,\bar I_{k,i}) \wedge 1)
\\& \label{eq:auxAdvectionWeightSubstep4}
\leq C\bar r_{\mathrm{min}}^{-2}
(\bar r_{\mathrm{min}}^{-1}\dist(\cdot,\mathcal{T}_p) \wedge 1)
+ C\bar r_{\mathrm{min}}^{-2}|\hat\vartheta_i|
\leq C\bar r_{\mathrm{min}}^{-2}|\hat\vartheta_i|
\end{align}
throughout~$W_{i}(t)\cap B_{r_{\mathcal{P}}}(\mathcal{T}_p(t))$ for all~$t\in [0,T]$.
In view of the definition~\eqref{DefinitionWeightInterpolationWedge2}
and the argument for~\eqref{eq:transportTripleJunctionCutoff} (carefully noting
that the latter is established also on interpolation wedges), the otherwise
same ingredients and computations employed for the proof of~\eqref{eq:auxAdvectionWeightSubstep4}
also imply
\begin{align}
\label{eq:auxAdvectionWeightSubstep5}
|\partial_t\hat\vartheta_i + (B\cdot\nabla)\hat\vartheta_i|
\leq C\bar r_{\mathrm{min}}^{-2}|\hat\vartheta_i|
\end{align}
in~$W_{j}(t)\cap B_{r_{\mathcal{P}}}(\mathcal{T}_p(t))$ for all~$t\in [0,T]$.

\textit{Substep 5: Conclusion.}
In summary, the estimates~\eqref{eq:auxAdvectionWeightSubstep1}--\eqref{eq:auxAdvectionWeightSubstep5}
imply the asserted bound~\eqref{AdvectionEquationVolumeControl}
for the advective derivative in terms of the auxiliary weights~$\hat\vartheta_i$.
In particular, the family of auxiliary weights $(\hat\vartheta_i)_{i\in\{1,\ldots,P\}}$
satisfies all the required properties of Definition~\ref{def:transportedWeights}
with the only exception being $\hat\vartheta_i\in L^1_{x,t}(\Rd[2]{\times}[0,T])$.

\textit{Step 4: Construction and properties of $\vartheta_i$.}
As already mentioned at the beginning of the proof, we may now define
$\vartheta_i:=\eta_R\hat\vartheta_i$ for all~$i\in\{1,\ldots,P\}$. 
The regularity and the required coercivity properties for~$\vartheta_i$ are then immediate
consequences of its definition and the previous step. The estimate~\eqref{AdvectionEquationVolumeControl}
on the advective derivative also carries over since~$\eta_R$ is time-independent and by~\eqref{eq:gradientBoundIntWeight}
\begin{align*}
|\hat\vartheta_i||(B\cdot\nabla)\eta_R| \leq C|\vartheta_i|
\quad\text{in } \Rd[2]\times [0,T],
\end{align*}
so that the product rule together with the previous step implies~\eqref{AdvectionEquationVolumeControl}
on the level of the weight~$\vartheta_i$. This in turn concludes the proof of Lemma~\ref{LemmaWeightedVolumeControl}.
\end{proof}

\section{Admissibility of a class of Read-Shockley type surface tensions}

Finally, we provide the proof that the surface tensions given by the 
Read-Shockely formulas \eqref{eq:read-shockley sigma} and \eqref{eq:read-shockley f} 
are admissible, as stated in Lemma \ref{lemma:read-shockley}. 
Here, we are inspired by \cite{EsedogluOtto15} and partly follow the 
general strategy of Theorem~5.5 in~\cite{EsedogluOtto15}. However, the situation is more complicated in our setting as our integrand is not concave. In fact, $f^2$ is strictly convex close to the origin.

\begin{proof}[Proof of Lemma \ref{lemma:read-shockley}] 
	We first prove the embeddability for general surface tensions $\sigma_{i,j}$ coming from \eqref{eq:read-shockley sigma} with $f$ satisfying the negativity condition \eqref{eq:read shockley negative fourier}. In the second step, we then verify this negativity condition for the particular choice of the Read-Shockley formula \eqref{eq:read-shockley f}. 
	
	\textit{Step 1: Embeddability under negativity condition.}
	To show the embaddability, we prove the equivalent negative definiteness \eqref{eq:Q negative}.
	To this end, we define the symmetric bilinear form
	\[
	Q(u) := \dashint_{-\pi/4}^{\pi/4} \dashint_{-\pi/4}^{\pi/4} f^2(x-y) u(x) u(y) \dx \dy \quad \text{for $u$ with } \dashint_{-\pi/4}^{\pi/4} u(x)\dx =0,
	\] 
	where $f$ is extended evenly to $(-\pi/4,\pi/4)$ and periodically to $(-\pi/2,\pi/2)$. 
	Denoting $g:=f^2$ and using the orthonormal system $(e^{4k\mathbf{i}})_{k\in \mathbb{Z}}$, where $\mathbf{i}=\sqrt{-1}$, by Plancherel, we may write $Q$ in Fourier space as
	\begin{align*}
		Q(u) = \sum_{k\in \mathbb{Z}} \widehat{g}_k |\widehat{u}_k|^2.
	\end{align*}
	By assumption, for all $k\in \mathbb{Z}\setminus \{0\}$, $\widehat{g}_k$ is a negative real number. In addition, we have $\widehat u_0= \dashint_{-\pi/4}^{\pi/4} u(x) \dx =0$. 
	Hence, the weaker version $z^\mathsf{T} Qz\leq 0$ for all $z\in \Rd[d]$ with $\sum_{j=1}^P z_j=0$ follows from plugging in the measure
	\begin{align}\label{eq:read shockley test function}
		u(x) = \sum_{j=1}^P \delta_{x=\theta_j} z_j.	
	\end{align} 
	
	To obtain the strict inequality $z^\mathsf{T} Qz<0$, we will quantify this argument as follows.
	First, we approximate $u$ in \eqref{eq:read shockley test function} by
	\begin{align*}
		u^N(x) = \sum_{j=1}^PF_N(4(x-\theta_j)) z_j,
	\end{align*} 
	where $F_N$ is the F\'ejer kernel. In other words, the $k$-th Fourier coefficient of $u^N$ is given by
	\begin{align*}
		\widehat{(u^N)}_k = 
		\begin{cases}
			\left(1-\frac{|k|}{N}\right) \sum_{j=1}^P e^{-4k\theta_j \mathbf{i}} z_j,& |k|<N\\
			0,&|k|\geq N.
		\end{cases}
	\end{align*}
	Since $F_N \stackrel{\ast}{\rightharpoonup} \delta_0$ in the sense of measures, we have $u^N  \stackrel{\ast}{\rightharpoonup} u$ as measures. Hence also the product measure converges, $u^N(x)\dx 
	\otimes u^N(y) \dy  \stackrel{\ast}{\rightharpoonup} u(\dx) \otimes u(\dy)$ as measures on $(-\pi/4,\pi/4)^2$.
	Therefore, since $g=f^2$ is continuous,
	\begin{align*}
		\lim_{N\to \infty} Q(u^N) = Q(u) = \sum_{i,j=1}^P z_i f^2(\theta_i-\theta_j) z_j = z^\mathsf{T} Q z.
	\end{align*}
	In order to conclude, we use the assumption on $\widehat g_k$ and $\widehat {(u^N)}_0=\sum_{j=1}^P z_j =0$ to obtain for any $N$ larger than, say, $2P$,
	\begin{align*}
	Q(u^N) &= \sum_{|k|<N}  \widehat{g}_k	\left(1-\frac{|k|}{N}\right) ^2  |\widehat{u}_k|^2
	\\& \leq \sum_{k=1}^P \widehat{g}_k	\left(1-\frac{P}{N}\right) ^2  |\widehat{u}_k|^2
	\leq -\frac14 \left( \min_{1\leq k \leq P} |  \widehat{g}_k|\right) \sum_{k=1}^P  |\widehat{u}_k|^2.
	\end{align*}
	Now we observe that
	\begin{align*}
		\widehat{u}_k = \sum_{j=1}^P e^{-4k\theta_j \mathbf{i}} z_j.
	\end{align*}
	In other words,  the $P$-vector $(\widehat{u}_1,\ldots, \widehat{u}_P)$ is given by the matrix-vector product $Mz$, where $M$ is a $(P\times P)$-Vandermonde matrix with entries $M_{kj} = (e^{-4\theta_j \mathbf{i}})^k.$
	Then the claim follows from the fact that 
	\begin{align*}
		|\det M| 
		= \left| \prod_{1\leq j < k \leq P} \left(e^{-4\theta_j \mathbf{i}}-e^{-4\theta_k \mathbf{i}}\right)\right| >0,
	\end{align*}
	where we have used our assumption $\theta_k \neq \theta_j \mod \frac{\pi}2$ for $k\neq j$. 
	
	\textit{Step 2: Negativity condition $\widehat{g}_k<0$ for the Read-Shockley formula.}
	Let us now turn to the specific Read-Shockley profile \eqref{eq:read-shockley f}. We aim to show that $g=f^2$, extended evenly from $(0,\pi/4)$ to $(-\pi/4,\pi/4)$, satisfies 
	\begin{align*}
		\widehat g_k  \text{ is a negative real number for all } k \in \mathbb{Z}\setminus \{0\}.
	\end{align*}
	Since $g$ is even, $\widehat g_k \in \Rd[]$ for all $k$, and by symmetry we only need to show 
	\begin{align*}
		\widehat g_k<0 \quad \text{ for all } k =1,2,3, \ldots.
	\end{align*}
	Two integrations by parts yield
	\begin{align*}
		\widehat g_k
		=&\dashint_{-\pi/4}^{\pi/4}g(x) \cos(4kx) \dx
		=2\frac{2}{\pi}\int_0^{\pi/4}g(x) \cos(4kx) \dx
		\\=& \frac1\pi \left[\frac1k g(x) \sin(4kx) - \frac1{4k^2} g'(x)\cos(4kx) \right]_{x=0}^{\pi/4}- \frac1{4\pi k^2} \int_0^{\pi/4} g''(x)  \cos(4kx) \dx.
	\end{align*}
	Since $\sin(0)=\sin(n\pi)=0$ and by assumption $g'(0) =g'(0+)=\lim_{x\to0} 2f(x)f'(x) =0$ and $g'(\pi/4)=f(\pi/4)f'(\pi/4)=0$, the boundary terms vanish and therefore $\widehat g_k<0$ is equivalent to
	\begin{equation*}
		\int_0^{\pi/4} g''(x) \cos(4kx) \dx > 0.
	\end{equation*}
	In case of the particular structure \eqref{eq:read-shockley f} of $f$, using the change of variable $x\mapsto \theta_\ast x$ this may be written as
	\begin{equation}\label{eq:log2log-1}
			I(\alpha)=\int_0^{1} \big( \log^2(x) + \log(x) -1 \big) \cos(\alpha x) \dx > 0
	\end{equation}
	for $\alpha =4 n\theta_\ast$, $n=1,2,3,...$.
	
	We will show that \eqref{eq:log2log-1} in fact holds for all $\alpha>0$. 
	Using the series representation of the cosine and integrating by parts twice each term of the series in the integral \eqref{eq:log2log-1}, we obtain the (absolutely convergent) series representation
	\begin{align*}
		I(\alpha) & = \sum_{n=1}^\infty \frac{4n^2+6n}{(2n+1)^2}\frac{(-1)^{n+1}}{(2n+1)!} \alpha^{2n}\\
		& = \sum_{k=1}^\infty \left(\frac{4 (2k-1)^2 + 6(2k-1)}{(2(2k-1) +1)^2} - \frac{4(2k)^2 - 6(2k)}{(2(2k)+1)^2} \frac{\alpha^2}{(4k+1)4k} \right) \frac{\alpha^{4k-2}}{(4k-1)!}.
	\end{align*}
	As the map $x\mapsto \frac{4x^2+6x}{(2x+1)^2}$ is strictly decreasing for $x\geq 2$, we have
	\begin{align*}
		\sum_{k=2}^\infty \left(\frac{4 (2k-1)^2 + 6(2k-1)}{2(2k-1) +1)^2} - \frac{4(2k)^2 - 6(2k)}{(2(2k)+1)^2} \frac{\alpha^2}{(4k+1)4k} \right) \frac{\alpha^{4k-2}}{(4k-1)!} >0
	\end{align*}
	provided $0<\alpha^2 \leq 72$.
	The remaining term for $k=1$ can be seen to be strictly positive provided $0<\alpha^2 < \frac{500}{11}$.
	Therefore, we proved $I(\alpha) >0$ under the condition $0 <\alpha^2 \geq 45< \frac{500}{11}$.
	
	Furthermore, it can be seen that the map $\alpha \mapsto \int_0^\alpha \frac{\sin(t)}{t} \dt$ is non-negative and maximal when $\alpha=\pi$.
	Consequently, for $\alpha \geq \sqrt{45}$ we have the estimate
	\begin{align*}
		I(\alpha) \geq \frac{1}{\alpha}\left( -1 - \int_0^\pi \frac{\sin(t)}{t} \dt + 2 \int_0^{\sqrt{45}} \frac{1}{x} \int_0^x \frac{\sin(t)}{t} \dt \dx\right).
	\end{align*}
	Numerical integration yields
	\begin{align*}
		I(\alpha) \geq \frac{4}{\alpha},
	\end{align*}
	concluding the proof.
\end{proof}

\section*{Glossary of notation}

\begin{longtabu}{ll}
\endfirsthead
\endhead
\endfoot
\endlastfoot

$d\geq 2$ & ambient dimension
\vspace{2mm}
\\
$D$ & open set
\vspace{2mm}
\\
$\partial_t v$ & distributional partial derivative w.r.t.\ time
\\
&  of $v: D \times [0,T) \to \Rd$
\vspace{2mm}
\\
$\nabla v$ & distributional partial derivative w.r.t.\ space, $(\nabla v)_{i,j} = \partial_j v_i$
\vspace{2mm}
\\
$C_{\mathrm{cpt}}^\infty(D)$ & space of compactly 
supported and infinitely\\
& differentiable functions on $D$
\vspace{2mm}
\\
$C_{t}^lC_{x}^k(U)$ & space of functions on~$U\subset\Rd{\times}[0,T]$ with continuous  \\
& and bounded partial derivatives~$\partial_t^{l'}\partial_x^{k'},\,0\leq l'\leq l,\,0\leq k'\leq k$.
\vspace{2mm}
\\
$u \otimes v$ & tensor product of $u, v\in \Rd$, $(u \otimes v)_{i,j} = u_i v_j$
\vspace{2mm}
\\
$A : B$ & $\sum_{i,j} A_{ij}B_{ij}$, scalar product of tensors
\vspace{2mm}
\\
$\mathcal{L}^d$ & $d$-dimensional 
Lebesgue measure
\vspace{2mm}
\\
$\mathcal{H}^k$ & $k$-dimensional Hausdorff measure on $\Rd$ for $k\in [0,d]$
\vspace{2mm}
\\
$L^p(\Omega,\mu)$ & Lebesgue space w.r.t.\ to a measure $\mu$ on $\Omega \subset \Rd$ for $p\in [1,\infty]$
\vspace{2mm}
\\
$L^p(D)$ & Lebesgue space w.r.t.\ Lebesgue measure
\vspace{2mm}
\\
$L^p(D;\Rd)$ & Lebesgue space for vector valued functions
\vspace{2mm}
\\
$L^p([0,T];X)$ & Bochner--Lebesgue space for a Banach space $X$ and $T \in (0,\infty)$
\vspace{2mm}
\\
$W^{k,p}(D)$ & Sobolev spaces with $p\in [1,\infty)$ and $k\in \mathbb{N}$
\vspace{2mm}
\\
$BV(D)$
& Functions of bounded variation \cite{AmbrosioFuscoPallara} on Lipschitz domain $D\subset \Rd$
\vspace{2mm}
\\
$\partial^* \Omega$ & reduced boundary of a set of finite perimeter $\Omega \subset D$
\vspace{2mm}
\\
$\smash{\vec{n}=-\frac{\nabla\chi_\Omega}{|\nabla\chi_\Omega|}}$ &  outward pointing unit normal vector field along $\partial^*\Omega$
\vspace{2mm}
\\
$s_{i,j}$ & signed distance function to ${\bar{I}}_{i,j}$ with $\nabla s_{i,j} = \bar{\vec{n}}_{i,j}$
\vspace{2mm}
\\
$\dist(\cdot, A)$ & distance function $\Rd{\times}[0,T]\ni(x,t)\mapsto \dist(x,A(t))$
for a domain
\\ & $A=\bigcup_{t\in [0,T]}A(t){\times}\{t\}$, $A(t)\subset\Rd$, $t\in [0,T]$.
\vspace{2mm}
\\
$P \geq 2$ & number of phases
\vspace{2mm}
\\
$\Omega_i$ & region occupied by phase $i =1,\ldots,P$ in \emph{weak solutions}
\vspace{2mm}
\\
$\chi_i$ & characteristic function of $\Omega_i$
\vspace{2mm}
\\
$I_{i,j}$ & interface between phases $\Omega_i$ and $\Omega_j$
\vspace{2mm}
\\
$\vec{n}_{i,j}$ & unit normal vectors along $I_{i,j}$ pointing from phase $i$ to phase $j$
\vspace{2mm}
\\
$V_i$ & normal velocity of $I_{i,j}$ with $V_i>0$ for expanding $\Omega_i$, see \eqref{EvolutionPhasesBVSolution}
\vspace{2mm}
\\
$\bar \Omega_i$, $\bar \chi_i$, \ldots & corresponding quantities of the \emph{strong solution}
\vspace{2mm}
\\
$\vec{H}_{i,j}$ & mean curvature vector of ${\bar{I}}_{i,j}$
\vspace{2mm}
\\
$H_{i,j}$ & scalar mean curvature of ${\bar{I}}_{i,j}$ given by\\
& $\vec{H}_{i,j} \cdot {\bar{\vec{n}}}_{i,j}=-\nabla^{\mathrm{tan}}\cdot\bar{\vec{n}}_{i,j}=-\Delta s_{i,j}$
\vspace{2mm}
\\
$s_{i,j}$ & signed distance function to ${\bar{I}}_{i,j}$ with $\nabla s_{i,j} = \bar{\vec{n}}_{i,j}$
\vspace{2mm}
\\
$\smash{J= \big(\begin{smallmatrix} 0 & -1 \\ 1 & 0	\end{smallmatrix}\big)}$ & counter-clockwise rotation by $90^\circ$
\vspace{2mm}
\\
$\bar{\tau}_{i,j}$ & tangent vector along ${\bar{I}}_{i,j}$ given by $ J^{-1}\bar{\vec{n}}_{i,j}$
\vspace{2mm}
\\
$O(\cdot)$ & Landau symbol, implicit constant only depends on strong solution
\end{longtabu}

\bibliographystyle{abbrv}
\bibliography{multiphase}

\end{document}